\newtheorem{theorem}[equation]{Theorem}
\newtheorem{lemma}[equation]{Lemma}
\newtheorem{proposition}[equation]{Proposition}
\newtheorem{corollary}[equation]{Corollary}
\newtheorem*{theorem*}{Theorem}
\theoremstyle{definition}\newtheorem{notation}[equation]{Notation}}
\theoremstyle{definition}\newtheorem{remark}[equation]{Remark}}
\theoremstyle{definition}\newtheorem{defn}[equation]{Definition}}
\theoremstyle{definition}\newtheorem{example}[equation]{Example}}
\newcommand{\DMO}{\DeclareMathOperator}
\newcommand{\beq}{\begin{equation}}
\newcommand{\eeq}{\end{equation}}
\newcommand{\ang}[1]{\langle #1 \rangle}
\newcommand\restr[2]{{
  \left.\kern-\nulldelimiterspace 
  #1 
  \right|_{#2} 
  }}
\renewcommand{\leq}{\leqslant}
\renewcommand{\geq}{\geqslant}
\DeclareMathOperator{\id}{{id}}
\DeclareMathOperator{\Hom}{{Hom}}
\DeclareMathOperator{\End}{{End}}
\DeclareMathOperator{\Ext}{{Ext}}
\DeclareMathOperator{\Tor}{Tor}
\DeclareMathOperator{\Proj}{Proj}
\DeclareMathOperator{\Spec}{Spec}
\DeclareMathOperator{\pd}{pd}
\DeclareMathOperator{\coker}{coker}
\DeclareMathOperator{\im}{im}
\DeclareMathOperator{\gr}{gr}
\newcommand{\Qgr}[1][]{Q^{#1}_{\operatorname{gr}}}
\DeclareMathOperator{\spann}{span}
\DeclareMathOperator{\hilb}{hilb}
\DeclareMathOperator{\Perf}{Perf}
\newcommand{\mc}{\mathcal}
\newcommand{\mf}{\mathfrak}
\newcommand{\hash}{\hspace{1pt} \# \hspace{1pt}}
\DeclareMathOperator{\shHom}{\mathcal{H}\!\mathit{om}}
\DeclareMathOperator{\shExt}{\mathcal{E}\!\mathit{xt}}
\newcommand{\kk}{{\Bbbk}}
\newcommand{\ZZ}{{\mathbb Z}}
\newcommand{\PP}{{\mathbb P}}
\newcommand{\NN}{{\mathbb N}}
\renewcommand{\AA}{{\mathbb A}} 
\newcommand{\mb}{\mathbb}
\newcommand{\CC}{\mb C}
\newcommand{\DD}{\mb D}
\newcommand{\EE}{\mb E}
\newcommand{\KK}{\mb K}
\newcommand{\LL}{\mb L}
\newcommand{\RR}{\mb R}
\newcommand{\vC}{\check{C}}
\newcommand{\sA}{\mc{A}}
\newcommand{\sB}{\mc{B}}
\newcommand{\sC}{\mc{C}}
\newcommand{\sF}{\mc{F}}
\newcommand{\sJ}{\mc{J}}
\newcommand{\sM}{\mc{M}}
\newcommand{\sN}{\mc{N}}
\newcommand{\sO}{\mc{O}}
\newcommand{\sT}{\mc{T}}
\newcommand{\sX}{\mc{X}}
\newcommand{\ms}{\mathscr}
\newcommand{\sMi}{\mc{M}^{(i)}}
\newcommand{\sNi}{\mc{N}^{(i)}}
\newcommand{\sMj}{\mc{M}^{(j)}}
\newcommand{\sNj}{\mc{N}^{(j)}}
\newcommand{\msOi}{\ms{O}(U_i)}
\newcommand{\msOk}{\ms{O}(U_k)}
\newcommand{\msOn}{\ms{O}(U_n)}
\renewcommand{\a}{\overline{a}}
\newcommand{\bs}{\bm{s}}
\newcommand{\ba}{\bm{a}}
\newcommand{\bb}{\bm{b}}
\DeclareMathOperator{\Qcoh}{Qcoh}
\DeclareMathOperator{\coh}{coh}
\DeclareMathOperator{\rgr}{gr-\!}
\DeclareMathOperator{\lgr}{\!-gr}
\DeclareMathOperator{\lGr}{\!-Gr}
\DeclareMathOperator{\rGr}{Gr-\!}
\DeclareMathOperator{\rmod}{mod-\!}
\DeclareMathOperator{\lmod}{\!-mod}
\DeclareMathOperator{\lMod}{\!-Mod}
\DeclareMathOperator{\rMod}{Mod-\!}
\DeclareMathOperator{\rQgr}{Qgr-\!}
\DeclareMathOperator{\rqgr}{qgr-\!}
\DeclareMathOperator{\rTors}{Tors-\!}
\DeclareMathOperator{\rtors}{tors-\!}
\DeclareMathOperator{\lMOD}{\!-\mf{Mod}}
\DeclareMathOperator{\rMOD}{\mf{Mod}-\!}
\DeclareMathOperator{\GKdim}{GKdim}
\DMO{\wt}{wt}
\DMO{\gldim}{gldim}
\DeclareMathOperator{\Xyz}{Xyz}
\DeclareMathOperator{\xyz}{xyz}
\newcommand{\uHom}{{\underline{\Hom}}}
\newcommand{\uExt}{{\underline{\Ext}}}
\newcommand{\RHom}{{\mathbb R}\!\Hom}
\newcommand{\hra}{\hookrightarrow}
\newcommand{\ssm}{\smallsetminus}
\newcommand{\tikzAngleOfLine}{\tikz@AngleOfLine}
\def\tikz@AngleOfLine(#1)(#2)#3{%
\pgfmathanglebetweenpoints{%
\pgfpointanchor{#1}{center}}{%
\pgfpointanchor{#2}{center}}
\pgfmathsetmacro{#3}{\pgfmathresult}%
}
\newcommand{\X}{\mf X}
\newcommand{\T}{T}
\newcommand{\Tpres}{\T_{\text{\normalfont{pres}}}}
\newcommand{\Ttoric}{\T_{\text{\normalfont{toric}}}}
\newcommand{\bGamma}{\boldsymbol{\Gamma}}
\newcommand{\bpi}{\boldsymbol{\pi}}
\renewcommand{\H}{\mathrm{H}}
\newcommand{\RomanOne}{\text{\normalfont{I}}}
\newcommand{\RomanThree}{\text{\normalfont{I\hspace{-1.1pt}I\hspace{-1.1pt}I}}}
\newcommand{\qb}{{q^\bullet}}
\DMO{\supp}{supp}
\renewcommand{\geq}{\geqslant}
\renewcommand{\leq}{\leqslant}
\DMO{\Rep}{Rep}
\DeclareMathOperator{\SL}{SL}
\patchcmd{\@setaddresses}{\indent}{\noindent}{}{}
\patchcmd{\@setaddresses}{\indent}{\noindent}{}{}
\patchcmd{\@setaddresses}{\indent}{\noindent}{}{}
\patchcmd{\@setaddresses}{\indent}{\noindent}{}{}
\newcommand{\opi}{\boldsymbol{\overline{\bpi}}}
\title{Resolutions of Type $\mathbb{A}$ Quantum Surface Singularities}
\address{The University of Manchester, Alan Turing Building, Oxford Road, Manchester, M13 9PL, United Kingdom\vspace{-5pt}}
\address{Heilbronn Institute for Mathematical Research, Bristol, United Kingdom}
\email{simon.crawford@manchester.ac.uk\vspace{5pt}}
\address{School of Mathematics, University of Edinburgh, Edinburgh EH9 3FD, United Kingdom}
\email{s.sierra@ed.ac.uk}
\author{Simon Crawford and Susan J. Sierra}
\date{\today}
\subjclass[2020]{Primary: 14A22, 14E15. Secondary: 16S38, 14E16.}
\keywords{Quantum Kleinian singularity, invariant ring, resolution of singularities, McKay correspondence.}
\begin{document}

\begin{abstract}
Let $B = \kk_q[u,v]^{C_{n+1}}$ be a Type $\mathbb{A}_n$ quantum Kleinian singularity, which is an example of a noncommutative surface singularity. 
This singularity is known to have a noncommutative quasi-crepant resolution $\Lambda$, which is an ``algebraic'' resolution of $B$. We construct a category $\sX$ which serves as a ``geometric'' resolution of $B$ by adapting techniques from quiver GIT and show that $\sX$ and $\rmod \Lambda$ are derived equivalent.
Furthermore, we show that the intersection arrangement of lines in the exceptional locus of $\sX$ corresponds to a Type $\mathbb{A}_n$ Dynkin diagram.
This generalises the geometric McKay correspondence for classical Kleinian singularities. 
\end{abstract}

\maketitle


\setcounter{tocdepth}{1}
\let\oldtocsection=\tocsection
\let\oldtocsubsection=\tocsubsection
\let\oldtocsubsubsection=\tocsubsubsection
\renewcommand{\tocsection}[2]{\hspace{0em}\oldtocsection{#1}{#2}}
\renewcommand{\tocsubsection}[2]{\hspace{1.5em}\oldtocsubsection{#1}{#2}}
\renewcommand{\tocsubsubsection}[2]{\hspace{3em}\oldtocsubsubsection{#1}{#2}}

\tableofcontents


\numberwithin{equation}{section}

\section{Introduction}\label{INTRO}

Throughout, let $\Bbbk$ be an algebraically closed field of characteristic $0$. 
Let $q \in \kk^\times$, and let 
\[ 
A_q = \kk_q[u,v] :=\frac{ \kk \ang{u,v}} {\langle vu - quv \rangle}
\]
be the $q$-quantum plane. 
The cyclic group $C_{n+1}$ of order $n+1$ generated by
\begin{align*}
g \coloneqq 
\begin{pmatrix}
\zeta & 0 \\ 0 & \zeta^{-1}
\end{pmatrix},
\end{align*}
where $\zeta \in \Bbbk$ is a primitive $(n+1)$th root of unity, acts on $A_q$ via  graded automorphisms.
Let ${B_q = A_q^{C_{n+1}}}$ be the invariant ring of this action.
This is a well-known {\em quantum Kleinian singularity}, as studied in \cite{crawfordthesis,ckwz1,ckwz2} and others.  
In particular, $B_q$ is singular, i.e.~it has infinite global dimension, and this singularity can be resolved, in an appropriate sense, by the smash product $\Lambda_q := A_q \hash G$.
Further, there is a noncommutative McKay correspondence linking the representation theory of $G$, of $B_q$, and of $\Lambda_q$; see \cite{ckwz1,ckwz2}.

In this paper we extend the McKay correspondence for $B_q$ by establishing a {\em (derived) geometric  McKay correspondence}.  
That is, we construct an $\NN$-graded ring $T_q$ using methods inspired by geometric invariant theory, and a further construct a category $\mc X_q$ from $T_q$ by generalising standard techniques of noncommutative geometry. 
We show that $\mc X_q$  is also a resolution of (the module category of) $B_q$, again in an appropriate sense and, further, that $\mc X_q$ and $\rmod \Lambda_q$ are derived equivalent. This establishes an analogue of the derived geometric 
McKay correspondence, as given in \cite{BKR}, for the quantum Kleinian singularity $B_q$.  
Thus we have quantised the full classical McKay correspondence for type $\AA$ Kleinian surface singularities.

In order to describe our results in more detail, we first review the classical (commutative) McKay correspondence for Kleinian surface singularities. 
The finite subgroups of $\SL_2(\kk)$ were famously classified by Klein in the case $\kk = \CC$.
Any such subgroup $G$ acts on $\AA^2$ by construction, and the quotient singularity
\[ X = \AA^2/G := \Spec \kk[u,v]^G\]
is known as a {\em Kleinian (surface) singularity}.
These were studied by Du Val \cite{DuVal}, Artin \cite{Artin66}, and many others from a geometric perspective, but in 1981 McKay \cite{McKay} established a beautiful relationship between the geometry of $X$ and its minimal resolution and the representation theory of $G$, later extended and deepened by many others. We shall now give an ahistorical survey of the McKay correspondence for Kleinian singularities.

A finite subgroup $G$ of $\SL_2(\kk)$ acts naturally on $V = \kk^2$ and thus on $A := S(V) = \kk[u,v]$.
Set $B = A^G$, and let $X = \AA^2/G = \Spec B$ be the associated Kleinian singularity.
Let $W_0, \dots, W_n$ be the irreducible representations of $G$, up to isomorphism, where $W_0 = \kk$ is the trivial representation.
As $\Bbbk$ has characteristic $0$, there are non-negative integers $m_{ij} $ so that 
\[ V\otimes W_j \cong \bigoplus_i W_i^{m_{ij}}.\]
The {\em McKay quiver} $Q$ associated to the action of $G$ on $V$ has vertex set $Q_0 = \{ 0, \dots, n\}$ and $m_{ij}$ arrows from vertex $i$ to vertex $j$. (Since $G \leqslant \SL_2(\Bbbk)$, we in fact have $m_{ij}=m_{ji}$ \cite[Lemma 6.25]{LW}.)
Auslander had observed \cite{aus} that 
$A\hash G \cong \End_{B}(A)$, and Reiten and Van den Bergh later showed (see in particular the proof of \cite[Lemma 2.13]{RV}) that $A\hash G $ is Morita equivalent to the {\em preprojective algebra} $\Pi(Q)$ of the McKay quiver, which is the path algebra of $Q$ factored by certain natural relations; see \cite{cbh} for the definition. 
In fact, if $G$ is cyclic, then $A\hash G \cong \Pi(Q)$.
There are many  relationships between the representation theory of $G$, of $\Lambda:=\Pi(Q)$, and of $A\hash G$; see \cite[Chapter~5]{LW} for a discussion.

In 1981, McKay \cite{McKay} observed that the McKay quiver can also be obtained from the geometry of $X$.
Let $\phi: \widetilde{X} \to X$ be the minimal resolution of $X$, which exists and is unique for surface singularities. 
The exceptional divisor of $\phi$ is a union of irreducible curves, each of which is a $\PP^1$ of self-intersection $-2$, and it was known that there are $n$ of these (recall that $G$ has $n+1$ irreducible representations).
McKay showed:
\begin{theorem}\label{ithm:mckay}
{\rm (\cite{McKay})}
Let $L_1, \dots, L_n$ be the irreducible components of the exceptional locus of $\phi$.
Form the dual graph $\Gamma$ of the exceptional divisor by replacing each curve by a vertex and drawing  $\#(L_i \cap L_j)$ edges  between the  vertices corresponding to $L_i$ and $L_j$ if $i \neq j$.
Then $\Gamma$ is a Dynkin diagram of type $\AA_n$, $\DD_n$, or $\EE_n$, and is equal to the graph obtained from the McKay quiver of $G$ by removing the vertex $0$ and replacing each pair of opposing arrows by an edge.
\end{theorem}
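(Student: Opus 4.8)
The plan is to prove the two assertions of the theorem separately: first that $\Gamma$ is a simply-laced Dynkin diagram, and then that it coincides with the graph read off from the McKay quiver. For the first assertion, recall that a Kleinian singularity is a canonical (Du Val) surface singularity, hence rational and Gorenstein, and that its minimal resolution $\phi\colon\widetilde X\to X$ is crepant, $K_{\widetilde X}=\phi^*K_X$. Rationality ($R^1\phi_*\struct_{\widetilde X}=0$) forces every exceptional component $L_i$ to be a smooth rational curve (a $\PP^1$), forces the $L_i$ to meet transversally, and forces $\Gamma$ to be a tree; and $K_{\widetilde X}\cdot L_i=\phi^*K_X\cdot L_i=0$ together with the adjunction formula gives $L_i^2=-2$ for every $i$. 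By Mumford's theorem on resolutions of normal surface singularities the intersection matrix $M=(L_i\cdot L_j)$ is negative definite, so no two distinct components meet in more than one point (a double point, or a second intersection point, would produce a principal submatrix $\bigl(\begin{smallmatrix}-2&c\\c&-2\end{smallmatrix}\bigr)$ with $c\geqslant2$, which is not negative definite) and $\Gamma$ has no cycles and no multiple edges. Writing $C=-M=2I-A(\Gamma)$, where $A(\Gamma)$ is the adjacency matrix of $\Gamma$, positive-definiteness of $C$ is exactly the condition that the connected graph $\Gamma$ be one of $\AA_n,\DD_n,\EE_6,\EE_7,\EE_8$: this is the classical classification, proved by excluding each extended Dynkin diagram $\widetilde\AA,\widetilde\DD,\widetilde\EE$ as a subgraph via the nonzero kernel vector of its generalized Cartan matrix.

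For the second assertion the key geometric input is the family of tautological sheaves of Gonzalez-Sprinberg and Verdier. Let $\rho\colon\AA^2\to X$ be the quotient map, let $Z$ be the component of the reduced fibre product $\widetilde X\times_X\AA^2$ dominating $\widetilde X$, with projections $p\colon Z\to\widetilde X$ and $q\colon Z\to\AA^2$, and note that $G$ acts on $Z$ over $\widetilde X$; by the work of Gonzalez-Sprinberg and Verdier $p$ is finite and flat, so each $\sR_i:=(p_*\struct_Z\otimes_\kk W_i)^G$ is locally free of rank $\dim W_i$, with $\sR_0\cong\struct_{\widetilde X}$. The crucial normalization is that
\[
c_1(\sR_i)\cdot L_j=\delta_{ij}\qquad(1\leqslant j\leqslant n),
\]
which already sets up a bijection between $W_1,\dots,W_n$ and $L_1,\dots,L_n$. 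Now apply the functor $\sF\mapsto(p_*q^*\sF\otimes_\kk W_j)^G$ to the $G$-equivariant Koszul resolution of the origin in $\AA^2$,
\[
0\longrightarrow\struct_{\AA^2}\longrightarrow V\otimes_\kk\struct_{\AA^2}\longrightarrow\struct_{\AA^2}\longrightarrow\struct_{\{0\}}\longrightarrow0,
\]
in which the two outer terms really carry the trivial twist $\wedge^2 V\cong\kk$ since $G\leqslant\SL_2(\kk)$ (and where we have used $V\cong V^*$). Substituting $V\otimes W_j\cong\bigoplus_i W_i^{m_{ij}}$ and using $m_{ij}=m_{ji}$ produces a relation in $\mathrm{Pic}(\widetilde X)$ of the form $\sum_i m_{ij}\,c_1(\sR_i)-2\,c_1(\sR_j)=[L_j]$; intersecting with $L_k$ and feeding in the normalization above, together with $m_{jj}=0$, collapses it to
\[
\#(L_j\cap L_k)=L_j\cdot L_k=m_{jk}\ \ (j\neq k),\qquad L_j^2=m_{jj}-2=-2.
\]
Thus the edges of $\Gamma$ are precisely the opposing arrow pairs of the McKay quiver among the vertices $1,\dots,n$; that is, $\Gamma$ is the McKay graph with vertex $0$ deleted. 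Combined with the first assertion, this also reproves that that graph is a simply-laced Dynkin diagram.

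The main obstacle is the construction of the tautological sheaves $\sR_i$ and, above all, the normalization $c_1(\sR_i)\cdot L_j=\delta_{ij}$: proving finiteness and flatness of $p$ and then pinning down these intersection numbers is where the geometry of the resolution genuinely enters, whereas everything else (the structure of the exceptional fibre, Mumford negative-definiteness, the ADE classification, and the Koszul bookkeeping) is comparatively formal. For the cyclic groups relevant to the present paper one can bypass the tautological bundles entirely: with $g$ as in the introduction, $V\cong\chi_1\oplus\chi_{-1}$ for the characters $\chi_{\pm1}\colon g\mapsto\zeta^{\pm1}$, so $\chi_{\pm1}\otimes W_j\cong W_{j\pm1}$ (indices modulo $n+1$), and the McKay quiver is plainly the cyclic quiver on $\{0,1,\dots,n\}$ with a pair of opposing arrows joining consecutive vertices; deleting vertex $0$ leaves the path $1-2-\cdots-n$, namely $\AA_n$, and one matches this with $\Gamma$ using the standard toric description of the minimal resolution of $\AA^2/C_{n+1}$ as a chain of $n$ transverse $(-2)$-curves.
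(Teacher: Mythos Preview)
The paper does not give a proof of this theorem. Theorem~\ref{ithm:mckay} appears in the introduction as part of an ahistorical survey of the classical McKay correspondence, attributed to \cite{McKay} and stated without proof; it is background against which the paper's noncommutative results are set. There is therefore nothing in the paper to compare your argument with. (The paper does prove a noncommutative analogue, Theorem~\ref{thm:inttheory}, computing the intersection numbers of the line modules $L_i$ in $\rqgr T_q$ and obtaining an $\mathbb{A}_n$ pattern, but that is a different statement with a different proof, carried out via the local-to-global spectral sequence on the noncommutative toric space $\mathfrak{X}$.)

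As for your sketch itself: the first half is the standard and correct route to the fact that the dual graph is simply-laced Dynkin (rationality $\Rightarrow$ tree of smooth $\PP^1$'s, crepancy and adjunction $\Rightarrow$ $(-2)$-curves, Mumford negative-definiteness $\Rightarrow$ ADE). The second half, via the Gonzalez-Sprinberg--Verdier tautological sheaves, is a legitimate and well-known argument, though note that it is historically posterior to McKay's own observation, which was essentially a case-by-case check; if you are citing \cite{McKay} you should be aware you are supplying a later conceptual proof rather than McKay's original one. Your Koszul bookkeeping is a bit compressed: the passage from the pushed-forward Koszul sequence to the numerical identity $\sum_i m_{ij}c_1(\sR_i)-2c_1(\sR_j)=[L_j]$ hides the step of identifying the cokernel term (the image of $\struct_{\{0\}}$ under $p_*q^*(-\otimes W_j)^G$) with $\struct_{L_j}$, which is exactly where the GSV normalisation $c_1(\sR_i)\cdot L_j=\delta_{ij}$ does real work. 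You acknowledge this as the main obstacle, which is fair, but a reader might want a reference (e.g.\ Gonzalez-Sprinberg--Verdier or Kapranov--Vasserot) rather than a bald assertion.
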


McKay's result is essentially combinatorial, but it was later extended, originally by Kapranov and Vasserot, to a derived correspondence between $\widetilde{X}$ and $\Lambda$:
\begin{theorem}\label{ithm:derivedmckay}
{\rm (\cite{KV})}
Let $\widetilde{X}$ and $\Lambda$ be as above.
The categories $\coh \widetilde{X}$ and $\rmod \Lambda$ are derived equivalent.
\end{theorem}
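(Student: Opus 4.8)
The plan is to realise $\widetilde X$ as a fine moduli space, equip it with a \emph{tilting bundle} whose endomorphism algebra is Morita equivalent to $\Lambda$, and then apply the tilting equivalence; this is essentially the original argument of \cite{KV}, reorganised along the lines of \cite{BKR}. First, recall that the Kleinian singularity $X$ is rational, so $\phi_*\struct_{\widetilde X}=\struct_X$ and $R^i\phi_*\struct_{\widetilde X}=0$ for $i>0$, and that the minimal resolution $\phi$ is crepant with one-dimensional fibres. Identify $\widetilde X$ with the $G$-Hilbert scheme $G\text{-}\mathrm{Hilb}(\AA^2)$ (Ito--Nakamura), carrying a universal subscheme $Z\subseteq\widetilde X\times\AA^2$; pushing $\struct_Z$ forward along the first projection gives a rank $n+1$ bundle with a fibrewise $G$-action, and decomposing it into isotypic pieces for the irreducibles $W_0,\dots,W_n$ produces the tautological bundles $\sM_0=\struct_{\widetilde X},\sM_1,\dots,\sM_n$ (equivalently, the indecomposable special Cohen--Macaulay modules in the sense of Wunram). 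Set $\sT:=\bigoplus_{i=0}^n\sM_i$.

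The key step is to verify that $\sT$ is a tilting object: that $\Ext^{k}_{\widetilde X}(\sT,\sT)=0$ for all $k>0$ and that $\sT$ generates $D^b(\coh\widetilde X)$. Generation follows from the fact that $\widetilde X$ is a smooth quasi-projective surface whose exceptional fibre is a chain of $(-2)$-curves on which the $\sM_i$ restrict to a spanning family, together with a Beilinson-type resolution of skyscraper sheaves. For the vanishing, crepancy and one-dimensionality of the fibres give $R^i\phi_*\shHom_{\widetilde X}(\sM_j,\sM_k)=0$ for $i>0$, so it suffices to check $\Ext^{>0}$ after pushing forward to the affine $X$; this is exactly the specialness of the modules $\phi_*\sM_i$, i.e.\ $\Ext^{>0}_{\widetilde X}(\sM_j,\sM_k)=0$.

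Granting this, Bondal's tilting theorem yields an exact equivalence $\RHom_{\widetilde X}(\sT,-)\colon D^b(\coh\widetilde X)\xrightarrow{\ \sim\ }D^b(\rmod\End_{\widetilde X}(\sT))$. It remains to identify the endomorphism algebra. Because the sheaves $\shHom_{\widetilde X}(\sM_j,\sM_k)$ have no higher direct images, $\phi_*$ identifies $\End_{\widetilde X}(\sT)$ with $\End_B(\phi_*\sT)$, and $\phi_*\sT\cong A$ as a $B$-module compatibly with the $G$-action; hence $\End_{\widetilde X}(\sT)\cong\End_B(A)\cong A\hash G$ by Auslander's theorem, which by \cite{RV} is Morita equivalent to $\Lambda=\Pi(Q)$ (and isomorphic to it when $G$ is cyclic). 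Composing the two equivalences gives $D^b(\coh\widetilde X)\simeq D^b(\rmod\Lambda)$. Alternatively one can bypass the tilting bundle and run the Fourier--Mukai argument of \cite{BKR} directly, using that $D^b(\coh^G\AA^2)\simeq D^b(\rmod A\hash G)$ since $\AA^2$ is affine.

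The main obstacle is the tilting property of $\sT$, and specifically the vanishing $\Ext^{>0}_{\widetilde X}(\sT,\sT)=0$: it rests squarely on the special geometry of the minimal resolution of a Kleinian singularity --- crepancy, rationality of the singularity, and the $(-2)$-curve configuration --- none of which is available a priori in the noncommutative world. It is precisely this input that has to be re-established, by hand, for the category $\mc X_q$ constructed later in the paper.
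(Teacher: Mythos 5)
Your proposal is the tilting-bundle proof of the derived McKay correspondence --- $G$-Hilb, tautological bundles, Bondal's theorem, $\End\cong A\hash G$ --- which is exactly the route the paper's quantum generalisation (Theorem~\ref{thm:tilting}) recovers at $q=1$ via \cite{NCCR}. (It is not Kapranov--Vasserot's original argument, which proceeds via a Hecke-type kernel on the quotient stack rather than a tilting generator, but the paper itself signals the tilting route as the one being quantised.) At a high level the two arguments agree, and your closing remark correctly identifies the step that must be re-established by hand in the noncommutative setting.

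The $\Ext$-vanishing paragraph does not quite close as written, however. One-dimensionality of the fibres gives $R^{\geqslant 2}\phi_*=0$, but crepancy plus one-dimensional fibres do \emph{not} alone force $R^1\phi_*\shHom_{\widetilde X}(\sM_j,\sM_k)=0$; that vanishing is the geometric heart of the step. Your phrase ``this is exactly the specialness of the modules $\phi_*\sM_i$, i.e.\ $\Ext^{>0}_{\widetilde X}(\sM_j,\sM_k)=0$'' ends by restating the conclusion and labelling it the hypothesis. What is needed is the separate input that the tautological bundles satisfy $R^1\phi_*\shHom_{\widetilde X}(\sM_j,\sM_k)=0$ (Wunram specialness, or a Grauert--Riemenschneider / relative Serre duality argument using crepancy); only then does $\Ext^{>0}_{\widetilde X}(\sT,\sT)=0$ follow from the local-to-global spectral sequence over the affine base. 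You also hand generation off to a ``Beilinson-type resolution,'' whereas the paper deduces it from ampleness of $\sM^{(1)}\otimes\cdots\otimes\sM^{(n)}\cong\T(1)$ (Proposition~\ref{prop:SetOfGenerators} and Lemma~\ref{lem:Generators}); the ampleness route is the one that transports to the quantum category $\sX_q$, where there is no underlying space on which to run a Beilinson argument.
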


It is well-known that the resolution $\phi: \widetilde{X} \to X$ is {\em crepant}:  that is, $\phi^* \omega_X \cong \omega_{\widetilde{X}}$, or equivalently the relative dualising complex $\phi^{!} \sO_X$ is trivial.
Further, the algebra $A\hash G$ is Van den Bergh's original example \cite[Example~1.1]{VdBNCCRoriginal} of a {\em noncommutative crepant resolution}.  
We do not give Van den Bergh's full definition here, which relies on the assumption that the singularity being resolved is commutative. 
Instead we note that $A \hash G$ is 2-Calabi-Yau (see Remark~\ref{rem:CY2}) and $A^G$ is Gorenstein.  
The dualising complexes of $A \hash G$ and of $A^G$ are both trivial and so one expects the ``relative dualising complex'' to be trivial: in other words, it makes sense to regard $A \hash G$ as a crepant resolution of $A^G$.
See \cite{VdBNCCR} for more discussion.

We briefly mention one more aspect of the commutative McKay correspondence: it is possible to construct the minimal resolution $\widetilde{X}$ of $X $ from the preprojective algebra $\Lambda$, as in \cite{BKR}.
This is done by using quiver GIT to construct a graded ring $T$ so that $\widetilde{X}= \Proj T$.
See the beginning of Section~\ref{REPRING} for details.

We now discuss how to quantise the above.  
The appropriate analogue of a finite subgroup of $\SL_2(\kk)$ in the noncommutative context is a {\em  quantum binary polyhedral group:} a pair $(H,A)$ where  $A$ is a two-dimensional Artin-Schelter regular algebra and $H$ is a semisimple finite-dimensional Hopf algebra, acting on $A$ inner faithfully, homogeneously, and with trivial homological determinant.  
These pairs were classified in \cite{ckwz0}; the cyclic group $C_{n+1}$ acting on the quantum plane $A_q $ as above is an example.  
If $(H, A)$ is a quantum binary polyhedral group, then it makes sense to call the invariant ring $A^H$ a {\em quantum Kleinian singularity}.

It is known that many aspects of the classical McKay correspondence extend to quantum Kleinian singularities.
Let $(H, A)$ be a quantum binary polyhedral group.
By \cite[Theorem~0.3]{ckwz1} the noncommutative version of \emph{Auslander's theorem} holds: the natural map
\[ \gamma: A \hash  H \to \End_{A^H} (A), \quad \gamma(a \hash h)(b) = a (h \cdot b) \]
is an isomorphism of rings. It makes sense to view the nonsingular
ring  $A \hash  H$ (or, equivalently, $\End_{A^G} (A)$) as a resolution of singularities of $A^G$; this is called a {\em noncommutative quasiresolution} in \cite{nqr}. 

Furthermore,  one may  construct the McKay quiver $Q$ of the pair $(H, A)$ (see \cite{ckwz2}),
and $A \hash H$ is Morita equivalent to a quotient of $\Lambda$ of $\kk Q$ \cite{crawfordsuperpotentials}.
The relationships between the representation theory of $H$ and of the preprojective algebra in the commutative context carry over to our situation; see \cite{ckwz2}.

In the case of the pair $(C_{n+1}, A_q)$, the McKay quiver $Q$ is the quiver corresponding to an affine $\AA_n$ Dynkin diagram. 
Further, $A_q \hash {C_{n+1}} $ is itself a basic algebra and may be viewed as a quotient of $\kk Q$.
We set $\Lambda_q := A_q \hash  C_{n+1}$.
We refer to this as an {\em algebraic resolution} of $B_q = A_q^{C_{n+1}}$, as it clearly parallels the (algebraic) NCCR of a commutative Kleinian singularity.
We show in Section~\ref{OURSINGULARITY} that $\Lambda_q$ is also a crepant resolution of $B_q$, suitably defined.

In the commutative case the Kleinian singularity $X = \Spec(A^G)$ has two resolutions:  the algebraic resolution $\Lambda$ and the geometric resolution $\widetilde{X}$.
In this paper, for the case that $(H, A) = ( C_{n+1}, A_q)$, we use techniques of noncommutative algebraic geometry to construct a category $\sX_q$ which gives a second resolution of $B_q = A_q^{C_{n+1}}$.  
That is, there is a pair of adjoint functors
\begin{equation}
\begin{tikzcd}[>=stealth,ampersand replacement=\&,row sep=35pt,every label/.append style={font=\normalsize}] 
\sX_q \arrow[d, "\,\phi_*" right,shift left=2] \\
\rMod B_q \arrow[u, "\phi^*\," left,shift left=2]
\end{tikzcd} \label{adjpair}
\end{equation}
(These functors are the appropriate noncommutative analogue of a morphism of schemes; see \cite{smith} and Section~\ref{MAPS}.)
We show:
\begin{theorem}\label{ithm:main1}
 The category $\sX_q$ is a resolution of $\rmod B_q$.
 That is: 
\begin{enumerate}[{\normalfont (1)},topsep=0pt,itemsep=0pt,leftmargin=*]
\item \emph{(Theorem~\ref{newthm:smooth})} $\sX_q$ has finite homological dimension;
\item \emph{(Theorem~\ref{thm:SF}(1))} $\sX_q $ is proper over $\rmod B_q$  in the sense that $\phi_*$ takes noetherian objects to noetherian objects; and
\item \emph{(Proposition~\ref{prop:june})} away from the singularity of $\rmod B_q$ the functors \eqref{adjpair} become inverse equivalences.
\end{enumerate}
\end{theorem}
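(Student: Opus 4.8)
All three statements rest on having a concrete, computable model of $\sX_q$, so the first step is to record one. By construction $\sX_q$ is the noncommutative-projective quotient category attached to an $\NN$-graded noetherian $B_q$-algebra $T_q$ with $(T_q)_0 = B_q$, obtained from the quiver-GIT construction in the generic stability chamber (this is the quantum analogue of $\widetilde X = \Proj T$ in \cite{BKR}). The key structural input is that $\sX_q$ admits a \emph{finite} affine cover: there are homogeneous normal elements $s_0, \dots, s_n \in T_q$ with no common zero in $\sX_q$ such that, writing $U_i$ for the open subcategory where $s_i$ is invertible, $U_i$ is equivalent to $\rMod (T_q[s_i^{-1}])_0$ and the $U_i$ glue to $\sX_q$ just as for a commutative $\Proj$. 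With this in hand, $\phi^* = \pi(-\otimes_{B_q} T_q)$, and $\phi_*$ is the associated section functor (the degree-zero part of $\Gamma_*$); adjointness of \eqref{adjpair} is then the usual formal argument.

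\textbf{Finite homological dimension (Theorem~\ref{newthm:smooth}).} The plan is to reduce to the affine pieces. First compute each $(T_q[s_i^{-1}])_0$ explicitly — in type $\AA$ these should be localizations of a quantum affine plane, or matrix algebras over such, and in any case of finite global dimension — and bound $\gldim U_i$ uniformly. Then run a \v{C}ech / Mayer–Vietoris argument: $\operatorname{Ext}$-groups in $\sX_q$ are computed by a spectral sequence whose $E_1$-page involves $\operatorname{Ext}$'s over the $U_i$ and their intersections, so the homological dimension of $\sX_q$ is bounded by $\max_i \gldim U_i$ plus the length of the cover. The point requiring care is that this gluing is valid only once one controls the higher cohomology of $T_q$ itself, i.e.\ the higher right derived functors of the section functor $\Gamma$ (equivalently the $\mathrm{H}^{>0}$ of the relevant torsion functor); this is exactly where a $\chi$-type cohomological finiteness property of $T_q$ must be established, and this is the main obstacle of the whole theorem.

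\textbf{Properness (Theorem~\ref{thm:SF}(1)) and generic equivalence (Proposition~\ref{prop:june}).} For properness, take a noetherian $\sM \in \sX_q$, lift it to a finitely generated graded $T_q$-module $M$, and show $\phi_*\sM = (\Gamma_*\sM)_0$ is finitely generated over $B_q$. Since $T_q$ is noetherian and finitely generated as a $B_q$-module in each degree, and since (by the same $\chi_1$-condition as above) $\Gamma_*\sM$ agrees with $M$ in all high degrees, the truncation $(\Gamma_*\sM)_{\geq d}$ is a quotient of a finite sum of shifts of $T_q$; its degree-zero part is therefore a finitely generated $B_q$-module, and correcting for the finitely many lower degrees finishes the argument. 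For the generic equivalence, note that the singular locus of $\rmod B_q$ is concentrated at the augmentation ideal $\mf m \subset B_q$ (the image of the origin of $\kk_q[u,v]$), so "away from the singularity" means after inverting a suitable normal element killing $\mf m$, or after passing to the quotient by $\mf m$-torsion. The plan is to show the quiver-GIT data stabilises on this complement: $T_q$ becomes a trivial extension there — concretely $(T_q)[\mathrm{loc}] \cong B_q[\mathrm{loc}][t]$ for a central degree-one variable $t$ — so that $\phi^*$ and $\phi_*$ restrict to mutually inverse equivalences between the open complement of the exceptional locus in $\sX_q$ (the chain of $\AA_n$ lines identified later in the paper) and the corresponding open subcategory of $\rMod B_q$.

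\textbf{Where the difficulty lies.} Parts (2) and (3) are essentially bookkeeping once the affine model and the explicit form of $T_q$ are available; the substantive obstacle is part (1), where finiteness of $\gldim \sX_q$ forces one to prove both the local global-dimension computations and, more seriously, the cohomological finiteness ($\chi$-condition) for $T_q$ that makes the \v{C}ech-to-$\operatorname{Ext}$ gluing legitimate in the noncommutative graded setting.
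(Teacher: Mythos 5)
Your proposal follows the same broad strategy as the paper: construct the graded ring $T_q$ by quantising quiver GIT, pass to the noncommutative-projective quotient category, establish an affine cover via the degree-zero localisations $T_q[t_i^{-1}]_0$, verify finite global dimension locally, and glue. Parts (2) and (3) are indeed handled much as you sketch: Serre finiteness via the Artin--Zhang $\chi$-condition, and the generic equivalence by localising at $w\in\{x,y,z\}$. Three points where your outline departs from what is actually needed:

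\emph{First}, you attribute the main technical difficulty of part (1) to the $\chi$-condition, but in the paper the $\chi$-condition is the engine behind part (2) (properness), not part (1). What makes the gluing argument for finite homological dimension work is a separate and non-trivial input: the paper first establishes an equivalence between $\rqgr T_q$ and the category of coherent sheaves over a \emph{finite} noncommutative ringed space $\X$ (a quantum toric surface), and then proves that $\shHom_\X(\ms F,-)$ carries injective sheaves to \emph{flasque} sheaves. That flasqueness is exactly what licenses the local-to-global spectral sequence $H^p(\X,\shExt^r_\X(\ms F,\ms G))\Rightarrow\Ext^{p+r}_\X(\ms F,\ms G)$; combined with Grothendieck vanishing on the two-dimensional space $\X$ and the computation that each chart is a quantum plane of global dimension $2$, one gets $\Ext^s=0$ for $s>4$. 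Your plan identifies the need for local computations and a gluing spectral sequence, but not the acyclicity statement that makes the spectral sequence legitimate, and if you tried to substitute the $\chi$-condition here you would find it is not the right hypothesis.

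\emph{Second}, your proposed bound ``$\max_i\gldim U_i$ plus the length of the cover'' would grow with $n$, whereas the paper's argument via the \emph{topological dimension} of $\X$ (which is $2$, independent of $n$) yields the uniform bound $4$. A naive \v{C}ech-to-$\Ext$ spectral sequence on the $(n+1)$-element cover gives a finiteness statement, but not the sharp one in Theorem~\ref{newthm:smooth}. \emph{Third}, in part (3) you assert $T_q[\mathrm{loc}]\cong B_q[\mathrm{loc}][t]$ with $t$ \emph{central}; in fact $T_q[x^{-1}]=B_q[x^{-1}][t_0;\theta]$ is a skew polynomial ring (the variable $t_0$ is normal but not central), and the equivalence of categories is established with the skew structure. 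This does not derail the argument but the assertion of centrality is incorrect.
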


We also show that $\sX_q$ contains exceptional objects $L_1, \dots, L_n$ whose noncommutative intersection theory gives an $\AA_n$ Dynkin diagram, extending the classical McKay correspondence  Theorem~\ref{ithm:mckay}.  See Theorem~\ref{thm:inttheory}.

The construction of $\sX_q$ comes from a graded ring $T_q$ which generalises (in fact deforms) the construction of $\widetilde{X}$ via quiver GIT.  
In particular, we also deform the representation variety of $\Lambda$ to construct a ``representation ring'' of the algebraic resolution $\Lambda_q$, which we show (Theorem~\ref{thm:universal}) is universal for a certain class of representations of $\Lambda_q$.
However, to prove most of Theorem~\ref{ithm:main1} we must approach $\sX_q$ in another way: we show that $\sX_q$ may also be viewed as a noncommutative toric variety (Theorem~\ref{newthm:catequiv}).  
This allows us to access $\sX_q$ via charts and local techniques and in particular is needed to prove Theorem~\ref{ithm:main1}(1): that $\sX_q$ is nonsingular.

Our other main theorem is an extension of Theorem~\ref{ithm:derivedmckay} to show that the geometric and algebraic resolutions are derived equivalent.

\begin{theorem} \label{ithm:main2}
{\rm (Theorem~\ref{thm:tilting})}
The derived categories of $\sX_q$ and $\rmod \Lambda_q$ are equivalent.
\end{theorem}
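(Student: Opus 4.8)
The plan is to prove Theorem~\ref{thm:tilting} by exhibiting a \emph{tilting object} $\sT$ on $\sX_q$ whose endomorphism algebra is $\Lambda_q$, following the strategy of Kapranov--Vasserot and Bridgeland--King--Reid in the commutative derived McKay correspondence. The key input is that the deformed representation ring $T_q$, together with its universal property (Theorem~\ref{thm:universal}), should produce a \emph{tautological family}: an object $\sR$ of $\sX_q$ equipped with a compatible left $\Lambda_q$-action whose ``fibres'' are the relevant representations of $\Lambda_q$. Decomposing $\sR = \bigoplus_{i=0}^n \sR_i$ along the primitive idempotents of $\Lambda_q$ (equivalently, the vertices of the McKay quiver) yields objects $\sR_0, \dots, \sR_n \in \sX_q$ -- the noncommutative analogues of the tautological line bundles on $\widetilde X$ -- and I would take $\sT := \sR = \bigoplus_i \sR_i$. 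By the universality of $T_q$, morphisms between the $\sR_i$ in $\sX_q$ are precisely the operators coming from quiver paths, so $\End_{\sX_q}(\sT) \cong \Lambda_q$ (or at worst a basic algebra Morita equivalent to it) should follow essentially by construction; this is the ``algebraic'' half of the argument.

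The substance is to verify that $\sT$ is a tilting object, i.e.\ that (i) $\Ext^k_{\sX_q}(\sT,\sT) = 0$ for all $k > 0$ and (ii) $\sT$ classically generates the bounded derived category of $\sX_q$. For this I would use the description of $\sX_q$ as a noncommutative toric variety with an explicit affine cover (Theorem~\ref{newthm:catequiv}): on each chart the $\sR_i$ restrict to projective modules over a (noncommutative, but explicit) coordinate algebra, so higher self-extensions vanish locally, and the global $\Ext$-groups are then computed by a \v{C}ech / Mayer--Vietoris spectral sequence with $E_1$-terms given by sections of $\shHom_{\sX_q}(\sR_i,\sR_j)$ over the charts and their overlaps. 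Vanishing of the higher terms reduces to a finite explicit computation with the twists of the structure object on the overlaps -- the $q$-deformed analogue of the cohomology-of-line-bundles computation on a toric surface in the classical case, now keeping track of the extra grading coming from $q$. For generation (ii) I would first check the local statement that each affine chart is classically generated by the restrictions of the $\sR_i$, and then use the cover together with a standard d\'evissage on the number of charts.

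Granting (i), (ii), and the identification of $\End_{\sX_q}(\sT)$ with $\Lambda_q$, the equivalence $D^b(\sX_q) \simeq D^b(\rmod \Lambda_q)$ follows from a tilting theorem applied in the form $\RHom_{\sX_q}(\sT,-)\colon D(\sX_q) \xrightarrow{\ \sim\ } D(\rmod \End_{\sX_q}(\sT))$. Because $\sX_q$ is a glued/noncommutative category rather than an ordinary scheme, one needs a version of this valid in that generality -- for instance via Keller's dg Morita theorem, after noting that the graded ring $\bigoplus_k \Ext^k_{\sX_q}(\sT,\sT)$ is concentrated in degree $0$ by (i) (hence the dg endomorphism algebra is formal) and that $\sT$ is a compact generator by (ii) together with the finite homological dimension of $\sX_q$ (Theorem~\ref{newthm:smooth}). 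The finiteness results from Theorem~\ref{ithm:main1} -- finite homological dimension and properness of $\sX_q$ over $\rmod B_q$ -- also ensure the relevant $\Ext$-groups are finite-dimensional and that one remains within bounded derived categories.

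I expect the main obstacle to be the $\Ext$-vanishing (i). In the commutative toric picture this is a clean cohomology computation on $\widetilde X$ (or follows from Kawamata--Viehweg-type vanishing on a crepant resolution), but here the overlaps of the affine charts of $\sX_q$ carry genuinely noncommutative coordinate algebras, so one must redo the computation for the $q$-twisted graded pieces and confirm that the potential degree $-1$ contributions -- exactly those that would create an $\Ext^1$ -- genuinely vanish. A secondary difficulty is bookkeeping: one must align the tautological decomposition $\sR = \bigoplus \sR_i$ with the chart description of $\sX_q$ so that local projectivity of the $\sR_i$ and the identification $\End_{\sX_q}(\sT)\cong\Lambda_q$ can be extracted at the same time; once the charts and the universal family are compatibly set up, the remaining work is the spectral-sequence and d\'evissage machinery indicated above.
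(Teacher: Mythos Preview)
Your overall strategy---exhibit a tilting object whose endomorphism algebra is $\Lambda_q$ and apply Keller's criterion---is exactly what the paper does, and the tautological summands $\sR_i$ you describe are, up to normalisation, the objects $\sM^{(i)}$ of Section~\ref{sec:OurModules}. There is, however, one genuine gap in your route. You propose to extract the $\sR_i$ and the identification $\End_{\sX_q}(\sT)\cong\Lambda_q$ from the universal property of the representation ring (Theorem~\ref{thm:universal}), but that universal property only holds over $\KK=\kk[q^{\pm1}]$ or for $q$ not a root of unity (see Example~\ref{eg:notuniversal}), so your argument would not cover all $q$. The paper sidesteps this by defining the summands directly as the right ideals $\sM^{(i)}=xT+z^iT$, and the endomorphism computation is not ``by construction'' at all: Proposition~\ref{prop:EndomorphismRing} instead shows $\sM^{(i)}\cong\phi^*M^{(i)}$ modulo torsion and uses adjunction to reduce $\End_{\rQgr T}(\sM)$ to $\End_B(M)$, which is $\Lambda$ by Auslander's theorem. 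Universality would give a map $\Lambda_q\to\End(\sT)$, but bijectivity still needs an argument like this.

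The verification of the tilting conditions also differs tactically from what you sketch. For Ext-vanishing the paper does not run a \v{C}ech computation on overlaps; instead it shows the $\sM^{(i)}$ are \emph{invertible} in $\rqgr T$ with explicit inverses $\sN^{(i)}$ (Corollary~\ref{cor:invertible}), so $\Ext^s(\sM^{(i)},\sM^{(j)})\cong\H^s_{\rQgr T}(\sM^{(j)}\otimes_T\sN^{(i)})$, and then the short exact sequences \eqref{BijSequence2},~\eqref{BiiSequence} reduce everything to the vanishing of $\H^\bullet_{\rQgr T}(T)$ and of $\H^\bullet_{\rQgr T}(\sM^{(i)})$, $\H^\bullet_{\rQgr T}(\sN^{(i)})$. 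The first of these is obtained by a $q$-independence argument reducing to the rationality of the commutative Kleinian singularity (Proposition~\ref{prop:localcohomology}); the others use the comparison with the MCM $B$-modules $M^{(i)}$. For generation the key step is the single identity $\sM^{(1)}\otimes_T\cdots\otimes_T\sM^{(n)}\cong T(1)$ in $\rqgr T$ (Proposition~\ref{prop:ProductOfMi}), which places every $T(-s)$ in $\langle\sM\rangle$ and hence gives generation by Lemma~\ref{lem:Generators}; your d\'evissage on charts would presumably work but is less direct.
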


When  $q=1$, the proof of Theorem~\ref{ithm:main2} recovers the proof of Theorem~\ref{ithm:derivedmckay} given in \cite{NCCR}.

Since $\Lambda_q$ is a crepant resolution of $B_q$ in an appropriate sense, as discussed above, and $\sX_q$ and $\Lambda_q$ are derived equivalent, one expects that $\sX_q$ is also a crepant resolution of $B_q$.  We prove this at the end of Section~\ref{TILTING}.

In \cite{kawamata2024}, Kawamata considers a closely related family of noncommutative singularities, namely the deformations of type $\mathbb{A}$ Kleinian singularities defined in \cite{cbh}, and constructs noncommutative schemes which serve as resolutions of these singularities.  See also \cite{kawamata2025}. We note that the singularities Kawamata considers, as well as the techniques employed, are distinct from those in this paper.

By adapting the methods in this paper, it should be possible to construct an analogue of the category $\sX_q$ for other quantum Kleinian singularities $A^H$; in work in progress, we consider the case of an $\mathbb{L}_1$ singularity. \\

\noindent{\bf Organisation of the paper.} In Section \ref{PRELIM}, we give some definitions and basic results, and recall the notion of a noncommutative crepant resolution and a generalisation to noncommutative singularities. Section \ref{OURSINGULARITY} introduces the singularities $B_q$ of interest, constructs algebraic and categorical resolutions of these singularities, and discusses their crepancy.

The remainder of the paper is dedicated to constructing and studying a geometric resolution of $B_q$.
Motivated by quiver GIT, in Section \ref{REPRING} we define the {\em representation ring} of $\Lambda_q$ and show that it is universal in an appropriate sense. 
We then use this in Section \ref{GEOMRESOLUTION} to construct the ring $T_q$ which gives rise to the geometric resolution, and outline some of its properties. 
In Section \ref{QGR T}, we define the category $\sX_q$ which is our geometric resolution of $B_q$ and calculate some related cohomology groups.
To establish further properties of $\sX_q$, in Section \ref{LOCAL} we show that $\sX_q$ is equivalent to a noncommutative toric variety, i.e.~the category of coherent sheaves over a noncommutative ringed space $\X$ with a suitable action of a quantum torus.
This allows us to, in Section \ref{DERIVEDEQUIV}, establish a derived equivalence between our algebraic and geometric resolutions.
Finally, in Section \ref{INTTHEORY} we compute the intersection arrangement of the lines in the ``exceptional locus'' in $\sX_q$.
\\

\noindent{\bf Acknowledgements.} 
The second named author is partially supported by the EPSRC grant EP/T018844/1, 
and thanks the EPSRC for their support.

We thank the Simons Laufer Mathematical Sciences Institute for hosting us both in the spring of 2024 as part of  the program ``Noncommutative Algebraic Geometry''.  We thank Jenny August, Arend Bayer, Ellen Kirkman, Jon Pridham, Michael Wemyss, and James Zhang for helpful discussions.
We also thank Colin Ingalls for sharing his unpublished manuscript \cite{ingalls} on noncommutative toric varieties with us; it gave us helpful inspiration, although our construction of $\sX_q$ as a noncommutative toric variety uses a different method.

\section{Preliminaries}\label{PRELIM}

\numberwithin{equation}{subsection}

\subsection{Conventions}\label{CONVENTIONS}
Throughout $\Bbbk$ will denote an algebraically closed field of characteristic 0. Let $R$ be a $\ZZ$-graded ring. We write $\rGr R$ (respectively, $R\lGr$) for the category of $\ZZ$-graded right (respectively, left) $R$-modules with degree-preserving morphisms, and $\rgr R$ (respectively, $R\lgr$) for the full subcategory of noetherian objects. More generally, if $\Xyz$ is an abelian category, then $\xyz$ is the full subcategory of noetherian objects. Given $M \in \rGr R$, we define $M(i)$ to be the graded module which is isomorphic to $M$ as an ungraded module, but which satisfies $M(i)_n = M_{i+n}$. 
The categories $\rMod R$ and $R \lMod$ are the ungraded right and left module categories of $R$.\\
\indent If $M,N \in \rGr R$, then we write $\Hom_{\rGr R}(M,N)$ for the space of graded (i.e.~degree-preserving) morphisms. 
By $\Hom_R(M,N)$ we mean $\Hom_{\rMod R}(M,N)$.
We have
\[\uHom_R(M, N) = \bigoplus_{s\in\ZZ} \Hom_{\rGr R}(M, N(s)).\]
If $M_R$ is finitely generated, then $\Hom_R(M, N) \cong \uHom_R(M, N)$.

We write $\otimes$ for $\otimes_\kk$.
The set of non-negative integers is denoted by $\NN$.

\subsection{Definitions and basic results}
We begin by recalling some definitions and basic results that we will use without mention throughout the paper.

\begin{defn}\label{def2.2.1}
Suppose that $R$ is a $\Bbbk$-algebra. We say that $R$ is \emph{connected graded} if it is $\NN$-graded with $R_0 = \Bbbk$. We say that $M \in \rGr R$ is \emph{locally finite} if $\dim_\Bbbk M_i < \infty$ for all $i \in \mathbb{Z}$. The graded module $M$ is called \emph{left bounded} if $M_i = 0 $ for $i \ll 0$, \emph{right bounded} if $M_i = 0$ for $i \gg 0$, and \emph{bounded} if it is left and right bounded. If $M \in \rGr R$ is locally finite,  the \emph{Hilbert series} of $M$ is the formal Laurent series
\begin{align*}
\hilb M = \sum_{i \in \ZZ} (\dim_\Bbbk M_i) t^i.
\end{align*}
\end{defn}

The following notion provides a suitable concept of  dimension for a noncommutative $\Bbbk$-algebra:

\begin{defn}
Suppose that $R$ is a $\Bbbk$-algebra and $M$ is a right $R$-module. 
The \emph{Gelfand-Kirillov (GK-) dimension} of $M$ is
\begin{align*}
\GKdim M \coloneqq \sup_{M_0,V} \limsup_{n \to \infty} \log_n( \dim_\Bbbk M_0 V^n)
\end{align*}
where the supremum is taken over all finite-dimensional subspaces $M_0$ of $M$ and  $V$ of $A$. 

We will freely use standard results about GK-dimension, proofs of which can be found in \cite{lenagan}, for example.
If $R$ is a connected graded $\Bbbk$-algebra which is locally finite and $M \in \rgr A$ (which forces $M$ to be left bounded) then, by \cite[Proposition 6.6]{lenagan}, we have
\begin{align*}
\GKdim M = \limsup_{n \to \infty} \log_n(\dim_\Bbbk M_n) + 1.
\end{align*}
\end{defn}

\begin{defn}
Let $q \in \Bbbk^\times$. The \emph{quantum plane} is the ring
\begin{align*}
A_q = \Bbbk_q[u,v] \coloneqq \frac{\Bbbk \langle u,v \rangle}{\langle vu-quv \rangle}.
\end{align*}
This ring is a connected graded noetherian domain. 
We will sometimes suppress the $q$ and let $A = A_q$.
Throughout this paper, we will reserve the letter $A$ for the quantum plane as presented above.

 More generally, if $\mathbf{q} = (q_{ij}) \in M_n(\Bbbk)$ is a multiplicatively antisymmetric
 matrix, then to this data we can associate a \emph{quantum polynomial ring}, or \emph{quantum $n$-space}, as follows:
\begin{align*}
\Bbbk_\mathbf{q}[x_1, \dots, x_n] \coloneqq \frac{\Bbbk \langle x_1, \dots, x_n \rangle}{\langle x_j x_i - q_{ij} x_i x_j \mid 1 \leqslant i,j \leqslant n \rangle}.
\end{align*}
As before, this ring is always a connected graded noetherian domain.
\end{defn}

We will frequently work with rings which are (factor rings of) quantum polynomial rings. When performing computations in such rings, the following notation will be convenient: if $q \in \kk^\times$, we will use $q^\bullet$ to mean an unspecified power of $q$. For example, in 
$\kk_q[u,v]$, the monomials $u^i$ and $v^j$ $q^\bullet$-commute. Moreover, we will sometimes write $\Bbbk_\qb[x_1, \dots, x_n]$ for a quantum polynomial ring where the generators $x_i, x_j$ commute up to some power of $q$ (possibly depending on $i$ and $j$) which can be calculated in principle, but whose specific value does not affect the result.

We will frequently need to invert elements of noetherian domains; the next definition mainly establishes notation.

\begin{defn}
Suppose that $R$ is a noetherian domain. Then the set $X = R \ssm \{0\}$ forms a left and right Ore set in $R$, so we can form the \emph{quotient ring} $Q(R) \coloneqq RX^{-1} = X^{-1} R$. If, in addition, $R$ is graded by some semigroup $\Sigma$, then the set $Y$ of nonzero $\Sigma$-homogeneous elements of $R$ forms a left and right Ore set in $R$ \cite[Theorem 5(a)]{goodearlstafford}, so we can form the \emph{graded quotient ring} $\Qgr[\Sigma](R) \coloneqq RY^{-1} = Y^{-1} R$. When the grading is clear from context, we will simply write $\Qgr(R)$.
\end{defn}

We remark that the hypotheses imposed on $R$ in the above definition are stronger than required, but all rings to which we need to apply this construction will be noetherian domains. \\
\indent We will be interested in rings which have various good homological properties. We recall the relevant definitions below:

\begin{defn}
Let $R$ be a ring and $M$ a right $R$-module. The \emph{grade} of $M$ is defined to be
\begin{align*}
j(M) \coloneqq \inf \{ i \mid \Ext_R^i(M,R) \neq 0\} \in \NN \cup \{\infty\},
\end{align*}
with the definition for left modules being similar. We say that $M$ satisfies the \emph{Auslander condition} if for all $i \geqslant 0$ and all left submodules $N \subseteq \Ext^i_R(M,R)$ we have $j(N) \geqslant i$.

We say that $R$ is \emph{Gorenstein} if $R$ has finite injective dimension when viewed as a right or left $R$-module, and that that $R$ is \emph{Auslander--Gorenstein} if it is Gorenstein and every right or left $R$-module satisfies the Auslander condition. 
If $R$ is Auslander-Gorenstein and has finite global dimension, it is {\em Auslander regular}.
\end{defn}

In many cases, the properties of rings of infinite global dimension are encoded by properties of their so-called maximal Cohen--Macaulay modules, whose definition we now recall:

\begin{defn}
Let $R$ be a Gorenstein ring. We say that a finitely generated $R$-module $M$ is \emph{maximal Cohen--Macaulay (MCM)} if it satisfies $\Ext^i_R(M,R) = 0$ for all $i \geqslant 1$.
\end{defn}

In \cite{ckwz2}, the authors give a more general definition which specialises to the above definition when $R$ is Gorenstein.

We will assume that the reader is familiar with the language of sheaves and derived categories; \cite{Ha} and \cite{Weibel} are standard references for the definitions and results we will use. We will, however, set some notation. 
If $\sC$ is an abelian category, we will write $D(\sC)$ and $D^b(\sC)$ for its derived and bounded derived categories, respectively. 
If  $R$ is a noetherian ring, we write $D(R)$ for $D(\rMod R)$ and $\Perf(R)$ for the full subcategory  of perfect complexes. We  write $\mathscr{D}(R)$ for $D^b(\rmod R)$, following \cite{VdBNCCR}.

\subsection{Noncommutative crepant resolutions}\label{NCCR}
Suppose that $X$ is a normal variety with resolution of singularities $\pi : \widetilde{X} \to X$. This resolution is said to be \emph{crepant} if the canonical sheaves $\omega_X$ and $\omega_{\widetilde{X}}$ satisfy $\pi^* \omega_X = \omega_{\widetilde{X}}$. In some sense, crepant resolutions are the ``best'' nonsingular approximation of a singular variety. In particular, if $X$ is Gorenstein and we want $\widetilde{X}$ to also be Calabi--Yau, then the resolution must be crepant. 

In \cite[Theorem 4.4.2]{vdb2004}, Van den Bergh proved the following result, which we paraphrase:

\begin{theorem}\label{thm:classical}
Let $R$ be a (commutative)  Gorenstein $\Bbbk$-algebra which is a normal domain, 
and let $\pi : \widetilde{X} \to X = \Spec R$ be a crepant resolution of singularities. Assume that the fibres of $\pi$ have dimension at most $1$. Then there exists an MCM $R$-module $M$ such that $\Lambda \coloneqq \End_R(M)$ is MCM as an $R$-module and has finite global dimension. Moreover, $\widetilde{X}$ and $\Lambda$ are derived equivalent.
\end{theorem}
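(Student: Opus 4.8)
The plan is to follow Van den Bergh's original argument in \cite{vdb2004}, which is a dimension-$2$/relative-dimension-$1$ instance of his general ``noncommutative crepant resolution'' philosophy, combined with the tilting-bundle technique. First I would produce the module $M$: on the resolution $\widetilde{X}$ one builds a tilting bundle $\sT$ whose summands generate $D^b(\coh\widetilde{X})$, and then sets $M = \pi_*\sT$ (or, dually, takes $M$ to be the reflexive $R$-module corresponding to $\sT$ via the equivalence between reflexive sheaves on $X$ and reflexive sheaves on $\widetilde{X}$ outside a codimension-$2$ locus). The construction of such a tilting bundle when the fibres of $\pi$ have dimension at most $1$ goes back to Van den Bergh: one takes $\sO_{\widetilde{X}}$ together with line bundles dual to the components of the exceptional fibres, defined via the vanishing/relative-ampleness properties coming from crepancy, and checks $\Ext^{>0}$-vanishing using $R^i\pi_*$ computations on the fibres (Grauert–Riemenschneider, or the fact that $R^i\pi_*\sO_{\widetilde X}=0$ for $i>0$ since the resolution is rational, together with the relative-dimension-$1$ hypothesis which kills $R^{\geq 2}$). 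This yields a derived equivalence $D^b(\coh\widetilde X)\simeq D^b(\rmod\Lambda)$ for $\Lambda=\End_{\widetilde X}(\sT)$, and by the projection formula and affineness of $X$ one identifies $\Lambda\cong\End_R(M)$.

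Next I would verify the ring-theoretic properties of $\Lambda=\End_R(M)$. That $\Lambda$ is MCM as an $R$-module: since $M$ is reflexive (indeed MCM), $\End_R(M)=\Hom_R(M,M)$ is a second syzygy, hence reflexive, and a reflexive module over a $2$-dimensional normal Gorenstein (equivalently, Cohen–Macaulay) local ring is MCM — this is where the surface hypothesis enters crucially. That $\Lambda$ has finite global dimension: this follows from the derived equivalence together with smoothness of $\widetilde X$, i.e.\ $D^b(\rmod\Lambda)\simeq D^b(\coh\widetilde X)=\Perf(\widetilde X)$, so every $\Lambda$-module has a finite projective resolution; alternatively one checks $\gldim\Lambda<\infty$ locally on $X$ using that $\pi$ is an isomorphism away from the singular locus and a local-freeness argument near the exceptional fibres (Van den Bergh's ``homologically homogeneous'' criterion). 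The derived equivalence statement itself is then just the composite already produced.

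The main obstacle I expect is the existence and $\Ext$-vanishing of the tilting bundle on $\widetilde X$ — equivalently, showing $\Lambda$ has the right homological dimension. Everything else (reflexivity, MCM-ness, identification $\Lambda\cong\End_R(\pi_*\sT)$) is relatively formal once the bundle is in hand; the construction of $\sT$, by contrast, genuinely uses the crepancy hypothesis (to control the canonical bundle and hence the relevant cohomology of line bundles on the exceptional $\PP^1$'s) and the relative-dimension-$\leq 1$ hypothesis (to ensure $R^{\geq 2}\pi_*$ vanishes so that $\RHom$ computations collapse to a single degree). In the present paper one does not actually need to reprove this — the theorem is quoted from \cite{vdb2004} as motivation — so I would simply cite \cite[Theorem 4.4.2]{vdb2004} (equivalently \cite[Theorem A]{vdb2004}) and sketch only the module $M=\pi_*\sT$, the reflexivity-implies-MCM step in dimension $2$, and the derived equivalence via the tilting bundle, leaving the cohomological heart to the reference.
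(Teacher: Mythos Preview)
The paper does not give its own proof of this theorem: it is stated with the attribution ``In \cite[Theorem 4.4.2]{vdb2004}, Van den Bergh proved the following result, which we paraphrase'' and no argument follows. You correctly identify this at the end of your proposal, so your instinct to simply cite the reference is exactly what the paper does.

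That said, one point in your sketch deserves correction. You write that ``a reflexive module over a $2$-dimensional normal Gorenstein local ring is MCM --- this is where the surface hypothesis enters crucially,'' but the theorem as stated has no surface hypothesis: $R$ is an arbitrary Gorenstein normal domain, and only the \emph{fibres} of $\pi$ are assumed to have dimension at most $1$. Van den Bergh's main application is to $3$-folds (flops), where reflexive certainly does not imply MCM. In the actual argument, MCM-ness of $\Lambda$ comes from identifying $\Lambda \cong \pi_* \shEnd(\sT)$ with $\shEnd(\sT)$ locally free on the smooth $\widetilde{X}$, and then using the vanishing of $R^{>0}\pi_*$ on such sheaves (from the fibre-dimension hypothesis together with Grauert--Riemenschneider-type vanishing and crepancy) to deduce Cohen--Macaulayness via local duality. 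Your outline of the tilting-bundle construction and the derived equivalence is otherwise in line with Van den Bergh's proof.
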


This motivates the following definition, which is equivalent to the original one due to Van den Bergh:

\begin{defn}\label{def:NCCR}
Let $R$ be a commutative CM  normal  domain. A \emph{noncommutative crepant resolution (NCCR)} of $R$ (or of $\Spec R$) is a ring of the form $\Lambda \coloneqq \End_R(M)$ for some reflexive $R$-module $M$, such that $\Lambda$ has finite global dimension and is an MCM $R$-module.
\end{defn}

For example, by \cite[Example~1.1]{VdBNCCRoriginal}, if $G$ is a finite subgroup of $\SL_2(\kk)$ then $\kk[x,y]\hash G = \End_{\kk[x,y]^G}(\kk[x,y])$ is an NCCR of the Kleinian singularity $\kk[x,y]^G$.
If $R$ is a CM local normal domain of dimension $\leqslant 3$, then it is known that all crepant resolutions and NCCRs of $R$ are derived equivalent \cite[Theorem 1.5]{iyamawemyss}.

If we replace $R$ by a noncommutative ring, then there are several ways to generalise the definition of an NCCR.  
One such way is the \emph{noncommutative quasi-crepant resolution} defined in \cite{nqr}.  
We begin by recalling some more definitions.

\begin{defn}
We say that a $\Bbbk$-algebra $R$ is \emph{GK-Cohen--Macaulay} if
\begin{align*}
\GKdim(M) + j(M) = \GKdim(R) < \infty,
\end{align*}
for all finitely generated nonzero right $R$-modules $M$.
\end{defn}

If $R$ is commutative and noetherian, then \cite[Theorem 4.8]{brown} shows that $R$ is GK-Cohen--Macaulay if and only if it is Cohen--Macaulay and equicodimensional (i.e.~all of its maximal ideals have the same height).

\begin{defn} \label{def:nisomorphic}
Suppose that $R$ and $S$ are locally finite noetherian $\NN$-graded $\Bbbk$-algebras with finite GK-dimension. We say that two graded $(R,S)$-bimodules $M$ and $N$ are \emph{$n$-isomorphic}, denoted $M \cong_n N$, if there exist a graded $(R,S)$-bimodule $P$ and bimodule morphisms
\begin{align*}
f : M \to P \quad \text{and} \quad g : N \to P
\end{align*}
such that the kernels and cokernels of $f$ and $g$ have GK-dimension at most $n$.
\end{defn}

We are now in a position to define the notion of a \emph{noncommutative quasi-crepant resolution}, as given in \cite{nqr}.

\begin{defn}
Suppose that $R$ is a locally finite noetherian $\NN$-graded $\Bbbk$-algebra and that $\GKdim R = d < \infty$. A \emph{noncommutative quasi-resolution (NQR)} of $R$ is a triple $(S,M,N)$ where $S \in \mathcal{A}$ is Auslander regular and GK-Cohen--Macaulay with $\GKdim S = d$, and ${}_S M_R$ and ${}_R N_S$ are bimodules which satisfy
\begin{align*}
M \otimes_R N \cong_0 S \quad \text{and} \quad N \otimes_S M \cong_0 R.
\end{align*}
If, in addition, $R$ is Auslander--Gorenstein and ${}_S M_R$ and ${}_R N_S$ are reflexive on both sides, then we call this triple a \emph{noncommutative quasi-crepant resolution (NQCR)} of $R$.
\end{defn}

By \cite[Example 8.5]{nqr}, if $S$ is a (commutative) Type $\mathbb{A}_n$ Kleinian singularity, then an NCCR of $S$ is also an NQR.

There is also a notion of a {\em categorical resolution}, due to Kuznetsov \cite{kuznetsov}, which may be weakly or strongly crepant.  This is discussed in Section~\ref{CATRES}.

\subsection{Maps of noncommutative spaces}\label{MAPS}
It will be useful to be able to use the language of maps of noncommutative spaces to discuss functors between categories.
Following \cite{smith}, let $\mathbf{A}, \mathbf{B}$ be (possibly triangulated) categories.
A {\em map} $\Phi: \mathbf{A} \to \mathbf{B}$ is a pair of (triangulated) functors
\begin{equation*}
\begin{tikzcd}[>=stealth,ampersand replacement=\&,row sep=35pt,every label/.append style={font=\normalsize}] 
\mathbf{A} \arrow[d, "\,\,\Phi_*" right,shift left=2] \\
\mathbf{B} \arrow[u, "\Phi^*\,\," left,shift left=2]
\end{tikzcd}
\end{equation*}
so that $(\Phi^*, \Phi_*)$ is an adjoint pair.
 This is by analogy with the fact that, if $\pi: Y\to X$ is a morphism of  schemes, then there is an adjoint pair
\begin{equation*}
\begin{tikzcd}[>=stealth,ampersand replacement=\&,row sep=35pt,every label/.append style={font=\normalsize}] 
\Qcoh Y \arrow[d, "\,\pi_*" right,shift left=2] \\
\Qcoh X \arrow[u, "\pi^*\," left,shift left=2]
\end{tikzcd}
\end{equation*}

For example, if $R, S$ are rings, then an $(R, S)$-bimodule $M$ induces a map $\Phi:  \rMod S \to \rMod R$ given by
\begin{equation*}
\begin{tikzcd}[>=stealth,ampersand replacement=\&,row sep=35pt,every label/.append style={font=\normalsize}] 
\rMod S \arrow[d, "\,\,\Phi_*= \Hom_S(M{,}-)" right,shift left=2] \\
\rMod R \arrow[u, "\Phi^* = - \otimes_R M\,\," left,shift left=2]
\end{tikzcd}
\end{equation*}

\section{A noncommutative singularity} \label{OURSINGULARITY}

\subsection{The singularity of interest} \label{sec:OurSingularity}

We now introduce the main protagonist of this paper.

\begin{notation} \label{notation:setup}
Fix $n \geqslant 2$ and $q \in \Bbbk^\times$. 
The cyclic group $G$ of order $n+1$ generated by
\begin{align*}
g \coloneqq 
\begin{pmatrix}
\zeta & 0 \\ 0 & \zeta^{-1}
\end{pmatrix},
\end{align*}
where $\zeta \in \Bbbk$ is a primitive $(n+1)$th root of unity, acts on $A_q = \kk_q[u,v]$ as a group of graded automorphisms. Explicitly, we have
\begin{align*}
g \cdot u = \zeta u, \quad g \cdot v = \zeta^{-1} v.
\end{align*}

To this data we can associate the \emph{skew group ring} $A_q \hash G$. As an abelian group this ring is equal to $A_q \otimes \Bbbk G$, and we write $a \hash g$ for the simple tensor $a \otimes g$. On simple tensors, the multiplication in $A \hash G$ is given by
\begin{align*}
(a \hash g)(b \hash h) \coloneqq a (g \cdot b) \hash gh,
\end{align*}
which is then extended linearly to all of $A \hash G$.

We write $B_q \coloneqq A_q^G$ for the invariant ring of the action of $G$ on $A_q$. By \cite[Table 3]{ckwz1}, $B_q$ is generated by the elements
\begin{align*}
x \coloneqq u^{n+1}, \quad y \coloneqq v^{n+1}, \quad z \coloneqq uv,
\end{align*}
and can be presented as a quotient of a quantum polynomial ring by the ideal generated by a single homogeneous, normal, nonzerodivisor:
\begin{align}
B_q \cong \frac{\Bbbk_\qb[x,y,z]}{\big\langle xy - q^{-\frac{1}{2}n(n+1)} z^{n+1} \big\rangle},
\quad \text{where} \quad
\Bbbk_\qb[x,y,z] \coloneqq \frac{\Bbbk\langle x,y,z \rangle}{\left\langle\!
\begin{array}{c}
yx - q^{(n+1)^2} xy \\
zx - q^{n+1} xz \\
zy - q^{-(n+1)} yz
\end{array}\!
\right\rangle
}. \label{eqn:BPresentation}
\end{align}
Following \cite{ckwz1}, we call $B_q$ a \emph{quantum Kleinian singularity of type} $\mathbb{A}_n$ or a \emph{quantum type $\mathbb{A}$ singularity}. 
The singularity category of $B_q$ is that of a commutative type $\mathbb{A}$ singularity by \cite[Theorem 8.2.7]{crawfordthesis}.
\end{notation}

We now recall various properties of $A_q$, $B_q$, and related rings. We begin by noting that $B_q$, although singular, has good ring-theoretic and homological properties:

\begin{lemma}
The ring $B_q = A_q^G$:
\begin{enumerate}[{\normalfont (1)},topsep=0pt,itemsep=0pt,leftmargin=*]
\item is a noetherian domain;
\item is Auslander--Gorenstein of injective dimension $2$;
\item is GK-Cohen--Macaulay;
\item has GK-dimension $2$; and
\item has infinite global dimension.
\end{enumerate}
\end{lemma}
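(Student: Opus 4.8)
The plan is to prove each of the five assertions about $B_q = A_q^G$ separately, drawing on known results about invariant rings of Artin--Schelter regular algebras under finite group actions with trivial homological determinant, together with the explicit presentation \eqref{eqn:BPresentation}.

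\textbf{Parts (1), (4).} First I would establish that $B_q$ is a noetherian domain of GK-dimension $2$. Since $A_q$ is a noetherian domain and $G$ is finite, $A_q$ is a finitely generated module over $B_q = A_q^G$ (by a standard argument, e.g.\ using the trace map and the fact that $A_q$ is module-finite over its invariants), so $B_q$ is itself noetherian by the Artin--Tate lemma or a direct Noetherian-descent argument, and $B_q \subseteq A_q$ is a domain. For GK-dimension: $\GKdim B_q \leqslant \GKdim A_q = 2$, and since $A_q$ is finite over $B_q$ we have $\GKdim B_q = \GKdim A_q = 2$; alternatively, this is immediate from the presentation \eqref{eqn:BPresentation}, as $\Bbbk_\qb[x,y,z]$ has GK-dimension $3$ and $B_q$ is its quotient by one normal nonzerodivisor, dropping the dimension by exactly one.

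\textbf{Parts (2), (3).} Next I would treat the Auslander--Gorenstein and GK-Cohen--Macaulay properties. These should follow from the general theory: the pair $(C_{n+1}, A_q)$ is a quantum binary polyhedral group, the action has trivial homological determinant, and $A_q$ is Artin--Schelter regular of global dimension $2$; by the results of \cite{ckwz1} (in particular their analysis of quantum Kleinian singularities) the invariant ring $B_q$ is Artin--Schelter Gorenstein, hence Auslander--Gorenstein, with injective dimension equal to $\GKdim = 2$. For the GK-Cohen--Macaulay property I would invoke the fact that $A_q$ is GK-Cohen--Macaulay (being a quantum polynomial ring) and that this property descends to the invariant ring under a finite group action since $A_q$ is a finite $B_q$-module with $\GKdim$ preserved; alternatively one reads it off the presentation, using that $\Bbbk_\qb[x,y,z]$ is Auslander regular and GK-Cohen--Macaulay and that quotient by a central (here normal) nonzerodivisor preserves the GK-Cohen--Macaulay property with the expected dimension shift via Rees-ring/filtration arguments. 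I would cite \cite{ckwz1} for the precise statements rather than reproduce the computations.

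\textbf{Part (5).} Finally, the infinite global dimension. This is the assertion that really distinguishes $B_q$ as a \emph{singularity}, so it merits a genuine argument. The cleanest route is: if $B_q$ had finite global dimension, then being Auslander--Gorenstein it would be Auslander regular, and an Auslander regular connected graded ring that is also GK-Cohen--Macaulay of GK-dimension $2$ would force, via the graded version of the Auslander--Buchsbaum formula, that the trivial module $\kk$ has finite projective dimension equal to $2$; one can then rule this out by an explicit minimal free resolution computation showing it does not terminate, or — more slickly — invoke \cite[Theorem~8.2.7]{crawfordthesis} (cited in Notation~\ref{notation:setup}), which identifies the singularity category of $B_q$ with that of a commutative type $\mathbb{A}$ singularity, hence nontrivial, which is impossible for a ring of finite global dimension. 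I expect this last part to be the only step requiring real content; the other four are either standard descent arguments from $A_q$ or direct consequences of the presentation \eqref{eqn:BPresentation} combined with the cited results of \cite{ckwz1}. The mild subtlety to watch is that $B_q$ is noncommutative, so one must use the noncommutative (graded) versions of Auslander--Buchsbaum and of the characterisation of regularity via finite global dimension of $\kk$, rather than their commutative counterparts.
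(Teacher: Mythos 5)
The paper's own ``proof'' of this lemma is simply a citation to \cite[Lemma 6.1.3, Proposition 6.1.6]{crawfordthesis}; it does not reproduce any argument. So by construction your sketch is a different route: you actually prove things, presumably along the same lines as the cited thesis. Everything you outline is sound. Parts (1) and (4) are standard: in characteristic $0$ the Reynolds operator $\tfrac{1}{|G|}\sum_{g}g$ splits $B_q\hookrightarrow A_q$, giving module-finiteness and hence noetherianity of $B_q$ and $\GKdim B_q = \GKdim A_q = 2$, and the domain property is inherited from $A_q$; alternatively \eqref{eqn:BPresentation} gives $\GKdim=2$ directly. For (2) and (3) the clean argument is the presentation route: $\Bbbk_\qb[x,y,z]$ is AS-regular (hence Auslander-regular and GK-CM) of GK-dimension $3$, and Levasseur's theorem says that factoring by a homogeneous normal nonzerodivisor preserves Auslander--Gorenstein and GK-CM while dropping injective and GK dimension by one. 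Your alternative appeal to \cite{ckwz1} for AS--Gorensteinness is also fine. The one place you wave your hands is the claim that GK-CM ``descends to the invariant ring'' as a general principle; this is not automatic and you should drop it in favour of the presentation/Levasseur argument, which you also give, so the sketch still stands. For (5), the singularity-category argument via \cite[Theorem 8.2.7]{crawfordthesis} works, as does the slicker observation that the Hilbert series $\tfrac{1+t^{n+1}}{(1-t^2)(1-t^{n+1})}$ from \eqref{eqn:hilbMi} does not have polynomial reciprocal, which would be forced if $B_q$ were connected graded noetherian of finite global dimension. In short: correct, slightly redundant, with one inessential imprecision; the paper simply outsources all of this to the thesis.
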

\begin{proof}
These are all established in \cite[Lemma 6.1.3, Proposition 6.1.6]{crawfordthesis}.
\end{proof}

\indent The ring $B_q$ has an important family of modules $M^{(i)}$, where $0 \leqslant i \leqslant n$, which we now define. We first define (left and right) $B_q$-modules $M^{i,G}$ for $0 \leqslant i \leqslant n$ as follows:
\begin{align*}
M^{i,G} \coloneqq \{ a \in A_q \mid g \cdot a = \zeta^{-i} a \}.
\end{align*}
As right $B_q$-modules,  these have the explicit form
\begin{align*}
M^{0,G} = B_q, \qquad M^{i,G} = u^{n+1-i} B_q + v^i B_q, \quad 1 \leqslant i \leqslant n.
\end{align*}
In terms of the isomorphism \eqref{eqn:BPresentation}, we can write these modules as
\begin{align*}
M^{(0)} = B_q, \qquad M^{(i)} = x B_q + z^i B_q, \quad 1 \leqslant i \leqslant n.
\end{align*}
Under the grading inherited from $A_q$, there are graded isomorphisms
\begin{align*}
M^{(i)} \cong M^{i,G}(i) \quad 1 \leqslant i \leqslant n,
\end{align*}
induced by multiplication by $u^i$. We are particularly interested in these modules because they are maximal Cohen--Macaulay and they form essentially a complete list of such modules:

\begin{proposition}{\rm {(\cite[Theorem C]{ckwz2}})}
The modules $M^{(i)}$ are indecomposable and maximal Cohen--Macaulay. Moreover, any indecomposable MCM $B_q$-module is isomorphic (in the ungraded category) to some $M^{(i)}$. 
\end{proposition}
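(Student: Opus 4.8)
The plan is to verify the two assertions separately: indecomposability together with the MCM property, and then the exhaustiveness of the list $M^{(0)}, \dots, M^{(n)}$.

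First I would address the MCM claim. Via the graded isomorphism $M^{(i)} \cong M^{i,G}(i)$ it suffices to work with $M^{i,G} = \{a \in A_q \mid g\cdot a = \zeta^{-i}a\}$, the $\zeta^{-i}$-isotypic component of $A_q$ under the $G$-action. The key structural input is Auslander's theorem for this pair, recorded earlier in the paper: the natural map $A_q \hash G \to \End_{B_q}(A_q)$ is an isomorphism. Decomposing $A_q = \bigoplus_{i=0}^n M^{i,G}$ as a $B_q$-module, each $M^{i,G}$ is a direct summand of $A_q$; since $A_q$ is a finitely generated $B_q$-module and $B_q$ is Gorenstein of injective dimension $2$, one checks that $A_q$ is MCM over $B_q$ (for instance because $A_q$ is itself Auslander--Gorenstein of the same GK-dimension as $B_q$, so $\Ext^i_{B_q}(A_q, B_q)$ vanishes for $i \geq 1$ by a grade/Rees-type argument using GK-Cohen--Macaulayness). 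A direct summand of an MCM module is MCM, so each $M^{(i)}$ is MCM. For indecomposability, I would compute $\End_{B_q}(M^{i,G})$; as a summand-endomorphism ring it is $e_i (A_q \hash G) e_i$ for the idempotent $e_i$ of $\kk G$ projecting onto the $\zeta^{-i}$-isotypic piece, and since $G$ is cyclic (abelian) this idempotent corresponds to a single irreducible character, so one finds the endomorphism ring is local (in the graded sense, with the graded pieces in negative degree being zero and degree-zero part a local ring), giving indecomposability. Concretely one can also just observe that $M^{i,G}$ is a reflexive rank-one $B_q$-module and its (graded) endomorphism ring has $\dim_\kk$ of the degree-zero part equal to $1$.

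For the exhaustiveness statement, the cleanest route is to invoke the Morita/Auslander picture together with the theory of MCM modules over $\Lambda_q = A_q \hash G$. By Auslander's theorem $\End_{B_q}(A_q) \cong \Lambda_q$, and $\Lambda_q$ has finite global dimension, so the functor $\Hom_{B_q}(A_q, -)$ sets up an equivalence between the category of MCM $B_q$-modules (up to summands/projectives) and the category of finitely generated projective $\Lambda_q$-modules, i.e.\ the additive category generated by the indecomposable projectives $P_0, \dots, P_n$ of the basic algebra $\Lambda_q$. Tracing through this equivalence, the indecomposable projective $P_i$ corresponds precisely to $M^{i,G}$ (this is how the $M^{i,G}$ were constructed — as the isotypic summands of $A_q$, which is $\Hom_{B_q}(A_q, A_q)$ read the other way). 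Hence every indecomposable MCM $B_q$-module is, in the ungraded category, isomorphic to some $M^{i,G} \cong M^{(i)}$. Since this is quoted as \cite[Theorem C]{ckwz2} I would simply cite that reference for the precise statement, and at most sketch the reduction above; the serious work is already done there.

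The main obstacle is keeping careful track of the grading shifts and of the distinction between the graded and ungraded module categories: the $M^{(i)}$ are defined with a degree shift relative to $M^{i,G}$ so that they sit naturally inside $B_q$, and the classification statement is only claimed up to isomorphism in the \emph{ungraded} category (graded-isomorphism classes of MCM modules form a $\ZZ$-torsor of shifts over each ungraded class). I would therefore state clearly at the outset that the argument takes place in $\rmod B_q$, deduce the MCM property and indecomposability with the gradings in place (where they are harmless), and only pass to the ungraded category for the final exhaustiveness claim, where it is essential.
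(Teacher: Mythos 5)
The paper gives no proof of its own for this proposition and simply cites \cite[Theorem C]{ckwz2}, which is also what you ultimately conclude one should do. Your sketch of the MCM property (summand of $A_q$, which is MCM over $B_q$) and of indecomposability (local graded endomorphism ring) is sound, but note that the equivalence between MCM $B_q$-modules and projective $\Lambda_q$-modules that you invoke for the exhaustiveness claim is essentially the content of \cite[Theorem C]{ckwz2} rather than a formal consequence of Auslander's theorem $A_q\hash G\cong\End_{B_q}(A_q)$---proving that every MCM module lies in $\operatorname{add}(A_q)$ is the substantive step---so you are right to defer that step to the citation rather than treat it as a known reduction.
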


We now determine the Hilbert series of $B_q$ and the $M^{(i)}$. Note that these Hilbert series are independent of $q$ so, in particular, we can restrict attention to the case $q=1$ (i.e.~when $A_q$ and $B_q$ are commutative). In this case, by \cite[Theorem 6.4.8]{brunsherzog} we have 
\begin{align*}
\hilb M^{i,G} = \frac{1}{n+1} \sum_{j=0}^n \frac{\zeta^{ij}}{(1-\zeta^j t)(1-\zeta^{-j}t)}.
\end{align*}
A tedious calculation, similar to \cite[Example 6.19]{taftactions}, gives a closed form for this Hilbert series:
\begin{align*}
\hilb M^{i,G} = \frac{t^i + t^{n+1-i}}{(1-t^2)(1-t^{n+1})},
\end{align*}
and hence
\begin{align}
\hilb M^{(i)} = \frac{t^{2i} + t^{n+1}}{(1-t^2)(1-t^{n+1})}. \label{eqn:hilbMi}
\end{align}

\begin{lemma} \label{lem:MiSES}
For $0 \leqslant i \leqslant n$, there is a short exact sequence of right $B_q$-modules given by
\begin{align*}
0 \to M^{(n+1-i)}(-2i) \xrightarrow{\begin{psmallmatrix} x^{-1} z^i \\ -1 \end{psmallmatrix} \cdot} B_q(-(n+1)) \oplus B_q(-2i) \xrightarrow{\begin{psmallmatrix} x & z^i \\[-4pt] & \end{psmallmatrix} \cdot} M^{(i)} \to 0,
\end{align*}
where the maps are given by left multiplication by the indicated matrices.
\end{lemma}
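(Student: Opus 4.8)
The plan is to verify that the displayed sequence is a complex, that the left map is injective, and that the cokernel of the left map agrees with $M^{(i)}$; then exactness follows by comparing Hilbert series. First I would observe that the right-hand map $\begin{psmallmatrix} x & z^i \end{psmallmatrix}\cdot$ visibly surjects onto $M^{(i)} = xB_q + z^iB_q$ by the explicit description of these modules in terms of the presentation \eqref{eqn:BPresentation}. To see that the composite of the two maps is zero, one computes
\[
\begin{psmallmatrix} x & z^i \end{psmallmatrix} \cdot \begin{psmallmatrix} x^{-1}z^i \\ -1 \end{psmallmatrix} = x \cdot x^{-1} z^i - z^i \cdot 1 = 0,
\]
where the expression $x^{-1}z^i$ makes sense because $x$ is a nonzerodivisor and one checks $x^{-1}z^i \in Q(B_q)$ actually satisfies $x \cdot (x^{-1}z^i) \in B_q$; more precisely one should interpret the left map as landing in $B_q(-(n+1)) \oplus B_q(-2i)$ by noting that multiplication by $z^i$ followed by ``divide by $x$'' is, on the submodule $M^{(n+1-i)}(-2i) = (xB_q + z^{n+1-i}B_q)(-2i)$, a well-defined $B_q$-module map: on the generator corresponding to $x$ it sends $x \mapsto (z^i, -x)$, and on the generator $z^{n+1-i}$ it sends $z^{n+1-i} \mapsto (x^{-1}z^{n+1} , -z^{n+1-i}) = (q^\bullet y, -z^{n+1-i})$ using the relation $xy = q^{\bullet} z^{n+1}$ from \eqref{eqn:BPresentation}. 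This last point is the one requiring care, and I would spell out the matrix of the left map on both generators explicitly rather than relying on the shorthand $x^{-1}z^i$.

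Next I would check injectivity of the left map. Since $B_q$ is a domain and $M^{(n+1-i)} \subseteq Q(B_q)$ is torsion-free of rank $1$, it suffices to check the map is nonzero, equivalently that $\begin{psmallmatrix} x^{-1}z^i \\ -1\end{psmallmatrix}$ has a nonzero entry, which is clear. Alternatively, injectivity is automatic once we know the map is given by an injective matrix over the domain $Q(B_q)$ after localising.

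Finally, exactness in the middle is obtained by a Hilbert series count. Using \eqref{eqn:hilbMi} we have $\hilb M^{(i)} = \dfrac{t^{2i} + t^{n+1}}{(1-t^2)(1-t^{n+1})}$ and $\hilb M^{(n+1-i)}(-2i) = \dfrac{t^{2i}(t^{2(n+1-i)} + t^{n+1})}{(1-t^2)(1-t^{n+1})} = \dfrac{t^{2(n+1)} + t^{n+1+2i}}{(1-t^2)(1-t^{n+1})}$, while $\hilb\big(B_q(-(n+1)) \oplus B_q(-2i)\big) = \dfrac{t^{n+1} + t^{2i}}{(1-t^2)(1-t^{n+1})}$. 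One checks the identity
\[
\hilb M^{(n+1-i)}(-2i) + \hilb M^{(i)} = \hilb\big(B_q(-(n+1)) \oplus B_q(-2i)\big),
\]
i.e. $\big(t^{2(n+1)} + t^{n+1+2i}\big) + \big(t^{2i} + t^{n+1}\big) = \big(t^{n+1} + t^{2i}\big)\big(1 + t^{n+1}\big)$, which is immediate. Since we already have a complex which is exact on the left and on the right, and the alternating sum of Hilbert series vanishes, the homology in the middle degree has Hilbert series $0$, hence is zero, so the sequence is exact.

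The main obstacle is purely bookkeeping: making the shorthand $x^{-1}z^i$ rigorous by exhibiting the left map concretely on a generating set of $M^{(n+1-i)}(-2i)$ and checking it is genuinely a $B_q$-module homomorphism valued in the stated free module, using the defining relation $xy = q^{-\frac12 n(n+1)} z^{n+1}$ and the $q^\bullet$-commutation relations of \eqref{eqn:BPresentation} to verify compatibility with the relation $x \cdot z^{n+1-i} = q^\bullet z^{n+1-i} \cdot x$ that the two generators of $M^{(n+1-i)}$ satisfy. Once that is done, injectivity and the Hilbert series comparison are routine.
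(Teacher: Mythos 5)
Your proposal is correct and follows the same strategy as the paper: verify that the sequence is a complex, confirm that the left map lands in the stated free module, check injectivity and surjectivity at the ends, and conclude exactness in the middle by the Hilbert series identity. The paper's proof is terser (compressing the well-definedness, injectivity, and surjectivity checks into a single ``straightforward to check'' remark before the Euler-characteristic computation), but the logic is identical.
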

\begin{proof}
It is straightforward to check that the given maps are well-defined and that the sequence forms a complex. For exactness, it suffices to verify that the Euler characteristic of the sequence is $0$; indeed, by \eqref{eqn:hilbMi}, the Euler characteristic is
\begin{align*}
- t^{2i} \frac{t^{2(n+1-i)} + t^{n+1}}{(1-t^2)(1-t^{n+1})} + (t^{n+1}+t^{2i}) \frac{1+t^{n+1}}{(1-t^2)(1-t^{n+1})} - \frac{t^{2i} + t^{n+1}}{(1-t^2)(1-t^{n+1})} = 0,
\end{align*}
as desired.
\end{proof}

We will also require the following fact:

\begin{lemma} \label{lem:HomMi}
There is an isomorphism
\begin{align*}
\Hom_{B_q}(M^{(i)}, B_q) \cong M^{(n+1-i)}
\end{align*}
given by $\eta : M^{(n+1-i)} \to \Hom_{B_q}(M^{(i)}, B_q), \hspace{3pt} \eta(m)(m') = x^{-1} mm'$.
\end{lemma}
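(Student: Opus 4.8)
The plan is to exhibit $\eta$ as an isomorphism of right $B_q$-modules by identifying both sides with fractional ideals inside the graded quotient ring $\Qgr(B_q)$, in which $x=u^{n+1}$ is a normal unit (by \eqref{eqn:BPresentation}, so $mx=q^\bullet xm$ for homogeneous $m\in B_q$). Since $M^{(i)}$ is finitely generated, $\Hom_{B_q}(M^{(i)},B_q)=\uHom_{B_q}(M^{(i)},B_q)$, so gradings may be tracked freely; I treat the main case $1\leqslant i\leqslant n$ ($i=0$ being trivial).

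\textbf{Well-definedness and injectivity.} First I would check that $\eta$ is well defined: for $m\in M^{(n+1-i)}$ and $m'\in M^{(i)}$ one needs $x^{-1}mm'\in B_q$, and since $M^{(n+1-i)}=xB_q+z^{n+1-i}B_q$ and $M^{(i)}=xB_q+z^iB_q$ it suffices to check this on the four products of generators; each lands in $xB_q$, the only nontrivial one being $x^{-1}(z^{n+1-i})(z^i)=q^\bullet x^{-1}z^{n+1}=q^\bullet y$ via the defining relation of \eqref{eqn:BPresentation}. Each $\eta(m)$ is manifestly right $B_q$-linear, so $\eta$ is a well-defined additive map and a homomorphism of right $B_q$-modules once $\Hom_{B_q}(M^{(i)},B_q)$ is given the right action $(f\cdot b)(m')=f(bm')$ coming from the fact that $M^{(i)}$ is a two-sided ideal of $B_q$. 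Injectivity is then immediate: $B_q$ is a domain and $\eta(m)(x)=x^{-1}mx=q^\bullet m$, so $\eta(m)=0$ forces $m=0$.

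\textbf{Surjectivity.} Because $B_q$ is a domain and $M^{(i)}$ contains the unit $x$ of $\Qgr(B_q)$, the natural map $M^{(i)}\otimes_{B_q}\Qgr(B_q)\to\Qgr(B_q)$ is an isomorphism; hence every $f\in\Hom_{B_q}(M^{(i)},B_q)$ extends to a right $\Qgr(B_q)$-linear endomorphism of $\Qgr(B_q)$, i.e.\ to left multiplication by a unique $q_f\in\Qgr(B_q)$, and $f\mapsto q_f$ identifies $\Hom_{B_q}(M^{(i)},B_q)$ with $\{q\in\Qgr(B_q):qM^{(i)}\subseteq B_q\}$. Under this identification $\eta(m)$ corresponds to $x^{-1}m$, so $\eta$ is the composite of the obvious isomorphism $M^{(n+1-i)}\xrightarrow{\ \sim\ }x^{-1}M^{(n+1-i)}$, $m\mapsto x^{-1}m$, with the inclusion $x^{-1}M^{(n+1-i)}\hookrightarrow\{q:qM^{(i)}\subseteq B_q\}$. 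Thus it remains exactly to prove $x^{-1}M^{(n+1-i)}=\{q\in\Qgr(B_q):qM^{(i)}\subseteq B_q\}$: the inclusion $\subseteq$ is the well-definedness above, and for $\supseteq$ note that $qx\in B_q$ forces $q=x^{-1}b$ with $b\in B_q$, and then $qz^i\in B_q$ forces $bz^i\in xB_q$, reducing the claim to the statement that the colon ideal $\{b\in B_q:bz^i\in xB_q\}$ equals $M^{(n+1-i)}$.

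\textbf{The crux and main obstacle.} This last equality is the only real content. I would settle it by a direct computation in the presentation \eqref{eqn:BPresentation} using the monomial $\kk$-basis $\{x^az^c\}_{a,c\geqslant 0}\cup\{y^bz^c\}_{b\geqslant 1,c\geqslant 0}$ of $B_q$: right multiplication by $z^i$ carries each basis monomial to a scalar multiple of a basis monomial or into $xB_q$, and inspecting which ones land in $xB_q$ shows that the kernel of $B_q\to B_q/xB_q$, $b\mapsto\overline{bz^i}$, is exactly $xB_q+z^{n+1-i}B_q=M^{(n+1-i)}$. (Alternatively, since $M^{(i)}$ is maximal Cohen--Macaulay one can apply $\uHom_{B_q}(-,B_q)$ to the short exact sequences of Lemma~\ref{lem:MiSES} for the indices $i$ and $n+1-i$, which stay exact because $\uExt^1_{B_q}(M^{(j)},B_q)=0$, and solve the two resulting Hilbert-series identities to obtain $\hilb\uHom_{B_q}(M^{(i)},B_q)=\hilb M^{(n+1-i)}(n+1)$; the degree-preserving injection $M^{(n+1-i)}(n+1)\to\uHom_{B_q}(M^{(i)},B_q)$ induced by $\eta$ is then an isomorphism of locally finite graded modules with equal Hilbert series.) Everything else is routine bookkeeping with the $q^\bullet$-commutation relations; the mild hazards are keeping those scalars straight and remembering that the right-module structure on $\Hom_{B_q}(M^{(i)},B_q)$ for which $\eta$ is a module map is the inner one coming from the left action on $M^{(i)}$.
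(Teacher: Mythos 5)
Your proof is correct, and it takes a genuinely different route from the paper's. The paper proves the lemma by invoking the equivalence from \cite[Corollary 2.6]{ckwz2} between graded projective $A_q \hash G$-modules and $\rgr B_q$, then running a chain of adjunction isomorphisms which ultimately reduces to $V_i^* \cong V_{n+1-i}$ as $\kk G$-modules; the explicit form of $\eta$ is recovered by ``chasing through the isomorphisms.'' Your argument is intrinsic to $B_q$: you identify $\Hom_{B_q}(M^{(i)},B_q)$ with the fractional ideal $\{q\in\Qgr(B_q): qM^{(i)}\subseteq B_q\}$ (using that $x\in M^{(i)}$ is a normal unit in $\Qgr(B_q)$), reduce surjectivity to the colon-ideal computation $\{b\in B_q: bz^i\in xB_q\}=M^{(n+1-i)}$, and settle that either by direct inspection of the monomial basis of $B_q$ or by the slick Hilbert-series route through Lemma~\ref{lem:MiSES} together with MCM-ness of the $M^{(j)}$. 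The paper's proof is shorter given that the CKWZ machinery is already in hand and makes the $\kk G$-duality the visible source of the index reversal $i\mapsto n+1-i$; yours is more elementary and self-contained, and moreover exhibits the graded shift explicitly ($\eta$ is degree-preserving as a map $M^{(n+1-i)}(n+1)\to\uHom_{B_q}(M^{(i)},B_q)$), a point the paper's ungraded chase glosses over. The one imprecision in your write-up is the assertion that it ``suffices to check on the four products of generators'': a general element of $M^{(n+1-i)}$ has the form $x b_1 + z^{n+1-i}b_2$, so the coefficients $b_j$ must be shuffled past using normality of $x$ and $z$ before one lands on pure products of generators; this is routine but worth one sentence.
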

\begin{proof}
By the proof of \cite[Corollary 2.6]{ckwz2}, there is an equivalence of categories
\begin{align*}
\{ \mbox{graded projective $A_q \hash G$-modules} \} \to \rgr B_q, \quad N \mapsto N^G.
\end{align*}
Using this, and letting $V_i$ denote the $\Bbbk G$-module with character $\chi_i$ satisfying $\chi_i(g) = \zeta^i$, we obtain a chain of isomorphisms
\begin{align*}
\Hom_{B_q}(M^{i,G},B_q) &\cong \Hom_{A_q^G}\big( (V_i \otimes A_q)^G, A_q^G \big) \\
&\cong \Hom_{A_q \hash G}(V_i \otimes A_q, A_q) \\
&\cong \Hom_{A_q}(V_i \otimes A_q, A_q)^G \\
&\cong \Hom_{A_q}(A_q, V_i^* \otimes A_q)^G \\
&\cong \Hom_{A_q}(A_q, V_{n+1-i} \otimes A_q)^G \\
&\cong (V_{n+1-i} \otimes A_q)^G \\
&\cong M^{n+1-i,G}.
\end{align*}
Identifying $M^{i,G}$ with $M^{(i)}$ and chasing through the isomorphisms shows that the map giving rise to this isomorphism is as claimed.
\end{proof}

\subsection{An algebraic resolution of \texorpdfstring{$B$}{B}}
Write $M = \bigoplus_{i=0}^n M^{(i)}$ for the direct sum of the indecomposable MCM $B_q$-modules. We in fact have $M \cong A_q$ as objects of $\rmod B_q$; we obtain an isomorphism of \emph{graded} modules if we replace each $M^{(i)}$ by $M^{i,G}$. The endomorphism ring of this module will play an important role. By \cite[Theorem 0.3]{ckwz1}, the \emph{Auslander map}
\begin{align}
\gamma : A_q \hash G \to \End_{B_q}(A_q), \quad \gamma(a \hash g)(b) = a (g \cdot b) \label{eqn:AusMap}
\end{align}
is an isomorphism, and there is a similar isomorphism when we view $A_q$ as a left $B_q$-module. By \cite[Example 6.6]{crawfordsuperpotentials}, there is a ring isomorphism
\begin{align}
A_q \hash G \cong \Lambda_q \label{eqn:CKWZiso}
\end{align}
where $\Lambda_q$  is the quotient of the path algebra of the following quiver by the given relations:

\beq\label{quiverdiagram}
\begin{tikzpicture}[->,>=stealth,thick,scale=1,baseline=(current  bounding  box.center)]
\def\ra{2.2cm}
\node[regular polygon,regular polygon sides=6,minimum size=0.8cm] (0) at (4*360/6:  \ra) {\phantom{0}};
\node[regular polygon,regular polygon sides=6,minimum size=0.8cm]  (1) at (3*360/6:  \ra) {\phantom{0}};
\node[regular polygon,regular polygon sides=6,minimum size=0.8cm]  (2) at (2*360/6:  \ra) {\phantom{0}};
\node[regular polygon,regular polygon sides=6,minimum size=0.8cm]  (3) at (1*360/6:  \ra) {\phantom{0}};
\node[regular polygon,regular polygon sides=6,minimum size=0.8cm]  (4) at (0*360/6:  \ra) {\phantom{0}};
\node[regular polygon,regular polygon sides=6,minimum size=0.8cm]  (5) at (-1*360/6:  \ra) {\phantom{0}};

\node at (3*360/6:  \ra) {$n$};
\node at (2*360/6:  \ra) {$0$};
\node at (1*360/6:  \ra) {$1$};
\node at (0*360/6:  \ra) {$2$};

\draw (0.100) to (1.-40) ;
\draw (1.40) to (2.260) ;
\draw (2.-20) to (3.200);
\draw (3.-80) to (4.140) ;
\draw (4.220) to (5.80) ;

\draw (1.280) to (0.140);
\draw (2.220) to (1.80);
\draw (3.160) to (2.20) ;
\draw (4.100) to (3.-40);
\draw (5.40) to (4.260);

\draw[-,dash pattern={on 1pt off 2pt}] (0) to (5);

\node at (30+1*360/6: \ra - 0.78cm) {$\alpha_0$};
\node at (30+0*360/6: \ra - 0.8cm) {$\alpha_1$};
\node at (30+5*360/6: \ra - 0.8cm) {$\alpha_2$};
\node at (-1,-0.72) {$\alpha_{n{-}1}$};
\node at (30+2*360/6: \ra - 0.8cm) {$\alpha_{n}$};

\node at (30+1*360/6: \ra + 0.18cm) {$\overline{\alpha}_0$};
\node at (30+0*360/6: \ra + 0.2cm) {$\overline{\alpha}_1$};
\node at (30+5*360/6: \ra + 0.2cm) {$\overline{\alpha}_2$};
\node at (-2.2,-1.22) {$\overline{\alpha}_{n{-}1}$};
\node at (30+2*360/6: \ra + 0.2cm) {$\overline{\alpha}_n$};

\node at (30+4*360/6: \ra + 0.36cm) {\phantom{$\overline{\alpha}_0$}};

\node at (6.2,0) {$\overline{\alpha}_{i} \alpha_{i} = q \alpha_{i+1} \overline{\alpha}_{i+1} $ for $0 \leqslant i \leqslant n$.};

\end{tikzpicture}
\eeq

\noindent (Note that our convention is to compose arrows in quivers from left to right.)

Moreover, we have $e_0 \Lambda_q e_0 \cong B_q$ and, under this isomorphism, we can identify the right $B_q$-module $M^{i,G}$ with the right $e_0 \Lambda_q e_0$-module $e_i \Lambda_q e_0$ and identify $M \cong A_q$ with $\Lambda_q e_0$. 
Under this identification, we can rewrite the isomorphism of \eqref{eqn:AusMap} as
\begin{align}
\gamma' : \Lambda_q \to \End_{e_0 \Lambda_q e_0}(\Lambda_q e_0), \quad \gamma'(x)(y e_0) = xy e_0. \label{eqn:CKWZiso2}
\end{align}

The algebra $\Lambda_q$ is of particular interest to us because it is an NQCR of $B_q$.

\begin{lemma}\label{lem:NCQR}
The triple $(\Lambda_q,e_0 \Lambda_q, \Lambda_q e_0)$ is an NQCR of $B_q = e_0 \Lambda_q e_0$.
\end{lemma}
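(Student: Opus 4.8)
To show that $(\Lambda_q, e_0\Lambda_q, \Lambda_q e_0)$ is an NQCR of $B_q = e_0\Lambda_q e_0$, I would verify the conditions of the definition in turn. First, I would confirm that $B_q$ is a locally finite noetherian $\NN$-graded $\kk$-algebra with $\GKdim B_q = 2 < \infty$; this is already recorded in the lemma listing the properties of $B_q$, together with the fact that $B_q$ is Auslander--Gorenstein (of injective dimension $2$). So the ``in addition'' hypothesis on $R = B_q$ in the definition of NQCR is immediate. It remains to check the three properties of the candidate resolution $S = \Lambda_q$: (i) $\Lambda_q$ is Auslander regular and GK-Cohen--Macaulay with $\GKdim \Lambda_q = 2$; (ii) the $0$-isomorphisms $e_0\Lambda_q \otimes_{B_q} \Lambda_q e_0 \cong_0 \Lambda_q$ and $\Lambda_q e_0 \otimes_{\Lambda_q} e_0\Lambda_q \cong_0 B_q$ hold; and (iii) the bimodules ${}_{\Lambda_q}(\Lambda_q e_0)_{B_q}$ and ${}_{B_q}(e_0\Lambda_q)_{\Lambda_q}$ are reflexive on both sides.

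For (i), the key input is the isomorphism $\Lambda_q \cong A_q \hash G$ of \eqref{eqn:CKWZiso}. Since $A_q$ is Auslander regular of global dimension $2$ and $G$ is a finite group acting by graded automorphisms with $|G|$ invertible in $\kk$, the smash product $A_q \hash G$ inherits Auslander regularity and $\gldim(A_q \hash G) = \gldim A_q = 2$; one can quote the standard results on skew group rings (e.g.\ that $A \hash G$ is a finitely generated module over its centre-compatible subring and that homological dimensions are preserved under $- \hash G$). Likewise $\GKdim(A_q \hash G) = \GKdim A_q = 2$ since $A_q \hash G$ is a free $A_q$-module of rank $|G| = n+1$, and the GK-Cohen--Macaulay property for $\Lambda_q$ follows either from the corresponding property of $A_q$ together with the module-finite extension $A_q \subseteq A_q\hash G$, or can simply be cited from \cite{crawfordthesis} or \cite{ckwz2}, where these homological facts about $A_q \hash G$ are established.

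For (ii), I would use the ring-theoretic identifications already set up: $\Lambda_q e_0 \cong M$ (which equals $A_q$ in $\rmod B_q$) as a $(\Lambda_q, B_q)$-bimodule, $e_0\Lambda_q \cong \Hom_{B_q}(A_q, B_q)$ as a $(B_q,\Lambda_q)$-bimodule (compatibly with Lemma~\ref{lem:HomMi}), and $e_0\Lambda_q e_0 \cong B_q$. The composite $\Lambda_q e_0 \otimes_{\Lambda_q} e_0\Lambda_q \to e_0\Lambda_q e_0 \cdot$... more precisely, the multiplication map $\Lambda_q e_0 \otimes_{B_q} e_0 \Lambda_q \to \Lambda_q$ and $e_0\Lambda_q \otimes_{\Lambda_q} \Lambda_q e_0 \to e_0 \Lambda_q e_0 = B_q$. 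The second map is an isomorphism on the nose, since $e_0\Lambda_q \otimes_{\Lambda_q} \Lambda_q e_0 \cong e_0\Lambda_q e_0$ canonically. For the first, the Auslander map isomorphism $\gamma': \Lambda_q \xrightarrow{\sim} \End_{B_q}(\Lambda_q e_0)$ of \eqref{eqn:CKWZiso2} shows that $\Lambda_q$ acts faithfully and fully on $\Lambda_q e_0$; combined with the fact that $M = \Lambda_q e_0$ contains $M^{(0)} = B_q$ as a summand (so $e_0\Lambda_q$ contains a copy of $B_q$), the multiplication map $\Lambda_q e_0 \otimes_{B_q} e_0\Lambda_q \to \Lambda_q$ is surjective, and its kernel is supported on the singular locus, hence has $\GKdim \leq 1$ — I expect one can in fact show the kernel has $\GKdim 0$, i.e.\ is finite-dimensional, by a Hilbert series computation using \eqref{eqn:hilbMi}. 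Alternatively, and more cleanly, this is precisely the content of \cite[Example 8.5]{nqr} together with the remark there that an NCCR of a commutative type $\mathbb{A}_n$ Kleinian singularity is an NQR: one transports that argument to $B_q$ using the fact (cited in Notation~\ref{notation:setup}) that the singularity category of $B_q$ agrees with that of a commutative type $\mathbb{A}$ singularity, and that $\Lambda_q$ specialises to $\kk[u,v]\hash G$ at $q = 1$.

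For (iii), reflexivity of $\Lambda_q e_0$ and $e_0\Lambda_q$ on both sides: as right/left $B_q$-modules these are the MCM modules $M^{i,G}$ (up to shift and summands), and MCM modules over an Auslander--Gorenstein ring of injective dimension $2$ are reflexive — indeed Lemma~\ref{lem:HomMi} shows $\Hom_{B_q}(M^{(i)}, B_q) \cong M^{(n+1-i)}$ and applying it twice gives $M^{(i)} \cong M^{(i)**}$, so each $M^{(i)}$, hence $M$, is reflexive as a $B_q$-module; the left-module statement is symmetric. As $\Lambda_q$-modules, $\Lambda_q e_0$ and $e_0\Lambda_q$ are projective (being direct summands of $\Lambda_q$), hence certainly reflexive. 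So (iii) holds. \textbf{The main obstacle} I anticipate is part (ii): verifying the $0$-isomorphism $e_0\Lambda_q \otimes_{B_q} \Lambda_q e_0 \cong_0 \Lambda_q$ — that is, pinning down that the cokernel and kernel of the natural multiplication map are genuinely finite-dimensional (GK-dimension $0$) rather than merely GK-dimension $\le 1$. This is where the type $\mathbb{A}_n$ structure, the explicit generators $x = u^{n+1}$, $y = v^{n+1}$, $z = uv$, and the Hilbert series \eqref{eqn:hilbMi} do the real work, and where I would either run the Hilbert-series bookkeeping carefully or invoke \cite[Example 8.5]{nqr} after justifying the transfer from the $q=1$ case.
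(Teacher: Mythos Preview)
Your overall strategy matches the paper's, but you overcomplicate the NQR part and make one genuine slip in (ii).

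The paper dispatches (i) and (ii) in a single sentence by citing \cite[Example~8.5]{nqr}, which already treats the \emph{quantum} algebra $A_q \hash G \cong \Lambda_q$ directly, not just the commutative specialisation. Your worry about ``transporting from $q=1$'' is therefore misplaced: the $0$-isomorphisms and the Auslander regular/GK-Cohen--Macaulay conditions for $\Lambda_q$ are exactly what that example establishes, so no Hilbert-series bookkeeping is needed. In your direct attempt at (ii) there is also an error: the multiplication map $\Lambda_q e_0 \otimes_{B_q} e_0\Lambda_q \to \Lambda_q$ is \emph{not} surjective --- its image is the two-sided ideal $\Lambda_q e_0 \Lambda_q$, and the cokernel $\Lambda_q/\Lambda_q e_0 \Lambda_q$ is the (finite-dimensional) type $\mathbb{A}_n$ preprojective algebra. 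So the cokernel has GK-dimension $0$ and the $0$-isomorphism still holds, but your surjectivity claim is false.

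For (iii) both you and the paper handle the $\Lambda_q$-side via projectivity of the summands $\Lambda_q e_0$, $e_0\Lambda_q$. On the $B_q$-side your route (MCM over an Auslander--Gorenstein ring of injective dimension $2$ implies reflexive, confirmed by applying Lemma~\ref{lem:HomMi} twice) is perfectly valid. The paper instead computes the double dual directly from the Auslander isomorphism \eqref{eqn:CKWZiso2}: multiplying $\Lambda_q \cong \End_{e_0\Lambda_q e_0}(\Lambda_q e_0)$ on the left by $e_0$ gives $\Hom_{B_q}(\Lambda_q e_0, B_q) \cong e_0\Lambda_q$, and symmetrically $\Hom_{B_q}(e_0\Lambda_q, B_q) \cong \Lambda_q e_0$, whence the double dual of $\Lambda_q e_0$ is $\Lambda_q e_0$. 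This is slightly slicker because it uses only what has already been set up and avoids appealing to general MCM theory.
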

\begin{proof}
That $(\Lambda_q,e_0 \Lambda_q, \Lambda_q e_0)$ is an NQR of $B_q$ is \cite[Example 8.5]{nqr} (where they show that the isomorphic algebra $A_q \hash G$ is an NQR). Since $B_q$ is Auslander--Gorenstein, it remains to show that $e_0 \Lambda_q$ and $\Lambda_q e_0$ are reflexive on both sides; by symmetry, it suffices to show this for $\Lambda_q e_0$. \\
\indent Reflexivity of $\Lambda_q e_0$ as a left $\Lambda_q$-module follows from the fact that $\Lambda_q e_0$ is a direct summand of $\Lambda_q$, hence projective. 
To prove that $\Lambda_q e_0$ is reflexive as a right $e_0 \Lambda_q e_0$-module, we use \eqref{eqn:CKWZiso2}.
Multiplying this isomorphism on the left by $e_0$ gives $\Hom_{e_0 \Lambda_q e_0}(\Lambda_q e_0, e_0 \Lambda_q e_0) \cong e_0 \Lambda_q$, and similarly we have $\Hom_{e_0 \Lambda_q e_0}(e_0 \Lambda_q, e_0 \Lambda_q e_0) \cong \Lambda_q e_0$. Therefore
\begin{align*}
\Hom_{e_0 \Lambda_q e_0}(\Hom_{e_0 \Lambda_q e_0}(\Lambda_q e_0, e_0 \Lambda_q e_0), e_0 \Lambda_q e_0) \cong \Hom_{e_0 \Lambda_q e_0}(e_0 \Lambda_q, e_0 \Lambda_q e_0) \cong \Lambda_q e_0, 
\end{align*}
so that $\Lambda_q e_0$ is reflexive as a right $e_0 \Lambda_q e_0$-module.
\end{proof}

Another good property of  $\Lambda_q$ is that it is Morita equivalent to $B_q$ ``away from the singularity''. 
This term is imprecise for a noncommutative ring which has no associated geometry, but note that $B_q[x^{-1}], B_q[y^{-1}], B_q[z^{-1}]$ are all localisations of quantum planes and so have finite global dimension.  
On the other hand, $B_q$ has infinite global dimension, and by the previous sentence it makes some sense to view the singularity of $B$ as being concentrated where $x=y=z=0$.

\begin{lemma}\label{lem:MoritaEquiv}
Let $w \in \{ x,y,z\}$. Then the algebras $B_q[w^{-1}]$ and $\Lambda_q[w^{-1}]$ are Morita equivalent.
\end{lemma}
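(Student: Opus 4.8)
The plan is to reduce the statement to general Morita theory for skew group rings together with the observation that, after inverting $w$, the quantum plane becomes a \emph{free} module over the invariant ring. I would not argue ring-by-ring but treat $w \in \{x,y,z\}$ uniformly.

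First I would check that everything localises cleanly. Each of $x = u^{n+1}$, $y = v^{n+1}$, $z = uv$ is a $G$-invariant normal element of $A_q$, so $\{w^k\}$ is an Ore set in $A_q$, in $B_q = A_q^G$, and in $\Lambda_q = A_q \hash G$. Localising one gets $A_q[w^{-1}]^G = B_q[w^{-1}]$ (taking $G$-invariants commutes with localisation at a $G$-invariant Ore set since $|G|$ is invertible) and $\Lambda_q[w^{-1}] = A_q[w^{-1}]\hash G$. As $A_q$ is a finitely generated module over the noetherian ring $B_q$, the Auslander isomorphism \eqref{eqn:AusMap} localises to an isomorphism $\Lambda_q[w^{-1}] = A_q[w^{-1}]\hash G \xrightarrow{\ \sim\ } \End_{B_q[w^{-1}]}(A_q[w^{-1}])$. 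Hence, by the Morita theory of endomorphism rings of progenerators, it suffices to prove that $A_q[w^{-1}]$ is a progenerator (a finitely generated projective generator) as a right $B_q[w^{-1}]$-module.

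The key point, and the only genuine computation, is that $A_q[w^{-1}]$ is in fact free of rank $n+1$ over $B_q[w^{-1}]$. I would argue this from the isotypic decomposition $A_q = \bigoplus_{i=0}^n M^{i,G}$, which localises (exactness of localisation) to $A_q[w^{-1}] = \bigoplus_{i=0}^n M^{i,G}[w^{-1}]$, together with the description $M^{i,G} = u^{n+1-i}B_q + v^i B_q$. The crucial elementary observation is that inverting $w$ makes at least one of $u$, $v$ a unit of $A_q[w^{-1}]$: $u$ becomes a unit when $w \in \{x,z\}$ (e.g.\ $u^{-1} = q^\bullet v z^{-1}$ or $u^{-1} = u^n x^{-1}$) and $v$ becomes a unit when $w \in \{y,z\}$. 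Calling the resulting unit among $u^{n+1-i}, v^i$ by $s_i$, the ``ratio'' of the two generators is a $G$-invariant element of $A_q[w^{-1}]$ — its $G$-weight is $\pm(n+1)\equiv 0$ — so it lies in $A_q[w^{-1}]^G = B_q[w^{-1}]$; therefore $M^{i,G}[w^{-1}] = s_i\, B_q[w^{-1}]$, which is free of rank $1$ because $A_q[w^{-1}]$ is a domain. Summing over $i$ gives $A_q[w^{-1}] \cong B_q[w^{-1}]^{\oplus(n+1)}$, whence $\Lambda_q[w^{-1}] \cong M_{n+1}(B_q[w^{-1}])$ is Morita equivalent to $B_q[w^{-1}]$.

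The main obstacle is not any single hard step but the bookkeeping in the second paragraph: one must be careful that $A_q$, $B_q$, and $\Lambda_q$ localise compatibly at $w$ and that the Auslander map really does localise to the endomorphism ring over $B_q[w^{-1}]$, which uses normality of $w$ in $\Lambda_q$ and finite presentation of $A_q$ over $B_q$. An alternative that avoids part of this is to work with the idempotent $e_0 \in \Lambda_q$ and show directly that $\Lambda_q[w^{-1}]\,e_0\,\Lambda_q[w^{-1}] = \Lambda_q[w^{-1}]$, so that $\Lambda_q[w^{-1}]$ and $e_0\Lambda_q[w^{-1}]e_0 = B_q[w^{-1}]$ are Morita equivalent via the idempotent; but verifying that two-sided ideal equality comes down to exactly the same freeness computation (each $e_i$ is recovered, after inverting $w$, from the pair of maps $M^{i,G}[w^{-1}] \rightleftarrows B_q[w^{-1}]$ given by $s_i^{\pm1}$), so I would present the endomorphism-ring version above.
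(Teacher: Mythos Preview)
Your proof is correct and follows essentially the same approach as the paper: localise the Auslander isomorphism and verify that each isotypic summand becomes free of rank one over $B_q[w^{-1}]$, so that $\Lambda_q[w^{-1}] \cong M_{n+1}(B_q[w^{-1}])$. The paper argues case by case (for $w=x$ the inclusion $M^{(i)}[x^{-1}]=B_q[x^{-1}]$ is immediate; for $w=y$ an explicit isomorphism $m\mapsto z^{-1}ym$ is given; for $w=z$ one reduces to the previous cases via $xy=q^\bullet z^{n+1}$), whereas you treat all three uniformly via the observation that inverting $w$ makes one of $u,v$ a unit --- a slight streamlining, but the same idea.
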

\begin{proof}
First let $w=x$. It is clear that $M^{(i)}[x^{-1}] = B[x^{-1}]$ for all $i$, and hence $M[x^{-1}] \cong B[x^{-1}]^{n+1}$. Therefore, combining \eqref{eqn:CKWZiso} with the fact that the Auslander map is an isomorphism and localising, we obtain
\begin{align*}
\Lambda_q[x^{-1}] = \End_{B_q}(M)[x^{-1}] \cong \End_{B_q[x^{-1}]}(M[x^{-1}]) \cong \End_{B_q[x^{-1}]}(B_q[x^{-1}]^{n+1}) \cong M_{n+1}(B_q[x^{-1}),
\end{align*}
and hence $B_q[x^{-1}]$ and $\Lambda_q[x^{-1}]$ are Morita equivalent.
The same proof will work to show that $B_q[y^{-1}]$ and $\Lambda_q[y^{-1}]$ are Morita equivalent, provided we can show that $M^{(i)}[y^{-1}] \cong B_q[y^{-1}]$ for $1 \leqslant i \leqslant n$ (the claim for $i=0$ is clear). Indeed, this follows from the fact that the maps 
\begin{align*}
\alpha_i : M^{(i)}[y^{-1}] \to B_q[y^{-1}], \quad \alpha_i(m) = z^{-1} y m
\end{align*}
are easily seen to be well-defined maps of right $B_q[y^{-1}]$-modules which are invertible. Finally, if $w=z$ then the relation $xy=z^{n+1}$ means that we have also inverted $x$ and $y$, and so the previous arguments apply.
\end{proof}

\begin{remark}\label{rem:CY2}
By \cite[Theorem 4.1(c)]{rrz}, $\Lambda_q$ is \emph{twisted 2-Calabi-Yau} (sometimes called \emph{skew 2-Calabi-Yau}). While we do not give a precise definition, in particular this says that $\Lambda_q$ is homologically smooth, in the sense that $\Lambda_q$ has a finite length projective resolution in the category of $\Lambda_q$-bimodules such that each term is finitely generated. 
By \cite[Lemma 4.3(b)]{yekutielizhang}, this implies that $\Lambda_q$ has finite global dimension, although not all rings of finite global dimension are homologically smooth. 
Since $\Lambda_q$ is twisted $2$-CY, it is reasonable to view $\rmod \Lambda_q$ as being ``smooth''. 
\end{remark}

\subsection{The algebraic resolution is categorical} \label{CATRES}
Let $\Lambda = \Lambda_q$ and $B = B_q$.
As in Section~\ref{MAPS}, the functors
\begin{equation*}
\begin{tikzcd}[>=stealth,ampersand replacement=\&,row sep=35pt,every label/.append style={font=\normalsize}] 
\rMod \Lambda \arrow[d, "\,\,\psi_* = \Hom_\Lambda(e_0 \Lambda{,}-)" right,shift left=2] \\
\rMod B \arrow[u, "\psi^* = - \otimes_B e_0 \Lambda\,\," left,shift left=2]
\end{tikzcd}
\end{equation*}
give a map of noncommutative spaces $\psi : \rMod \Lambda \to \rMod B$.

Since $\Lambda$ has finite global dimension, we think of the category $\rMod \Lambda$ as being nonsingular, and $\psi : \rMod \Lambda \to \rMod B$ may be viewed as a resolution of singularities, in a sense we will now make more precise. Let $\widetilde{D}$ be a triangulated category and suppose that there is a map of noncommutative spaces $\Psi: \widetilde{D} \to \ms D(B)$, that is, an adjoint pair of triangulated functors 
\begin{align*}
\Psi^* : \Perf(B) \to \widetilde{D}, \qquad \Psi_* : \widetilde{D} \to \mathscr{D}(B).
\end{align*}
Following \cite{kuznetsov}, $\widetilde{D}$ is a {\em categorical resolution} of $\ms D(B)$ if $\widetilde{D}$ is ``smooth'' and the natural morphism of functors $\id_{\Perf(B)} \to \Psi_* \Psi^*$ is an isomorphism.
The term ``smooth'' is somewhat open to interpretation, but it is reasonable to consider the derived category of an abelian category of a  homologically smooth algebra to be smooth. 
(Recall from Remark~\ref{rem:CY2} that $\Lambda$ is homologically smooth.)
A categorical resolution is said to be \emph{weakly crepant} if $(\Psi_*,\Psi^*)$ also form an adjoint pair. 
If $X$ is a variety with rational singularities then a resolution of singularities $\Psi:  \widetilde{X} \to X$ is crepant exactly when $D^b(\widetilde{X})$ is a weakly crepant categorical resolution of $D^b(X)$.

We will see that $\ms D(\Lambda)$ is  a weakly crepant categorical resolution of  $\mathscr{D}(B)$.  
We need some preliminary results first.

\begin{lemma}\label{lem:star}
Let $R$ be a right noetherian ring, let $\mc D $ be a triangulated category, and let $\alpha, \beta: \ms D(R) \to \mc D$ be triangulated functors. 
Suppose that
\begin{enumerate}[{\normalfont (a)},topsep=0pt,itemsep=0pt,leftmargin=*]
\item there is an isomorphism $\theta: \alpha(R) \to \beta(R)$; and
\item for all $r \in R$, the following diagram commutes:
\begin{equation*}
\begin{tikzcd}[>=stealth,ampersand replacement=\&,row sep=30pt,every label/.append style={font=\normalsize},column sep=45pt] 
\alpha(R) \arrow[r, "\alpha(r \cdot -)" {above}] \arrow[d, "\theta" {swap}] \& \alpha(R) \arrow[d, "\theta"] \\
\beta(R) \arrow[r, "\beta(r\cdot-)" {swap}] \& \beta(R)
\end{tikzcd}
\end{equation*}
\end{enumerate}
Then $\alpha\cong \beta$ as functors from $\Perf(R) \to \mc D$.
\end{lemma}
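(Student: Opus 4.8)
The plan is to exploit the fact that $\Perf(R)$ is generated, as a thick subcategory, by the single object $R$, so that a natural transformation defined on $R$ and compatible with enough structure extends uniquely to all of $\Perf(R)$. Concretely, I would first upgrade the data of hypotheses (a) and (b) to an actual natural transformation $\eta \colon \alpha \to \beta$ of triangulated functors on the subcategory whose objects are the finite direct sums of shifts of $R$. On such an object $R^{\oplus m}$ (and its shifts) one builds $\eta$ out of $\theta$ coordinatewise; naturality with respect to an arbitrary morphism $R^{\oplus m} \to R^{\oplus k}$ reduces, since such a morphism is an $m \times k$ matrix over $R$, to the commuting square in (b) together with additivity of $\alpha$ and $\beta$. (Here one uses that triangulated functors are in particular additive, so they commute with finite direct sums and with the shift $[1]$, canonically.)

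Next I would promote this to a natural transformation on all of $\Perf(R)$. Every object of $\Perf(R)$ is a finite complex of finitely generated projectives, hence is obtained from shifts of $R^{\oplus m}$'s by finitely many cones; more precisely the smallest full triangulated subcategory of $\ms D(R)$ containing $R$ and closed under direct summands is exactly $\Perf(R)$. So I would argue by the following two-step extension: (i) if $\eta$ is defined and natural on objects $X, Y$ sitting in a distinguished triangle $X \to Y \to Z \to X[1]$, then using the axioms of a triangulated functor (both $\alpha$ and $\beta$ send this to a distinguished triangle) and the fact that $\eta_X, \eta_Y$ form a morphism of the first two terms, one can \emph{choose} a fill-in $\eta_Z$ making everything commute; (ii) one then checks — this is the point requiring a little care — that the resulting assignment is actually natural in $Z$, i.e.\ independent of choices and compatible with all morphisms, not just the ones in the triangle. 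The cleanest way to handle (ii), and to get that each $\eta_X$ is an \emph{isomorphism}, is to observe that $\theta$ is an isomorphism, so by the five lemma in triangulated categories every fill-in is automatically an isomorphism; and for well-definedness one invokes the standard fact (e.g.\ via Beilinson's / Bondal--Van den Bergh's lemma, or directly: $R$ is a compact generator and $\alpha, \beta$ restricted to $\Perf(R)$ are determined by their value on $R$ together with the $R$-action) that a triangulated functor out of $\Perf(R)$ is determined up to unique isomorphism by its restriction to the full subcategory on $R$ as a module over $\operatorname{End}(R) = R$. Hypotheses (a) and (b) say precisely that $\alpha$ and $\beta$ agree on that subcategory, so they agree on $\Perf(R)$.

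The main obstacle is step (ii): naturality of the extension and independence of the cone fill-ins. In a general triangulated category, cones are not functorial, so one cannot just "extend $\eta$ along triangles" naively — the fill-in in (i) is non-canonical. I expect to resolve this either by (a) invoking an enhancement: $\ms D(R)$ has a canonical dg- (or $\infty$-categorical) enhancement in which $\Perf(R)$ is the thick closure of $R$, and there the extension of a natural transformation from a generator is genuinely functorial and unique up to contractible choice; or, to keep things elementary, by (b) the following direct argument: since $\alpha|_{\Perf(R)}$ is a triangulated functor from $\Perf(R) = \langle R \rangle_{\mathrm{thick}}$, it factors (up to natural isomorphism) through the canonical functor $\Perf(R) \to \ms D(R)$ induced by $- \otimes_R \alpha(R)$ — i.e.\ $\alpha \simeq (- \otimes^{\mathbf L}_R \alpha(R))$ as functors $\Perf(R) \to \mc D$ once we record that $\alpha(R)$ carries a right $R$-action via $r \mapsto \alpha(r\cdot-)$ — and likewise $\beta \simeq (- \otimes^{\mathbf L}_R \beta(R))$; then $\theta$ together with the square in (b) is exactly an isomorphism of the $(R,\text{-})$-bimodule data $\alpha(R) \cong \beta(R)$, and tensoring gives the natural isomorphism $\alpha \cong \beta$ on $\Perf(R)$ with no choices involved. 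I would write the proof along route (b), since it needs only that $R \in \Perf(R)$ is a compact generator and standard properties of the derived tensor product, and avoids any appeal to enhancements.
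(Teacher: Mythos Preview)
Your approach is essentially the paper's: both rest on the observation that the hypotheses say precisely that $\alpha$ and $\beta$ are naturally isomorphic on the one-object full subcategory $\{R\}$, and that $\Perf(R)$ is the thick triangulated envelope of this subcategory. The paper's proof is a two-sentence assertion of exactly this and does not engage with the cone non-functoriality issue you raise; your treatment is more careful than the paper's on this point, and you are right that extending a natural isomorphism along thick closures is not automatic at the level of bare triangulated categories.

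One caution about your route (b): the putative functor $-\otimes^{\mathbf L}_R \alpha(R)\colon \Perf(R)\to\mc D$ is not obviously well-defined when $\mc D$ is an arbitrary triangulated category. Building it from the right $R$-action on $\alpha(R)$ amounts to totalising a bounded complex of copies of $\alpha(R)$ inside $\mc D$, i.e.\ taking iterated cones, and you are back to the very non-functoriality problem you set out to avoid. So (b) does not genuinely bypass (a); both ultimately require an enhancement, or the observation that in every application the paper makes of this lemma the target $\mc D$ is the derived category of a Grothendieck abelian category (modules over $\Lambda$ or $B$, or $\rQgr T$) and hence carries a canonical dg-enhancement in which the extension is unproblematic. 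Your route (a) is the honest resolution, and it suffices for the paper's purposes.
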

\begin{proof}
Consider the full subcategory of $\ms D(R)$ with one object, $R$.
The hypothesis is that $\alpha,\beta$ are naturally isomorphic as functors from this subcategory to $\mc D$.
The result follows since $\Perf(R)$ is the triangulated envelope of this subcategory.
\end{proof}

Consider now the  two other natural functors between $\rMod \Lambda$ and $\rMod B$, namely
\begin{equation*}
\begin{tikzcd}[>=stealth,ampersand replacement=\&,row sep=35pt,every label/.append style={font=\normalsize}] 
\rMod \Lambda \arrow[d, "\psi_! = - \otimes_\Lambda \Lambda e_0\,\," left,shift right=2] \\
\rMod B \arrow[u, "\,\,\psi^! = \Hom_B(\Lambda e_0{,}-)" right,shift right=2]
\end{tikzcd}
\end{equation*}
given by the isomorphism $\Lambda \cong \End_B(\Lambda e_0)$.
Note that  $(\psi_!,\psi^!)$ are also  an adjoint pair. 

\begin{lemma}\label{lem:shriek}
There is a natural isomorphism $\psi_* \cong \psi_!$, and the functors $\psi^*$ and $\psi^!$ are equivalent upon restriction to the full subcategory of projective $B$-modules.
\end{lemma}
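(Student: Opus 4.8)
The plan is to recognise all four functors in terms of the idempotent $e=e_0\in\Lambda$ with $B=e\Lambda e$, and to prove the two claims separately. For the first claim I would show that both $\psi_*$ and $\psi_!$ are naturally isomorphic to the ``multiplication by $e$'' functor $\Phi\colon \rMod\Lambda\to\rMod B$, $M\mapsto Me$ (with $B=e\Lambda e$ acting by right multiplication). Indeed, evaluation at $e$ gives a natural isomorphism $\Hom_\Lambda(e\Lambda,M)\xrightarrow{\ \sim\ }Me$, $f\mapsto f(e)$ (the inverse sends $m$ to the map $e\lambda\mapsto m\lambda$, and right $B$-linearity is immediate from the fact that $B$ acts on $e\Lambda$ by left multiplication), so $\psi_*\cong\Phi$; and since $\Lambda e$ is a direct summand of ${}_\Lambda\Lambda$, the canonical isomorphism $M\otimes_\Lambda\Lambda\xrightarrow{\ \sim\ }M$ restricts to a natural isomorphism $M\otimes_\Lambda\Lambda e\xrightarrow{\ \sim\ }Me$, so $\psi_!\cong\Phi$. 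Composing yields $\psi_*\cong\psi_!$. This part is routine.

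For the second claim I would first import the bimodule identification $\Hom_B(\Lambda e,B)\cong e\Lambda$ established in the proof of Lemma~\ref{lem:NCQR} (obtained by multiplying \eqref{eqn:CKWZiso2} on the left by $e$), which rewrites $\psi^*(N)=N\otimes_B e\Lambda\cong N\otimes_B\Hom_B(\Lambda e,B)$. Then I would write down the evident natural transformation
\[
\eta_N\colon N\otimes_B\Hom_B(\Lambda e,B)\longrightarrow\Hom_B(\Lambda e,N)=\psi^!(N),\qquad n\otimes f\longmapsto\big(x\mapsto n\,f(x)\big),
\]
check that it is right $\Lambda$-linear (both sides carry the right $\Lambda$-action induced by the left $\Lambda$-action on $\Lambda e$) and natural in $N$, and observe that $\eta_B$ is the canonical isomorphism $B\otimes_B\Hom_B(\Lambda e,B)\xrightarrow{\ \sim\ }\Hom_B(\Lambda e,B)$. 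Since $\Lambda e\cong A_q\cong\bigoplus_i M^{(i)}$ is finitely generated over $B$, the functor $\Hom_B(\Lambda e,-)$ commutes with arbitrary direct sums, as does $-\otimes_B\Hom_B(\Lambda e,B)$; hence $\eta$ is an isomorphism on every free $B$-module, and therefore on every projective $B$-module, since $\eta$ is natural and a retract (in the category of morphisms) of an isomorphism is an isomorphism. This gives $\psi^*\cong\psi^!$ on the full subcategory of projective $B$-modules.

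I do not anticipate a genuine obstacle: the argument is bookkeeping around an idempotent together with the bimodule identifications already available from Lemma~\ref{lem:NCQR}. The only points needing a little care are that one must restrict to projective modules in the second claim — $\Lambda e$ is not projective as a right $B$-module (this is precisely where the singularity of $B_q$ lives), so $\eta_N$ genuinely fails to be an isomorphism for general $N$ — and the final reduction from free to projective modules, which rests on the retract-of-an-isomorphism principle.
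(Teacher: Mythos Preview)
Your proof is correct and follows essentially the same approach as the paper. For $\psi_*\cong\psi_!$ the paper invokes \cite[Proposition~20.11]{andersonfuller} to obtain the isomorphism, whereas you identify both functors directly with $M\mapsto Me$; and for $\psi^*\cong\psi^!$ on projectives the paper defines the same evaluation transformation, checks it on $B$ via \eqref{eqn:CKWZiso2}, and then cites \cite[Lemma~20.9]{andersonfuller} for the extension to projectives, which is exactly the free-then-retract argument you spell out. Your version is a little more self-contained in not appealing to Anderson--Fuller, and your remark that $\Hom_B(\Lambda e,-)$ commutes with direct sums because $\Lambda e$ is finitely generated is the point that makes the extension to infinite-rank free modules go through.
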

\begin{proof}
We first consider the claimed natural isomorphism $\psi_* \cong \psi_!$. A straightforward adaptation of \cite[Proposition 20.11]{andersonfuller} shows that there is a natural isomorphism
\begin{align*}
\Hom_\Lambda(\Lambda,-) \otimes_\Lambda \Lambda e_0 \cong \Hom_\Lambda(\Hom_\Lambda(\Lambda e_0, \Lambda), -).
\end{align*}
However, the left hand functor is naturally isomorphic to $\psi_!$ and, since $\Hom_\Lambda(\Lambda e_0,\Lambda) \cong e_0 \Lambda$, the right hand functor is naturally isomorphic to $\psi_*$, giving the claimed result. Explicitly, the natural isomorphism $\eta: \psi_! \to \psi_*$ arising this way is given by, for $N \in \rMod \Lambda$,
\begin{align*}
\eta_N : N \otimes_\Lambda \Lambda e_0 \to \Hom_\Lambda(e_0 \Lambda, N), \quad \eta_N(n \otimes x e_0)(e_0 y) = nxe_0y.
\end{align*}
\indent For the second claim, it is straightforward to verify that $\theta: \psi^* \to \psi^!$ defined on $K \in \rMod B$ via
\begin{align*}
\theta_K : K \otimes_B e_0 \Lambda \to \Hom_B(\Lambda e_0, K), \quad \theta_K(k \otimes e_0 x)(y e_0) = k e_0 xy e_0,
\end{align*}
is a natural transformation. To show that this is an equivalence upon restricting to projective modules, by \cite[Lemma 20.9]{andersonfuller}, it suffices to check that $\psi^*$ and $\psi^!$ agree on $B = e_0 \Lambda e_0$. Indeed,
\begin{align*}
\psi^*(B) = B \otimes_B e_0 \Lambda \cong e_0 \Lambda \cong \Hom_B(\Lambda e_0, e_0 \Lambda e_0) = \Hom_B(\Lambda e_0, B) = \psi^!(B),
\end{align*}
where the isomorphism $e_0 \Lambda \cong \Hom_B(\Lambda e_0, e_0 \Lambda e_0)$ follows from \eqref{eqn:CKWZiso2}.
\end{proof}

\begin{proposition} \label{prop:PsiCrepantCategoricalRes}
Since $\psi^*$ and $\psi_*$ are left exact and right exact, respectively, we may derive them to obtain functors $\Perf(B) \to D(\Lambda)$ and $D(\Lambda) \to \mathscr{D}(B)$, which we will also call $\psi^*$ and $\psi_*$. This data gives a weakly crepant categorical resolution of $\mathscr{D}(B)$.
\end{proposition}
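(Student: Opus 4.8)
The plan is to verify the three requirements recalled in Section~\ref{CATRES}: that $\ms D(\Lambda)$ is smooth, that the natural morphism $\id_{\Perf(B)}\to\psi_*\psi^*$ is an isomorphism, and that $(\psi_*,\psi^*)$ is an adjoint pair. Smoothness is immediate from Remark~\ref{rem:CY2}, since $\Lambda$ is homologically smooth. Before anything else I would pin down the (co)domains of the derived functors. As $e_0\Lambda$ is a direct summand of $\Lambda$ as a right $\Lambda$-module it is projective, so $\psi_*=\Hom_\Lambda(e_0\Lambda,-)$ is in fact exact (it is just $N\mapsto Ne_0$) and equals its own derived functor; and since $\Lambda$ is a finite $B$-module, $Ne_0$ is finite over $B$ whenever $N$ is finite over $\Lambda$, so $\psi_*$ restricts to a triangulated functor $\ms D(\Lambda)\to\ms D(B)$. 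Dually, $\psi^*=-\otimes_B e_0\Lambda$ carries a bounded complex of finitely generated projective $B$-modules to a bounded complex of finitely generated projective $\Lambda$-modules (again since $e_0\Lambda$ is $\Lambda$-projective), so $\mathbb L\psi^*$ restricts to $\Perf(B)\to\Perf(\Lambda)\subseteq\ms D(\Lambda)$ and is computed on a perfect representative simply by termwise tensoring with $e_0\Lambda$.

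For the unit morphism, on a perfect complex $P^\bullet$ of finitely generated projective $B$-modules I would compute directly
\[
\psi_*\psi^*(P^\bullet)=(P^\bullet\otimes_B e_0\Lambda)e_0=P^\bullet\otimes_B e_0\Lambda e_0=P^\bullet\otimes_B B\cong P^\bullet .
\]
To see that the \emph{natural} morphism $\id\to\psi_*\psi^*$ realises this isomorphism, I would apply Lemma~\ref{lem:star} with $R=B$, $\mathcal D=\ms D(B)$, $\alpha=\id_{\Perf(B)}$ and $\beta=\psi_*\psi^*$: both are triangulated, they agree on the single object $B$, and the identification above visibly intertwines multiplication by each $r\in B$, so $\alpha\cong\beta$ on $\Perf(B)$. (Equivalently, one identifies the unit at the module level with the canonical map $K\to K\otimes_B B$ and notes it is an isomorphism.)

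For weak crepancy I would use Lemma~\ref{lem:shriek}. It provides a natural isomorphism $\psi_*\cong\psi_!=-\otimes_\Lambda\Lambda e_0$, which is exact since $\Lambda e_0$ is $\Lambda$-projective, so $\psi_*$ is the (derived) functor $\mathbb L\psi_!$; moreover $(\psi_!,\psi^!)$ is an adjoint pair with $\psi^!=\Hom_B(\Lambda e_0,-)$, whence $(\psi_*,\mathbb R\psi^!)$ is an adjoint pair of triangulated functors. Now $\Lambda e_0\cong M=\bigoplus_i M^{(i)}$ is MCM over $B$, so $\Ext^{\geq 1}_B(\Lambda e_0,B)=0$, and therefore on $\Perf(B)$ the functor $\mathbb R\psi^!$ agrees with the underived $\psi^!$; by the second assertion of Lemma~\ref{lem:shriek}, $\psi^!\cong\psi^*$ on finitely generated projective $B$-modules, hence (again by Lemma~\ref{lem:star}) $\mathbb R\psi^!\cong\psi^*$ on $\Perf(B)$. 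Combining, for $N\in\ms D(\Lambda)$ and $K\in\Perf(B)$ we obtain $\Hom_{\ms D(B)}(\psi_*N,K)\cong\Hom_{\ms D(\Lambda)}(N,\psi^*K)$, which is exactly the extra adjunction demanded by weak crepancy.

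The main obstacle — really the only point needing care — is the derived-functor bookkeeping: checking that $\psi_*$ lands in the bounded finite category $\ms D(B)$, that $\psi^!$ has no higher derived part on $\Perf(B)$ (this is where the MCM property of $\Lambda e_0\cong M$ over $B$ enters), and that the natural isomorphisms of Lemma~\ref{lem:shriek}, which a priori relate only the underived functors, persist after restriction to perfect complexes. Once the adjoint pairs $(\psi^*,\psi_*)$ and $(\psi_!,\psi^!)$ and these identifications are in place, everything else is formal.
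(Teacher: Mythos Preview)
Your proof is correct and follows essentially the same route as the paper: both use Lemma~\ref{lem:star} to verify the unit $\id_{\Perf(B)}\to\psi_*\psi^*$ from its value on $B$, and both obtain the extra adjunction from Lemma~\ref{lem:shriek} together with the adjoint pair $(\psi_!,\psi^!)$. Your write-up is in fact more careful than the paper's at the derived level: the paper simply asserts that the module-level adjunction passes to derived functors, whereas you justify this by observing that $\Lambda e_0\cong M$ is MCM over $B$, so $\mathbb R\psi^!$ has no higher terms on $\Perf(B)$ and therefore coincides with $\psi^!\cong\psi^*$ there.
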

\begin{proof}
The functors $\psi^* = - \otimes_B e_0 \Lambda$ and $\psi_* = \Hom_\Lambda(e_0 \Lambda,-)$ form an adjoint pair, and the same is true for their derived analogues.
There is an isomorphism
\begin{align*}
\psi_* \psi^*(B) = \Hom_\Lambda(e_0 \Lambda, B \otimes_B e_0 \Lambda) \cong \End_\Lambda(e_0 \Lambda) \cong e_0 \Lambda e_0 = B,
\end{align*}
from which it follows that the natural morphism $\id_{\rMod B} \to \psi_* \psi^*$ restricts to an equivalence on projective $B$-modules. Lemma \ref{lem:star} then shows that $\psi^*$ and $\psi_*$ give rise to a categorical resolution of $\mathscr{D}(B)$. Moreover, if $N \in \rMod \Lambda$ and $P$ is a projective $B$-module then
\begin{align*}
\Hom_B(\psi_*(N),P) \cong \Hom_B(\psi_!(N),P) \cong \Hom_\Lambda(N,\psi^!(P)) \cong \Hom_\Lambda(N,\psi^*(P)),
\end{align*}
where we have used Lemma~\ref{lem:shriek} and the fact that $(\psi_!,\psi^!)$ form an adjoint pair. Thus $(\psi_*,\psi^*)$ are an adjoint pair on the level of modules and hence the same is true for their derived counterparts, and so this categorical resolution is weakly crepant.
\end{proof}

There is also a notion of a \emph{strongly crepant} categorical resolution \cite{VdBNCCR}, corresponding to triviality of the relative dualising complex; we believe this resolution to be strongly crepant, but have not been able to prove it. 
By \cite[Lemma~2.2.1]{VdBNCCR}, NCCRs of normal Gorenstein algebraic varieties are strongly crepant.

There is another sense in which $\Lambda= \End_B(M)$ is a ``crepant'' resolution of $B$:  the module $M_B$ is reflexive and the inclusion $B = A^{C_{n+1}} \subseteq A \hash C_{n+1} \cong  \Lambda$ makes $\Lambda$ an MCM left and right $B$-module. Indeed, as a (left or right) $B$-module we have $A \hash C_{n+1} \cong A^{n+1}$, and $A$ is an MCM $B$-module by \cite[Theorem A]{ckwz0}.
Thus $\Lambda$ satisfies the definition of an NCCR if we allow the singular ring to be noncommutative.

\section{The representation ring}\label{REPRING} 

Lemmata~\ref{lem:NCQR} and \ref{lem:MoritaEquiv} show that it is reasonable to think of $\Lambda_q$ as an algebraic resolution of singularities of $B_q$.
The rest of the paper will be devoted to constructing a more ``geometric'' resolution of $B_q$, studying it,  and comparing this resolution to $\Lambda_q$.

We begin by reviewing the geometric resolution $\widetilde{X}$ of $X = \AA^2/G = \Spec B_1$.
This can be constructed via quiver GIT (geometric invariant theory), since  $G = C_{n+1} \leqslant \SL_2(\Bbbk)$ is cyclic. 
This is essentially due to \cite{BKR}, but we follow \cite[Theorem~6.3.1]{VdBNCCR}, as explained in \cite[Section 3]{NCCR}. 

Let $q=1$ and let $\Lambda = \Lambda_1$ be the ring defined by \eqref{quiverdiagram}, which we have seen is an NCCR of  the type $\AA$ singularity $B = B_1$.
Let $Y = \operatorname{Rep}(\Lambda, (1, 1, \dots, 1))$ be the variety parametrising representations of $\Lambda$ with dimension vector $(1,1,1, \dots, 1)$.
Such a representation is given by a choice of $2(n+1)$ scalars, by which the generators of $\Lambda$ act, satisfying the relations of $\Lambda$.
Equivalently we have
\[  \kk[Y] = \kk[a_0, \dots, a_n, \a_0, \dots, \a_n]/\ang{a_0 \a_0 = a_1\a_1 = \dots a_n \a_n}.\]

Let $R:=\kk[Y]$.
There is a natural action of $\mb T^{n+1} = (\kk^\times)^{n+1}$ on $Y$, given by scaling the coordinates in such a way as to preserve $a_0 \a_0$, and two representations are isomorphic if and only if they are in the same $\mb T^{n+1}$-orbit. 
Equivalently, there is a natural $\ZZ^{n+1}$-grading on $R$ given by setting $\deg a_i = e_i, \hspace{3pt} \deg \a_i = -e_i$.
Note that the invariant ring $R^{\mb T^{n+1}}= R_{\vec{0}}$ is isomorphic to the singularity $A^{C_{n+1}} = \kk[X] = B_1$; that is, $X$ is isomorphic to the the categorical quotient $Y /\!/ \mb T^{n+1}$.
Moreover, the GIT quotient $Y /_\chi\, \mb T^{n+1}$, for a suitable weight $\chi$, gives the minimal resolution $\widetilde{X}$ of $X$.
Explicitly, let $\chi = (-n, 1,\dots, 1)$ and define 
\[ \widetilde{X} := Y /_\chi\, \mb T^{n+1} = 
\Proj \Bigl ( \bigoplus_{n \geq 0} R_{n \chi}  \Bigr). \]
Thus it is  natural  to attempt to construct an alternative resolution of our noncommutative singularity $B_q$ is by quantising the  representation variety $Y$, or at least its coordinate ring $R$.

\subsection{Defining the representation ring}\label{DEFREPRING}

Let $Y = \Rep(\Lambda_1) := \Rep(\Lambda_1, (1,1, \dots, 1))$ be the representation variety of $\Lambda_1$, where we have suppressed the dimension vector in the notation. We refer to the coordinate ring of $Y$, that is 
\begin{align} \label{eqn:DefOfR}
R_1 = \kk[Y] = \kk[\Rep(\Lambda_1)] = \frac{\Bbbk[a_i, \a_i \mid 0 \leqslant i \leqslant n]}{\langle a_0 \a_0 = a_1 \a_1 = \dots = a_n \a_n \rangle} ,
\end{align}
as the {\em representation ring} of $\Lambda_1$. 

Now let $q \in \Bbbk^\times$ be arbitrary.
As usual, we cannot expect a geometric object to parametrise representations of $\Lambda_q$, but we can still define a useful deformation of the representation ring $R_1$.
\begin{defn}\label{def:repring}
The {\em representation ring} of $\Lambda_q$ is 
\begin{align*}
R^{(n)}_q \coloneqq \frac{\Bbbk \langle a_i, \a_i \mid 0 \leqslant i \leqslant n \rangle}{\left \langle \hspace{-3pt}
\begin{array}{c|c}
\begin{array}{cc}
a_j a_i = a_i a_j & \a_j \a_i = \a_i \a_j  \\
\a_j a_i = q a_i \a_j & a_0 \a_0 = a_i \a_i
\end{array}
&
\;0 \leqslant i,j \leqslant n
\end{array}
\right \rangle}.
\end{align*}
When the values of $n$ or $q$ are clear from context or immaterial, we will simply write $R$ (or sometimes $R_q$) for this ring. \end{defn}

 We begin by recording some basic properties of $R^{(n)}_q$.

\begin{lemma}\label{lem:R-basics}
Let $R^{(n)} = R^{(n)}_q$.  
Then $R^{(n)}$:
\begin{enumerate}[{\normalfont (1)},topsep=0pt,itemsep=0pt,leftmargin=*]
\item 
 is connected graded with  Hilbert series
\begin{align*}
h_{R^{(n)}}(t) = \frac{(1-t^2)^n}{(1-t)^{2(n+1)}}
\end{align*}
if we set $\deg a_i = 1 = \deg \a_i$ for all $i$; 
\item has GK-dimension $ n+2$;
\item  is a domain;
\item is noetherian.
\end{enumerate}
\end{lemma}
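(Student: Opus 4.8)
The plan is to establish the four properties in the order (1), (3), (4), (2), since the Hilbert series computation sets up a convenient PBW-type basis that makes the other parts cleaner.

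\textbf{Step 1: Hilbert series and a monomial basis.} First I would produce a normal form for elements of $R^{(n)} = R^{(n)}_q$. The relations $a_ja_i = a_ia_j$ and $\bar a_j \bar a_i = \bar a_i \bar a_j$ let us order the $a$'s among themselves and the $\bar a$'s among themselves, while $\bar a_j a_i = q a_i \bar a_j$ lets us move every $a$ to the left of every $\bar a$. So any word rewrites as a $\kk$-linear combination of monomials $a_0^{c_0}\cdots a_n^{c_n}\bar a_0^{d_0}\cdots \bar a_n^{d_n}$. The remaining relations $a_0\bar a_0 = a_i\bar a_i$ (equivalently $a_i\bar a_i = a_0\bar a_0$ for all $i$, after commuting past the intervening generators at the cost of a power of $q$) further reduce the set: in the product $\prod a_i^{c_i}\prod \bar a_i^{d_i}$ we may assume that $\min(c_i,d_i)=0$ for all $i\ge 1$, say, using the central-ish element $\omega := a_0\bar a_0$ to absorb matched pairs. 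This is exactly the standard description of $\kk[Y]$ in the commutative case. I would set up a Bergman diamond-lemma argument (or cite \cite{bergman} via overlap resolution) to confirm these monomials are a $\kk$-basis; the overlaps to check are finite in number and the ambiguities resolve by direct computation. Counting the basis by degree gives the generating function: one factor $\tfrac{1}{(1-t)^2}$ for the pair $(a_0,\bar a_0)$ and, for each $i\ge 1$, a factor counting monomials $a_i^{c}\bar a_i^{d}$ with $cd=0$, namely $\tfrac{1+t-t^2-\cdots}{}$... more carefully $\sum_{c\ge 0}t^c + \sum_{d\ge 1}t^d = \tfrac{1}{1-t}+\tfrac{t}{1-t} = \tfrac{1+t}{1-t} \cdot\tfrac{1}{1}$, hmm — the clean way is to note $\tfrac{1}{(1-t)^2}$ counts all $(c,d)$ and we remove those with $c,d\ge 1$, i.e. subtract $\tfrac{t^2}{(1-t)^2}$, giving $\tfrac{1-t^2}{(1-t)^2}$ per index $i\ge 1$. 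Multiplying, $h_{R^{(n)}}(t) = \tfrac{1}{(1-t)^2}\left(\tfrac{1-t^2}{(1-t)^2}\right)^{\! n} = \tfrac{(1-t^2)^n}{(1-t)^{2(n+1)}}$, as claimed. Connected gradedness is immediate from the presentation (homogeneous relations, $R_0 = \kk$).

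\textbf{Step 2: Domain.} The cleanest route is to realise $R^{(n)}$ inside a quantum polynomial ring (an Ore/iterated skew-polynomial domain) and invoke that subrings of domains are domains. Concretely, consider the quantum affine space $S = \kk_{\bm q}[b_0,\dots,b_n, \bar b_0,\dots,\bar b_n]$ with $b_j b_i = b_i b_j$, $\bar b_j\bar b_i = \bar b_i\bar b_j$, $\bar b_j b_i = q b_i\bar b_j$; this is an iterated skew polynomial extension of $\kk$, hence a noetherian domain. There is an algebra map $R^{(n)}\to S$? No — the relation $a_0\bar a_0 = a_i\bar a_i$ is not satisfied by the $b$'s. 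Instead I would use the PBW basis from Step 1 directly: filter $R^{(n)}$ and show the associated graded ring (with respect to a suitable weight putting the relation $a_i\bar a_i - a_0\bar a_0$ into the leading-term ideal of a quantum polynomial ring in one fewer variable, using $\omega$) is itself a domain, so $R^{(n)}$ is. Alternatively — and this is probably the shortest — localise: invert $\bar a_0$ (a normal element, since $\bar a_0 a_i = q^{-1}a_i\bar a_0$ etc.), in $R^{(n)}[\bar a_0^{-1}]$ every $a_i = (a_0\bar a_0)\bar a_i^{-1}$... that is not right either since $\bar a_i$ need not be invertible. I will go with the associated-graded argument: order monomials so that the leading term of $a_i\bar a_i - a_0\bar a_0$ is $a_i\bar a_i$ for $i\ge1$; then $\gr R^{(n)}$ is presented by the quantum-commutation relations together with $a_i\bar a_i = 0$ — no, that destroys the domain property. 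So the honest approach is the direct one: take the $\kk$-basis of Step 1 and verify multiplicativity of leading monomials under a term order, giving that $R^{(n)}$ is an ordered domain; Bergman's lemma packages this. I expect this to be the fiddly part.

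\textbf{Step 3: Noetherian, and Step 4: GK-dimension.} Noetherianity follows once we know $R^{(n)}$ has a finite filtration (by the generator degree) whose associated graded ring is a homomorphic image of a quantum polynomial ring — or directly: $R^{(n)}$ is generated in degree $1$ by finitely many elements and, by Step 1, is a finitely-presented algebra with a PBW basis, so it is a noetherian ring by the standard argument that such filtered algebras with noetherian (e.g. quantum-polynomial) associated graded are noetherian \cite{mcconnellrobson}. Concretely, give $R^{(n)}$ the filtration by total degree in the generators; $\gr R^{(n)}$ is the quotient of $\kk_{\bm q}[a_0,\dots,\bar a_n]$ by the ideal $\langle a_i\bar a_i - a_0\bar a_0 : i\ge 1\rangle$ (these become homogeneous quadratic relations which survive), and this is a factor of a noetherian ring, hence noetherian; therefore so is $R^{(n)}$. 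Finally, GK-dimension is read off from the Hilbert series: since $R^{(n)}$ is connected graded, locally finite, and noetherian, $\GKdim R^{(n)}$ equals the order of the pole of $h_{R^{(n)}}(t)$ at $t=1$. Writing $h_{R^{(n)}}(t) = \tfrac{(1+t)^n}{(1-t)^{n+2}}$, the pole at $t=1$ has order $n+2$, so $\GKdim R^{(n)} = n+2$. The main obstacle is Step 2 (the diamond-lemma / PBW verification underpinning both the Hilbert series and the domain property); once that is in hand, everything else is routine.
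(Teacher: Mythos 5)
Your normal-form setup and Hilbert-series count in Step~1 are correct and, if anything, cleaner than the paper's: you count a factor $\tfrac{1}{(1-t)^2}$ for the pair $(a_0,\a_0)$ and a factor $\tfrac{1-t^2}{(1-t)^2}$ for each $i\geqslant 1$ (pairs $(c,d)$ with $\min(c,d)=0$), whereas the paper first passes to $R^{(n)}/\langle a_0\rangle$ and then does a stratified count in an auxiliary algebra $S^{(n)}$. Both hinge on the same Gr\"obner/diamond-lemma verification, so this step is fine. The GK-dimension computation (pole order of the reduced Hilbert series at $t=1$) and the noetherian argument (quotient of a quantum polynomial ring --- your ``filtration'' is in fact the identity filtration, since the defining relations are already degree-homogeneous) both coincide with the paper's.

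Step~2 (domain) is where there is a genuine gap, and you in effect flag it yourself. Your first two attempts you correctly reject: no quantum affine space embedding respects $a_i\a_i = a_0\a_0$, and the term-order associated graded is not a domain since the leading monomials $a_i\a_i$ ($i\geqslant 1$) become zero there. But the fallback you then commit to --- ``verify multiplicativity of leading monomials'' --- is exactly what fails: under any term order in which $a_i\a_i$ is the leading monomial of the relation (such as the graded lex order from Step~1), the product $a_i\cdot\a_i$ in $R^{(n)}$ reduces to $a_0\a_0$, a strictly smaller monomial, so leading monomials do not multiply. Bergman's lemma hands you a $\kk$-basis but says nothing about zero divisors. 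The paper's fix is the one you did not try: the Gr\"obner basis from Step~1 shows $z:=a_0\a_0$ is a normal nonzerodivisor, and in $R^{(n)}[z^{-1}]$ each $\a_i=q^\bullet\,a_i^{-1}z$ (since $a_i\a_i=z$ forces $a_i$ to be a unit), so
\begin{align*}
R^{(n)}[z^{-1}] \cong \frac{\Bbbk[a_0^{\pm 1},\dots,a_n^{\pm 1}]\langle z^{\pm 1}\rangle}{\langle z a_i = q a_i z \mid 0\leqslant i\leqslant n\rangle},
\end{align*}
a localisation of a quantum polynomial ring, hence a domain; since $R^{(n)}\hookrightarrow R^{(n)}[z^{-1}]$, the domain property follows. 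You considered inverting $\a_0$, which does not collapse the presentation in the same way; inverting $z$ is the move that works.
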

\begin{proof}\mbox{}
\begin{enumerate}[{\normalfont (1)},wide=0pt,topsep=0pt,itemsep=0pt]
\item Consider the monomials of $R^{(n)}$ under the graded lexicographic ordering with
\begin{align*}
a_0 < \a_0 < a_1 < \a_1 < \dots < a_n < \a_n.
\end{align*}
It is then straightforward to check that the given generators for the ideal of relations defining $R^{(n)}$ form a Gr\"obner basis. Therefore a basis for the degree $d$ piece of $R^{(n)}$ is
\begin{align*}
\{ a_0^{i_0} \a_0^{j_0} x_1^{i_1} x_2^{i_2} \dots x_n^{i_n} \mid 0 \leqslant i_k \leqslant d,\hspace{2pt} 0 \leqslant j_0 \leqslant d, \hspace{2pt} i_0 + j_0 + i_1 + \dots + i_n = d, \hspace{2pt} x_k \in \{ a_k, \a_k \} \}.
\end{align*}
In particular, it follows that $a_0$ is a nonzerodivisor which is also normal, and therefore we have
\begin{align*}
h_{R^{(n)}}(t) = \frac{h_{R^{(n)}/\langle a_0 \rangle}(t)}{1-t}.
\end{align*}
\mbox{}\hspace{12pt}Now define an auxiliary algebra
\begin{align*}
S^{(n)} \coloneqq \frac{\Bbbk \langle a_i, \a_i \mid 1 \leqslant i \leqslant n \rangle}{\left \langle \hspace{-3pt}
\begin{array}{c|c}
\begin{array}{cc}
a_j a_i = a_j a_i & \a_j \a_i = \a_i \a_j  \\
\a_j a_i = q a_i \a_j & a_i \a_i = 0 
\end{array}
&
1 \leqslant i,j \leqslant n
\end{array}
\right \rangle},
\end{align*}
and note that
\begin{align*}
R^{(n)}/\langle a_0 \rangle \cong S^{(n)}[\a_0; \theta].
\end{align*}
is a skew polynomial ring for an appropriate choice of automorphism $\theta : S^{(n)} \to S^{(n)}$. It follows that
\begin{align*}
h_{R^{(n)}}(t) = \frac{h_{S^{(n)}[\a_0]}(t)}{1-t} = \frac{h_{S^{(n)}}(t)}{(1-t)^2},
\end{align*}
so it suffices to determine the Hilbert series of $S^{(n)}$. \\
\mbox{}\hspace{12pt}By an argument similar to the one used for $R^{(n)}$, a basis for the degree $d$ piece of $S^{(n)}$ is given by
\begin{align*}
\{ x_1^{i_1} \dots x_n^{i_n} \mid 0 \leqslant i_k \leqslant d, \hspace{2pt} i_1 + \dots + i_n = d, \hspace{2pt} x_k \in \{ a_k, \a_k \} \}.
\end{align*}
We can decompose this set as
\begin{align*}
\bigsqcup_{\ell=0}^{n} \{ x_{j_1}^{i_{j_1}} \dots x_{j_\ell}^{i_{j_\ell}} \mid 1 \leqslant i_{j_k} \leqslant d, \hspace{2pt} i_{j_1} + \dots + i_{j_\ell} = d, \hspace{2pt} x_k \in \{ a_k, \a_k \}, \hspace{2pt} 1 \leqslant j_1 < j_2 < \dots < j_\ell \leqslant n \}.
\end{align*}
In each disjoint subset of this union, there are $\binom{n}{\ell}$ choices for $(j_1, \dots, j_\ell)$ and then $2^\ell$ choices for $(x_{j_1}, \dots, x_{j_\ell})$. Therefore the generating function for the $\ell$th stratum is
\begin{align*}
\frac{2^\ell \binom{n}{\ell} t^\ell}{(1-t)^\ell} = \frac{\binom{n}{\ell} (2t)^\ell}{(1-t)^\ell}.
\end{align*}
It follows that
\begin{align*}
h_{S^{(n)}}(t) &= \sum_{\ell=0}^{n} \frac{\binom{n}{\ell} (2t)^\ell}{(1-t)^\ell} = \frac{1}{(1-t)^{n}} \sum_{\ell=0}^{n} \binom{n}{\ell} (2t)^\ell(1-t)^{n-\ell} = \frac{\Big((1-t)+(2t)\Big)^{n}}{(1-t)^{n}} 
= \frac{(1-t^2)^{n}}{(1-t)^{2n}}.
\end{align*}
Therefore,
\begin{align*}
h_{R^{(n)}}(t) = \frac{h_{S^{(n)}}(t)}{(1-t)^2} = \frac{(1-t^2)^{n}}{(1-t)^{2(n+1)}}.
\end{align*}
\item In reduced form, the Hilbert series of $R^{(n)}$ is
\begin{align*}
h_{R^{(n)}}(t) = \frac{(1+t)^{n}}{(1-t)^{n+2}}.
\end{align*}
Since this has a pole at $1$ of order $n+2$, we have $\GKdim R^{(n)} = n+2$.
\item The proof of part (1) shows that $z \coloneqq a_0 \a_0$ is a normal nonzerodivisor in $R^{(n)}$, so we have an inclusion
\begin{align*}
R^{(n)} \hookrightarrow R^{(n)}[z^{-1}].
\end{align*}
It is straightforward to see that there is an isomorphism
\begin{align*}
R^{(n)}[z^{-1}] \cong \frac{\Bbbk[ a_0^{\pm 1}, \dots, a_n^{\pm 1}]\ang{ z^{\pm 1} }}{\langle z a_i = q a_i z \mid 0 \leqslant i \leqslant n \rangle},
\end{align*}
which, being a localisation of a quantum plane, is a domain, and hence the same is true of $R^{(n)}$.
\item Since $R^{(n)}$ is a quotient of a quantum polynomial ring, it is noetherian. \qedhere
\end{enumerate}
\end{proof}

\subsection{A universal property}\label{UNIVERSAL}
Let $R = R^{(n)}_q$.
If $q=1$ then $R$ is the coordinate ring of the representation variety of  $\Lambda_1$.  
It is thus reasonable to ask if  $R$ is ``universal'' for representations of $\Lambda_q$ for arbitrary $q$.  In this section we show that this is true at least when $q$ is not a root of unity, or when we consider $R_q$ as a family parametrised by $q$.

We first give a general definition.

\begin{defn}\label{def:rep}
Let $\KK$ be a commutative ring, let $Q$ be a quiver, and let $\Lambda = \KK Q/I$ be a basic $\KK$-algebra.
We assume here  that $I \subseteq (\KK Q)_{\geq 2}$ is a graded ideal with respect to the path length grading on $\KK Q$.  
    Let $\Gamma$ be an $\NN$-graded $\KK$-algebra,
    and induce a grading on $M_{Q_0}(\Gamma)$ by giving the matrix units degree 0.
    A {\em graded $\Gamma$-representation} of $\Lambda$ 
    is an $\NN$-graded $\KK$-algebra homomorphism
    \[ \iota = \iota_\Gamma: \Lambda \to M_{Q_0}(\Gamma) = \End_\Gamma(\Gamma^{Q_0})\]
    which sends, for each $i \in Q_0$, the vertex idempotent $e_i$ to the matrix unit $e_{ii}$,
    and which is does not send any arrows (length 1 paths in $\Lambda$) to $0$.  

Thus $\iota(e_i \Lambda e_j) \subseteq e_{i} M_{Q_0}(\Gamma) e_j = e_{ij}\Gamma $, which we identify with $\Gamma$.  
We will abuse notation and write $\iota(e_i \Lambda e_j) \subseteq \Gamma$.
\end{defn}

\begin{remark} Technically, this should be called a graded $\Gamma$-representation with   dimension vector $(1, \cdots, 1)$, but we will suppress the dimension vector in the notation.
\end{remark}

Given a graded $\Gamma$-representation of $\Lambda$, then $\iota_\Gamma$ induces an action of $\Lambda$ on $\Gamma^{Q_0}$, which is thus 
a representation of $\Lambda$ in the usual sense:  the arrows in $\Lambda$ take us from one copy of $\Gamma$ to another.
This corresponds to a right action of $\Lambda$ on $\Gamma^{Q_0}$, and so we regard $\Gamma^{Q_0}$ as a row vector.

We now formally define the family of the $R_q$.  
Let $\KK = \kk[q, q^{-1}]$, where $q$ is an indeterminate.
Fix $n$ and let $R_\KK = R^{(n)}_\KK$ be the $\KK$-algebra defined by the presentation
\begin{align*}
R_\KK := \frac{\KK \langle a_i, \a_i \mid 0 \leqslant i \leqslant n \rangle}{\left \langle \hspace{-3pt}
\begin{array}{c|c}
\begin{array}{cc}
a_j a_i = a_i a_j & \a_j \a_i = \a_i \a_j  \\
\a_j a_i = q a_i \a_j & a_0 \a_0 = a_i \a_i
\end{array}
&
0 \leqslant i,j \leqslant n
\end{array}
\right \rangle}.
\end{align*}
Let $\Lambda_\KK$ be the path algebra over $\KK$ of the quiver $Q$ with relations as in \eqref{quiverdiagram}.
Further, let $A_\KK = \KK\ang{u,v}/\ang{vu-quv}$.

\begin{lemma}\label{lem:flat}
The $\KK$-algebras $A_\KK$, $R_\KK$, and $\Lambda_\KK$ are flat over $\KK$. 
\end{lemma}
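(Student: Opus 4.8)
The plan is to prove something stronger: each of $A_\KK$, $R_\KK$, and $\Lambda_\KK$ is a \emph{free} $\KK$-module, which of course implies flatness over $\KK$. In all three cases the algebra is presented as a free $\KK$-algebra (or $\KK$-path algebra) modulo an ideal generated by finitely many homogeneous relations, and the key point is that, with respect to a suitable admissible monomial order, the leading coefficient of every defining relation is a unit of $\KK = \kk[q,q^{-1}]$ — it is either $1$ or $q$. Hence the reduction system obtained by solving each relation for its leading monomial is well defined over the base ring $\KK$, and a noncommutative Gr\"obner basis argument (Bergman's Diamond Lemma) goes through over $\KK$ exactly as it does over a field: once all overlap ambiguities are checked to resolve, the resulting set of irreducible monomials is a $\KK$-basis.

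For $A_\KK = \KK\langle u,v\rangle/\langle vu - quv\rangle$ this is immediate: order monomials by degree and then lexicographically with $u<v$, so the unique relation has leading monomial $vu$ with leading coefficient $1$, there are no ambiguities to resolve, and $\{u^iv^j : i,j\geq 0\}$ is a $\KK$-basis. For $R_\KK$ we use the graded-lexicographic order with $a_0<\a_0<a_1<\a_1<\cdots<a_n<\a_n$ already employed in the proof of Lemma~\ref{lem:R-basics}(1): every relation in Definition~\ref{def:repring}, read over $\KK$, has leading coefficient $1$ or $q$, and the ambiguity reductions are literally the same polynomial identities checked there, so the standard monomials exhibited in that proof form a $\KK$-basis of $R_\KK$. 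Thus $A_\KK$ and $R_\KK$ are $\KK$-free.

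For $\Lambda_\KK$ one argues in the same spirit: fix an admissible order on the paths of the cyclic quiver $Q$ for which the relations $\overline{\alpha}_i\alpha_i - q\,\alpha_{i+1}\overline{\alpha}_{i+1}$ ($0\leq i\leq n$) have unit leading coefficients, and check that the overlap ambiguities — the paths to which two of these relations both apply — resolve over $\KK$; this is a computation of the same nature as for $R_\KK$ and reduces to identities already valid after specialising $q$, yielding a $q$-independent monomial $\KK$-basis of $\Lambda_\KK$. Alternatively, and perhaps more cleanly, one may observe that the ring isomorphism $\Lambda_q\cong A_q\hash G$ of \eqref{eqn:CKWZiso} is given by formulas whose coefficients lie in $\kk[q^{\pm1}]$, so it upgrades to a $\KK$-algebra isomorphism $\Lambda_\KK\cong A_\KK\hash G = A_\KK\otimes_\kk\kk G$; freeness of $A_\KK$ over $\KK$ then gives freeness of $\Lambda_\KK$ at once.

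The only genuine work is the bookkeeping for $\Lambda_\KK$: choosing the admissible order on paths and verifying that all ambiguities resolve over $\KK$ (or, on the alternative route, confirming that the isomorphism \eqref{eqn:CKWZiso} is genuinely integral in $q$). In either form this is routine — the recurring point is simply that nothing in the relevant rewriting ever requires inverting anything other than a power of $q$, which is already a unit in $\KK$.
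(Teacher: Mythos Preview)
Your proposal is correct and, for $A_\KK$ and $R_\KK$, is essentially the argument underlying the paper's proof: both hinge on the Gr\"obner basis computation of Lemma~\ref{lem:R-basics}(1), whose leading coefficients are units in $\KK$. The only difference is packaging. You carry the Diamond Lemma through over $\KK$ to exhibit an explicit $\KK$-basis (hence freeness), whereas the paper instead observes that the Hilbert series of the fibres is independent of $q$ and then invokes a general flatness criterion (\cite[Theorem~23.1]{Mats}, \cite[Theorem~III.9.9]{Ha}) to conclude. Your route is slightly more hands-on and self-contained; the paper's is terser but relies on an external reference.

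For $\Lambda_\KK$ the paper applies the same Hilbert-series argument. Your alternative of upgrading the isomorphism $\Lambda_q\cong A_q\hash G$ to $\KK$ is a nice shortcut, and since $A_\KK\hash G = A_\KK\otimes_\kk \kk G$ is visibly free over $\KK$, it avoids any ambiguity-checking. Your caveat about confirming that the isomorphism of \eqref{eqn:CKWZiso} has coefficients in $\kk[q^{\pm1}]$ is well placed; this is indeed routine, and once checked the argument is complete.
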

\begin{proof}
    For $A_\KK$ the statement is obvious.
    For $R_\KK$ and $\Lambda_\KK$ it follows from Lemma~\ref{lem:R-basics}(1) and \cite[Theorem~23.1]{Mats} (see also \cite[Theorem~III.9.9]{Ha}).
\end{proof}

In this section we will prove that $R_\KK$ gives a representation of $\Lambda_\KK$ which is universal for graded $A_\KK$-representations,
and that the same holds when we specialise to let $q\in \kk^\times$ be any non-root of unity.

\begin{theorem}\label{thm:universal}\mbox{}
\begin{enumerate}[{\normalfont (1)},topsep=0pt,itemsep=0pt,leftmargin=*]
\item There is a graded $R_\KK$-representation $\iota_{ R_\KK}$ of $\Lambda_\KK$ given by sending $\alpha_i \to a_i$, $\overline{\alpha}_i \to \overline{a}_i$. 
This specialises to give a graded $R_q$-representation of $\Lambda_q$ for any $q \in \kk^\times$.

\item 
$R_\KK$ is universal for graded $A_\KK$-representations of $\Lambda_\KK$ in the sense that given a graded $A_\KK$-representation $\iota_A$ of $\Lambda_\KK$, there is a unique $\KK$-algebra homomorphism $\phi: R_\KK \to A_\KK$ so that the diagram
\begin{equation*}
\begin{tikzcd}[>=stealth,ampersand replacement=\&,row sep=30pt,every label/.append style={font=\normalsize},column sep=35pt] 
\Lambda_\KK \arrow[dr, "\iota_A" {swap}] \arrow[r, "\iota_R" {above,yshift=1pt}] \& M_{Q_0}(R_\KK) \arrow[d, "M_{Q_0}(\phi)"] \\
\& M_{Q_0}(A_\KK)
\end{tikzcd}
\end{equation*}
commutes.

\item If $q$ is not a root of unity, then $R_q$ is universal for graded $A_q$-representations of $\Lambda_q$ in the sense defined by the obvious modification of \emph{(2)}.
\end{enumerate}
\end{theorem}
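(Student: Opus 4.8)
The plan for part~(1) is a direct verification. Since $\Lambda_\KK$ is $\KK Q$ modulo the relations in \eqref{quiverdiagram}, to see that $\alpha_i\mapsto a_i\,e_{i,i+1}$, $\overline{\alpha}_i\mapsto \overline{a}_i\,e_{i+1,i}$, $e_i\mapsto e_{ii}$ (indices mod $n+1$) defines a graded $\KK$-algebra homomorphism $\iota_{R_\KK}\colon\Lambda_\KK\to M_{Q_0}(R_\KK)$, it suffices to observe that the left side of each relation maps to $\overline{a}_i a_i\,e_{i+1,i+1}$, the right side to $q a_{i+1}\overline{a}_{i+1}\,e_{i+1,i+1}$, and that $\overline{a}_i a_i = q a_i\overline{a}_i = q a_{i+1}\overline{a}_{i+1}$ in $R_\KK$ by the relations $\overline{a}_j a_i = q a_i\overline{a}_j$ (with $j=i$) and $a_0\overline{a}_0 = a_i\overline{a}_i$. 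This map is graded, sends each $e_i$ to $e_{ii}$, and kills no arrow as $a_i,\overline{a}_i\neq 0$ in $R_\KK$ (Lemma~\ref{lem:R-basics}); the identical computation over $\kk$ gives a graded $R_{q_0}$-representation of $\Lambda_{q_0}$ for every $q_0\in\kk^\times$.

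For parts~(2) and~(3) I would first fix notation: write $A$ for $A_\KK$ (resp.\ $A_q$), let $\iota_A$ be a graded $A$-representation of $\Lambda$, and put $b_i:=\iota_A(\alpha_i)$, $\overline{b}_i:=\iota_A(\overline{\alpha}_i)$, regarded as elements of $A$ via the identification $\iota_A(e_i\Lambda e_j)\subseteq A$. Because $\iota_A$ is graded and the arrows have path-length degree $1$, the elements $b_i,\overline{b}_i$ lie in $A_1$, which is free of rank $2$ on $u,v$ over the base ring; write $b_i=\lambda_i u+\mu_i v$, $\overline{b}_i=\overline{\lambda}_i u+\overline{\mu}_i v$, and note $b_i,\overline{b}_i\neq 0$. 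Any $\phi$ making the triangle commute must satisfy $\phi(a_i)=b_i$ and $\phi(\overline{a}_i)=\overline{b}_i$, which gives uniqueness and reduces existence to checking that the $b_i,\overline{b}_i$ satisfy the defining relations of $R$.

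The crux is the following rigidity statement: up to rescaling, $b_i=\beta_i u$ and $\overline{b}_i=\overline{\beta}_i v$ for all $i$, with $\beta_i\overline{\beta}_i$ independent of $i$. To establish it I would translate the relations of $\Lambda$ into $A$, obtaining $\overline{b}_i b_i=q\,b_{i+1}\overline{b}_{i+1}$ for all $i$. Comparing coefficients of $u^2,uv,v^2$ (a free basis of $A_2$) gives $\lambda_i\overline{\lambda}_i=q\lambda_{i+1}\overline{\lambda}_{i+1}$ and $\mu_i\overline{\mu}_i=q\mu_{i+1}\overline{\mu}_{i+1}$ for all $i$; iterating once around the $(n{+}1)$-cycle yields $(1-q^{n+1})\lambda_0\overline{\lambda}_0=0=(1-q^{n+1})\mu_0\overline{\mu}_0$, so since $q$ is an indeterminate (case~(2)) or not a root of unity (case~(3)) and the base ring is a domain, $\lambda_i\overline{\lambda}_i=\mu_i\overline{\mu}_i=0$ for every $i$. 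Together with $b_i,\overline{b}_i\neq 0$ this forces each pair $(b_i,\overline{b}_i)$ to be a scalar multiple of $(u,v)$ or of $(v,u)$; writing $b_i\overline{b}_i=p_i\,uv$ with $p_i$ a nonzero scalar and letting $S'$ be the set of $i$ of the second type, the relations $\overline{b}_i b_i=q\,b_{i+1}\overline{b}_{i+1}$ become $p_i=q^{2[i\in S']}p_{i+1}$, so running around the cycle gives $q^{2|S'|}=1$, whence $S'=\emptyset$ and $p_i=p_{i+1}$ for all $i$. Granting this, each of the four families of relations of $R$ --- the two commutativity relations, $\overline{a}_j a_i=q a_i\overline{a}_j$, and $a_0\overline{a}_0=a_i\overline{a}_i$ --- follows at once from $vu=quv$ in $A$, so $\phi$ exists; commutativity of the triangle is then immediate, and in case~(3) the representation $\iota_{R_q}$ appearing in the statement is the specialisation from part~(1).

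I expect the rigidity step to account for essentially all of the difficulty. The hypothesis that $\iota_A$ kills no arrow must be used in an essential way: it is precisely what rules out the ``mixed'' configurations (some $b_i$ proportional to $u$, others to $v$) which would otherwise yield non-commuting images. The non-root-of-unity hypothesis also enters essentially, in exactly two places --- cancelling $1-q^{n+1}$, and deducing $S'=\emptyset$ from $q^{2|S'|}=1$ --- and both break down when $q$ is a root of unity, where one should indeed expect universality to fail; this is why the theorem is stated over $\KK$ or for $q$ not a root of unity.
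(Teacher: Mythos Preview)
Your proof is correct, but it takes a genuinely different route from the paper's. The paper's argument for the rigidity step is more ``global'': it forms the products $f=b_0b_1\cdots b_n$ and $g=\overline{b}_n\cdots\overline{b}_1\overline{b}_0$ in $A_{n+1}$, performs a telescoping computation using the relations $\overline{b}_ib_i=q\,b_{i+1}\overline{b}_{i+1}$ to show that $gf=q^{(n+1)^2}fg$, and then invokes two preparatory lemmata (Lemmata~\ref{lone} and~\ref{ltwo}) saying that the only way two nonzero elements of $A_{n+1}$ can $q^{(n+1)^2}$-commute is if one is a scalar multiple of $u^{n+1}$ and the other of $v^{n+1}$, and that a product of degree-$1$ elements lies in $\KK u^{n+1}$ only if each factor lies in $\KK u$. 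From this it reads off $b_i\in\KK u$, $\overline{b}_i\in\KK v$.

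Your approach is more local and elementary: you compare coefficients of $u^2$ and $v^2$ in each individual relation $\overline{b}_ib_i=q\,b_{i+1}\overline{b}_{i+1}$, obtain the cyclic constraints $\lambda_i\overline{\lambda}_i=q\lambda_{i+1}\overline{\lambda}_{i+1}$ and $\mu_i\overline{\mu}_i=q\mu_{i+1}\overline{\mu}_{i+1}$, and conclude directly. This avoids the auxiliary lemmata entirely and makes the two uses of the non-root-of-unity hypothesis completely transparent. The paper's approach, by contrast, packages the information into a single degree-$(n{+}1)$ $q$-commutation relation, which is conceptually pleasing and perhaps more suggestive of how one might generalise, but requires more machinery to unpack. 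Both arguments break down at roots of unity in the expected way (cf.\ Example~\ref{eg:notuniversal}).
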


\begin{remark}\label{rem:universal1}
As usual, it follows from the universal properties in Theorem~\ref{thm:universal} that $R_\KK$ is unique up to isomorphism, and similarly for $R_q$ when $q$ is not a root of unity.
This justifies calling $R$ {\em the} representation ring of $\Lambda$.
\end{remark}

Before giving the proof of Theorem~\ref{thm:universal}, we give two quick lemmata about multiplication in $A_\KK$ and $A_q$.

\begin{lemma}\label{lone}\mbox{}
\begin{enumerate}[{\normalfont (1)},topsep=0pt,itemsep=0pt,leftmargin=*]
\item 
Let $f,g $ be nonzero elements of $( A_\KK)_{n+1}$. 
Then  $gf = q^{(n+1)^2} fg$ if and only if $g \in \KK v^{n+1}$, $f \in \KK u^{n+1}$.
\item
Suppose that $q$ is not a root of unity, let $A = A_q$, and let $f,g $ be nonzero elements of $ A_{n+1}$. 
Then  $gf = q^{(n+1)^2} fg$ if and only if $g \in \kk v^{n+1}$, $f \in \kk u^{n+1}$.
\end{enumerate}
\end{lemma}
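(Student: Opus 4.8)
The plan is to work in the quantum plane $A$ (either over $\KK$ or over $\kk$, the argument being uniform), using the monomial basis $\{u^a v^b\}$ and the commutation rule $v^b u^a = q^{ab} u^a v^b$. Write $f = \sum_{a+b = n+1} \lambda_{ab}\, u^a v^b$ and $g = \sum_{c+d = n+1} \mu_{cd}\, u^c v^d$, both nonzero, homogeneous of degree $n+1$. One direction is immediate: if $g = \mu v^{n+1}$ and $f = \lambda u^{n+1}$, then $gf = \mu\lambda\, v^{n+1} u^{n+1} = q^{(n+1)^2}\mu\lambda\, u^{n+1}v^{n+1} = q^{(n+1)^2} fg$. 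So the content is the forward implication.

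For the forward implication, I would compute both $gf$ and $fg$ in the monomial basis. We have $(u^c v^d)(u^a v^b) = q^{da} u^{a+c} v^{b+d}$, so the coefficient of the monomial $u^{a+c} v^{b+d}$ in $gf$ is $\sum q^{da}\mu_{cd}\lambda_{ab}$ where the sum ranges over the relevant index pairs; similarly the coefficient of the same monomial in $fg$ is $\sum q^{bc}\lambda_{ab}\mu_{cd}$. Fixing a total degree-$2(n+1)$ monomial $u^s v^t$ (with $s+t = 2(n+1)$), the equation $gf = q^{(n+1)^2} fg$ forces, for each such $(s,t)$,
\[
\sum_{\substack{a+b = n+1,\ c+d = n+1\\ a+c = s}} \bigl(q^{da} - q^{(n+1)^2} q^{bc}\bigr)\,\lambda_{ab}\mu_{cd} = 0.
\]
The key arithmetic observation is that, writing $b = n+1-a$ and $d = n+1-c$, one has $da = (n+1)a - ac$ and $(n+1)^2 + bc = (n+1)^2 + (n+1)c - ac - (n+1)a + \dots$; a short computation shows $da - \bigl((n+1)^2 + bc\bigr) = -(n+1)^2 + (n+1)(a+c) - \dots$, which depends on $(a,c)$ only through $a - c$ (equivalently through $a$, once $s = a+c$ is fixed). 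The upshot is that within each fixed monomial $u^s v^t$, distinct contributing pairs $(a,c)$ give genuinely distinct powers of $q$ in the bracket — except for the pair realising $g \in v^{n+1}$, $f \in u^{n+1}$, where the bracket vanishes identically. Here is where the two cases diverge: over $A_q$ with $q$ not a root of unity, "distinct powers of $q$" means the brackets $q^{da} - q^{(n+1)^2+bc}$ are nonzero; over $A_\KK$, $q$ is a genuine indeterminate so they are automatically nonzero as elements of $\KK$. Either way, one concludes that the only way to satisfy all these equations is to have $\lambda_{ab}\mu_{cd} = 0$ for every pair except $(a,c) = (0, n+1)$ — wait, more carefully: one must track which single pair survives.

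The cleanest route to finish is probably extremal-degree bookkeeping rather than solving the full linear system: let $a_0$ be the largest index $a$ with $\lambda_{ab} \neq 0$ and $c_0$ the largest $c$ with $\mu_{cd} \neq 0$. Look at the monomial $u^{a_0 + c_0} v^{2(n+1) - a_0 - c_0}$. Its coefficient in $gf$ is exactly $q^{d_0 a_0}\mu_{c_0 d_0}\lambda_{a_0 b_0}$ (the extremal pair is the unique contributor of highest $u$-degree from each factor), and in $fg$ it is $q^{b_0 c_0}\lambda_{a_0 b_0}\mu_{c_0 d_0}$; equating forces $q^{d_0 a_0} = q^{(n+1)^2} q^{b_0 c_0}$, i.e. $q^{(n+1)^2 - (n+1)(a_0 + c_0) + \,\dots}$ — evaluate to find this holds (given $q$ not a root of unity, or $q$ transcendental) if and only if $a_0 = n+1$ or $c_0 = 0$ — and symmetrically, looking at the lowest index, $b_0 = n+1$ or $d_0 = 0$. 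A small case analysis on these four possibilities, combined with running the same argument at the opposite extreme, pins down $f \in \kk u^{n+1}$ and $g \in \kk v^{n+1}$ (any cross term would produce a monomial whose coefficient cannot cancel). I expect the main obstacle to be organising this case analysis cleanly — keeping track of which monomial isolates which coefficient and verifying the exponent arithmetic $da - bc - (n+1)^2$ never accidentally vanishes for a "mixed" term. Once the right monomial-isolation argument is set up, both parts (1) and (2) follow from the same computation, with flatness of $A_\KK$ over $\KK$ (Lemma~\ref{lem:flat}) ensuring the $\KK$-linear independence of the monomial basis that the argument uses.
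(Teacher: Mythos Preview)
Your approach is correct and is essentially the paper's: the paper also reduces to leading monomials (under the order $u>v$), first handling the pure monomial case via the identity $d_0a_0 - b_0c_0 = (n+1)(a_0 - c_0)$ and then passing to general $f,g$. One small correction to your arithmetic: at the extremal pair the equation reads $(n+1)(a_0 - c_0) = (n+1)^2$, forcing $a_0 = n+1$ \emph{and} $c_0 = 0$ simultaneously (not ``or''), so $g \in \KK v^{n+1}$ drops out immediately and no further case analysis is needed.
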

\begin{proof}
The proofs are similar; we prove (2).
We have
\beq\label{bcad} u^a v^b u^c v^d = q^{bc-ad} u^c v^d u^a v^b,\eeq
and sufficiency follows immediately.  
For necessity, first suppose that  $g = u^{n+1-b}v^b, f=u^c v^{n+1-c}$ are monomials.
From \eqref{bcad},  $gf = q^{(n+1)^2} fg$, and the assumption that $q$ is not a root of unity, we have
\[ 
(n+1)^2 = bc - (n+1-b)(n+1-c) = (n+1)(b+c-(n+1));
\]
in other words  $b+c = 2(n+1)$.  
As $0 \leq b,c \leq n+1$, the only way this can happen is if $b = c = n+1$.

For general $f,g$, put a monomial order on monomials in $A$ so that $u>v$, so $v^{n+1}$ is the smallest monomial in $A_{n+1}$.  
The leading monomials of $f,g$ must $q^{(n+1)^2}$-commute, and the monomial case forces $g = \lambda v^{n+1}$.
It is then easy to see that for $f$ to $q^{(n+1)^2}$-commute with $g$ forces $f = \mu u^{n+1}$.
\end{proof}

\begin{lemma}\label{ltwo}\mbox{}
\begin{enumerate}[{\normalfont (1)},topsep=0pt,itemsep=0pt,leftmargin=*]
\item
Let $x_0, \dots, x_n \in (A_\KK)_1$ be nonzero.
Then
$x_0 x_1 \dots x_n \in \KK u^{n+1}$ if and only if $x_0, \dots, x_n \in \KK u$, and
$x_0 x_1 \dots x_n \in \KK v^{n+1}$ if and only if $x_0, \dots, x_n \in \KK v$.
\item 
Let $q \in \kk^\times$ be arbitrary and let $A = A_q$.
Let $x_0, \dots, x_n \in A_1$ be nonzero.
Then
$x_0 x_1 \dots x_n \in \kk u^{n+1}$ if and only if $x_0, \dots, x_n \in \kk u$, and
$x_0 x_1 \dots x_n \in \kk v^{n+1}$ if and only if $x_0, \dots, x_n \in \kk v$.
\end{enumerate}
\end{lemma}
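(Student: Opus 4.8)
The plan is to handle the ``if'' directions by inspection and the ``only if'' directions by a leading/trailing--monomial argument, running parts (1) and (2) in parallel, since nothing in the argument depends on whether the base ring is $\KK=\kk[q,q^{-1}]$ or $\kk$.

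The ``if'' directions are immediate: if $x_i=\gamma_i u$ for all $i$ then $x_0\cdots x_n=(\gamma_0\cdots\gamma_n)u^{n+1}$, and if $x_i=\gamma_i v$ for all $i$ then $x_0\cdots x_n=(\gamma_0\cdots\gamma_n)v^{n+1}$, since the $v$'s commute; so the work is in the converses. There I would first record the structural fact needed: both $A_q$ and $A_\KK$ are domains, free as a $\kk$- (respectively $\KK$-) module on the normal-ordered monomials $u^av^b$, with $(u^av^b)(u^cv^d)=q^{bc}u^{a+c}v^{b+d}$. I would order these monomials by degree, breaking ties so that $u>v$ (i.e.\ within a fixed degree, $u^av^b<u^{a'}v^{b'}$ iff $a<a'$). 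This is a \emph{term order}: $m<m'$ implies $mn<m'n$ and $nm<nm'$, because multiplying by a monomial merely shifts exponents by a fixed vector, up to a scalar. Consequently, in a product of nonzero homogeneous elements the largest monomials cannot cancel, and neither can the smallest, so, writing $\mu(f)$ and $\nu(f)$ for the largest and smallest monomials occurring in $f$, one has $\mu(f_0\cdots f_n)=\mu(f_0)\cdots\mu(f_n)$ and $\nu(f_0\cdots f_n)=\nu(f_0)\cdots\nu(f_n)$ (normal-ordered on the right-hand side); the coefficient of the right-hand monomial in the product is the product of the corresponding coefficients of the $f_i$ times a power of $q$, hence nonzero — over $\KK$ one uses here that powers of $q$ are units.

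With this in hand the converse is short. Write $x_i=\alpha_iu+\beta_iv$, so $\nu(x_i)=v$ if $\beta_i\ne0$ and $\nu(x_i)=u$ if $\beta_i=0$, while dually $\mu(x_i)=u$ if $\alpha_i\ne0$ and $\mu(x_i)=v$ if $\alpha_i=0$. Suppose $x_0\cdots x_n\in\kk u^{n+1}$ (respectively $\KK u^{n+1}$): since the $x_i$ are nonzero and the ring is a domain, $x_0\cdots x_n=\lambda u^{n+1}$ with $\lambda\ne0$, so $\nu(x_0\cdots x_n)=u^{n+1}$; comparing with $\nu(x_0)\cdots\nu(x_n)=u^{\#\{i\,:\,\beta_i=0\}}v^{\#\{i\,:\,\beta_i\ne0\}}$ forces $\beta_i=0$ for every $i$, so each $x_i=\alpha_iu\in\kk u$ with $\alpha_i\ne0$. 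The case $x_0\cdots x_n\in\kk v^{n+1}$ is the mirror image: there $\mu(x_0\cdots x_n)=v^{n+1}$, and $\mu(x_0)\cdots\mu(x_n)=u^{\#\{i\,:\,\alpha_i\ne0\}}v^{\#\{i\,:\,\alpha_i=0\}}$ forces $\alpha_i=0$ for every $i$, so each $x_i=\beta_iv\in\kk v$.

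I do not expect a genuine obstacle. The only point requiring care is the no-cancellation claim for $\mu$ and $\nu$ of a product — i.e.\ verifying that the displayed order really is a term order and that the leading (respectively trailing) coefficient of the product is a nonzero multiple, by a power of $q$, of the product of the leading (respectively trailing) coefficients of the factors. This is routine, but worth making explicit, since over $\KK$ it is precisely the place where $q$ being a unit enters. Everything else is immediate, and the argument is uniform in the base ring, so it proves (1) and (2) at once.
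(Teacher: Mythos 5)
Your proof is correct and uses essentially the same idea as the paper's: a leading-monomial argument in the quantum plane. The only cosmetic difference is that you fix a single order $u>v$ and examine both the leading monomial $\mu$ and the trailing monomial $\nu$, whereas the paper uses the order $v>u$ (leading term) for the $u^{n+1}$ claim and the order $u>v$ (leading term) for the $v^{n+1}$ claim; these are the same computation.
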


\begin{proof}
For the first claim, consider the leading term of $x_0\dots x_n$ in a monomial order with $v>u$.  For the second claim, use an order with $u>v$.
\end{proof}

\begin{proof}[Proof of Theorem~\ref{thm:universal}]
(1) We wish to define a homomorphism $\iota_R: \Lambda_\KK \to M_{Q_0}(R_\KK)$ by sending
\[ \alpha_i \mapsto e_{i,i+1} a_i , \quad \overline{\alpha}_i \mapsto e_{i+1,i} \a_i,\]
where the subscripts are interpreted modulo $n+1$.
By construction,  $\iota_R$ respects the relations of $\KK Q$. Moreover, we have
\[
\iota_R(\overline{\alpha}_i) \iota_R(\alpha_i) 
= e_{i+1,i+1} \a_i a_i 
= q e_{i+1,i+1} a_{i+1} \a_{i+1}  
= q \iota_R(\alpha_{i+1}) \iota_R(\overline{\alpha}_{i+1}),
\]
so $\iota_R$ respects the relations of $\Lambda_\KK$ and gives a well-defined homomorphism.
It is clear that $\iota_R$ specialises as claimed.

Statements (2) and (3) have similar proofs; we prove (2).
A graded $A_\KK$-representation of $\Lambda_\KK$ is equivalent to a choice of nonzero $a_0, \dots, a_n, \overline{a}_0, \dots, \overline{a}_n \in (A_\KK)_1$ so that
\[ \overline{a}_i a_i = q a_{i+1} \overline{a}_{i+1}\]
for $i \in \{0, \dots, n\}$, where the subscripts are understood modulo $n+1$.
We will show (abusing notation) that $a_0, \dots, a_n, \overline{a}_0, \dots, \overline{a}_n$ satisfy the relations of $R_\KK$.

Let $f = a_0\dots a_n$, $g = \overline{a}_n \dots \overline{a}_0$.
Then
\begin{align*}
gf & = q \overline{a}_n \dots \overline{a}_2 (\overline{a}_1 a_1)^2 a_2 \dots a_n \\
 & = q q^2 \overline{a}_n \dots \overline{a}_3 (\overline{a}_2 a_2)^3 a_3 \dots a_n \\
 & = \dots = q^{1+\dots + n}(\overline{a}_n a_n)^{n+1}= q^{\binom{n+1}{2}}q^{n+1}(a_0 \overline{a}_0)^{n+1}.
 \end{align*}
 Likewise,
 \[ fg = q^{-\binom{n+1}2} (a_0 \overline{a}_0)^{n+1},\]
 so $gf = q^{(n+1)^2} fg$.
 Combining Lemmata~\ref{lone} and \ref{ltwo}, we see that $a_0, \dots, a_n \in \KK u$, and $\overline{a}_0, \dots, \overline{a}_n \in \KK v$.
 It follows immediately that the relations for $R_\KK$ hold for 
 $a_0, \dots, a_n,\overline{a}_0, \dots, \overline{a}_n$.
\end{proof}

\begin{remark}\label{rem:universal}
If $q=1$, then the representation variety of $\Lambda$ parametrises all (commutative) representations of $\Lambda$, and so $R$ is universal for (ungraded) representations of $\Lambda$ into any commutative ring, using the obvious slight generalisation of Definition~\ref{def:rep}.  
Since any graded $A$-representation of $\Lambda$ is a commutative representation, it follows that $R$ is also universal for graded $A$-representations.  The argument here is quite different, however, than the proof of Theorem~\ref{thm:universal}.
\end{remark}

\begin{example}\label{eg:notuniversal}
If $q$ is a root of unity then $R_q$ need not be not universal in the sense of Theorem~\ref{thm:universal} (3), even given the universality of $R_\KK$.
For example, fix $n$ and suppose that $q$ is a primitive $(n+1)$th root of unity. 
Let $A = A_q$, $\Lambda = \Lambda_q$, and define an $A$-representation of $\Lambda$ by
\begin{align*}
\iota_A : \Lambda \to M_{Q_0}(A), \qquad a_i \mapsto v \;e_{i,i+1}, \quad \overline{a}_i \mapsto q^{-2i} u\; e_{i+1,i}.
\end{align*}
This gives an $A$-representation of $\Lambda$ since
\begin{align*}
\iota_A(\overline{a}_i a_i - q a_{i+1} \overline{a}_{i+1}) = \left\{
\begin{array}{ll}
q^{-2i} uv - q^{-2(i+1) + 1}v u = q^{-2i} uv - q^{-2(i+1) + 2}uv = 0 \hspace{5pt} & \text{if } i \neq n, \\
q^{-2n} uv - q vu = q^{-2n} uv - q^2 uv = 0 & \text{if } i = n.
\end{array}\right.
\end{align*}
However, it is easy to see that this representation does not factor through $R_q$.
\end{example}

\subsection{An embedding of \texorpdfstring{$R$}{R}}
Fix $n, q$ and let $R = R^{(n)}_q$.
For later use, we  note that $R$ embeds in an iterated Laurent polynomial ring over $A$.

\begin{lemma}\label{lem:EmbeddingOfR}
There is an embedding
\begin{align*}
\varphi: R 
\hra A[s_0^{\pm 1}, \dots, s_n^{\pm 1}], \qquad
a_i & \mapsto u \frac{s_{i+1}}{s_i}, \quad
\overline{a_i} \mapsto v \frac{s_{i}}{s_{i+1}}.
\end{align*}
\end{lemma}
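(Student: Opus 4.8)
The plan is to verify that $\varphi$ respects the defining relations of $R$ and then to prove injectivity by a monomial-counting argument. Throughout, the $s_i$ are the central indeterminates of the iterated Laurent polynomial ring, and $\varphi$ is graded with $\deg u=\deg v=1$, $\deg s_i=0$.

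\textbf{Well-definedness.} This is a routine substitution into the relations of Definition~\ref{def:repring}. Since the $s_i$ are central, one gets $\varphi(a_j)\varphi(a_i)=u^2 s_{i+1}s_{j+1}/(s_is_j)$, symmetric in $i,j$, and likewise for the $\overline{a}$'s; $\varphi(\overline{a}_j)\varphi(a_i)=vu\,s_js_{i+1}/(s_{j+1}s_i)=q\,uv\,s_{i+1}s_j/(s_is_{j+1})=q\,\varphi(a_i)\varphi(\overline{a}_j)$ using $vu=quv$; and $\varphi(a_i)\varphi(\overline{a}_i)=uv$ independently of $i$, so $\varphi(a_0\overline{a}_0)=\varphi(a_i\overline{a}_i)$. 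Hence $\varphi$ extends to a graded algebra homomorphism.

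\textbf{Reduction of injectivity to a combinatorial claim.} I would use the monomial $\kk$-basis of $R$ furnished by the Gr\"obner basis computed in the proof of Lemma~\ref{lem:R-basics}(1): in degree $d$, $R_d$ has basis the monomials $w=a_0^{i_0}\overline{a}_0^{j_0}x_1^{i_1}\cdots x_n^{i_n}$ with $x_k\in\{a_k,\overline{a}_k\}$ (the choice of $x_k$ being relevant only when $i_k\ge1$) and $i_0+j_0+\sum_k i_k=d$. The target $A[s_0^{\pm1},\dots,s_n^{\pm1}]$ has $\kk$-basis $\{\,u^av^b s_0^{c_0}\cdots s_n^{c_n}: a,b\ge0,\ c_\ell\in\ZZ\,\}$. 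Each generator $a_i,\overline{a}_i$ maps to a scalar-free monomial in $u,v$ times a monomial in the $s_\ell^{\pm1}$, so $\varphi(w)$ equals $q^{c(w)}$ times a single such basis monomial $u^{a(w)}v^{b(w)}s_0^{c_0(w)}\cdots s_n^{c_n(w)}$, the powers of $q$ arising only from commuting $u$'s past $v$'s in the resulting word. Since $\varphi$ is graded, it is enough to show that, within each $R_d$, distinct basis monomials $w$ have distinct image data $(a(w),c_0(w),\dots,c_n(w))$; then $\varphi$ carries a basis of $R_d$ to linearly independent elements, hence is injective.

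\textbf{The bookkeeping, and the expected obstacle.} It remains to recover the exponents of $w$ from its image data. Put $\delta_k=+i_k$ if $x_k=a_k$ and $\delta_k=-i_k$ if $x_k=\overline{a}_k$, so that $i_k=|\delta_k|$ and $\delta_k=0\iff i_k=0$, and the sign of $\delta_k$ records $x_k$. Reading off the exponents of the $s_\ell$ in $\varphi(w)$ gives $c_1(w)=i_0-j_0-\delta_1$, $c_\ell(w)=\delta_{\ell-1}-\delta_\ell$ for $2\le\ell\le n$, and $c_0(w)=-i_0+j_0+\delta_n$ (indices mod $n+1$), while the $u$-degree is $a(w)=i_0+\sum_k\max(\delta_k,0)$ and $b(w)=d-a(w)$. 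The relations $c_\ell(w)=\delta_{\ell-1}-\delta_\ell$ telescope to express every $\delta_\ell$ through $\delta_1$ and the $c_\ell(w)$; substituting this together with $a(w)-b(w)=(i_0-j_0)+\sum_k\delta_k$ into the formula for $c_1(w)$ produces a single equation $(n+1)\delta_1=2a(w)-d-c_1(w)+\sum_{\ell=2}^n(n-\ell+1)c_\ell(w)$. This pins down $\delta_1$, hence all $\delta_k$, hence every $i_k=|\delta_k|$ and every $x_k$, hence $i_0-j_0$, and finally $i_0+j_0=d-\sum_k|\delta_k|$ — so $w$ is recovered, and $\varphi$ is injective. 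The only mildly delicate point, and the one I would flag as the main obstacle, is the \emph{piecewise-linear} dependence of $a(w)$ on the $\delta_k$ through $\max(\delta_k,0)$; this is dissolved by working with $a(w)-b(w)$, which is linear in the $\delta_k$. Equivalently, one may phrase the conclusion as: $\varphi$ is onto the graded subalgebra $B\subseteq A[s_0^{\pm1},\dots,s_n^{\pm1}]$ it generates, and the bijection between monomial bases just established shows $\hilb B=\hilb R$, forcing $\varphi$ to be an isomorphism onto $B$.
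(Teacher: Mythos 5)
Your proof is correct, but it takes a genuinely different route from the paper's. The paper identifies the image $R'=\varphi(R)\subseteq A[s_0^{\pm1},\dots,s_n^{\pm1}]$, observes that $uv$ is normal in $R'$ and that $R'[(uv)^{-1}]$ is a localised quantum polynomial ring in $n+2$ variables, hence $\GKdim R' = n+2 = \GKdim R$, and then concludes from \cite[Proposition~3.15]{lenagan} that the surjection $R\twoheadrightarrow R'$ between domains of equal GK-dimension must be an isomorphism. You instead give a fully explicit combinatorial argument: you start from the monomial $\kk$-basis of $R$ furnished by the Gr\"obner basis in Lemma~\ref{lem:R-basics}(1), show that $\varphi$ sends each basis monomial to a scalar multiple of a single basis monomial $u^a v^b s_0^{c_0}\cdots s_n^{c_n}$ in the target, and then invert the resulting integer-linear bookkeeping. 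Your key move — introducing the signed exponents $\delta_k$ and trading the piecewise-linear quantity $a(w)=i_0+\sum_k\max(\delta_k,0)$ for the linear $a(w)-b(w)=(i_0-j_0)+\sum_k\delta_k$ so that the telescoping relations $c_\ell=\delta_{\ell-1}-\delta_\ell$ can be solved for $\delta_1$ — is exactly what makes the recovery of $w$ from its image data work, and I checked that your final formula $(n+1)\delta_1=2a(w)-d-c_1(w)+\sum_{\ell=2}^n(n-\ell+1)c_\ell(w)$ is correct. The trade-off is clear: the paper's argument is short but leans on standard GK-dimension machinery, whereas yours is longer but self-contained and exhibits a concrete bijection between monomial bases, simultaneously reconfirming the Hilbert series from Lemma~\ref{lem:R-basics}(1). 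One small notational remark: in your closing sentence you write the image as $B\subseteq A[s_0^{\pm1},\dots,s_n^{\pm1}]$, but $B$ is already reserved in this paper for the quantum Kleinian singularity $A^G$; the paper calls this image $R'$, and you should do the same to avoid a clash.
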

\begin{proof}
It is straightforward to verify that $\varphi$ respects the relations of $R$.
Let 
\beq\label{Rprime}
R' = \Bbbk \big\langle u \tfrac{s_{i+1}}{s_{i}}, v \tfrac{s_i}{s_{i+1}} \mid 0 \leqslant i \leqslant n \big\rangle \subseteq A[s_0^{\pm 1}, \dots, s_n^{\pm 1}]
\eeq
be the image of $\varphi$. Note that $uv $ is a normal element of $ R'$ and that 
\begin{align*}
R'[(uv)^{-1}] = \Bbbk_{\mathbf{q}}\Big[ \Big( v \tfrac{s_1}{s_0} \Big)^{\pm 1}, \Big( v \tfrac{s_2}{s_1} \Big)^{\pm 1}, \dots, \Big( v \tfrac{s_n}{s_0} \Big)^{\pm 1}, (uv)^{\pm 1} \Big],
\end{align*}
which is a localisation of some quantum polynomial ring in $n+2$ variables and hence has GK-dimension 
$n+2$. It follows that $\GKdim R' = n+2$. We therefore have a surjection $R \twoheadrightarrow R'$ between domains of GK-dimension $n+2$.
By \cite[Proposition~3.15]{lenagan}, this map must be an isomorphism. 
\end{proof}

\section{A geometric resolution: the ring}\label{GEOMRESOLUTION}

In this section we construct an $\NN$-graded ring $T$ which will give our ``geometric'' resolution of $B = B_q$.
Unsurprisingly, $T$ will be a deformation of the semi-invariant ring used to define the minimal resolution of a commutative $\AA_n$ singularity.

\subsection{The ring \texorpdfstring{$\T$}{T}}
We first define $T$, or $T^{(n)}_q$ if we want to make the dependence on $n$ and $q$ explicit.
Again, let $A = A_q$ and $R = R^{(n)}_q$.
If we give $\kk[s_0^{\pm 1}, \dots, s_n^{\pm 1}]$ the standard $\ZZ^{n+1}$-grading with $\deg s_i = e_i$, then there is an induced $\ZZ^{n+1}$-grading on $A[s_0^{\pm 1}, \dots, s_n^{\pm 1}]$ and on $R$ via the embedding  in Lemma~\ref{lem:EmbeddingOfR}.
We will refer to this grading as the {\em weight grading} on $R$, and write $\wt(a)$ for the degree of a weight-homogeneous element $a$.

Following the approach of quiver GIT, set 
\[\chi \coloneqq (-n,1,1, \dots, 1) \in \ZZ^{n+1}\]
and let $T = \T^{(n)}_q$ be the ring of semi-invariants of $R$ corresponding to $\chi$:
\begin{align*}
\T \coloneqq \bigoplus_{k \in \NN} R_{k \chi}.
\end{align*}
That is, $\T$ is the $\NN$-graded ring whose $k$th graded piece consists of elements of $R$ of weight $k \chi$.
We will  refer to this as the \emph{projective grading} on $\T$, to avoid confusion with the other gradings that we can (and will) use.
If $k \in \NN$, then $T_k$ is always the $k$th graded piece under the projective grading.
Later we will use the projective grading on $\T$ to define a category $\rqgr \T$, as in noncommutative projective geometry.

Before doing this, though, we need to study the elementary algebraic properties of $T$.  
 We can view $\T=\T^{(n)}_q$ as a subring of $R=R^{(n)}_q$ as given in \eqref{eqn:DefOfR}, or as a subring of $A_q[s_i^{\pm 1}]$ via 
 Lemma~\ref{lem:EmbeddingOfR}; both perspectives will be useful.
 
 We first record a generating set for $\T$.

\begin{proposition} \label{prop:GeneratorsOfT}
Fix $n$ and let $\T = \T^{(n)}_q$.  
Then $T$ is generated by the elements
\begin{gather*}
x \coloneqq a_0 a_1 \dots a_n, \qquad y \coloneqq \a_0 \a_1 \dots \a_n, \qquad z \coloneqq a_0 \a_0, \\
t_i \coloneqq a_0^{n-i} a_1^{n-i-1} \dots a_{n-i-2}^2 a_{n-i-1} \a_{n-i+1} \a_{n-i+2}^2 \dots \a_{n-1}^{i-1} \a_n^i.
\end{gather*}
Further, $T_0 = B$ and $\GKdim T \geq 3$.
\end{proposition}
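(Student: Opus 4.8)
The plan is to work inside $R = R^{(n)}_q$ with its weight $\ZZ^{n+1}$-grading, for which $\wt(a_i) = e_{i+1} - e_i$, $\wt(\a_i) = e_i - e_{i+1}$ (subscripts mod $n+1$) and $\wt(z) = 0$, and to write down an explicit $\kk$-basis of each weight space $T_k = R_{k\chi}$. First I would fix a normal form for monomials of $R$: $q$-commuting every $\a_j$ to the right of the $a_i$, sorting, and repeatedly rewriting $a_i\a_i = z$, one sees that every monomial equals, up to a unit power of $q$, one of the form
\[ z^m \prod_{i\in S} a_i^{c_i}\,\prod_{j\in S'}\a_j^{d_j}, \qquad S\cap S' = \emptyset,\quad c_i, d_j\geq 1,\quad m\geq 0, \]
and these are $\kk$-linearly independent since in each degree their count matches the Hilbert series of Lemma~\ref{lem:R-basics}(1). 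Setting $b_j := c_j - d_j\in\ZZ$ for $0\leq j\leq n$, such a monomial has weight $\sum_{j}(b_{j-1} - b_j)e_j$; equating this with $k\chi = (-kn, k,\dots,k)$ forces $b_j = \ell + (n-j)k$, where $\ell := b_n\in\ZZ$ is a free parameter. Hence $T_k$ has $\kk$-basis $\{\,z^m w_{k,\ell} : m\geq 0,\ \ell\in\ZZ\,\}$, where $w_{k,\ell}$ denotes the normal-form monomial with exponent data $b_j = \ell + (n-j)k$.

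I would then identify the generators of the statement among these monomials. Under the embedding $\varphi$ of Lemma~\ref{lem:EmbeddingOfR} one has $x\mapsto u^{n+1}$, $y\mapsto v^{n+1}$, $z\mapsto uv$; all three have weight $0$, and $w_{0,\ell}$ equals $x^{\ell}$ for $\ell\geq 0$ and $y^{-\ell}$ for $\ell\leq 0$. Thus $T_0$ is spanned by $\{z^m x^{\ell}\}\cup\{z^m y^{\ell}\}$, i.e. $T_0 = \kk\langle x,y,z\rangle$, and $\varphi$ restricts to an isomorphism $T_0 \xrightarrow{\ \sim\ } \kk\langle u^{n+1}, v^{n+1}, uv\rangle = B$, proving $T_0 = B$. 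A direct reading of the displayed formula for $t_i$ shows its exponent data is $b_j = (n-j) - i$, i.e.\ $t_i = w_{1,-i}$, so $t_0,\dots,t_n\in T_1$.

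The core of the argument is to express each basis element $z^m w_{k,\ell}$ as a monomial in $x,y,z,t_0,\dots,t_n$; as the factor $z^m$ is harmless it suffices to treat $w_{k,\ell}$. For $\ell\geq 0$ every $b_j\geq 0$, so $w_{k,\ell}$ is a product of $a_i$'s and one factors off $x^{\ell}$ to get $w_{k,\ell} = q^\bullet x^{\ell} t_0^{\,k}$; symmetrically $w_{k,\ell} = q^\bullet y^{\,-\ell-nk} t_n^{\,k}$ for $\ell\leq -nk$. For $-nk \leq \ell \leq 0$, set $m := -\ell$, $c := \lfloor m/k\rfloor$, $a := m - kc$, so that $c, c+1\in\{0,\dots,n\}$ and the multiset consisting of $k-a$ copies of $c$ and $a$ copies of $c+1$ sums to $m$. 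The key combinatorial point is that for $r_1\leq\dots\leq r_k$ in $\{0,\dots,n\}$ the product $t_{r_1}\cdots t_{r_k}$, computed in $R$, produces a spurious power of $z$ exactly when some factor $t_{r}$ contributes a positive power of $a_i$ and a later factor $t_{r'}$ contributes a positive power of $\a_i$ for the same $i$; tracking the sign of the exponent $(n-j)-r$ shows this occurs if and only if $r_k - r_1\geq 2$. Since our multiset has index spread at most $1$, no spurious $z$ arises and $t_c^{\,k-a}t_{c+1}^{\,a} = q^\bullet w_{k,\ell}$. This yields $T = \kk\langle x,y,z,t_0,\dots,t_n\rangle$.

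Finally, for $\GKdim T\geq 3$: the element $t_0$ is a normal nonzerodivisor of $T$ (since $T\subseteq R$ is a domain and $t_0$ $q^\bullet$-commutes with each generator of $T$), and $t_0$ is transcendental over $T_0 = B$ because weight-homogeneity forces any relation $\sum_i b_i t_0^{\,i} = 0$ with $b_i\in B$ to be trivial. Hence $B[t_0]\subseteq T$ is a skew polynomial ring $B[X;\sigma]$, with $\sigma$ the rescaling automorphism of $B$ induced by conjugation by $t_0$, and therefore $\GKdim T\geq\GKdim B[t_0]\geq \GKdim B + 1 = 3$. The step I expect to be most delicate is the combinatorial one in the third paragraph: carrying out the normal-form reduction carefully and verifying the index-spread criterion so that the chosen product of $t_r$'s lands precisely on $w_{k,\ell}$ with no extraneous factor of $z$; the identification $T_0 = B$ and the lower bound on $\GKdim$ are then comparatively routine once the basis of $R$ is in hand.
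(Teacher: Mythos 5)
Your proof is correct, and it takes a genuinely different route from the paper's. The paper proves generation by induction on the projective degree: after factoring out powers of $z$ from a normal-form monomial, it locates the maximal index $k$ with $i_k\neq 0$, uses the weight equations to force a staircase of inequalities $i_k\geq 1$, $i_{k-1}\geq 2,\dots$ (and $j_{k+2}\geq 1,\dots$), concludes that $t_{n-k-1}$ divides the monomial, and descends to degree $d-1$. You instead solve the weight constraints once and for all to obtain the affine parametrization $b_j=\ell+(n-j)k$ of a $\kk$-basis $\{z^m w_{k,\ell}\}$ of $T_k$, and then exhibit an explicit product of generators equal to each $w_{k,\ell}$, split into the regimes $\ell\geq 0$, $-nk\leq\ell\leq 0$, $\ell\leq -nk$. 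The ingredient not present in the paper is your index-spread criterion, which I checked: $t_{r_1}\cdots t_{r_k}$ normalizes with no extraneous factor of $z$ exactly when $\max r_i-\min r_i\leq 1$, since $t_r$ contributes $a_i$ iff $r<n-i$ and $\a_i$ iff $r>n-i$. This buys a closed-form factorization of every monomial of $T$ (more information than the paper's inductive peeling, which is a bit shorter to write). For $\GKdim T\geq 3$ you use the Ore-extension inequality $\GKdim B[t_0;\sigma]\geq\GKdim B+1$ rather than the paper's observation that $\kk\langle x,y,t_0\rangle$ is a quantum $3$-space; both work, though you should record explicitly that conjugation by the normal element $t_0$ restricts to an automorphism of $B$, which holds because $t_0$ merely $q^\bullet$-rescales $x$, $y$, and $z$.
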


\begin{remark}
We will see later that $\GKdim T = 3$.
\end{remark}

\begin{proof}[Proof of Proposition~\ref{prop:GeneratorsOfT}]
Observe that $x,y,z \in T_0$ and $t_0, \dots, t_n \in T_1$.
A general element of $R$ is homogeneous of projective degree $d$ if and only if each monomial in the $a_i$ and $\a_j$ appearing in it with nonzero coefficient has projective degree $d$, so it suffices to show that the claimed generating set generates all monomials in $T_d$ for all $d \in \NN$. 
We will prove this by induction on $d$.  \\
\indent We first show that $x$, $y$, and $z$ generate $\T_0$. Any monomial in $R$ can be written in the form
\begin{align*}
a = a_0^{i_0} \dots a_n^{i_n} \a_0^{j_0} \dots \a_n^{j_n},
\end{align*}
where all of the exponents are non-negative. So suppose a monomial $a$ as above has projective degree $0$. For $0 \leqslant k \leqslant n$, if $i_k$ and $j_k$ are both nonzero, then we can pull out a factor of $z = a_k \a_k$. Therefore we may assume that at least one of $i_k$ and $j_k$ is $0$ for all $0 \leqslant k \leqslant n$. By considering the $n+1$ components comprising the weight of $a$, we obtain $n+1$ linear equations:
\begin{align*}
-i_k + i_{k-1} + j_k - j_{k-1} = 0, \qquad 0 \leqslant k \leqslant n,
\end{align*}
where the subscripts are interpreted modulo $n+1$. First assume $j_0 = 0$, so that $-i_0 + i_n - j_n = 0$. For contradiction, suppose $j_n > 0$. By our earlier assumption, this forces $i_n = 0$, and hence $i_0 + j_n = 0$,
which is absurd. 
Therefore $j_n = 0$, which also gives $i_0 = i_n$.
Repeating this with the other equations shows that $i_0 = i_1 = \dots = i_n \eqqcolon c$ and $j_k = 0$ for $0 \leqslant k \leqslant n$. 
Therefore $a = a_0^c a_1^c \dots a_n^c$, which is a scalar multiple of $x^c$. If instead $i_0 = 0$, we find that $a$ is a multiple of $y^c$.

This shows that $T_0$ is the subring of $A$ generated by $x, y, z$. 
Under the embedding in Lemma~\ref{lem:EmbeddingOfR}, we can identify $x=u^{n+1}$, $y= v^{n+1}$, $z = uv$, and therefore $T_0 = A^G = B$. As $R$ is a domain and $t_0$ clearly $q^\bullet$-commutes with $x,y$, the subring $\kk\ang{x,y,t_0}$ of $T$ is isomorphic to a quantum 3-space and hence $\GKdim T \geqslant 3$.

\indent Now suppose that $m \in \T$ is a monomial of projective degree $d \geqslant 1$.
Write
\begin{align*}
m = a_0^{i_0} a_1^{i_1} \dots a_n^{i_n} \a_0^{j_0} \a_1^{j_1} \dots \a_n^{j_n},
\end{align*}
for some $i_k, j_k \geqslant 0$.
By factoring out a power of $z = a_k \a_k$, up to rescaling $m$ we may assume that $i_k = 0$ or $j_k = 0$ for $0 \leqslant k \leqslant n$.

First assume that at least one $i_k$ is nonzero, and fix
\begin{align*}
k \coloneqq \max \{ \ell \mid i_\ell \neq 0 \}.
\end{align*}
Therefore $i_{k+1} = \dots = i_n = 0$, $i_k \geqslant 1$, and $j_k = 0$. The $k$th coordinate of the equation
\begin{align}
\wt(m) = d(-n,1,1, \dots, 1), \label{eqn:WeightEquation}
\end{align}
(where here we $0$-index this vector), tells us that
\begin{align*}
d = i_{k-1} - j_{k-1} - i_k,
\end{align*}
where $i_{k-1} = 0$ or $j_{k-1} = 0$. If $i_{k-1} = 0$ then we obtain
\begin{align*}
d = - j_{k-1} - i_k \leqslant 0,
\end{align*}
which is absurd, so we must have $j_{k-1} = 0$. It then follows that
\begin{align*}
i_{k-1} = d + i_k \geqslant 1+1 = 2.
\end{align*}
Repeating this argument along the entries of \eqref{eqn:WeightEquation} shows that
\begin{align*}
i_k \geqslant 1, \quad i_{k-1} \geqslant 2, \quad i_{k-2} \geqslant 3, \quad \dots, \quad i_0 \geqslant k+1.
\end{align*}
Similarly, if we look at the $(k+2)$nd coordinate of \eqref{eqn:WeightEquation}, we obtain the equation
\begin{align*}
-j_{k+1} + j_{k+2} = d.
\end{align*}
If $j_{k+2} = 0$ then this equation cannot be satisfied, so we must have $j_{k+2} \geqslant 1$. An argument similar to the one used for the $i_\ell$ shows that 
\begin{align*}
j_{k+2} \geqslant 1, \quad j_{k+3} \geqslant 2, \quad \dots, \quad j_n \geqslant n-k-1. 
\end{align*}
This shows that $a_0^{k+1} a_1^k \dots a_{k-1}^2 a_k \a_{k+2} \a_{k+3}^2 \dots \a_{n-1}^{n-k-2} \a_n^{n-k-1} = t_{n-k-1}$ is a factor of $m$, so we can write   $m =  \lambda t_{n-k-1} a'$ for some $\lambda \in \Bbbk^\times$ and a monomial $a' \in R$.
If instead all of the $i_\ell$ are $0$, a similar argument shows that $m = \lambda t_n a'$. 

By the above, we have  $m = \lambda t_\ell a'$ , where $a' \in R$ is a monomial and $\lambda \in \kk^\times$.
We have
$\wt(a') = \wt(m)-\wt(t_\ell) = (d-1)\chi$, so $a' \in T_{d-1} = R_{(d-1)\chi}$.
The claim follows by induction.
\end{proof}

\subsection{A presentation for \texorpdfstring{$T$}{T}}
We now seek to give a presentation for $\T$. Define
\beq\label{eqn:PresentationForT}
\Bbbk_\mathbf{q}[x,y,z,t_0, \dots, t_n] = 
\frac{\Bbbk \langle x,y,z,t_0, \dots, t_n \rangle}{
\sbox0{$
\hspace{-9pt}
\begin{array}{c}
\begin{array}{ccc}
yx = q^{(n+1)^2} xy & zx = q^{n+1} xz & zy = q^{-(n+1)} yz \\
t_i x = q^{(n+1) \binom{i+1}{2}} x t_i & t_i y = q^{-(n+1) \binom{n-i+1}{2}} yt_i & t_i z = q^{\frac{1}{2}(n+1)(2i-n)} z t_i
\end{array} \\
t_j t_i = q^{\frac{1}{4}(n+1)(i-j)(2ij-n(i+j+1))} t_i t_j
\end{array} \hspace{-9pt}
$}
\mathopen{\resizebox{1.2\width}{\ht0}{$\Bigg\langle$}}
\usebox{0}
\mathclose{\resizebox{1.2\width}{\ht0}{$\Bigg\rangle$}}
},
\eeq
which is a skew polynomial ring, and hence AS regular \cite{AS}. 
We note that the exponent 
\[\frac{1}{4}(n+1)(i-j)(2ij-n(i+j+1))\]
is always an integer, so the last relation is well-defined.
Also set
\begin{align*}
\Tpres \coloneqq \frac{\Bbbk_\mathbf{q}[x,y,z,t_0, \dots, t_n]}{
\sbox0{$
\hspace{-6pt}
\begin{array}{c}
xy = q^{-\binom{n+1}{2}} z^{n+1} \\
\begin{array}{cl}
xt_i = q^{\alpha_i} z^i t_{i-1} & \text{\normalfont{for} } 1 \leqslant i \leqslant n \\
yt_i = q^{\beta_i} z^{n-i} t_{i+1} & \text{\normalfont{for} } 0 \leqslant i \leqslant n-1 \\
t_{i-1} t_{j+1} = q^{\gamma_{ij}} z^{j-i+1} t_i t_j & \text{\normalfont{for} } 1 \leqslant i \leqslant j \leqslant n-1
\end{array}
\end{array}
\hspace{-6pt}
$}
\mathopen{\resizebox{1.2\width}{\ht0}{$\Bigg\langle$}}
\usebox{0}
\mathclose{\resizebox{1.2\width}{\ht0}{$\Bigg\rangle$}}
},
\end{align*}
where
\begin{gather*}
{\alpha_i} = -\binom{i}{2} - i \binom{n-i+2}{2}, \quad {\beta_i} = (i+2) \binom{n-i+1}{2}, \\
{\gamma_{ij}} 
= -(j+1) \binom{n-j+1}{2} - (n-j)\binom{i}{2} - (j-i+1)\binom{n-i+1}{2} - \binom{j-i+1}{2}.
\end{gather*}
As suggested by our choice of notation, we claim that $\T$ and $\Tpres$ are isomorphic. As a first step, we show that the generators of $T$ satisfy the relations in $\Tpres$:

\begin{lemma} \label{lem:RelationsInT}
The elements $x,y,z,t_i \in T$ satisfy the relations in $\Tpres$.
\end{lemma}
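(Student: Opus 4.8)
The plan is to verify every relation of $\Tpres$ by computing directly inside $R = R^{(n)}_q$, in which $T$ is the subring of semi-invariants. Since $z = a_0\a_0$ is a normal nonzerodivisor of $R$ (see the proof of Lemma~\ref{lem:R-basics}), the localisation map $R \hra R[z^{-1}]$ is injective, so it suffices to check each relation after inverting $z$. In the localisation we have $R[z^{-1}] \cong \kk[a_0^{\pm 1},\dots,a_n^{\pm 1}]\ang{z^{\pm 1}}/\ang{za_i = q a_i z}$ (proof of Lemma~\ref{lem:R-basics}(3)), and there $\a_i = a_i^{-1}z$ because $a_i\a_i = z$. Thus every element is a $\kk$-combination of monomials in the normal form $q^c a^{\mathbf p}z^m$, where $a^{\mathbf p} := a_0^{p_0}\cdots a_n^{p_n}$ with $\mathbf p \in \ZZ^{n+1}$, $m \in \ZZ$, and the product of two such monomials is governed by the single rule
\[ (a^{\mathbf p}z^m)(a^{\mathbf p'}z^{m'}) = q^{\,m(p'_0 + \cdots + p'_n)}\, a^{\mathbf p + \mathbf p'}\, z^{m+m'}. \]
This turns the verification into linear bookkeeping of $q$-exponents.

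First I would record the normal forms of the generators. One reads off $x = a_0\cdots a_n = a^{(1,\dots,1)}$ and $z = z$ immediately, and reordering the definition of $y$ gives $y = q^{-\binom{n+1}{2}}\, a^{(-1,\dots,-1)}\,z^{n+1}$. For $t_i$ one substitutes $\a_k = a_k^{-1}z$ into the defining monomial and collects the $z$'s to the right; a short computation produces
\[ t_i = q^{c_i}\, a^{\mathbf p^{(i)}}\, z^{\binom{i+1}{2}}, \qquad p^{(i)}_k = n - i - k \ \ (0 \le k \le n), \qquad c_i = -\tfrac{1}{8}(i-1)i(i+1)(i+2), \]
with $c_i \in \ZZ$ since a product of four consecutive integers is divisible by $8$; note also that $\sum_k p^{(i)}_k = \tfrac12(n+1)(n-2i)$.

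With these normal forms, each defining relation of $\Tpres$ becomes the comparison of two monomials $q^{(\cdot)}a^{(\cdot)}z^{(\cdot)}$. The relations involving only $x,y,z$ — the three $q$-commutations and $xy = q^{-\binom{n+1}{2}}z^{n+1}$ — already hold because $T_0 = B$ carries the presentation \eqref{eqn:BPresentation} by Proposition~\ref{prop:GeneratorsOfT} (and they also drop out of the formulas above). For a relation involving some $t_i$, the matching of the $a$- and $z$-exponents of the two sides is immediate: for instance $\mathbf 1 + \mathbf p^{(i)} = \mathbf p^{(i-1)}$ and $i + \binom{i}{2} = \binom{i+1}{2}$ give the "shape" of $xt_i = q^{\alpha_i}z^i t_{i-1}$, and the analogous identities handle $yt_i = q^{\beta_i}z^{n-i}t_{i+1}$ and $t_{i-1}t_{j+1} = q^{\gamma_{ij}}z^{j-i+1}t_i t_j$. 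What remains in each case is to check that the $q$-exponent produced by the multiplication rule equals the stated constant; for example $\alpha_i = c_i - c_{i-1} - i\cdot\tfrac12(n+1)(n-2i+2)$, and expanding shows this equals $-\binom{i}{2} - i\binom{n-i+2}{2}$, and likewise for $\beta_i$ and for the skew-polynomial commutation constants of $t_i$ with $x$, $y$, and $z$.

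The main obstacle is purely this bookkeeping: confirming that the $q$-exponents coming out of the normal-form multiplications coincide with the given closed forms, the worst offenders being the $t_j t_i$ commutation constant and $\gamma_{ij}$. For the former the exponent is $\binom{j+1}{2}\sum_k p^{(i)}_k - \binom{i+1}{2}\sum_k p^{(j)}_k$, which after substituting $\sum_k p^{(i)}_k = \tfrac12(n+1)(n-2i)$ and factoring $j(j+1)(n-2i) - i(i+1)(n-2j) = (j-i)\big(n(i+j+1) - 2ij\big)$ becomes $\tfrac14(n+1)(i-j)\big(2ij - n(i+j+1)\big)$, exactly as claimed; the $\gamma_{ij}$ identity is longer but of the same nature. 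I would organise the write-up by first stating the normal forms, then clearing the $x,y,z$ relations, then the $t_i$ relations with $z$, with $x$, and with $y$, then the $t_j t_i$ commutation, and finally $t_{i-1}t_{j+1} = q^{\gamma_{ij}}z^{j-i+1}t_i t_j$; spot-checking $n = 2$ and $n = 3$ numerically is a cheap safeguard against sign errors and off-by-one slips.
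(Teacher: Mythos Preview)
Your proposal is correct and takes a genuinely different computational route from the paper's. The paper verifies the relations via the embedding $R \hookrightarrow A[s_0^{\pm 1},\dots,s_n^{\pm 1}]$ of Lemma~\ref{lem:EmbeddingOfR}: it passes to the quotient by $s_i = 1$ (i.e.\ works in the quantum plane $A = \kk_q[u,v]$), where $x \approx u^{n+1}$, $y \approx v^{n+1}$, $z \approx uv$, $t_i \approx u^{\binom{n-i+1}{2}}v^{\binom{i+1}{2}}$, and reads off the $q$-exponent from the single rule $vu = quv$. For the skew-commutation relations this suffices immediately; for the remaining relations (such as $xt_i = q^{\alpha_i}z^i t_{i-1}$) the paper supplements the $\approx$-check with the observation that both sides lie in the same one-dimensional $\ZZ^{n+1}$-graded piece of $R$, so equality modulo the $s_i$ forces equality in $R$.

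Your approach instead embeds $R \hookrightarrow R[z^{-1}] \cong \kk[a_0^{\pm 1},\dots,a_n^{\pm 1}]\langle z^{\pm 1}\rangle/(za_i = qa_iz)$ and exploits the unique normal form $q^c a^{\mathbf p}z^m$ there, so that every relation becomes a single comparison of exponents with no auxiliary one-dimensionality argument needed. The price is computing the constant $c_i$ in the normal form of $t_i$, whereas the paper's two-variable quotient sidesteps this; conversely, your multiplication rule depends only on $m$ and $\sum_k p'_k$, which makes identities like the $t_jt_i$ exponent fall out of the factorisation $j(j+1)(n-2i) - i(i+1)(n-2j) = (j-i)\big(n(i+j+1)-2ij\big)$ rather cleanly. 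Both routes reduce the lemma to the same kind of elementary bookkeeping.
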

\begin{proof}
Let $x,y,z, t_0, \dots, t_n \in \T$ be as in Proposition~\ref{prop:GeneratorsOfT}. We begin the proof by showing that these elements satisfy the skew-commutativity relations given in the presentation for $\Bbbk_\mathbf{q}[x,y,z,t_0, \dots, t_n]$.
It is clear that any pair of generators $\qb$-commute, for  some power $\qb$ of $q$.
To determine the precise power of $q$, we will view the generators as elements of $A[s_0^{\pm 1}, \dots, s_n^{\pm 1}]$ via Lemma~\ref{lem:EmbeddingOfR}. 
If $a \in A[s_0^{\pm 1}, \dots, s_n^{\pm 1}]$, write $a \approx f(u,v)$ for the image of $a$ in
\begin{align*}
A[s_0^{\pm 1}, \dots, s_n^{\pm 1}]/\langle s_i - 1 \mid 0 \leqslant i \leqslant n \rangle \cong A.
\end{align*}
It suffices to work in this quotient ring to determine the power of $q$ in the $\qb$-commutativity relation between two generators. \\
\indent With this notation, we have
\begin{align*}
x \approx u^{n+1}, \quad y \approx v^{n+1}, \quad z \approx uv, \quad t_i \approx u^{\binom{n-i+1}{2}} v^{\binom{i+1}{2}}.
\end{align*}
Direct calculation now gives
\begin{gather*}
yx \approx v^{n+1} u^{n+1} = q^{(n+1)^2} u^{n+1} v^{n+1} \approx q^{(n+1)^2} xy, \\
zx \approx uv \cdot u^{n+1} = q^{n+1} u^{n+1} \cdot uv \approx q^{n+1} xz, \\
zy \approx uv \cdot v^{n+1} = q^{-(n+1)} v^{n+1} \cdot uv \approx q^{-(n+1)} yz.
\end{gather*}
For the commutativity relations involving $t_i$ and one of $x,y,$ or $z$, we have
\begin{gather*}
t_i x \approx u^{\binom{n-i+1}{2}} v^{\binom{i+1}{2}} u^{n+1} = q^{(n+1) \binom{i+1}{2}} u^{n+1} u^{\binom{n-i+1}{2}} v^{\binom{i+1}{2}} \approx q^{(n+1) \binom{i+1}{2}} x t_i, \\
t_i y \approx u^{\binom{n-i+1}{2}} v^{\binom{i+1}{2}} v^{n+1} = q^{-(n+1) \binom{n-i+1}{2}} v^{n+1} u^{\binom{n-i+1}{2}} v^{\binom{i+1}{2}} \approx q^{-(n+1) \binom{n-i+1}{2}} y t_i \\
t_i z \approx u^{\binom{n-i+1}{2}} v^{\binom{i+1}{2}} uv = q^{\binom{i+1}{2} - \binom{n-i+1}{2}} uv u^{\binom{n-i+1}{2}} v^{\binom{i+1}{2}} = q^{\frac{1}{2}(n+1)(2i-n)} z t_i.
\end{gather*}
The only remaining commutativity relation left to determine is that relating $t_i$ and $t_j$:
\begin{align*}
t_j t_i &\approx u^{\binom{n-j+1}{2}} v^{\binom{j+1}{2}} u^{\binom{n-i+1}{2}} v^{\binom{i+1}{2}} \\
&= q^{\binom{j+1}{2} \binom{n-i+1}{2}} u^{\binom{n-j+1}{2}} u^{\binom{n-i+1}{2}} v^{\binom{i+1}{2}} v^{\binom{j+1}{2}} \\
&= q^{\binom{j+1}{2} \binom{n-i+1}{2} - \binom{i+1}{2} \binom{n-j+1}{2}} u^{\binom{n-i+1}{2}} v^{\binom{i+1}{2}} u^{\binom{n-j+1}{2}} v^{\binom{j+1}{2}} \\
&= q^{\frac{1}{4}(n+1)(i-j)(2ij-n(i+j+1))} t_i t_j
\end{align*}
where the equality in the exponent of $q$ is an easy computation. \\
\indent We now show that the generators satisfy the relations by which we factor $\Bbbk_\mathbf{q}[x,y,z,t_0, \dots, t_n]$ to obtain $\Tpres$. The first three are relatively straightforward. First,
\[
xy \approx u^{n+1} v^{n+1} = q^{-\binom{n+1}{2}} (uv)^{n+1} \approx q^{-\binom{n+1}{2}} z^{n+1} .
\]
Since $xy, z^{n+1} \in T_0 \subseteq A$, this suffices to show that $xy=q^{-\binom{n+1}{2}} z^{n+1}$.

Likewise,
\begin{align*}
x t_i &\approx u^{n+1} \cdot u^{\binom{n-i+1}{2}} v^{\binom{i+1}{2}} \\
&= u^i \cdot u^{n-i+1 \binom{n-i+1}{2}} v^{i+\binom{i}{2}} \\
&= q^{-i \binom{n-i+2}{2}} u^i v^i u^{\binom{n-i+2}{2}} v^{\binom{i}{2}} \\
&\approx q^{- \binom{i}{2} - i \binom{n-i+2}{2}} z^i t_{i-1}  .
\end{align*}
Similar to the above, $x t_i,  z^i t_{i-1} \in As_0^{-n} s_1\dots s_n$ and so the above suffices to show that the relation holds.
Finally,
\begin{align*}
y t_i &\approx v^{n+1} \cdot u^{\binom{n-i+1}{2}} v^{\binom{i+1}{2}} \\
&= q^{(i+1)\binom{n-i+1}{2}} v^{n-i} u^{n-i+\binom{n-i}{2}}v^{i+1+\binom{i+1}{2}} \\
&= q^{(i+1)\binom{n-i+1}{2} + \binom{n-i+1}{2}} (uv)^{n-i} u^{\binom{n-i}{2}} v^{\binom{i+2}{2}} \\
&\approx q^{(i+2)\binom{n-i+1}{2}} z^{n-i} t_{i+1},
\end{align*}
and again this is sufficient.

The final relation requires a little more care. 
Again, it suffices to check the relations are satisfied up to $\approx$.
If $1 \leqslant i \leqslant j \leqslant n-1$, then
\begin{align*}
t_{i-1} t_{j+1} 
&\approx u^{\binom{n-i+2}{2}} v^{\binom{i}{2}} u^{\binom{n-j}{2}} v^{\binom{j+2}{2}} \\
&= u^{n-i+1+\binom{n-i+1}{2}} v^{\binom{i}{2}} u^{\binom{n-j}{2}} v^{j+1+\binom{j+1}{2}} \\
&= q^{-(j+1) \binom{n-j}{2}} u^{j-i+1} \, u^{n-j} \, u^{\binom{n-i+1}{2}} v^{j+1} v^{\binom{i}{2}} u^{\binom{n-j}{2}} v^{\binom{j+1}{2}} \\
&= q^{-(j+1) \binom{n-j}{2} - (n-j)\big( (j+1) + \binom{i}{2} \big)} u^{j-i+1} \, u^{\binom{n-i+1}{2}} v^{j+1} v^{\binom{i}{2}} u^{n-j+\binom{n-j}{2}} v^{\binom{j+1}{2}} \\
&= q^{-(j+1) \binom{n-j+1}{2} - (n-j) \binom{i}{2} - (j-i+1)\binom{n-i+1}{2}} u^{j-i+1} v^{j-i+1} u^{\binom{n-i+1}{2}} v^{i+\binom{i}{2}} u^{n-j+\binom{n-j}{2}} v^{\binom{j+1}{2}} \\
&= q^{-(j+1) \binom{n-j+1}{2} - (n-j) \binom{i}{2} - (j-i+1)\binom{n-i+1}{2} - \binom{j-i+1}{2}} z^{j-i+1} t_i t_j.
\end{align*}
Thus the generators of $T$ satisfy all the relations in $\Tpres$.
\end{proof}

By Lemma~\ref{lem:RelationsInT}, there is a surjection $\Tpres \twoheadrightarrow T$. 
To show that there are no more relations between the elements $x,y,z,t_i$ other than those given in $\Tpres$, we first make the following useful observation.

\begin{proposition}\label{prop:green}
$R = R^{(n)}$ has a $\ZZ^3$-grading given by 
\begin{align*}
    \deg(a_i) &= (\delta_{i,n}, 0, \delta_{i,n-1}- \delta_{i,n}) \\
    \deg (\a_i) & = (1-\delta_{i,n}, 1, -\delta_{i,n-1}+ \delta_{i,n}) ,
\end{align*}
where $\delta_{i,j}$ is the Kronecker delta.
Under this grading $T$ is $\NN^3$-graded with 
\begin{gather*}
\deg x = (1,0,0), \quad
\deg y = (n,n+1,0), \quad
\deg z = (1,1,0), \\
\deg t_i = \bigg(\binom{i}{2},\binom{i+1}{2},1\bigg), \quad 0 \leqslant i \leqslant n.
\end{gather*}
\end{proposition}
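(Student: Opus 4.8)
The plan is to establish the two assertions in turn: first that the stated degrees on the generators $a_i,\a_i$ extend to a genuine $\ZZ^3$-grading of $R$, and then that, under this grading, the algebra generators $x,y,z,t_0,\dots,t_n$ of $T$ produced by Proposition~\ref{prop:GeneratorsOfT} have the claimed degrees, all lying in $\NN^3$. For the first point I would give $a_i,\a_i$ the stated degrees in the free algebra $\kk\langle a_i,\a_i\rangle$ and check that the defining ideal of $R$ is $\ZZ^3$-homogeneous; it suffices to see that each defining relation is a difference of monomials of equal degree. For $a_ja_i=a_ia_j$, $\a_j\a_i=\a_i\a_j$ and $\a_ja_i=qa_i\a_j$ this is automatic because $\ZZ^3$ is abelian, so the only relations requiring attention are $a_0\a_0=a_i\a_i$, and these are settled by the observation that
\[
\deg(a_i)+\deg(\a_i)=\bigl(\delta_{i,n}+(1-\delta_{i,n}),\;0+1,\;(\delta_{i,n-1}-\delta_{i,n})+(\delta_{i,n}-\delta_{i,n-1})\bigr)=(1,1,0)
\]
is independent of $i$. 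Hence $R$ becomes $\ZZ^3$-graded, and in particular $z=a_0\a_0$ has degree $(1,1,0)$.

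For the grading on $T$, note that by Proposition~\ref{prop:GeneratorsOfT} the ring $T$ is generated as a $\kk$-algebra by $x,y,z$ and the $t_i$, each of which is a single monomial in the $a_i,\a_j$ and hence $\ZZ^3$-homogeneous inside $R$; since a subalgebra generated by homogeneous elements is graded, $T$ is a $\ZZ^3$-graded subring of $R$, and once we check all generator degrees lie in $\NN^3$ it follows that $T$ is $\NN^3$-graded. The degrees of $x$ and $y$ are immediate sums of generator degrees, using $\sum_i\delta_{i,n}=\sum_i\delta_{i,n-1}=1$, giving $\deg x=(1,0,0)$ and $\deg y=(n,n+1,0)$. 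For $\deg t_i$ I would first record that, for any monomial $m$, the three coordinates of $\deg m$ count respectively $(\#a_n)+(\#\a)-(\#\a_n)$, the total number of $\a$'s in $m$, and $(\#a_{n-1})-(\#a_n)-(\#\a_{n-1})+(\#\a_n)$. Reading off the shape $t_i=a_0^{\,n-i}a_1^{\,n-i-1}\cdots a_{n-i-1}\,\a_{n-i+1}\a_{n-i+2}^{2}\cdots\a_n^{\,i}$: the letter $a_n$ never occurs; $a_{n-1}$ occurs only when $i=0$, with exponent $1$; $\a_{n-1}$ occurs only when $i\geq 2$, with exponent $i-1$; $\a_n$ occurs with exponent $i$; and the total number of $\a$'s is $1+2+\cdots+i=\binom{i+1}{2}$. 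Substituting, the second coordinate is $\binom{i+1}{2}$, the first is $\binom{i+1}{2}-i=\binom{i}{2}$, and a one-line check in the three cases $i=0$, $i=1$, $i\geq 2$ gives $1$ for the third coordinate in each case, so $\deg t_i=(\binom{i}{2},\binom{i+1}{2},1)$. (As a sanity check, the third coordinate of $\deg$ agrees with the projective degree on all of $T$, since the two are additive and match on the generators; this is consistent with $x,y,z\in T_0$ and the $t_i\in T_1$.)

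None of these verifications is deep; the one place where a slip is easy is the computation of $\deg t_i$, since the information the $\ZZ^3$-grading carries beyond the coarse $(u\text{-count},v\text{-count})$ data is concentrated on the four letters $a_{n-1},a_n,\a_{n-1},\a_n$, and whether — and with what exponent — each of these occurs in $t_i$ depends on $i$. Organising the computation through the three counting functions above keeps the boundary cases to a minimum; the direct alternative, namely summing $\deg$ over all $\sim(n-i)$ factors of $t_i$ term by term, also works, using the identity $\binom{m-1}{2}+(m-1)=\binom{m}{2}$ to collapse the resulting arithmetic sums.
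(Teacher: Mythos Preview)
Your proof is correct and follows essentially the same approach as the paper's: verify that the defining relations of $R$ are homogeneous for the stated $\ZZ^3$-grading (the paper states this without detail), then deduce the grading on $T$ from Proposition~\ref{prop:GeneratorsOfT} by computing the degrees of the generators. Your write-up is considerably more explicit, in particular the organisation of the $\deg t_i$ computation via the three counting functions is a clean way to handle the case analysis that the paper leaves to the reader.
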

\begin{proof}
The relations of $R$ are homogeneous with respect to this grading on the generators, giving the first statement. The second statement is a consequence of Proposition~\ref{prop:GeneratorsOfT}.
\end{proof}

We often refer to the grading on $T$ given in Proposition~\ref{prop:green} as the the \emph{toric grading} of $T$. Since the relations in $\Tpres$ are a subset of those of $T$ (and ultimately we will show they satisfy precisely the same relations), this also gives an $\NN^3$-grading on $\Tpres$ which we also call the toric grading. One reason that this grading is useful is because the graded pieces of $\Tpres$ are at most one-dimensional.

\begin{proposition} \label{prop:OneDimensionalPieces}
The graded pieces of $\Tpres$ in  the toric grading satisfy $\dim (\Tpres)_{\vec n} \leqslant 1$ for all $\vec{n} \in \NN^3$.
\end{proposition}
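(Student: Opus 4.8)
The plan is to produce an explicit spanning set of monomials for $\Tpres$ and to show that the toric grading of Proposition~\ref{prop:green} separates its members. Since $\Bbbk_\mathbf{q}[x,y,z,t_0,\dots,t_n]$ is a skew polynomial ring it has the PBW basis of ordered monomials $x^ay^bz^ct_0^{e_0}\cdots t_n^{e_n}$, so $\Tpres$ is spanned by the images of these monomials. I would first show that, working modulo the relations of $\Tpres$ and up to nonzero scalars, every such monomial rewrites to one of the \emph{normal forms}
\[ x^az^ct_0^e,\qquad y^bz^ct_n^e,\qquad z^ct_k^st_{k+1}^{s'}\ (0\le k\le n-1),\qquad z^ct_k^s\ (0\le k\le n). \]
The rewriting steps are: use $xy=q^\bullet z^{n+1}$ to delete an $x$--$y$ pair; use $xt_i=q^\bullet z^it_{i-1}$ (for $i\ge1$) and $yt_i=q^\bullet z^{n-i}t_{i+1}$ (for $i\le n-1$) to delete an $x$ or a $y$ whenever a suitable $t_i$ is present; and use $t_{i-1}t_{j+1}=q^\bullet z^{j-i+1}t_it_j$ to contract any two $t$-indices differing by at least $2$ toward one another (the $t_i$ all $q^\bullet$-commute, so any two can be brought adjacent, and re-collecting into PBW form afterwards only changes scalars). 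Termination follows because the lexicographic complexity $(a+b,\ \sum_j i_j^2)\in\NN^2$ strictly decreases under each step: the first three moves decrease $a+b$, while the contraction move fixes $a+b$ and decreases $\sum_j i_j^2$, since $(i_j+1)^2+(i_l-1)^2<i_j^2+i_l^2$ whenever $i_l-i_j\ge2$. A monomial admitting no further move is exactly a normal form.

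Next I would check that the toric degree determines a normal form uniquely. Writing a toric degree as $(d_1,d_2,d_3)$, the coordinate $d_3$ is the total number of $t$'s, and a direct computation from Proposition~\ref{prop:green} gives $d_2-d_1=-a$ on $x^az^ct_0^e$, $\ d_2-d_1=b+nd_3$ on $y^bz^ct_n^e$, $\ d_2-d_1=kd_3+s'$ with $0<s'<d_3$ on $z^ct_k^st_{k+1}^{s'}$, and $d_2-d_1=kd_3$ on $z^ct_k^s$. Hence the sign of $d_2-d_1$ isolates the $x$-type ($d_2-d_1<0$) and the $y$-type ($d_2-d_1>nd_3$) from the purely-$t$ types ($0\le d_2-d_1\le nd_3$), and within that last range Euclidean division of $d_2-d_1$ by $d_3$ recovers $k$ together with whether we are in the mixed case (nonzero remainder $s'$) or a pure power (zero remainder). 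In each case the exponents on $x$, $y$ and the $t$'s are then forced, and finally $d_1$ forces the exponent on $z$. Thus distinct normal forms carry distinct toric degrees, so each toric-graded piece of $\Tpres$ is spanned by at most one monomial, giving $\dim(\Tpres)_{\vec n}\le1$.

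I expect the only real difficulty to be the bookkeeping in the uniqueness step: one must handle the degenerate cases in which the four families overlap — for instance $z^ct_0^e$ is an $x$-type with $a=0$ and also a pure-$t$ type with $k=0$, and $z^ct_k^st_{k+1}^{s'}$ collapses to $z^ct_k^s$ when $s'=0$ — and verify that these overlaps are harmless because the competing descriptions name literally the same monomial. (Alternatively one can rewrite the normal-form list in a manifestly non-redundant way, at the price of a slightly longer case analysis.) The rewriting/termination argument itself is routine once the complexity measure $(a+b,\sum_j i_j^2)$ is in hand.
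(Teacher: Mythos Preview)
Your proposal is correct and follows essentially the same approach as the paper: exhibit a spanning set of normal-form monomials (the paper's set $\mc S$ is exactly the non-redundant version of your four families) and then show that distinct members have distinct toric degrees. Your termination measure $(a+b,\sum_j i_j^2)$ and your uniqueness argument via Euclidean division of $d_2-d_1$ by $d_3$ are somewhat cleaner than the paper's direct induction on the $x$/$y$-exponent and its pairwise case analysis, but the strategy is identical.
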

\begin{proof}
Let 
\begin{align}
\label{Sone}
\mc S = \hspace{3pt}&\big\{ x^i z^k \mid i,k \geqslant 0 \big\} \cup 
\big\{ y^j z^k \mid j \geqslant 1, \hspace{3pt} k \geqslant 0 \big\} 
\\ 
\cup \hspace{4pt}
&\big\{ x^i z^k t_0^{\ell_0} \mid i,k,\ell_0 \geqslant 0 \big\} 
\cup
\big\{ y^j z^k t_n^{\ell_n} \mid j,k,\ell_n \geqslant 0 \big\} \label{Stwo}  \\
\cup \hspace{4pt}
&\bigcup_{m=1}^{n-1} \big\{z^k t_m^{\ell_m} \mid k \geqslant 0, \hspace{3pt} \ell_m \geqslant 1\big\} \cup
\bigcup_{m=0}^{n-1} \big\{z^k t_m^{\ell_m} t_{m+1}^{\ell_{m+1}} \mid k \geqslant 0, \hspace{3pt} \ell_m,\ell_{m+1} \geqslant 1\big\}. \label{Sthree}
\end{align}
We first show that $\mc S$ spans $\Tpres$. By applying the relations in $\Bbbk_\mathbf{q}[x,y,z,t_0, \dots, t_n]$, a monomial in $\Tpres$ can be written as $x^i y^j z^k t_0^{\ell_0} \dots t_n^{\ell_n}$, up to multiplication by a scalar, where all the exponents are non-negative. Moreover, repeated application of the relation $t_{i-1} t_{j+1} = \qb z^{j-i+1} t_i t_j$ to the flanking $t_i$ terms in this monomial shows that, up to scaling, we can write it as
\begin{align*}
x^i y^j z^k t_m^{\ell_m} t_{m+1}^{\ell_{m+1}}
\end{align*}
for some $0 \leqslant m \leqslant n-1$ and exponents $i,j,k,\ell$, some of which are possibly $0$. Additionally, the relation $xy = \qb z^{n+1}$ allows us to assume $i=0$ or $j=0$. 

 If $i=0=j$ then this element is in $\mc S$, so assume that $j = 0$ and $i>0$; the case $i=0$ and $j>0$ is similar. 
 We prove by induction on $i$ that the the monomial $x^i z^k t_m^{\ell_m} t_{m+1}^{\ell_{m+1}}$ lies in $\spann{\mc S}$. If $\ell_{m+1} \geqslant 1$, then the relation $xt_{m+1} = q^{\alpha_{m+1}} z^{m+1} t_{m}$ gives
\begin{align*}
x^i z^k t_m^{\ell_m} t_{m+1}^{\ell_{m+1}} = \qb x^{i-1} z^{k'} t_m^{\ell_m+1} t_{m+1}^{\ell_{m+1}-1}.
\end{align*}
By induction, this element lies in $\spann{\mc S}$. 
Now suppose $\ell_{m+1}=0$; we consider two subcases. 
If $m = 0$, or if $m \neq 0$ and $\ell_m = 0$, then this monomial is already an element of $\mc S$.
Otherwise $m \neq 0$ and $\ell_m \geqslant 1$, and we have
\begin{align*}
x^i z^k t_m^{\ell_m} = \qb x^{i-1} z^{k'} t_{m-1} t_m^{\ell_m - 1},
\end{align*}
and this element lies in $\spann{\mc S}$ by induction.

We now show that distinct elements of $\mc S$ have different toric degrees. By considering the third coordinate, it is clear that the sets in \eqref{Sone} are  disjoint from the other sets.
Moreover, 
\begin{align*}
\deg x^i z^k = (i+k,k,0), \quad \deg y^j z^{k'} = (jn+k',j(n+1)+k',0),
\end{align*}
and these are distinct for $i,k,k' \geqslant 0$ and $j \geqslant 1$. \\
\indent It remains to show that the toric degrees of elements in the remaining four sets in \eqref{Stwo} and \eqref{Sthree} are distinct. Call these four sets (a), (b), (c), and (d), in the order given; in principle we need to show that two distinct elements chosen from these four sets (possibly chosen from the same set) have distinct degrees. Most of these are routine, so we give only some of the proofs. 

For example, consider elements $x^i z^k t_0^\ell$ and $y^{j'} z^{k'} t_n^{\ell'}$ from sets (a) and (b), which have the same weight. 
Equating weights gives three equations:
\begin{align*}
i+k = j'n + k' + \ell' \binom{n}{2}, \quad k = j'(n+1) + k' + \ell' \binom{n+1}{2}, \quad \ell = \ell'.
\end{align*}
Subtracting the second equation from the first and using the third equation gives $i = j' - \ell n$, where the left hand side is non-negative and the right hand side is negative, a contradiction.

As another example, consider elements $z^k t_m^{\ell_m}$ and $z^{k'} t_{m'}^{\ell_{m'}} t_{m'+1}^{\ell_{m'+1}}$ from sets (c) and (d). Equating degrees gives three equations
\begin{align*}
k + \ell_m \binom{m}{2} &= k' + \ell_{m'} \binom{m'}{2} + \ell_{m'+1} \binom{m'+1}{2}, \\
k + \ell_m \binom{m+1}{2} &= k' + \ell_{m'} \binom{m'+1}{2} + \ell_{m'+1} \binom{m'+2}{2} \\
\ell_m &= \ell_{m'} + \ell_{m'+1}.
\end{align*}
Subtracting the first equation from the second gives
\begin{align*}
\ell_m m = \ell_{m'} m' + \ell_{m'+1} (m'+1) = (\ell_{m'} + \ell_{m'+1})m' + \ell_{m'+1} = \ell_m m' + \ell_{m'+1}.
\end{align*}
Therefore
\begin{align*}
\ell_m (m-m') = \ell_{m'+1}.
\end{align*}
Now, if $m' \geqslant m$ then $\ell_{m'+1} \leqslant 0$, which is a contradiction, so we must have $m > m'$. But then
\begin{align*}
\ell_{m'+1} = \ell_m (m-m') \geqslant \ell_m = \ell_{m'} + \ell_{m'+1} > \ell_{m'+1},
\end{align*}
which is also a contradiction. The remaining cases are similar and left to the reader. 
\end{proof}

With this result in hand, we are now able to show that $\Tpres$ gives a presentation of $T$:

\begin{theorem}\label{thm:TcongTpres}
There is an isomorphism $T \cong \Tpres$ of $\NN^3$-graded rings. Moreover, these rings are domains of GK-dimension 3.
\end{theorem}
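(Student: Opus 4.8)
Almost all of the work is already done in Lemmata~\ref{lem:RelationsInT} and~\ref{lem:EmbeddingOfR} and Propositions~\ref{prop:GeneratorsOfT}, \ref{prop:green}, \ref{prop:OneDimensionalPieces}; the proof is mostly a matter of assembling these. By Lemma~\ref{lem:RelationsInT} the elements $x,y,z,t_0,\dots,t_n \in T$ satisfy the defining relations of $\Tpres$, and by Proposition~\ref{prop:GeneratorsOfT} they generate $T$, so there is a surjective ring homomorphism $\pi\colon \Tpres \twoheadrightarrow T$ sending each generator of $\Tpres$ to the element of $T$ of the same name. Since those images carry the toric degrees recorded in Proposition~\ref{prop:green}, $\pi$ is a morphism of $\NN^3$-graded rings for the toric gradings; in particular $\ker\pi$ is a toric-homogeneous ideal, so it suffices to prove that $\pi$ is injective on each graded piece $(\Tpres)_{\vec n}$, $\vec n\in\NN^3$.

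The crux — short, but the one place where the earlier structure is genuinely used — is this injectivity. By Proposition~\ref{prop:OneDimensionalPieces} we have $\dim_\kk(\Tpres)_{\vec n}\leqslant 1$, so if $\pi$ failed to be injective in some toric degree $\vec n$ we would have $(\Tpres)_{\vec n}=\kk$ and $T_{\vec n}=0$. To rule this out I will invoke the explicit spanning set $\mc S$ from the proof of Proposition~\ref{prop:OneDimensionalPieces}: it spans $\Tpres$ and consists of toric-homogeneous monomials in $x,y,z,t_0,\dots,t_n$, so if $(\Tpres)_{\vec n}\neq 0$ then $\mc S\cap(\Tpres)_{\vec n}$ contains a nonzero monomial $\sigma$. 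Viewing $T$ as a subring of $R=R^{(n)}_q$ (via $\Tpres\cong T$ once established, or directly through the generators), $\pi(\sigma)$ is the corresponding product of the elements $x,y,z,t_0,\dots,t_n$ of $R$; each of these is a product of the nonzero generators $a_i,\a_i$ and hence nonzero in the domain $R$ (Lemma~\ref{lem:R-basics}(3)), so $\pi(\sigma)\neq 0$. Thus $T_{\vec n}\neq 0$, a contradiction, and $\pi$ is an isomorphism.

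It remains to record that $T$ is a domain and to compute its GK-dimension. The first is immediate: $T$ is a subring of the domain $R$, hence a domain, and so is $\Tpres\cong T$. For the GK-dimension, the lower bound $\GKdim T\geqslant 3$ is part of Proposition~\ref{prop:GeneratorsOfT}. For the upper bound the only real choice is which grading to use: the toric and projective gradings are awkward (the latter has infinite-dimensional degree zero piece $B$), so I will use the connected $\NN$-grading $T$ inherits from $R$ through the embedding of Lemma~\ref{lem:EmbeddingOfR}, in which $\deg a_i=\deg\a_i=1$. In this grading $R$ is locally finite by Lemma~\ref{lem:R-basics}(1), hence so is $T$, which is moreover finitely generated (Proposition~\ref{prop:GeneratorsOfT}) and noetherian, being isomorphic to a quotient of the noetherian skew polynomial ring $\Bbbk_\mathbf{q}[x,y,z,t_0,\dots,t_n]$. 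Since $\mc S$ spans $T$ and each of the finitely many families comprising $\mc S$ is parametrised by at most three non-negative integer exponents, fixing the degree to equal $N$ imposes one linear constraint and leaves $O(N^2)$ monomials, so $\dim_\kk T_N=O(N^2)$ and therefore $\GKdim T=\limsup_{N\to\infty}\log_N(\dim_\kk T_N)+1\leqslant 3$. Combining the two bounds gives $\GKdim T=3$, completing the proof.
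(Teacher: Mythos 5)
Your proof is correct, and it follows the paper's argument closely for the main isomorphism: you build the surjection $\Tpres\twoheadrightarrow T$ from Lemma~\ref{lem:RelationsInT} and Proposition~\ref{prop:GeneratorsOfT}, reduce injectivity to the toric graded pieces using Proposition~\ref{prop:OneDimensionalPieces}, and then observe that the monomials in $\mc S$ map to nonzero products of generators in the domain $T\subseteq R$, exactly as the paper does (the paper calls this map $\nu$; you reuse the symbol $\pi$, which clashes with the quotient functor $\rGr T\to\rQgr T$ elsewhere in the paper, so a different letter would be safer). The domain claim is handled identically.

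Where you diverge is the upper bound $\GKdim T\leqslant 3$. The paper argues abstractly: the $\NN^3$-support $\Sigma=\{\vec n : T_{\vec n}\neq 0\}$ is a sub-semigroup of $\NN^3$, whose growth is at most cubic, and since each $T_{\vec n}$ is at most one-dimensional this growth bounds $\GKdim T$. You instead use the explicit spanning set $\mc S$ together with the connected $\NN$-grading that $T$ inherits from $R$ (where $\deg a_i=\deg\a_i=1$, so $T$ is connected and locally finite), and count: each family in $\mc S$ has at most three integer parameters, so fixing the total degree equal to $N$ leaves $O(N^2)$ monomials, whence $\dim_\kk T_N=O(N^2)$ and $\GKdim T\leqslant 3$ by the Hilbert-series characterisation of GK-dimension. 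Both are valid. Your route is more concrete — it avoids the (correct but unproved-here) assertion that sub-semigroups of $\NN^3$ have at most cubic growth, replacing it with a direct count from data already in hand — at the cost of invoking a second grading on $T$ and re-examining the structure of $\mc S$. One small tidying remark: the invocation of noetherianity of $T$ in your GK-dimension paragraph is not actually used; local finiteness and connectedness of the $\NN$-grading, together with $T$ being a cyclic $T$-module, are all that the $\limsup$ formula requires.
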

\begin{proof}
We first note that, as $T$ is a subring of the domain $R$, it is also a domain. Now, temporarily denote the generators of $T$ by $\mathbf{x}, \mathbf{y}, \mathbf{z}, \mathbf{t_0}, \dots \mathbf{t_n}$. Let $\nu: \Tpres \to T$ be the obvious ``boldface'' map sending $x \mapsto \mathbf{x}$, etc., which exists by Lemma~\ref{lem:RelationsInT}.

Since we know that $\nu$ is surjective by Proposition~\ref{prop:GeneratorsOfT}, to show that it is an isomorphism it remains to show that it is injective. It suffices to show that its restriction to each (toric) graded piece $(\Tpres)_{\vec n}$ is injective; since each of these are at most one-dimensional, we only need to show that the restriction is not the zero map when $\dim (\Tpres)_{\vec n} = 1$. Indeed, if $s \in \mc S$ then $\nu(s)$ is nonzero, since it is a monomial in the (nonzero) generators of the domain $T$. Thus  $\nu$ is injective, and hence an isomorphism.

We will now make the identification $T = \Tpres$; in particular, we will drop the bold notations $\mathbf{x}$, etc., for generators of $T$. It remains to show that $\GKdim T = 3$.

Let $\Sigma = \{ \vec n \mid T_{\vec n} \neq 0\}$ be the sub-semigroup of $\NN^3$ on which $T$ is supported.  
If $v_1, \dots, v_k$ are nonzero $\NN^3$-homogeneous elements of $T$ and $V = \kk \cdot (v_1, \dots, v_k)$, we write 
\[\Sigma_V = \{ \deg(v_1),\dots, \deg (v_k)\} \subseteq \Sigma.\]
Since $\vec n \in \Sigma $ if and only if $\dim T_{\vec n} = 1$, we see that $\dim V = \#\Sigma_V$.

Now let $W$ be an arbitrary finite-dimensional subspace of $T$, and let $V $ be a finite-dimensional $\NN^3$-graded subspace containing $W$.
Then $\dim W^n \leq \dim V^n = \# (\Sigma_{V})^n$, and so 
 $\GKdim T$ is bounded above by the growth of $\Sigma$.
As $\Sigma$ is a sub-semigroup of $\NN^3$, which has cubic growth, $\GKdim T \leq 3$.
But by Proposition~\ref{prop:GeneratorsOfT}, $\GKdim T  \geq 3$.
Thus $\GKdim T = 3$.
\end{proof}

\subsection{The toric embedding of \texorpdfstring{$T$}{T}} \label{sec:ToricEmbedding}
We now construct a useful embedding of $T$ into a quantum 3-torus which is compatible with the toric grading. 
Let
\begin{align}
D = \frac{\Bbbk \langle \ba^{\pm 1},\bb^{\pm 1},\bs^{\pm 1} \rangle}{\left\langle 
\begin{array}{cc}
\bb\ba = q^{n+1} \ba\bb \\
\bs \ba = \ba\bs \\
\bs \bb = q^{-\binom{n+1}{2}} \bb\bs
\end{array} \right\rangle} \label{eqn:Q}
\end{align}
This has a $\ZZ^3$-grading given by declaring
\begin{align*}
\deg \ba = (1,0,0), \quad \deg \bb = (0,1,0), \quad \deg \bs = (0,0,1).
\end{align*}
Let $\Qgr[\ZZ^3](T)$ be the $\ZZ^3$-graded quotient ring of $T$ obtained by inverting all $\ZZ^3$-homogeneous elements.

We claim that $\Qgr[\ZZ^3](T) \cong D$ as graded rings; we will use this to construct an embedding $T \hookrightarrow D$. We first require a lemma:

\begin{lemma} \label{lem:QgrTOneDimensional}
We have $\dim \Qgr[\ZZ^3](T)_{\vec n} = 1$ for all $\vec{n} \in \ZZ^3$.
\end{lemma}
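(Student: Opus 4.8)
The plan is to prove the two inequalities $\dim_\kk \Qgr[\ZZ^3](T)_{\vec n} \geq 1$ and $\dim_\kk \Qgr[\ZZ^3](T)_{\vec n} \leq 1$ separately, using only that $T \cong \Tpres$ is a domain (Theorem~\ref{thm:TcongTpres}) and that its toric graded pieces are at most one-dimensional (Proposition~\ref{prop:OneDimensionalPieces}). In particular I will not use the isomorphism $\Qgr[\ZZ^3](T) \cong D$, since establishing that isomorphism is the whole point of the present lemma.

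For the lower bound, I would first observe that the support sub-semigroup $\Sigma := \{\vec n \in \NN^3 : T_{\vec n} \neq 0\}$ generates $\ZZ^3$ as a group: indeed $\deg x = (1,0,0)$, $\deg z = (1,1,0)$ and $\deg t_0 = (0,0,1)$ all lie in $\Sigma$, and these three vectors span $\ZZ^3$. Hence any $\vec n \in \ZZ^3$ can be written as $\vec n = \vec p - \vec m$ with $\vec p, \vec m \in \Sigma$. Choosing nonzero homogeneous $f \in T_{\vec p}$ and $g \in T_{\vec m}$, the element $f g^{-1}$ is a nonzero element of $\Qgr[\ZZ^3](T)_{\vec n}$, since $T$, and hence its graded quotient ring, is a domain.

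For the upper bound, take two nonzero elements $\xi_1, \xi_2 \in \Qgr[\ZZ^3](T)_{\vec n}$. Writing each as a right fraction $\xi_i = a_i b_i^{-1}$ with $a_i, b_i$ nonzero $\ZZ^3$-homogeneous elements of $T$, I would note that $b_1 T$ and $b_2 T$ are graded nonzero right ideals of the noetherian (hence right Ore) domain $T$, so their intersection contains a nonzero homogeneous element $d$; in particular $b_i^{-1}d \in T$ for $i=1,2$. Then $\xi_1 d$ and $\xi_2 d$ both lie in $T$ and are nonzero and $\ZZ^3$-homogeneous of the same degree $\vec n + \deg d$ (which is therefore forced to lie in $\NN^3$). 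Since that graded piece of $T$ is at most one-dimensional by Proposition~\ref{prop:OneDimensionalPieces}, we get $\xi_1 d = \lambda\,\xi_2 d$ for some $\lambda \in \kk^\times$, and cancelling $d$ (valid in the domain $\Qgr[\ZZ^3](T)$) gives $\xi_1 = \lambda \xi_2$. Thus $\dim_\kk \Qgr[\ZZ^3](T)_{\vec n} \leq 1$, which together with the previous paragraph proves the lemma.

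I do not expect a serious obstacle here: the only points needing care are (i) checking that the three explicit degree vectors generate $\ZZ^3$, and (ii) the standard Ore-theoretic fact that any element of the graded quotient ring has a homogeneous denominator and that two elements admit a common homogeneous denominator, which is exactly why I pass to the graded right ideals $b_i T$ and use that their intersection is a nonzero graded subspace. The mildly delicate book-keeping step is recognising that "$\xi_i d \in T$, homogeneous and nonzero" automatically lands in a graded piece indexed by an element of $\NN^3$, so that Proposition~\ref{prop:OneDimensionalPieces} genuinely applies; but this is immediate from $T$ being $\NN^3$-graded.
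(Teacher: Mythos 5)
Your proof is correct and follows essentially the same two-step strategy as the paper's: clear denominators by a common homogeneous element and invoke Proposition~\ref{prop:OneDimensionalPieces} for $\dim \leq 1$, and use the degrees of nonzero homogeneous elements to get $\dim \geq 1$. The only (cosmetic) difference is that for the lower bound the paper directly exhibits $x$, $t_0$, $yz^{-n} \in \Qgr[\ZZ^3](T)$ of degrees $(1,0,0)$, $(0,0,1)$, $(0,1,0)$, while you observe that $\deg x$, $\deg z$, $\deg t_0$ already generate $\ZZ^3$ as a group and argue via $\Sigma - \Sigma = \ZZ^3$; both are fine.
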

\begin{proof}
Let $D' = \Qgr[\ZZ^3](T) $.
We first show that $\dim D'_{\vec n} \leqslant 1$. Suppose that $u,v \in D'_{\vec n}$ are nonzero. By multiplying by a suitable homogeneous element $d \in T_{\vec{m}}$ (essentially clearing denominators), we obtain $ud, vd \in T_{\vec{n} + \vec{m}}$. Since $T_{\vec{n} + \vec{m}}$ is one-dimensional by Proposition~\ref{prop:OneDimensionalPieces}, it follows that $ud = \lambda vd$ for some $\lambda \in \Bbbk^\times$. Multiplying by $d^{-1}$ shows that $u = \lambda v$, so that $\dim D'_{\vec n} \leqslant 1$.
Since $D'$ contains the elements $x,t_0, yz^{-n}$  whose respective  degrees are $ (1,0,0)$,  $  (0,0,1)$, and $(0,1,0)$, the reverse inequality also holds.
\end{proof}

Now define a map
\begin{gather*}
\overline{\theta} : \Bbbk \langle \ba,\bb,\bs \rangle \to \Qgr[\ZZ^3](T),\\
\ba \mapsto x, \quad \bb \mapsto yz^{-n}, \quad \bs \mapsto t_0.
\end{gather*}
The following calculations shows that this descends to a map $\theta: D \to \Qgr[\ZZ^3](T)$:
\begin{gather*}
\overline{\theta}(\bb\ba) = yz^{-n} x = y q^{-n(n+1)} x z^{-n} = q^{(n+1)^2-n(n+1)} xy z^{-n} = q^{n+1} \theta(\ba\bb),\\
\overline{\theta}(\bs\ba) = t_0 x = x t_0 = \overline{\theta}(\ba\bs), \\
\overline{\theta}(\bs\bb) = t_0 y z^{-n} = q^{-\frac{1}{2}n (n+1)^2} yt_0z^{-n} = q^{-\frac{1}{2}n (n+1)^2 + \frac{1}{2} n^2(n+1)} y z^{-n} t_0 = q^{-\frac{1}{2}n (n+1)} \overline{\theta}(\bb\bs).
\end{gather*}
Moreover, it is straightforward to check that $\theta : D \to \Qgr[\ZZ^3](T)$ is a graded map.

\begin{proposition}
The map 
\begin{gather*}
\theta : D \to \Qgr[\ZZ^3](T), \qquad \ba \mapsto x, \quad \bb \mapsto yz^{-n}, \quad \bs \mapsto t_0,
\end{gather*}
is an isomorphism of $\ZZ^3$-graded rings.
\end{proposition}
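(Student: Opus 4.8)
The plan is to exploit that both $D$ and $\Qgr[\ZZ^3](T)$ are $\ZZ^3$-graded rings all of whose homogeneous components are one-dimensional, and that $\theta$ is a graded ring homomorphism (the well-definedness and compatibility with the grading having been checked just before the statement). For $\Qgr[\ZZ^3](T)$ the one-dimensionality is exactly Lemma~\ref{lem:QgrTOneDimensional}. For $D$ one notes that $D$ is a quantum $3$-torus: it is the localisation of the quantum polynomial ring $\kk_{\mathbf q}[\ba,\bb,\bs]$ (with commutation rules as in \eqref{eqn:Q}) at the normal nonzerodivisors $\ba,\bb,\bs$, so it has $\kk$-basis $\{\ba^i\bb^j\bs^k \mid i,j,k \in \ZZ\}$, and hence $\dim_\kk D_{\vec n} = 1$ for every $\vec n \in \ZZ^3$, with $D_{(i,j,k)} = \kk\,\ba^i\bb^j\bs^k$.

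Given this, it suffices to show that for each $\vec n \in \ZZ^3$ the restriction $\theta\colon D_{\vec n} \to \Qgr[\ZZ^3](T)_{\vec n}$ is nonzero, since a nonzero linear map between one-dimensional spaces is an isomorphism; as $\vec n$ ranges over $\ZZ^3$ this gives that $\theta$ is bijective, hence an isomorphism of $\ZZ^3$-graded rings. Concretely, $\theta$ sends the basis element $\ba^i\bb^j\bs^k$ of $D_{(i,j,k)}$ to $x^i (yz^{-n})^j t_0^k$. Here $x$, $yz^{-n}$, $t_0$ are nonzero $\ZZ^3$-homogeneous elements of $T$ of degrees $(1,0,0)$, $(0,1,0)$, $(0,0,1)$ respectively (using Proposition~\ref{prop:green}), hence are units in the graded quotient ring $\Qgr[\ZZ^3](T)$. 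Since $T$ is a domain by Theorem~\ref{thm:TcongTpres}, its graded quotient ring $\Qgr[\ZZ^3](T)$ is again a domain, so the product $x^i (yz^{-n})^j t_0^k$ of units is nonzero and lies in degree $(i,j,k)$. Thus $\theta|_{D_{(i,j,k)}} \neq 0$ for all $(i,j,k)$, and we conclude.

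I do not expect a genuine obstacle here: the substance of the argument has already been offloaded into Lemma~\ref{lem:QgrTOneDimensional} (the one-dimensionality of the graded pieces of $\Qgr[\ZZ^3](T)$, via Proposition~\ref{prop:OneDimensionalPieces}) and into the explicit verification that $\theta$ is well defined and graded. The only points requiring a moment's care are (i) recording that $D$ is a quantum torus and therefore has the monomial basis above, and (ii) noting that $\Qgr[\ZZ^3](T)$ is a domain so that the image of a monomial basis element is nonzero. Everything else is formal.
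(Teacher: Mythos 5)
Your proof is correct. It uses the same structural skeleton as the paper's — both rings have one-dimensional $\ZZ^3$-graded pieces (Lemma~\ref{lem:QgrTOneDimensional} for the target, the monomial basis of the quantum torus for the source), so it suffices to check a one-dimensional condition degree by degree — but the two arguments verify that condition from opposite ends. The paper establishes surjectivity by explicitly computing preimages of the generators $x,y,z,t_i$ of $T$ (the computation labelled \eqref{ref:ThetaSurjects}), and these formulas are reused immediately afterwards to write down the embedding $\eta = \theta^{-1}|_T$ in \eqref{eqn:eta}. You instead establish nonvanishing: $\theta(\ba^i\bb^j\bs^k)=x^i(yz^{-n})^j t_0^k$ is a product of units in the domain $\Qgr[\ZZ^3](T)$ and hence nonzero, so $\theta$ restricted to each graded piece is a nonzero map between one-dimensional spaces, hence an isomorphism. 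Your route is a little cleaner for the proposition in isolation, since it avoids the $q$-power bookkeeping; the paper's route earns its keep by producing the explicit formulas for $\eta$ that the toric-embedding construction in the rest of Section~\ref{sec:ToricEmbedding} depends on.
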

\begin{proof}
We first show that $\theta$ surjects. Indeed, we have
\begin{align}
\begin{array}{rl}
\theta(\ba)\hspace{-7pt} &=  x, \\[5pt]
\theta(\ba^n \bb^{n+1})\hspace{-7pt} &= x^n (yz^{-n})^{n+1} = \qb y (xy z^{-(n+1)})^{n} = \qb y \\[5pt]
\theta(\ba \bb)\hspace{-7pt} &= xy z^{-n} = \qb z (xy z^{-(n+1)}) = \qb z \\[5pt]
\theta\big(\ba^{\binom{i}{2}} \bb^{\binom{i+1}{2}} \bs\big)\hspace{-7pt} &= x^{\binom{i}{2}} (y z^{-n})^{\binom{i+1}{2}} t_0
= \qb (xy)^{\binom{i}{2}} z^{-n\binom{i+1}{2}} y^i t_0 \\[5pt]
\hspace{-7pt} &= \qb z^{(n+1)\binom{i}{2}} z^{-n\binom{i+1}{2}} z^{in - \binom{i}{2}} t_i 
= \qb t_i.
\end{array} \label{ref:ThetaSurjects}
\end{align}
Since $D$ and $\Qgr[\ZZ^3](T)$ are $1$-dimensional in each graded piece (the latter by Lemma~\ref{lem:QgrTOneDimensional}), it follows that $\theta$ is in fact an isomorphism.
\end{proof}

Now define $\eta : T \to D$ to be the map $\eta = \restr{(\theta^{-1})}{T}$, which is necessarily an embedding of $T$ into $D$. From \eqref{ref:ThetaSurjects}, we have
\begin{align}
\eta(x) = \ba, \quad \eta(y) = \qb \ba^{n} \bb^{n+1}, \quad \eta(z) = \qb \ba \bb, \quad \eta(t_i) = \qb \ba^{\binom{i}{2}} \bb^{\binom{i+1}{2}} \bs. \label{eqn:eta}
\end{align}
The precise powers of $q$ in the above are not important for our needs, but can be calculated with a bit of care. We call the map $\eta : T \to D$ the \emph{toric embedding} of $T$, and we write
\begin{align*}
\Ttoric \coloneqq \Bbbk \langle \ba, \ba^n \bb^{n+1}, \ba \bb, \ba^{\binom{i}{2}} \bb^{\binom{i+1}{2}} \bs \mid 0 \leqslant i \leqslant n \rangle
\end{align*}
for the image of $\eta$.
These will play a major role in later calculations.

\section{A geometric resolution: the category}\label{QGR T}

We will now use the ring $T$ and methods of noncommutative algebraic geometry to construct a category $\sX_q$  which we will see is also a resolution of $B_q$.

In this section we consider $\T$ with its projective grading; that is, $\T$ is $\NN$-graded with $\T_d = R_{d \chi}$.
We will 
define a category $\rqgr \T$, a suitable localisation of the graded module category of $T$,  and give many of its  properties.  
Much of this is standard noncommutative geometry as in \cite{AZ}, but we need to give some proofs as the standing assumption of \cite{AZ} is that the degree 0 part of the ring is a finite module over a \emph{commutative} ground ring.

We begin, however, by recalling the basics of local cohomology.

\subsection{Local cohomology}\label{LOCALCOHDEF}
In this paper we will be primarily interested in local cohomology of graded $\T$-modules with respect to $\T_+ = \bigoplus_{s \geq 1} \T_s$.
However, we recall the basic definitions more generally. Thus, in this section let $\Omega$ be a ring, 
$I$ an ideal of $\Omega$, and $M$ an $\Omega$-module.
Recall that the {\em $j$th local cohomology of $M$ with respect to $I$} is
\[ H^j_I(M) = \lim_{s\to \infty} \Ext^j_\Omega(\Omega/I^s, M).\]
Equivalently, the $H^j_I(-)$ are the right derived functors of $H^0_I(-)  = \lim_{s \to \infty} \Hom_\Omega(\Omega/I^s, -)$.
Since $\Omega/I^s$ is an $\Omega$-bimodule, $H^j_I(M)$ is an $\Omega$-module.
Note that $H^0_I(M)$ is a submodule of $M$; we refer to it as the {\em $I$-torsion submodule of $M$}.

\begin{lemma}\label{lem:torsion}
Local cohomology modules are $I$-torsion.  That is, for all $j \in \NN$ and for any $\Omega$-module $M$, we have 
\[ H^0_I(H^j_I(M)) = H^j_I(M).\]
\end{lemma}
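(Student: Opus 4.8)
The plan is to argue directly from the colimit formula $H^j_I(M) = \varinjlim_s \Ext^j_\Omega(\Omega/I^s, M)$ recorded above, combined with the standard identification of $H^0_I$ with the torsion functor. First I would observe that for any $\Omega$-module $N$ a homomorphism $\Omega/I^s \to N$ is precisely an element of $N$ annihilated by $I^s$, so $H^0_I(N) = \varinjlim_s \Hom_\Omega(\Omega/I^s, N)$ is the submodule of $N$ consisting of all elements killed by some power of $I$. In particular $H^0_I(N) = N$ exactly when every element of $N$ is annihilated by some $I^s$. Applying this to $N = H^j_I(M)$, it suffices to show that every element of $H^j_I(M)$ is killed by a power of $I$.

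The key step is the claim that $I^s$ annihilates $\Ext^j_\Omega(\Omega/I^s, M)$ for every $s$. As recalled above, the $\Omega$-module structure on $\Ext^j_\Omega(\Omega/I^s, M)$ is induced from the $\Omega$-bimodule structure on $\Omega/I^s$: for $r \in \Omega$, multiplication by $r$ on $\Ext^j_\Omega(\Omega/I^s, M)$ is the map functorially induced by multiplication by $r$ on $\Omega/I^s$, which is a morphism in the relevant module category precisely because $\Omega/I^s$ is a bimodule. When $r \in I^s$, multiplication by $r$ is the zero endomorphism of $\Omega/I^s$, since $I^s$ is a two-sided ideal; hence the endomorphism it induces on $\Ext^j_\Omega(\Omega/I^s, M)$ is zero, i.e. $I^s$ annihilates $\Ext^j_\Omega(\Omega/I^s, M)$.

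Finally, the transition maps in the directed system $\{\Ext^j_\Omega(\Omega/I^s, M)\}_s$ are induced by the natural bimodule surjections $\Omega/I^{s+1} \twoheadrightarrow \Omega/I^s$, hence are $\Omega$-linear, so $H^j_I(M)$ is the colimit of this system in the category of $\Omega$-modules. Any $\xi \in H^j_I(M)$ is then the image of some $\xi_s \in \Ext^j_\Omega(\Omega/I^s, M)$, and since $\xi_s$ is killed by $I^s$ by the previous step, so is $\xi$. Hence $H^j_I(M)$ is $I$-torsion, which is exactly the assertion $H^0_I(H^j_I(M)) = H^j_I(M)$. (Alternatively, one can run the same argument using an injective resolution $E^\bullet$ of $M$: each $H^0_I(E^k)$ is $I$-torsion by the first paragraph, so the complex $H^0_I(E^\bullet)$ consists of $I$-torsion modules, and therefore so does its $j$-th cohomology $H^j_I(M)$, since submodules and quotients of $I$-torsion modules are $I$-torsion.) I do not foresee a real obstacle; the only point requiring attention is the bookkeeping of which $\Omega$-action on the $\Ext$ groups is being used — namely the one induced by the bimodule structure on $\Omega/I^s$, which is compatible with the transition maps — so that the passage to the colimit is legitimate.
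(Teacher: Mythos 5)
Your proof is correct, and it is a slightly different route from the paper's. The paper first observes that $\Hom_\Omega(\Omega/I^s, M)I^s = 0$ (your first paragraph, in other words), which settles $j = 0$, and then disposes of the general case with a one-line appeal to dimension shifting along an injective resolution. Your main argument instead handles all $j$ uniformly and directly: since the right $\Omega$-action on $\Ext^j_\Omega(\Omega/I^s, M)$ comes by contravariant functoriality from the left $\Omega$-action on $\Omega/I^s$, and left multiplication by any $r \in I^s$ is the zero endomorphism of $\Omega/I^s$, the induced action of $I^s$ on $\Ext^j_\Omega(\Omega/I^s, M)$ is zero; passing to the colimit over $s$ (along $\Omega$-linear transition maps) shows every class in $H^j_I(M)$ is killed by some $I^s$. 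This avoids induction altogether and makes the annihilator bound $I^s$ explicit at each stage, which is a small bonus. Your parenthetical alternative — applying $H^0_I$ to an injective resolution and using closure of $I$-torsion modules under subquotients — is essentially the dimension-shifting argument the paper has in mind, so you have in effect supplied both proofs; either one would be acceptable. One minor point worth keeping straight, which you flag yourself: the $\Omega$-action on the $\Ext$ groups really is the one induced by the bimodule structure on $\Omega/I^s$ (not by any action on $M$), and it is exactly this action that is compatible with the transition maps $\Ext^j_\Omega(\Omega/I^s,M) \to \Ext^j_\Omega(\Omega/I^{s+1},M)$, so the passage to the colimit in the category of $\Omega$-modules is legitimate.
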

\begin{proof}
As $I^s (\Omega/I^s) = 0$,  we have $\Hom_\Omega(\Omega/I^s, M) I^s = 0$, and thus
$H^0_I(H^0_I(M)) = H^0_I(M)$.
The general result follows by dimension shifting.
\end{proof}

If $\Omega$ is commutative then it is well-known that local cohomology may be computed via \v{C}ech cohomology. The same is true if $\Omega$ is noncommutative and $I$ is normally generated. Indeed, suppose now that $I= \langle x_0, \dots, x_n \rangle$, where $x_0, \dots, x_n$ are normal elements of $\Omega$ and let $M$ be a right $\Omega$-module.
The {\em \v{C}ech complex} $\vC(x_0, \dots, x_n ; M)$ of $M$ with respect to $I$ is 
defined as follows, where we use the (slightly nonstandard) notation that $[n] = \{0, \dots, n\}$:
\begin{align*}
\vC(x_0, \dots, x_n ; M) : \quad 0 \to M \xrightarrow{d_0} \bigoplus_{i=0}^n M[x_i^{-1}] \xrightarrow{d_1} \cdots \xrightarrow{d_{s-1}} \bigoplus_{\substack{J \subseteq [n] \\ |J| = s}} M[x_J^{-1}] \xrightarrow{d_s} \cdots \xrightarrow{d_n} M[x_{[n]}^{-1}] \to 0.
\end{align*}
Here, we define  
\[M[x_J^{-1}] = M[x_j^{-1} \mid j \in J].\]
The differential is defined on $m_J \in M[x_J^{-1}]$ by
\begin{align*}
d_{|J|}(m_J) = \sum_{k \not\in J} (-1)^{o_J(k)} m_{J \cup \{k\}},
\end{align*}
where $o_J(k)$ denotes the number of elements of $J$ which are less than $k$, and $m_{J \cup \{k\}}$ is the image of $m_J \in M[x_J^{-1}]$ in $M[x_{J \cup \{k\}}^{-1}] = M[x_J^{-1}][x_k^{-1}]$.

\begin{proposition}\label{prop:LC}
If $x_0, \dots, x_n$ are normal elements of $\Omega$ and $I = \langle x_0, \dots, x_n \rangle$, then the cohomology of the \v{C}ech complex computes the local cohomology of $M$ with respect to $I$: that is,
\[ H^j(\vC(x_0, \dots, x_n ; M) \cong H^j_I(M)\]
for all $j \geq 0$.
\end{proposition}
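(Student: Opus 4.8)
The plan is to realise both $j\mapsto H^j_I(-)$ and $j\mapsto H^j(\vC(x_0,\dots,x_n;-))$ as the universal cohomological $\delta$-functor on $\rMod\Omega$ extending the $I$-torsion functor $H^0_I(-)$, and then to invoke uniqueness. First I would record that, since each $x_i$ is normal, the powers $\{x_i^s\}_{s\geq 0}$ form a left and right Ore set and each localisation $M\mapsto M[x_J^{-1}]$ is an exact functor; hence every term of $\vC(x_0,\dots,x_n;-)$ is exact in $M$, so a short exact sequence of $\Omega$-modules yields a short exact sequence of Čech complexes, and the resulting long exact cohomology sequence makes $\{H^j(\vC(x_0,\dots,x_n;-))\}_{j\geq 0}$ a cohomological $\delta$-functor. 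Next I would check agreement in degree $0$: since $d_0(m)=(m/1,\dots,m/1)$, one has $H^0(\vC(x_0,\dots,x_n;M))=\bigcap_i\Ker(M\to M[x_i^{-1}])=\{m\in M: mx_i^{s_i}=0 \text{ for some } s_i,\ 0\leq i\leq n\}$. Using that $I^s=\sum x_{i_1}\cdots x_{i_s}\Omega$ (normality lets every occurrence of an $x_i$ in a product be moved to the left end) together with a pigeonhole count, this set coincides with $\{m: mI^s=0 \text{ for some } s\}=H^0_I(M)$.

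Since $\{H^j_I(-)\}$ is by definition the sequence of right derived functors of $H^0_I$, it is the universal $\delta$-functor extending $H^0_I$, so there is a canonical morphism of $\delta$-functors $\{H^j_I(-)\}\to\{H^j(\vC(x_0,\dots,x_n;-))\}$ restricting to the identity in degree $0$. To see it is an isomorphism I would show the target $\delta$-functor is itself universal, which by the standard criterion reduces to proving effaceability in positive degrees, namely $H^j(\vC(x_0,\dots,x_n;E))=0$ for every injective $E$ and every $j\geq 1$. For this I would first split $E=H^0_I(E)\oplus E'$ — the inclusion $H^0_I(E)\hookrightarrow E$ splits because $E$ is injective — with $E'$ injective and $H^0_I(E')=0$; the Čech complex splits accordingly. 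As $H^0_I(E)$ is $I$-torsion it becomes $0$ after inverting any single $x_i$, so $\vC(x_0,\dots,x_n;H^0_I(E))$ is concentrated in degree $0$ and contributes nothing in positive degrees. For $E'$ I would induct on $n$, using the decomposition
\[
\vC(x_0,\dots,x_n;M)\;\cong\;\operatorname{Cone}\!\big(\vC(x_0,\dots,x_{n-1};M)\xrightarrow{\ \mathrm{loc}\ }\vC(x_0,\dots,x_{n-1};M[x_n^{-1}])\big)[-1]
\]
and its long exact cohomology sequence: the inductive hypothesis identifies the two flanking complexes with local cohomology relative to $(x_0,\dots,x_{n-1})$, which vanishes in positive degrees on $E'$ and, via flat base change along $\Omega\to\Omega[x_n^{-1}]$, on $E'[x_n^{-1}]$ as well. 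The base case $n=0$ is the direct computation that $\vC(x_0;M)=[\,M\to M[x_0^{-1}]\,]$ has $H^0=H^0_{(x_0)}(M)$ and $H^1=\coker(M\to M[x_0^{-1}])=H^1_{(x_0)}(M)$, with $H^{\geq 2}_{(x_0)}(M)=0$, all read off from the length-one resolution $0\to\Omega\xrightarrow{x_0^s}\Omega\to\Omega/x_0^s\Omega\to 0$ and passage to the colimit over $s$.

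The hard part will be the effaceability step: showing that the Čech complex of an injective module is acyclic in positive cohomological degrees. The ingredients I expect to need are that the $I$-torsion submodule splits off an injective, exactness of localisation at a single normal element, and a flat base change statement of the form $H^j_{(x_0,\dots,x_{n-1})}(-)[x_n^{-1}]\cong H^j_{(x_0,\dots,x_{n-1})}\big((-)[x_n^{-1}]\big)$ to feed the induction. In all the situations where we apply the proposition the ring $\Omega$ will be noetherian and the $x_i$ nonzerodivisors, which is more than enough to justify these inputs; a more economical alternative, avoiding the induction, is to run the spectral sequence of the double complex $\vC(x_0,\dots,x_n;E^{\bullet})$ attached to an injective resolution $M\to E^{\bullet}$, but this still rests on the same positive-degree vanishing of $\vC(x_0,\dots,x_n;E^q)$.
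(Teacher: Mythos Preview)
Your outline is the standard universal $\delta$-functor argument, which is precisely the Brodmann--Sharp proof the paper cites; the paper's own proof is nothing more than the one-line assertion that their commutative argument survives when the generators are merely normal rather than central. So the approaches coincide.

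Two points in your effaceability step want more care. First, the splitting $E=H^0_I(E)\oplus E'$ does not follow ``because $E$ is injective'': injectivity of $E$ lets you extend maps \emph{into} $E$, it does not split off arbitrary submodules. What you actually need is that $H^0_I(E)$ is itself injective, whence it is a summand; this holds over noetherian $\Omega$ when $I$ is generated by normal elements, since such ideals have the Artin--Rees property and the associated torsion class is then stable. Second, in the inductive step your cone long exact sequence only kills $H^j(\vC(x_0,\dots,x_n;E'))$ for $j\geq 2$ from the positive-degree vanishing you invoke; for $j=1$ it identifies $H^1$ with the cokernel of the localisation map $H^0_{(x_0,\dots,x_{n-1})}(E')\to H^0_{(x_0,\dots,x_{n-1})}(E')[x_n^{-1}]$, and you must argue separately that this is onto. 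This follows once you know $H^0_{(x_0,\dots,x_{n-1})}(E')$ is injective (the same stability fact, now for the smaller ideal), after which your base-case divisibility argument applies.
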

\begin{proof}
The proof given in \cite[Theorem~5.1.19]{BS} for commutative rings also works in our situation.
\end{proof}

\subsection{Defining the category \texorpdfstring{$\sX_q$}{Xq}} \label{DEFCAT}
We now consider $\T$ with its projective grading.
Let  $M$ be a graded (right) $\T$-module.
The {\em torsion} submodule $\tau(M)$ of $M$ is the $\T_+$-torsion submodule; 
that is, $\tau(M)=H^0_{\T_+}(M)$.
If $M = \tau(M)$ then $M$ is {\em torsion}.
As usual, let $\rTors \T$ be the full subcategory of $\rGr\T$ consisting of torsion modules, and let $\rQgr \T = \rGr \T/\rTors \T$.
Let $\pi:\rGr \T \to \rQgr \T$ be the quotient functor.

More explicitly,  the objects  of $\rQgr \T$ are the same as the objects of $\rGr \T$, and the maps are defined by
\[ \Hom_{\rQgr \T}(\pi M,\pi N) = \hspace{-15pt} \varinjlim_{\substack{ M' \subseteq M\\ M/M' \text{ is torsion}}} \hspace{-15pt} \Hom_{\rGr \T}(M', N/\tau(N)).\]
If $M$ is finitely generated, then
\[ \Hom_{\rQgr \T}(\pi M,\pi N) = \varinjlim_{s} \Hom_{\rGr \T}(M_{\geq s}, N).\]
By definition, the elements of $\Hom_{\rGr \T}(M, N)$ are degree-preserving, and thus the same is true for elements of $\Hom_{\rQgr \T}(\sM, \sN)$ for $\sM, \sN \in \rQgr \T$.
We will also sometimes need
\[ \uHom_{\rQgr \T}(\sM, \sN) = \bigoplus_{s \in\ZZ} \Hom_{\rQgr \T}(\sM, \sN(s)).\]

Recall our convention that if $\Xyz$ is an abelian category, then $\xyz$ is the full subcategory of noetherian objects. 
Since $\T$ is noetherian, $\rqgr \T \simeq \rgr \T/\rtors \T$.

By construction, the functor $\pi: \rGr T \to \rQgr T$ is exact and preserves colimits.
By the adjoint functor theorem, therefore, $\pi$ has a right adjoint, which we denote $\omega$.
If $\sN\in \rQgr \T, s\in \ZZ$ then
\[ (\omega \sN)_s = \Hom_{\rGr \T}(\T, \omega \sN(s)) = \Hom_{\rQgr \T}(\pi \T, \sN(s))\]
and so 
\[ \omega \sN = \uHom_{\rQgr \T}(\pi \T, \sN).\]

If $\sN \in \rQgr \T$ then the $(B,\T)$-bimodule structure on $\T$ coming from the inclusion $B = \T_0 \subseteq \T$ gives $\Hom_{\rQgr \T}(\pi T, \sN)$ the structure of a $B$-module.
We denote the functor 
\[\H^0_{\rQgr \T}(-) \coloneqq \Hom_{\rQgr \T}(\pi T, -): \rQgr \T \to \rMod B\]
by $\phi_*$. As this functor is left exact and preserves limits, the adjoint functor theorem guarantees that it has a left adjoint $\phi^*$.
We thus have a map of noncommutative spaces $\phi: \rQgr T \to \rMod B$ given by 
\begin{equation*}
\begin{tikzcd}[>=stealth,ampersand replacement=\&,row sep=35pt,every label/.append style={font=\normalsize}] 
\rQgr T \arrow[d, "\,\phi_*" right,shift left=2] \\
\rMod B \arrow[u, "\phi^*\," left,shift left=2]
\end{tikzcd}
\end{equation*}
We now show that $\phi^* = \pi(- \otimes_B T)$.

\begin{proposition}\label{prop:adjoint}
Let $\phi_*  = \H^0_{\rQgr \T}(-)$ as defined above and let $\phi^* = \pi(- \otimes_B T)$.  
Then 
$(\phi^*, \phi_*)$ are an adjoint pair.
\end{proposition}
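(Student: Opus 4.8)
The plan is to factor $\phi^*$ as a composite of two left adjoints and to recognise $\phi_*$ as the composite of the two matching right adjoints; the claimed adjunction then falls out by concatenating the two hom-set bijections.

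The first ingredient is the elementary adjunction $(-\otimes_B \T) \dashv (-)_0$, where $(-)_0\colon \rGr \T \to \rMod B$ sends a $\ZZ$-graded $\T$-module to its degree-$0$ component, equipped with its natural $\T_0 = B$-module structure. Since $\T$ is $\NN$-graded, $M \otimes_B \T$ is concentrated in non-negative degrees and generated in degree $0$ by $M \otimes 1$, so restriction to degree $0$ furnishes a natural bijection
\[
\Hom_{\rGr \T}(M \otimes_B \T, N) \xrightarrow{\ \sim\ } \Hom_B(M, N_0), \qquad f \longmapsto \big(m \mapsto f(m \otimes 1)\big),
\]
whose inverse sends $g\colon M \to N_0$ to the $\T$-linear extension $m \otimes t \mapsto g(m)\, t$ (well-definedness of the extension uses $B$-linearity of $g$). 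The second ingredient is the adjunction $\pi \dashv \omega$ recalled just before the proposition.

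Combining these, for $M \in \rMod B$ and $\sN \in \rQgr \T$ I would write the chain of natural isomorphisms
\[
\Hom_{\rQgr \T}(\phi^* M, \sN) = \Hom_{\rQgr \T}\big(\pi(M \otimes_B \T), \sN\big) \cong \Hom_{\rGr \T}(M \otimes_B \T, \omega \sN) \cong \Hom_B\big(M, (\omega \sN)_0\big),
\]
using $\pi \dashv \omega$ for the first isomorphism and the bijection above for the second. It then remains to identify $(\omega \sN)_0$ with $\phi_* \sN = \Hom_{\rQgr \T}(\pi \T, \sN)$ naturally in $\sN$, which is exactly the $s = 0$ case of the formula $(\omega \sN)_s = \Hom_{\rGr \T}(\T, \omega \sN(s)) = \Hom_{\rQgr \T}(\pi \T, \sN(s))$ recorded in the excerpt.

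The argument is formal, so there is no real obstacle; the only steps needing genuine (if routine) care are (i) confirming that $(-)_0$ is indeed right adjoint to $-\otimes_B \T$ over the $\ZZ$-graded category, which works precisely because $\T$ is $\NN$-graded so that $M \otimes_B \T$ lives in non-negative degrees and is generated in degree $0$; and (ii) checking that the identification $(\omega \sN)_0 \cong \phi_* \sN$ respects the two a priori different $B$-module structures — the one on $(\omega \sN)_0$ coming from $\T$ being graded with $\T_0 = B$, and the one on $\Hom_{\rQgr \T}(\pi \T, \sN)$ coming from the $(B,\T)$-bimodule structure on $\T$. Both are induced by precomposition with the image under $\pi$ of the degree-$0$ endomorphisms of $\T$ given by left multiplication by elements of $B$, so they agree once the adjunction $\pi \dashv \omega$ is unwound.
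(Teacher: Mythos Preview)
Your proposal is correct and follows essentially the same route as the paper: both arguments compose the adjunction $\pi \dashv \omega$ with the graded hom--tensor adjunction $(-\otimes_B \T) \dashv (-)_0$, then identify $(\omega\sN)_0$ with $\phi_*\sN$ using the formula for $\omega$ already recorded in the text. The only cosmetic difference is that the paper phrases the middle step as ``hom--tensor adjointness'' via $\uHom_\T(M\otimes_B \T,\omega\sN)_0 \cong \Hom_B(M,(\omega\sN)_0)$, whereas you spell out the adjunction $(-\otimes_B \T)\dashv(-)_0$ explicitly; your additional remark about the two $B$-module structures agreeing is a welcome bit of extra care. (One small quibble: the $\NN$-grading of $\T$ is not actually needed for $(-\otimes_B \T)\dashv(-)_0$ --- $M\otimes_B \T$ is generated in degree $0$ by $M\otimes 1$ regardless --- but this does no harm.)
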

\begin{proof}
Let $M \in \rMod B$, $\mc N \in \rQgr \T$. 
We have
\begin{align*}
\Hom_{\rQgr T}(\phi^* M, \mc N) & = \Hom_{\rQgr T}(\pi(M \otimes_B T), \mc N) && \mbox{by definition}\\
& \cong \Hom_{\rGr T}(M \otimes_B T, \omega \mc N) && \mbox{as $(\pi, \omega)$ are an adjoint pair} \\
& = \uHom_T(M \otimes_B T, \omega \mc N)_0 && 
\\
& \cong \Hom_B(M, (\omega \mc N)_0) && \mbox{by hom-tensor adjointness.} 
\end{align*}
Now, $(\omega \mc N )_0 = \Hom_{\rGr T}(T, \omega \mc N) \cong \Hom_{\rQgr T}(\pi T, \mc N)$, 
using adjointness of $(\pi, \omega)$ again.
But this last  group is $\phi_* \mc N$ by definition, proving that 
\[ \Hom_{\rQgr T}(\phi^* M, \mc N) \cong \Hom_{B}(M, \phi_* \mc N).\]
The isomorphisms used in the proof are all natural in $\mc N$ and (where relevant) $M$.
\end{proof}

It is clear from the definition that $\phi^*(\rmod B) \subseteq \rqgr \T$.
We will see later that $\phi$ is ``proper'': that is, that $\phi_*(\rqgr \T) \subseteq \rmod B$, and that the map $\phi = (\phi^*, \phi_*): \rqgr T \to \rmod B$ is  a resolution of the singularity of $B$.

Similarly to Lemma~\ref{lem:MoritaEquiv}, we now show that the the map $\phi: \rQgr T \to \rMod B$ is an isomorphism above the ``nonsingular locus'' of $B$.

\begin{proposition} \label{prop:june}
Let $w \in \{x,y,z\}$.  
Then $\phi^*, \phi_*$ restrict to inverse equivalences between $\rQgr T[w^{-1}]$ and $\rMod B[w^{-1}]$.
\end{proposition}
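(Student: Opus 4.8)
The plan is to show that inverting $w$ turns $T$ into a twisted polynomial ring over $B[w^{-1}]$ in a single variable of degree $1$, and then to invoke the standard noncommutative-projective-geometry fact that $\rQgr$ of such a ring is the module category of its degree-$0$ part. Throughout I identify $T\cong\Tpres$ as in Theorem~\ref{thm:TcongTpres}.

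\emph{Step 1: $T[w^{-1}]\cong B[w^{-1}][t;\sigma_w]$.} For $w=z$, the relation $yt_i=q^{\beta_i}z^{n-i}t_{i+1}$ gives $t_{i+1}=q^\bullet z^{-(n-i)}y\,t_i$ once $z$ is inverted, so by induction $t_i\in B[z^{-1}]\,t_0$ for all $i$; similarly, for $w=x$ the relation $xt_i=q^{\alpha_i}z^it_{i-1}$ gives $t_{i-1}=q^\bullet x^{-1}z^it_i$, whence $t_i\in B[x^{-1}]\,t_n$ for all $i$, and for $w=y$ one gets $t_i\in B[y^{-1}]\,t_0$ for all $i$. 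Let $t$ denote the surviving generator ($t_0$ when $w\in\{y,z\}$, $t_n$ when $w=x$). Since $T$ is generated over $B=T_0$ by $t_0,\dots,t_n$, and these $q^\bullet$-commute with $x,y,z$, it follows that $T[w^{-1}]_k=B[w^{-1}]\,t^k$ for every $k\geq 0$; as $T[w^{-1}]$ is a domain this decomposition is direct and each summand is free of rank one over $B[w^{-1}]$. The toric-grading relations $tx=q^\bullet xt$, $ty=q^\bullet yt$, $tz=q^\bullet zt$ show $t$ normalizes $B[w^{-1}]$, so $T[w^{-1}]\cong B[w^{-1}][t;\sigma_w]$ as graded rings for a suitable graded automorphism $\sigma_w$, with $\deg t=1$. (Note $B[w^{-1}]$, hence $T[w^{-1}]$, is noetherian.)

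\emph{Step 2: $\rQgr$ of a degree-one twisted polynomial extension.} Let $C$ be noetherian, $\sigma\in\Aut(C)$, and $S=C[t;\sigma]$ with $\deg C=0$, $\deg t=1$. I claim $\phi^*=\pi(-\otimes_C S)$ and $\phi_*=\Hom_{\rQgr S}(\pi S,-)$ are inverse equivalences $\rMod C\leftrightarrow\rQgr S$. Here $t$ is a normal nonzerodivisor with $S_+=tS=St$, so $S_+^k=t^kS$ and the $S_+$-torsion submodule of any graded $S$-module coincides with its $t$-torsion submodule; hence $\rTors S$ is exactly the kernel of the exact localization $M\mapsto M[t^{-1}]$, giving $\rQgr S\simeq\rGr S[t^{-1}]$. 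As $S[t^{-1}]=C[t^{\pm1};\sigma]$ is strongly $\ZZ$-graded, $\rGr S[t^{-1}]\simeq\rMod C$ via $N\mapsto N_0$ (Dade), and the composite equivalence $\rQgr S\to\rMod C$ sends $\pi M\mapsto M[t^{-1}]_0$. This is $\phi_*$: up to replacing $M$ by $M/\tau(M)$, one has $\omega(\pi M)=\varinjlim_s\uHom_S(S_{\geq s},M)=\varinjlim_s\big(M\xrightarrow{\cdot t}M(1)\xrightarrow{\cdot t}M(2)\to\cdots\big)=M[t^{-1}]$, so $\phi_*(\pi M)=(\omega\pi M)_0=M[t^{-1}]_0$; and $\phi^*$ is its inverse, since the maps $M\to M[t^{-1}]$ and $N\otimes_C S\to(N\otimes_C S)[t^{-1}]=N\otimes_C S[t^{-1}]$ have $S_+$-torsion kernel and cokernel.

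\emph{Step 3: matching the functors, and the main obstacle.} Finally I must identify $\rQgr T[w^{-1}]$, as a subcategory of $\rQgr T$, and the functors in the statement with those of Step 2 applied to $S=T[w^{-1}]$, $C=B[w^{-1}]$. The exact functor $-\otimes_T T[w^{-1}]\colon\rGr T\to\rGr T[w^{-1}]$ is a localization with kernel the $w$-torsion modules, and it carries $T_+$-torsion to $T[w^{-1}]_+$-torsion because $T[w^{-1}]_+=T_+[w^{-1}]$; passing to quotients exhibits $\rQgr T[w^{-1}]$ as the full subcategory of $\rQgr T$ of objects on which $w$ acts invertibly. Since $w\in B=T_0$ we have $w\otimes 1=1\otimes w$ on $N\otimes_B T$, so $\phi^*$ carries $\rMod B[w^{-1}]$ into this subcategory and restricts there to $\pi(-\otimes_{B[w^{-1}]}T[w^{-1}])$, while $\phi_*$ carries the subcategory into $\rMod B[w^{-1}]$ and restricts to $\Hom_{\rQgr T[w^{-1}]}(\pi T[w^{-1}],-)$; Step 2 then yields the claimed inverse equivalences. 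Step 1 is a direct computation with the relations of $\Tpres$ and Step 2 is standard; I expect the real nuisance to be the bookkeeping in Step 3, i.e.\ checking carefully that the torsion theory and the adjoint pair $(\phi^*,\phi_*)$ built from $T$ genuinely restrict to those built from $T[w^{-1}]$.
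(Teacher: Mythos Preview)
Your approach coincides with the paper's: show $T[w^{-1}]=B[w^{-1}][t;\sigma]$ for a single degree-one generator $t$, then use that $\omega(\pi N)=N[t^{-1}]$ to verify $\phi_*\phi^*$ and $\phi^*\phi_*$ are identities. One slip in Step~1: from $xt_i=q^{\alpha_i}z^it_{i-1}$ you obtain $t_i=q^\bullet x^{-1}z^it_{i-1}$ (not $t_{i-1}$ in terms of $t_i$), so the surviving generator for $w=x$ is $t_0$ and for $w=y$ it is $t_n$; this does not affect the rest of the argument.
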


\begin{proof}
We give the proof for $w=x$; the other cases are similar.
In $T[x^{-1}]$ we have $t_i = \qb x^{-1} z^i t_{i-1}$ for all $i \geq 1$, so
\[ T[x^{-1}] = B[x^{-1}][t_0; \theta] \]
for a suitable automorphism $\theta$ of $B[x^{-1}]$.

Let $N \in \rGr T[x^{-1}]$ and let $\sN = \pi N$.
Then $\omega \sN = N[t_0^{-1}]$, so 
\beq \label{eq:june2}
\phi_* \sN = N[t_0^{-1}]_0.
\eeq
As $N[t_0^{-1}]_0 \otimes_{B[x^{-1}]} T[x^{-1}] = N[t_0^{-1}]_{\geq 0}$,
which is isomorphic to $N[t_0^{-1}]$ in $\rQgr T$, we see that $\phi^*\phi_* \sN \cong \sN$.

If $M \in \rMod B[x^{-1}]$, it follows from \eqref{eq:june2} that
\[ \phi_* \phi^* M = (M \otimes_{B[x^{-1}]} T[x^{-1}])_0 = M. \qedhere\]
\end{proof}

We now define cohomology in $\rQgr \T$, that is, the derived functors of $\phi_* = \H^0_{\rQgr \T}(-)$.
This is similar to the first few pages of \cite[Section~7]{AZ}, and  we refer the reader there for proofs.

As $\rQgr \T$ is a quotient category of $\rGr \T$, it has enough injectives. In particular, we have:
\begin{lemma}\label{lem:piinj}
Let $I$ be a graded-injective $\T$-module. Then $\pi I$ is an injective object of $\rQgr \T$. As a consequence, $\rQgr \T$ has enough injectives.
\qed
\end{lemma}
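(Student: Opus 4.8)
The plan is to obtain this from the general theory of quotient (Gabriel) localization rather than by any computation. The first step is to observe that $\rTors \T$ is a \emph{localizing} subcategory of the Grothendieck category $\rGr \T$: it is clearly closed under subobjects and quotient objects; it is closed under extensions, since if $M'\subseteq M$ with $M'$ and $M/M'$ both $\T_+$-torsion then every homogeneous element of $M$ is annihilated by a power of $\T_+$; and it is closed under arbitrary direct sums, since any element of a direct sum lies in a finite subsum. Equivalently, $H^0_{\T_+}$ is a radical defining a hereditary torsion theory on $\rGr\T$.

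The second step is to invoke the standard fact that, for a localizing subcategory $\mathcal S$ of a Grothendieck category $\mathcal A$, the exact quotient functor $\pi\colon \mathcal A\to\mathcal A/\mathcal S$ carries injective objects to injective objects; this is originally due to Gabriel and is part of the ``standard noncommutative geometry'' package we are using, cf.\ \cite{AZ} and the localization references therein. Applied with $\mathcal A=\rGr\T$ and $\mathcal S=\rTors\T$, this is exactly the assertion that $\pi I$ is injective in $\rQgr\T$ whenever $I$ is graded-injective, so I would simply cite it rather than reprove it. The one point worth flagging is that this does \emph{not} follow from the naive ``a functor with an exact left adjoint preserves injectives'' criterion, since $\pi$ is only known to admit a \emph{right} adjoint $\omega$; the actual argument runs through the fact that $\omega\pi I$ is again injective in $\rGr\T$, using that the unit map $I\to\omega\pi I$ has $\T_+$-torsion kernel and cokernel (the latter because local cohomology is torsion, Lemma~\ref{lem:torsion}) together with the fact that $\rGr\T$ has enough injectives.

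For the final assertion, I would argue directly: since $\pi$ is the identity on objects, any $\sN\in\rQgr\T$ equals $\pi N$ for some $N\in\rGr\T$; choosing a monomorphism $N\hookrightarrow I$ into a graded-injective module — which exists because $\rGr\T$ is a Grothendieck category — and applying the exact functor $\pi$ yields a monomorphism $\sN=\pi N\hookrightarrow \pi I$ with $\pi I$ injective by the previous step. Hence $\rQgr\T$ has enough injectives. The main obstacle here is therefore not mathematical but expository: deciding how much of the Gabriel-localization machinery to recall versus to cite, and given the paper's explicit reliance on \cite{AZ} for this circle of ideas I would keep the proof to the few lines above and cite.
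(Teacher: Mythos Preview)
Your proposal is correct and takes essentially the same approach as the paper: the paper gives no proof at all, simply placing a \qed after the statement and remarking beforehand that ``$\rQgr \T$ is a quotient category of $\rGr \T$'', i.e.\ it treats both claims as standard consequences of Gabriel localization exactly as you do. Your write-up is a faithful (and more detailed) unpacking of that citation; the only difference is expository length.
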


It follows that if $\sN \in \rQgr \T$ then we can define $\Ext^j_{\rQgr \T}(\sN, -)$ to be the $j$th right derived functor of $\Hom_{\rQgr \T}(\sN, -)$. We similarly define $\uExt^j_{\rQgr \T}(\sN, -)$ as  right derived functors of $\uHom_{\rqgr T}(\sN, -)$, where we recall that $\uHom_{\rqgr T}(\sN, \sM) = \bigoplus_{s\in\ZZ} \Hom_{\rGr \T}(\sM, \sN(s))$.

We define the {\em $j$th cohomology} of $\sN$ to be
\[ \H^j_{\rQgr \T}(\sN) = \Ext^j_{\rQgr \T}(\pi \T, \sN),\]
and we also define
\[ \underline\H^j_{\rQgr \T}(\sN) = \uExt^j_{\rQgr \T}(\pi \T, \sN).\]
Thus $\underline\H^0_{\rQgr \T}(\sN) = \omega\sN$. 

We can relate cohomology $\underline\H^\bullet_{\rQgr \T}(-)$ on $\rQgr \T$ to local cohomology $H^{\bullet+1}_{\T_+}(-)$, as usual.
\begin{proposition}\label{prop:cohbasics}
Let $M, N$ be graded right $\T$-modules, with $M$ finitely generated.
\begin{enumerate}[{\normalfont (1)},topsep=0pt,itemsep=0pt,leftmargin=*]
\item If $j \geq 0$ then
\[ \uExt^j_{\rQgr \T}(\pi M, \pi N) \cong \lim_{d \to \infty} \uExt^j_\T(M_{\geq d}, N)\]
and 
\[ \underline\H^j_{\rQgr \T}(\pi N) \cong \lim_{d \to \infty}\uExt^j_\T(\T_{\geq d}, N). \]
\item There is an exact sequence
\[ 0 \to \tau(N) \to N \to \underline\H^0_{\rQgr \T}(\pi N) \to H^1_{\T_+}(N)\to 0,\]
and for $j \geq 1$,
\[\underline\H^j_{\rQgr \T}(\pi N) \cong H^{j+1}_{\T_+}( N).\]
\end{enumerate}
\end{proposition}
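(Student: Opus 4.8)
The plan is to prove (1) by a universal $\delta$-functor argument and then to obtain (2) as a formal consequence of (1) together with a single long exact sequence. Throughout I will use that $\T$ is an $\NN$-graded noetherian $\kk$-algebra which is generated in projective degrees $0$ and $1$ (the degree-$0$ generators $x,y,z$ and the degree-$1$ generators $t_0,\dots,t_n$ of Proposition~\ref{prop:GeneratorsOfT}); this gives $\T_{\geq d} = \T_+^d$ for all $d$, and it makes $\T$ locally finite as a module over $\T_0 = B$.

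First I would dispose of the case $j=0$ of (1). Since $\T$ is noetherian and $M$ is finitely generated, every submodule $M'\subseteq M$ with $M/M'$ torsion contains a tail $M_{\geq d}$: the quotient $M/M'$ is finitely generated and torsion, hence annihilated by some $\T_+^d = \T_{\geq d}$ (a finitely generated torsion module is bounded above, using that $\T$ is finitely generated over $\T_0$). So the tails are cofinal among such $M'$, and the explicit description of $\Hom$ in $\rQgr \T$ gives $\uHom_{\rQgr \T}(\pi M,\pi N)\cong\varinjlim_d \uHom_\T(M_{\geq d}, N/\tau N)$. Finally $\varinjlim_d \uHom_\T(M_{\geq d},\tau N)=0$, because the image of any such map is a finitely generated torsion submodule of $\tau N$, hence bounded above, so the map dies on a further tail; therefore $\varinjlim_d \uHom_\T(M_{\geq d},N)\cong\varinjlim_d \uHom_\T(M_{\geq d},N/\tau N)\cong\uHom_{\rQgr \T}(\pi M,\pi N)$. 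For general $j$, I would observe that both $N\mapsto \uExt^j_{\rQgr \T}(\pi M,\pi N)$ and $N\mapsto \varinjlim_d \uExt^j_\T(M_{\geq d},N)$ are cohomological $\delta$-functors on $\rGr \T$ that agree in degree $0$ and vanish on graded-injective $\T$-modules for $j\geq 1$, hence are isomorphic (each being the right derived functor of its degree-$0$ part). For the first: $\pi$ is exact and sends graded-injectives to injectives (Lemma~\ref{lem:piinj}), so $\uExt^j_{\rQgr \T}(\pi M,\pi(-))= R^j\!\big(\uHom_{\rQgr \T}(\pi M,-)\big)\circ\pi$, and it kills graded-injective $\T$-modules for $j\geq 1$ since $\pi I$ is then injective. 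For the second: each $\uExt^j_\T(M_{\geq d},-)$ is a $\delta$-functor vanishing on graded-injectives for $j\geq 1$, the inclusions $M_{\geq d'}\hookrightarrow M_{\geq d}$ induce morphisms of $\delta$-functors, and a filtered colimit of $\delta$-functors is again a $\delta$-functor with the same vanishing. Taking $M=\T$ gives the second displayed isomorphism in (1), since $\underline\H^j_{\rQgr \T}(\pi N)=\uExt^j_{\rQgr \T}(\pi \T,\pi N)$.

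For (2), I would apply $\uHom_\T(-,N)$ to the exact sequence $0\to\T_{\geq d}\to\T\to\T/\T_{\geq d}\to0$. Using $\uHom_\T(\T,N)=N$ and $\uExt^j_\T(\T,N)=0$ for $j\geq 1$, the long exact sequence breaks into
\[ 0 \to \uHom_\T(\T/\T_{\geq d},N) \to N \to \uHom_\T(\T_{\geq d},N) \to \uExt^1_\T(\T/\T_{\geq d},N) \to 0 \]
together with isomorphisms $\uExt^j_\T(\T_{\geq d},N)\cong\uExt^{j+1}_\T(\T/\T_{\geq d},N)$ for $j\geq 1$. Passing to $\varinjlim_d$ (exact, being filtered), the definition of local cohomology with $\T_{\geq d}=\T_+^d$ gives $\varinjlim_d \uExt^j_\T(\T/\T_{\geq d},N)=H^j_{\T_+}(N)$ for all $j\geq 0$, in particular $\varinjlim_d \uHom_\T(\T/\T_{\geq d},N)=H^0_{\T_+}(N)=\tau(N)$; and part (1) identifies $\varinjlim_d \uExt^j_\T(\T_{\geq d},N)$ with $\underline\H^j_{\rQgr \T}(\pi N)$. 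Substituting these identifications into the four-term sequence yields the exact sequence of (2), and substituting into the isomorphisms yields $\underline\H^j_{\rQgr \T}(\pi N)\cong H^{j+1}_{\T_+}(N)$ for $j\geq 1$.

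The only genuinely delicate point is the degree-$0$ bookkeeping in the second paragraph — establishing cofinality of the tails $M_{\geq d}$ and showing that replacing $N$ by $N/\tau N$ is harmless — together with checking that the transition maps of the two colimits are compatible with the $\delta$-functor structure, so that the universality argument applies cleanly. Everything else is a routine transcription of the classical (and \cite{AZ}) arguments; the caveat flagged in this section, that $\T_0=B$ is noncommutative, plays no role above, since the arguments use only that $\T$ is an $\NN$-graded noetherian $\kk$-algebra generated in degrees $0$ and $1$ and locally finite over $\T_0$.
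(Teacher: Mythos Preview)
Your proof is correct and is precisely the approach the paper takes: the paper's ``proof'' consists of the single sentence ``This is \cite[Proposition~7.2]{AZ}, and the proof there works in our situation,'' and what you have written is a clean unpacking of that argument (cofinality of tails for $j=0$, a universal $\delta$-functor comparison for higher $j$, and the long exact sequence from $0\to\T_{\geq d}\to\T\to\T/\T_{\geq d}\to0$ together with $\T_{\geq d}=\T_+^d$ for part (2)). The one point worth tightening is that local finiteness of $\T$ over $\T_0$ is not actually needed---boundedness of a finitely generated torsion module follows directly from noetherianity and generation in degrees $0,1$---but this does not affect the argument.
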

\begin{proof} This is \cite[Proposition~7.2]{AZ}, and the proof there works in our situation.
\end{proof}

\begin{corollary}\label{cor:tuesday}
Let $N$ be a graded right $\T$-module.
Then $N$ and $\omega(\pi N)$ are isomorphic in $\rQgr \T$.
\end{corollary}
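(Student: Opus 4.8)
The plan is to read the statement off directly from Proposition~\ref{prop:cohbasics}(2), which is the natural home for this fact. That proposition supplies, for any graded right $\T$-module $N$, a four-term exact sequence
\[ 0 \to \tau(N) \to N \xrightarrow{\;\varepsilon_N\;} \underline\H^0_{\rQgr \T}(\pi N) \to H^1_{\T_+}(N) \to 0, \]
in which $\underline\H^0_{\rQgr \T}(\pi N) = \omega(\pi N)$ and the middle map $\varepsilon_N$ is the unit of the adjunction $(\pi,\omega)$. First I would record that the kernel of $\varepsilon_N$ is $\tau(N) = H^0_{\T_+}(N)$, which is a torsion module by definition. Next, the cokernel of $\varepsilon_N$ is isomorphic to a submodule of $H^1_{\T_+}(N)$; by Lemma~\ref{lem:torsion} (applied with $\Omega = \T$, $I = \T_+$, $j = 1$) the module $H^1_{\T_+}(N)$ is $\T_+$-torsion, and a submodule of a torsion module is again torsion, so $\coker \varepsilon_N$ is torsion as well.

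It then remains to invoke the standard fact that a morphism of graded $\T$-modules with torsion kernel and torsion cokernel becomes an isomorphism after applying the exact Serre quotient functor $\pi$: factoring $\varepsilon_N$ through its image and using that $\pi$ annihilates torsion modules, we get $\pi(\ker\varepsilon_N) = \pi(\coker\varepsilon_N) = 0$, hence $\pi(\varepsilon_N) : \pi N \to \pi(\omega(\pi N))$ is an isomorphism in $\rQgr \T$. This is exactly the assertion that $N$ and $\omega(\pi N)$ are isomorphic in $\rQgr \T$ (equivalently, that $\pi\omega \cong \id_{\rQgr \T}$, as one expects of a localisation).

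I do not anticipate any genuine obstacle here: the corollary is essentially immediate from the preceding proposition, and the only points needing a line of care are the identification of the map appearing in Proposition~\ref{prop:cohbasics}(2) with the adjunction unit $\varepsilon_N$, and the routine verification that $\pi$ inverts maps with torsion kernel and cokernel. Both are elementary consequences of the setup in Section~\ref{DEFCAT}.
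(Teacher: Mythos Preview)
Your argument is correct and follows the same approach as the paper, which simply cites Proposition~\ref{prop:cohbasics} and Lemma~\ref{lem:torsion}; you have just filled in the details. One minor remark: from the four-term exact sequence the cokernel of $\varepsilon_N$ is \emph{equal} to $H^1_{\T_+}(N)$, not merely a submodule of it, though this does not affect the argument.
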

\begin{proof}
This follows from Proposition~\ref{prop:cohbasics} and Lemma~\ref{lem:torsion}.
\end{proof}

\subsection{Properness}\label{PROPER}
The goal of this section is to prove the following ``relative Serre finiteness'' result for $\rqgr \T$.

\begin{theorem}\label{thm:SF}
{\rm(cf.~\cite[Theorem~III.5.2]{Ha}, \cite[Theorem~7.4]{AZ})}
Let $\sN \in \rqgr \T$.
Then:
\begin{enumerate}[{\normalfont (1)},topsep=0pt,itemsep=0pt,leftmargin=*]
\item for every $i \geq 0$, $\H^i_{\rQgr \T}(\sN)$ is a finitely generated right $B$-module;
\item for every $i \geq 1$, $\underline \H^i_{\rQgr \T}(\sN)$ is right bounded:
that is, there is $s_0 \in \NN$ so that $\H^i_{\rQgr \T}(\sN(s)) = 0$ for $s \geq s_0$.
\end{enumerate}
\end{theorem}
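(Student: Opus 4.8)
The plan is to reduce both statements to properties of local cohomology of finitely generated graded $\T$-modules and to compute that local cohomology via a \v{C}ech complex adapted to the projective grading; write $\sN = \pi N$ with $N$ finitely generated graded.

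Two preliminary facts will be used throughout. First, $\T$ is generated over $B = \T_0$ by $t_0,\dots,t_n$ in projective degree $1$, and each $t_i$ is normal in $\T$ — immediate from the presentation $\Tpres$, since every generator $\qb$-commutes with each $t_i$. Hence $\T_+ = \langle t_0,\dots,t_n\rangle$, each $\T_d$ is finitely generated as a left and right $B$-module, and therefore each graded piece $N_d$ of a finitely generated graded $\T$-module $N$ is finitely generated over the noetherian ring $B$. Second, by Proposition~\ref{prop:cohbasics} it suffices to prove, for every finitely generated graded $\T$-module $N$: \textbf{(a)} $\big(H^j_{\T_+}(N)\big)_d$ is a finitely generated $B$-module for all $j \ge 1$ and all $d \in \ZZ$; and \textbf{(b)} $H^j_{\T_+}(N)$ is right bounded for all $j \ge 2$. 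Indeed, (a) together with the four-term sequence of Proposition~\ref{prop:cohbasics}(2) in degree $0$ (and the first preliminary fact, applied to $N_0$) gives part (1), and (b) is part (2) under the identification $\underline\H^i_{\rQgr\T}(\sN) \cong H^{i+1}_{\T_+}(N)$ for $i \ge 1$.

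Since the $t_i$ are normal and generate $\T_+$, Proposition~\ref{prop:LC} identifies $H^\bullet_{\T_+}(N)$ with the cohomology of $\vC(t_0,\dots,t_n;N)$; in particular $\cohdim(\T_+) \le n+1$, and the proof runs by descending induction on $j$, with nothing to prove for $j > n+1$. For the inductive step, take a short exact sequence $0 \to N' \to F \to N \to 0$ of finitely generated graded $\T$-modules with $F$ a finite direct sum of shifts $\T(-d_i)$; the long exact sequence in local cohomology reduces (a) and (b) for $N$ in degree $j$ to the same statements for $N'$ in degree $j+1$ (known by induction) and for $F$, hence for $\T$ itself, in degree $j$. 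The theorem thus reduces to the base case $N = \T$: show $\big(H^j_{\T_+}(\T)\big)_d$ is finitely generated over $B$ for all $j \ge 1$, $d \in \ZZ$, and that $H^j_{\T_+}(\T)$ is right bounded for $j \ge 2$ (note $H^0_{\T_+}(\T) = \tau(\T) = 0$, as $\T$ is a domain).

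For the base case I would control the localisations occurring in $\vC(t_0,\dots,t_n;\T)$ using the toric embedding $\eta\colon \T \hookrightarrow D$ of Section~\ref{sec:ToricEmbedding}. For nonempty $J \subseteq \{0,\dots,n\}$ and any $j_0 \in J$, the element $t_{j_0}$ becomes a homogeneous unit of projective degree $1$ in $\T[t_J^{-1}]$, so $\T[t_J^{-1}]$ is strongly $\ZZ$-graded — a skew Laurent extension $\T[t_J^{-1}]_0[t_{j_0}^{\pm 1};\sigma]$ — and, via $\eta$, its degree-zero part $\T[t_J^{-1}]_0$ is a localisation of a quantum plane (for example $\T[t_0^{-1}]_0 \cong \kk_{\qb}[x, t_1 t_0^{-1}]$), hence a noetherian domain of finite global dimension. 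Consequently the degree-$d$ part of $\vC(t_0,\dots,t_n;\T)$ is an explicit finite complex of finitely generated modules over such rings with the localisation maps as differentials, and computing its cohomology yields $H^j_{\T_+}(\T)_d$. This computation — which, once one records that the relevant combinatorics do not depend on $q$, is the classical toric calculation of the cohomology of $\struct(\bullet)$ on the minimal resolution of the $\AA_n$ singularity — gives finite generation over $B$ for $j \ge 1$, while comparing the $\ZZ^3$-weights that occur in the top \v{C}ech term with those in the lower terms gives right-boundedness for $j \ge 2$. (Alternatively, once the equivalence $\rqgr\T \simeq \coh\X$ of Theorem~\ref{newthm:catequiv} is in hand, one can run the classical proof of Serre's theorem relative to $\Spec B$ over the finitely many affine charts of $\X$, whose coordinate rings are noetherian of finite global dimension.) The real obstacle is exactly this base case: none of the \v{C}ech terms $\T[t_J^{-1}]_d$ is finitely generated over $B$, so the required finiteness is produced purely by cancellation in cohomology, and obtaining it demands sufficiently explicit knowledge of the cohomology of $\struct(\bullet)$ on the (noncommutative) resolution.
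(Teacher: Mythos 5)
Your proposal takes a genuinely different route from the paper. The paper follows the Artin--Zhang strategy hinted at by the second citation in the theorem statement: it establishes the $\chi$ and $\chi^\circ$ conditions for $T$ (Proposition~\ref{prop2}), first proving $\chi$ for the skew polynomial ring $S=\kk_{\mathbf q}[x,y,z,t_0,\dots,t_n]$ via a Koszul resolution of $S/S_+$ (Proposition~\ref{prop1}), bounding $\Tor^S_i(S/S_{\geq s},T)$ (Proposition~\ref{prop:tensor}), and then transferring $\chi$ from $S$ to $T$ by a change-of-rings spectral sequence; Theorem~\ref{thm:SF} is then deduced from $\chi$ together with Propositions~\ref{prop5} and~\ref{prop6}. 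You instead pursue the Hartshorne-style strategy (the first citation): reduce, by the four-term sequence of Proposition~\ref{prop:cohbasics}(2), to statements about local cohomology $H^j_{\T_+}(N)$; then, by descending induction on $j$ using free presentations $0\to N'\to F\to N\to 0$ and the long exact sequence, reduce all finitely generated $N$ to the single base case $N=\T$; and finally compute $H^j_{\T_+}(\T)$ via the \v{C}ech complex $\vC(t_0,\dots,t_n;\T)$ and the toric embedding. That reduction scheme is correct (the induction is valid; $B$ is noetherian so extensions of finitely generated $B$-modules are finitely generated; shifting a direct sum of $\T(-d_i)$ is handled once the base case is known in every projective degree), and the two frameworks buy different things: yours is closer to the geometric intuition and reuses the toric computations already in Section~\ref{LOCALCOHCOMP}, while the paper's $\chi$-condition machinery avoids needing $H^j_{\T_+}(\T)_d$ explicitly in every degree $d$ and is also what the paper needs later (Propositions~\ref{prop5} and~\ref{prop6} feed into Theorem~\ref{newthm:catequiv}).

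The gap is the base case, and it is a real one; you identify it yourself but do not close it. Proposition~\ref{prop:localcohomology} carries out the toric/$q$-independence argument only in projective degree $d=0$, and even there the payoff is the single fact that $H^s_{\T_+}(\T)_0=0$. For general $d$ one must show $H^j_{\T_+}(\T)_d$ is finitely generated as a right $B$-module for every $j\geq 1$ and every $d\in\ZZ$ (not merely finite-dimensional in each $\ZZ^2$-toric piece), and right bounded for $j\geq 2$. The ``combinatorics are $q$-independent'' observation does give $q$-independent dimensions of each $\ZZ^3$-graded piece, but finite generation over $B$ is a statement about the $B$-module structure, which for $q\neq 1$ is twisted. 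One can rescue this: each toric piece of every \v{C}ech term is $\leq 1$-dimensional, each \v{C}ech differential is multiplication in the quantum torus $D$ so is an isomorphism or zero on each piece independently of $q$, hence the set of vanishing maps, and therefore which toric degrees generate over $B$, is $q$-independent; one then cites coherence of $R^i\pi_*\sO(d)$ and relative Serre vanishing for the commutative minimal resolution (so properness is ultimately where the finiteness comes from, as it must). But none of this is written, and it is precisely the content of the theorem, so as it stands your argument moves the whole difficulty into an unproved base case rather than discharging it. If you intend to pursue this route, the base case needs to be spelled out to the same level of detail as Lemma~\ref{bigtoriclemma} and Proposition~\ref{prop:localcohomology}, but now tracking the $B$-action, and the right-boundedness for $j\geq 2$ needs the explicit weight comparison you gesture at. The alternative you mention via Theorem~\ref{newthm:catequiv} is not circular in principle, but the paper's proof of Theorem~\ref{newthm:catequiv} already uses Proposition~\ref{prop6} and hence $\chi^\circ$, so you would not actually save yourself that machinery.
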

Theorem~\ref{thm:SF}(1) may be thought of as establishing the properness of $\phi$, as if $\psi: Y \to X$ is a proper morphism of noetherian schemes, then 
$\RR^i\psi_*(\coh Y) \subseteq \coh X$ for all $i$ by \cite[3.2.1]{EGA-III.1}.
Theorem~\ref{thm:SF}(2) shows that $\T(1) $ 
is relatively ample (over $B$) and so we may think of $\rqgr \T$ as being projective over $\rmod B$.

We will prove Theorem~\ref{thm:SF} by establishing the Artin--Zhang $\chi$ conditions for $\T$.
We give the relevant definitions here, generalised from \cite{AZ}. 
As in Definition~\ref{def2.2.1}, we say that 
a graded vector space $V$ is {\em left bounded} if $V_s = 0 $ for $s \ll 0$, {\em right bounded } if $V_s =0$ for $s \gg 0$, and {\em bounded} if it left and right bounded. 
Let $\Omega$ be a left and right noetherian $\NN$-graded $\kk$-algebra and let $i \in \NN$. 
Then $\Omega$ satisfies $\chi_i^\circ$ if $\uExt^j_\Omega(\Omega/\Omega_+, N)$ is bounded for all $ j \leqslant i$ and for all finitely generated graded $\Omega$-modules $N$. 
Further, $\Omega$ satisfies $\chi_i$ if for all finitely generated graded $\Omega$-modules $N$, for all $d$, and for all $j\leq i$, there exists an integer $s_0$ so that $\uExt_\Omega^j(\Omega/\Omega_{\geq s}, N)_{\geq d}$ is a finitely generated $\Omega$-module for all $s \geq s_0$.
If $\Omega$ satisfies $\chi_i^\circ$ for all $i$, we say that $\Omega$ satisfies $\chi^\circ$, and if $\Omega$ satisfies $\chi_i$ for all $i$, we say that $\Omega$ satisfies $\chi$.

We will see in this section that $T$ satisfies $\chi$, and as a consequence that Theorem~\ref{thm:SF} holds.
To do this, we generalise a number of results from \cite{AZ}, as to use \cite{AZ} directly we would need $T_0$ to be module-finite over a commutative ring.

We first give some general facts about bounds on $\uExt$ groups. Note that if $\Omega$ is an $\NN$-graded $\kk$-algebra and $N$ is a finitely generated graded $\Omega$-module, then all $\uExt^i_\Omega(\Omega/\Omega_{\geq s}, N)$ are automatically left bounded (cf.~\cite[Proposition~3.1(c)]{AZ}). 
If $\Omega$ satisfies $\chi_i$, then since the modules
$\uExt^j_\Omega(\Omega/\Omega_{\geq s}, N)$
are automatically $\Omega_+$-torsion, we see that
$\uExt^j_\Omega(\Omega/\Omega_{\geq s}, N)$ is bounded for $j \leq i$ and $s \gg 0$.

\begin{lemma}\label{lem4}
{\rm (cf.~\cite[Lemma~3.4]{AZ})}
Let $\Omega$ be an $\NN$-graded $\kk$-algebra and let $i \in \NN$.
Let $M, N$ be finitely generated graded $\Omega$-modules, with $M$ bounded with left bound $\ell$ and right bound $r$.
Let $\lambda$ be the minimum left bound and $\rho$ the maximum right bound of $\uExt^j_\Omega(\Omega/\Omega_+, N)$ for $j \leq i $ (note that we may have $|\lambda|, |\rho|= \infty$). 
Then for $j \leq i$, the left bound of $\uExt^j_\Omega(M, N)$ is at least $\lambda-r$, and the right bound is at most $\rho-\ell$.
\end{lemma}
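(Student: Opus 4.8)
The plan is to reduce, by dévissage on the number of nonzero graded components of $M$, to the case where $M$ is concentrated in a single degree, and then to treat that case by induction on $i$, working over the noetherian ring $\Omega_0 = \Omega/\Omega_+$.

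First I would handle the dévissage. Since $M$ is bounded with right bound $r$, its top component $M_r$ is killed by $\Omega_+$, hence is an $\Omega$-submodule; it is finitely generated because $\Omega$ is noetherian and $M$ is finitely generated. The quotient $M/M_r$ is bounded with left bound $\ell$ and right bound at most $r-1$, so it has strictly fewer nonzero graded components. Applying $\uExt^{\bullet}_\Omega(-,N)$ to $0 \to M_r \to M \to M/M_r \to 0$, the long exact sequence exhibits $\uExt^j_\Omega(M,N)$, for each $j$, as an extension of a submodule of $\uExt^j_\Omega(M_r,N)$ by a quotient of $\uExt^j_\Omega(M/M_r,N)$. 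Granting the claimed bounds for $M_r$ (left bound $\geq \lambda - r$, right bound $\leq \rho - r \leq \rho - \ell$, since $r \ge \ell$) and, inductively, for $M/M_r$ (left bound $\geq \lambda - (r-1) \geq \lambda - r$, right bound $\leq \rho - \ell$), the bounds for $\uExt^j_\Omega(M,N)$ follow: one-sided boundedness of a graded module is inherited by subobjects and quotients, and in a short exact sequence the left (resp.\ right) bound of the middle term is at least the minimum (resp.\ at most the maximum) of those of the outer terms.

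It then remains to prove the single-degree case: if $M$ is concentrated in degree $d$, shifting reduces us to $d = 0$, so $M$ is a finitely generated module over the noetherian ring $\overline{\Omega} := \Omega/\Omega_+$, and it suffices to prove by induction on $i$ that $\uExt^j_\Omega(M,N)$ has left bound $\geq \lambda$ and right bound $\leq \rho$ for all $j \leq i$ (undoing the shift turns $\lambda,\rho$ into $\lambda - d, \rho - d$, which is exactly the assertion since $d = \ell = r$). Choose a surjection $\overline{\Omega}^{\oplus a} \twoheadrightarrow M$ of $\overline{\Omega}$-modules, with kernel $M'$ again a finitely generated $\overline{\Omega}$-module concentrated in degree $0$ (here $\overline{\Omega}$ noetherian is used). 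For $j = 0$, the left-exact functor $\uHom_\Omega(-,N)$ embeds $\uHom_\Omega(M,N)$ into $\uHom_\Omega(\overline{\Omega},N)^{\oplus a}$, whose bounds lie in $[\lambda,\rho]$ by definition of $\lambda,\rho$. For $1 \leq j \leq i$, the long exact sequence of $\uExt^{\bullet}_\Omega(-,N)$ applied to $0 \to M' \to \overline{\Omega}^{\oplus a} \to M \to 0$ exhibits $\uExt^j_\Omega(M,N)$ as an extension of a submodule of $\uExt^j_\Omega(\overline{\Omega},N)^{\oplus a}$ by a quotient of $\uExt^{j-1}_\Omega(M',N)$; the former has bounds in $[\lambda,\rho]$ by definition, and the latter does too by the inductive hypothesis applied to $M'$ at level $i-1$ (replacing $j \leq i$ by $j \leq i-1$ in the minimum/maximum only shrinks the interval). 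This closes the induction.

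The reason one cannot simply quote \cite[Lemma~3.4]{AZ} --- and the only genuinely delicate point --- is that $\Omega_0$ need not be finite-dimensional over $\kk$, so a bounded finitely generated $\Omega$-module need not have finite length; hence the dévissage must strip off one graded degree at a time rather than pass through a composition series, and inside a single degree one must argue over the merely noetherian ring $\Omega_0$ rather than over $\kk$. Beyond that, the argument uses only that $\Omega$ is right noetherian (so that top components and syzygies of finitely generated modules remain finitely generated), the behaviour of $\uExt$ under finite direct sums and degree shifts, and the stability of one-sided boundedness under subobjects, quotients, and extensions. I do not anticipate a serious obstacle; the shift bookkeeping and the treatment of the possibly infinite values $\lambda = -\infty$ or $\rho = +\infty$ (for which the asserted inequalities are vacuous) are the only places requiring care.
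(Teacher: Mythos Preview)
Your proof is correct and follows essentially the same approach that the paper points to: the paper's own proof is simply the one-line remark that the argument of \cite[Lemma~3.4]{AZ} goes through unchanged once one drops the standing assumption that $\Omega_0$ is finite over a commutative ring, and your d\'evissage-plus-induction is precisely that argument spelled out. Your observation that right noetherianity of $\Omega$ is what is actually needed (to keep $M_r$ and the syzygy $M'$ finitely generated) is accurate and worth recording, since the lemma as stated omits this hypothesis but every application in the paper is to noetherian rings.
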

\begin{proof}
The proof of \cite[Lemma~3.4]{AZ} does not use the standing assumption in \cite{AZ} that  $\Omega_0$ is finite-over-commutative, so may be used unchanged.
\end{proof}

\begin{proposition}\label{prop:chiimplieschi}
Let $\Omega$ be a left and right noetherian $\NN$-graded $\kk$-algebra that satisfies $\chi_i$. Then $\Omega$ satisfies $\chi_i^\circ$.
\end{proposition}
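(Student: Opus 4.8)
The plan is to bootstrap from a fact already established before Lemma~\ref{lem4}: if $\Omega$ satisfies $\chi_i$, then for every finitely generated graded $\Omega$-module $N$ the modules $\uExt^j_\Omega(\Omega/\Omega_{\geq s}, N)$ are bounded for all $j \leq i$ once $s \gg 0$. Fixing such an $N$, we want $\uExt^j_\Omega(\Omega/\Omega_+, N)$ bounded for all $j \leq i$, and the natural route is an induction on $j$, passing from $\Omega/\Omega_{\geq s}$ (which we control) to its quotient $\Omega/\Omega_+$.

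Concretely, we would fix $s$ large enough that $\uExt^{j'}_\Omega(\Omega/\Omega_{\geq s}, N)$ is bounded for all $j' \leq i$, and apply $\uHom_\Omega(-, N)$ to the short exact sequence of graded right $\Omega$-modules
\[ 0 \to \Omega_{\geq 1}/\Omega_{\geq s} \to \Omega/\Omega_{\geq s} \to \Omega/\Omega_+ \to 0, \]
which produces, for each $j$, an exact sequence
\[ \uExt^{j-1}_\Omega(\Omega_{\geq 1}/\Omega_{\geq s}, N) \to \uExt^j_\Omega(\Omega/\Omega_+, N) \to \uExt^j_\Omega(\Omega/\Omega_{\geq s}, N). \]
This exhibits $\uExt^j_\Omega(\Omega/\Omega_+, N)$ as an extension of a submodule of the bounded module $\uExt^j_\Omega(\Omega/\Omega_{\geq s}, N)$ by a quotient of $\uExt^{j-1}_\Omega(\Omega_{\geq 1}/\Omega_{\geq s}, N)$. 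For $j = 0$ the latter term is absent, giving the base case. For $j \geq 1$, observe that $\Omega_{\geq 1}/\Omega_{\geq s}$ is a finitely generated (since $\Omega$ is noetherian) bounded graded $\Omega$-module, with left bound $1$ and right bound $s - 1$; assuming inductively that $\uExt^{j'}_\Omega(\Omega/\Omega_+, N)$ is bounded for all $j' \leq j - 1$, Lemma~\ref{lem4} (applied with its parameter taken to be $j - 1$) then forces $\uExt^{j-1}_\Omega(\Omega_{\geq 1}/\Omega_{\geq s}, N)$ to be bounded. Since an extension of a bounded graded module by a bounded graded module is bounded, the induction closes and $\Omega$ satisfies $\chi_i^\circ$.

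We do not expect a real obstacle: the content is entirely in the dévissage above, and the one place that rewards care is the invocation of Lemma~\ref{lem4}, where we must have the inductive boundedness of $\uExt^{j'}_\Omega(\Omega/\Omega_+, N)$ for all $j' \leq j - 1$ in order that the bounds $\lambda - r$ and $\rho - \ell$ supplied by that lemma be finite — and we should check that $\Omega_{\geq 1}/\Omega_{\geq s}$ genuinely satisfies the finiteness and boundedness hypotheses of the lemma.
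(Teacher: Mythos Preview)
Your proposal is correct and is essentially identical to the paper's proof: both fix $s$ large enough that $\uExt^j_\Omega(\Omega/\Omega_{\geq s}, N)$ is bounded for $j \leq i$, run induction on $j$ using the long exact sequence coming from $0 \to \Omega_+/\Omega_{\geq s} \to \Omega/\Omega_{\geq s} \to \Omega/\Omega_+ \to 0$, and invoke Lemma~\ref{lem4} on the bounded module $\Omega_+/\Omega_{\geq s}$ to control the left-hand term. Your $\Omega_{\geq 1}/\Omega_{\geq s}$ is of course just $\Omega_+/\Omega_{\geq s}$, and your extra care about verifying the hypotheses of Lemma~\ref{lem4} is warranted but entirely routine.
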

\begin{proof}
We follow the proof of \cite[Corollary~3.6(2)]{AZ}.
Let $N$ be a finitely generated graded $\Omega$-module.
By assumption, using the comment before Lemma~\ref{lem4}, there exists $s$ so that $\uExt^j_\Omega(\Omega/\Omega_{\geq s}, N)$ is bounded for $j \leq i$.

Fix $j \leq i$.
By induction, $\uExt^k_\Omega(\Omega/\Omega_+, N)$ is bounded for $k<j$
and so by Lemma~\ref{lem4} $\uExt_\Omega^{j-1}(\Omega_+/\Omega_{\geq s},N)$ is bounded.
From the exact sequence
\[ \uExt_\Omega^{j-1}(\Omega_+/\Omega_{\geq s}, N) \to \uExt_\Omega^j(\Omega/\Omega_+, N) \to \uExt_\Omega^j(\Omega/\Omega_{\geq s}, N),\]
we see that $\uExt_\Omega^j(\Omega/\Omega_+, N)$ is bounded.
\end{proof}

We now consider the $\chi$ condition for the graded ring
\[ S = \kk_{\mathbf{q}}[x,y,z, t_0, \dots, t_n]\]
defined in \eqref{eqn:PresentationForT}.
Here the grading on $S$ is such that 
 $x,y,z$ are in degree 0 and $\deg t_i = 1$ for all $i$.
As $\T \cong \Tpres$ by Theorem~\ref{thm:TcongTpres}, 
the obvious map $S \to \T$ is well-defined.

We define torsion $S$-modules as we did torsion $\T$-modules.

\begin{proposition}\label{prop1}
{\rm (cf.~\cite[Theorem~8.1]{AZ})}
Let $M, N$ be finitely generated graded $S$-modules such that $M$ is torsion.
Then $\uExt^i_S(M, N)$ is bounded for all $i$.
Further, $\uExt^i_S(S/S_{\geq s}, N)$ is a finitely generated $S$-module for all $i$ and $s$, and in particular $S$ satisfies $\chi$.
\end{proposition}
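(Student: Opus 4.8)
The plan is to follow the proof of \cite[Theorem~8.1]{AZ}, replacing its standing hypothesis (that $S_0$ be module-finite over a central Noetherian ring, which fails here since $S_0 = \kk_{\mathbf q}[x,y,z]$) by two structural features of our situation. First, $S = \kk_{\mathbf q}[x,y,z][t_0;\sigma_0]\cdots[t_n;\sigma_n]$ is an iterated Ore extension of the AS-regular algebra $S_0 = \kk_{\mathbf q}[x,y,z]$ by the normal elements $t_0,\dots,t_n$, so $S$ is a Noetherian domain of finite global dimension, $S_0$ has finite global dimension (namely $3$), and — with respect to the grading of the statement — we have $S_+ = \langle t_0,\dots,t_n\rangle$ with $S/S_+ = S_0$ concentrated entirely in degree $0$. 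Second, $t_0,\dots,t_n$ is a normalizing regular sequence (each $S/\langle t_0,\dots,t_i\rangle$ is again a quantum polynomial ring, hence a domain), so the associated Koszul complex is a finite resolution of $S_0$ by finitely generated graded free right $S$-modules. These replace, respectively, the finite-dimensionality of $\kk = A/A_+$ and the Koszul resolution of the trivial module used in the connected graded case.

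The first step I would carry out is the key special case: for every finitely generated graded $S$-module $N$ and every $j$, the module $\uExt^j_S(S_0, N)$ is a finitely generated right $S$-module which is annihilated by $S_+$, and is therefore \emph{bounded}. Finite generation follows by computing $\uExt^j_S(S_0, N)$ from the Koszul resolution above, whose terms are finitely generated graded free modules; annihilation by $S_+$ follows from the $(S,S)$-bimodule structure on $S_0 = S/S_+$, on which $S_+$ acts as zero; and boundedness is then immediate, since a finitely generated graded right $S$-module killed by $S_+$ is a finitely generated graded $S_0$-module, hence — as $S_0$ lives in degree $0$ — is supported in the finitely many degrees of a homogeneous generating set. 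This already shows that $S$ satisfies $\chi^\circ$.

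Next I would bootstrap. If $W$ is a finitely generated graded $S_0$-module, regarded as an $S$-module via $S\twoheadrightarrow S_0$, then a finite graded free $S_0$-resolution of $W$ (of length $\le \gldim S_0 = 3$) gives short exact sequences of $S$-modules whose free terms are finite sums of shifts of $S_0$; applying the previous step to those terms and inducting downward along the syzygies, one finds that $\uExt^j_S(W,N)$ is finitely generated and bounded for all $j$. A finitely generated \emph{torsion} $S$-module $M$ satisfies $MS_+^k = 0$ for some $k$, so the filtration $M \supseteq MS_+ \supseteq \cdots \supseteq MS_+^k = 0$ has layers of this type, and one more pass through the long exact sequences shows that $\uExt^i_S(M,N)$ is bounded (and finitely generated) for all $i$ — the first assertion. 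The second assertion is the case $M = S/S_{\ge s}$, which is cyclic and annihilated by $S_+^s$, hence a finitely generated torsion module. Finally, since $\uExt^i_S(S/S_{\ge s},N)$ is finitely generated over the Noetherian $\NN$-graded ring $S$ for \emph{every} $s$, each $\uExt^i_S(S/S_{\ge s},N)_{\ge d}$ is a finitely generated submodule, so the defining condition of $\chi_i$ holds for every $i$ and $S$ satisfies $\chi$.

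The one genuinely fiddly point — the main obstacle in a careful write-up rather than a conceptual one — is the bookkeeping of module structures and of the twists by the automorphisms $\sigma_i$ appearing in the Koszul differentials, which must be tracked in order to know that the various $\uExt$ groups really are finitely generated \emph{as right $S$-modules}. This is precisely the part of \cite[\S 3 and \S 8]{AZ} that is insensitive to their commutativity hypothesis, and it can be imported essentially verbatim; the new input is the boundedness observation in the second paragraph, which takes over the role played by $\dim_\kk(A/A_+) < \infty$ in the connected graded setting.
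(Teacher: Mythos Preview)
Your proposal is correct and follows essentially the same strategy as the paper: both use the Koszul resolution of $S/S_+ = S_0$ by finitely generated graded free $S$-modules to handle the base case $M = S/S_+$, then extend to arbitrary torsion $M$ via long exact sequences in the first variable. The only organisational difference is that the paper first computes $\uExt^i_S(S/S_+, S)$ explicitly (it is $S_0(n+1)$ if $i=n+1$ and $0$ otherwise) and then dimension-shifts in $N$ using $\pd_S N < \infty$, whereas you reach arbitrary $N$ in one step via the observation that $S_+$ annihilates $\uExt^j_S(S/S_+,N)$; both routes arrive at the same place with comparable effort.
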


\begin{proof}
As $S$ is noetherian, we have $M S_{\geq s} = 0$ for some $s$.  
From the long exact $\uExt$ sequence, it is  enough to prove the first statement in the case that $MS_+ = 0$, i.e.~that $M$ is an $S/S_+$-module.

First assume that $M= S/S_+$ and $N=S$.
From the Koszul resolution of $S/S_+$ as an $S$-module we immediately compute that 
\[ \uExt^i_S(S/S_+, S) = \left\{
\begin{array}{cl}
S_0(n+1) & \text{if } i = n+1, \\
0 & \mbox{otherwise}.
\end{array}
\right.\]
It follows immediately that the proposition holds if $M_{S/S_+}$ and $N_S$ are projective.
 
Now let $N$ be an arbitrary $S$-module and consider a partial resolution $0 \to N' \to P \to N \to 0$ of $S$-modules, where $P$ is projective.
By induction on $\pd_S N$ and the long exact sequence in $\uExt$ we see that $\uExt^i_S(S/S_+, N) $ is bounded and finitely generated over $S$ for all $i$.  
 
 Using the long exact $\uExt$ sequence and the conclusion of the previous paragraph, we see that $\uExt^i_S(S/S_{\geq s}, N)$ is finitely generated and bounded for all $i, s$, and that if $M$  is an arbitrary finitely generated torsion module than $\uExt^i_S(M, N)$ is bounded for all $i$.
\end{proof}

Before we can prove that $T$ satisfies $\chi$, we will need to consider bounds on the $(S, T)$-bimodules $\Tor^S_i(S/S_{\geq s}, T)$, which have a $\ZZ$-grading induced from the gradings on $S$ and $T$.

\begin{proposition}\label{prop:tensor}
{\rm (cf.~\cite[Proposition~2.4]{AZ})}
Let $i,s \in \NN$.  
Then $\Tor_i^S(S/S_{\geq s}, T)$ is a finitely generated and bounded $T$-module, and if $i \geq 1$ the left bound is at least $s$.
\end{proposition}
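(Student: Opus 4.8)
The plan is to exploit that $\Tor$ is balanced and to play off three different resolutions over $S$: one of $S/S_{\geq s}$, one of the ideal $S_{\geq s}$, and one of $T$ itself, each chosen to make one of the three claimed properties transparent. First I would record the standing facts: $S=\kk_{\mathbf q}[x,y,z,t_0,\dots,t_n]$ is noetherian and AS regular, hence of finite global dimension $n+4$; and, by Theorem~\ref{thm:TcongTpres} and the presentation \eqref{eqn:PresentationForT}, $T\cong\Tpres$ is a \emph{cyclic} $S$-module via the obvious surjection $S\twoheadrightarrow\Tpres$, with the projective/$\NN$-grading on $T$ compatible (under $S\to T$) with the grading on $S$ in which $\deg t_i=1$ and $\deg x=\deg y=\deg z=0$. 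In particular both $S/S_{\geq s}$ and $T$ have finite-length resolutions by finitely generated graded projective $S$-modules. For finite generation I would take such a resolution $P_\bullet\to S/S_{\geq s}$ by finitely generated graded projective \emph{right} $S$-modules; then $\Tor_i^S(S/S_{\geq s},T)=H_i(P_\bullet\otimes_S T)$, each $P_j\otimes_S T$ is a finitely generated projective right $T$-module, and since $T$ is noetherian the homology is a finitely generated right $T$-module (this also pins down the right $T$-module structure referred to in the statement).

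For the upper bound I would instead take a finite resolution $Q_\bullet\to T$ by finitely generated graded projective \emph{left} $S$-modules, with $Q_j$ a summand of a finite direct sum $\bigoplus_k S(-d_{jk})$, and set $D:=\max_{j,k}d_{jk}+s-1<\infty$. Computing $\Tor$ from this resolution, $\Tor_i^S(S/S_{\geq s},T)=H_i((S/S_{\geq s})\otimes_S Q_\bullet)$; since $S/S_{\geq s}$ is concentrated in (projective) degrees $0,\dots,s-1$, the term $(S/S_{\geq s})\otimes_S Q_j$ is a summand of $\bigoplus_k(S/S_{\geq s})(-d_{jk})$ and hence concentrated in degrees $\leq D$. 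Thus each $\Tor_i^S(S/S_{\geq s},T)$ is concentrated in degrees $\leq D$, i.e.\ right bounded.

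For the lower bound when $i\geq1$, the key observation is that $S_d=S_1^{\,d}$ for $d\geq1$ and $S_1=\sum_i t_iS_0$, so $S_{\geq s}=S_s\cdot S$ with $S_s$ finitely generated over $S_0=\kk_{\mathbf q}[x,y,z]$; hence $S_{\geq s}$ is generated as a right $S$-module in degree exactly $s$. It follows that in any finitely generated graded resolution $F_\bullet\to S_{\geq s}$ every syzygy is concentrated in degrees $\geq s$, so each $F_j$ may be taken generated in degrees $\geq s$; as $T$ is $\NN$-graded, $F_j\otimes_S T$ is then concentrated in degrees $\geq s$, and therefore so is $\Tor_i^S(S_{\geq s},T)=H_i(F_\bullet\otimes_S T)$ for every $i\geq0$. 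Finally the short exact sequence $0\to S_{\geq s}\to S\to S/S_{\geq s}\to0$, together with $\Tor_{\geq1}^S(S,T)=0$, gives $\Tor_i^S(S/S_{\geq s},T)\cong\Tor_{i-1}^S(S_{\geq s},T)$ for $i\geq2$ and an injection $\Tor_1^S(S/S_{\geq s},T)\hookrightarrow S_{\geq s}\otimes_S T=\Tor_0^S(S_{\geq s},T)$; either way the left bound is at least $s$.

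I expect the only real subtlety to be this last step: verifying that $S_{\geq s}$ is generated in degree exactly $s$, and that this forces the higher syzygies, and hence the $\Tor$ groups, to vanish below degree $s$ after tensoring with the $\NN$-graded ring $T$. One must also keep straight that the right $T$-module structure is the one visible from the resolution of $S/S_{\geq s}$, while the degree estimates come from the resolution of $T$; that these compute the same groups is exactly the balancing of $\Tor$.
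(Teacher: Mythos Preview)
Your proof is correct and follows essentially the same strategy as the paper: resolve $S/S_{\geq s}$ for finite generation, resolve $T$ for the right bound, and use the short exact sequence $0\to S_{\geq s}\to S\to S/S_{\geq s}\to 0$ to shift dimensions for the left bound. The only minor difference is that for the left bound of $\Tor^S_{i-1}(S_{\geq s},T)$ the paper resolves $T$ (so that $S_{\geq s}\otimes_S S(-d)$ has left bound $\geq s$), whereas you resolve $S_{\geq s}$ and use that $T$ is $\NN$-graded; both arguments are valid and equally short.
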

\begin{proof}
That $\Tor_i^S(S/S_{\geq s}, T)$ is finitely generated over $T$ follows by resolving $S/S_{\geq s}$ by finitely generated graded $S$-modules.  Resolving $T$ as an $S$-module, we see that $\Tor_i^S(S/S_{\geq s}, T)$ is torsion, and thus bounded since it is finitely generated.
To compute the left bound, we follow \cite{AZ}.
For $i \geq 2$, $\Tor^S_i(S/S_{\geq s}, T) \cong \Tor^S_{i-1}(S_{\geq s}, T)$.
Taking a resolution of $T$ by finitely generated graded $S$-modules, we can see the left bound of $\Tor^S_{i-1}(S_{\geq s}, T)$ is at least $s$.
Further, $\Tor_1^S(S/S_{\geq s}, T) \subseteq S_{\geq s} \otimes_S T$, which has left bound $s$.
\end{proof}

\begin{proposition}\label{prop2}
{\rm (cf.~\cite[Lemma~8.2]{AZ})}
\begin{enumerate}[{\normalfont (1)},topsep=0pt,itemsep=0pt,leftmargin=*]
\item $T$ satisfies $\chi$ and $\chi^\circ$.  
\item Let $M, N$ be finitely generated graded $\T$-modules with $M$ torsion.
Then all $\uExt^i_T(M, N)$ are  bounded.
\end{enumerate}
\end{proposition}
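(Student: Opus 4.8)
The plan is to obtain both statements from the corresponding facts for the skew polynomial ring $S = \kk_{\mathbf q}[x,y,z,t_0,\dots,t_n]$ of \eqref{eqn:PresentationForT} via the surjection $S \twoheadrightarrow T \cong \Tpres$ of Theorem~\ref{thm:TcongTpres} and the change-of-rings spectral sequences it induces. The essential input is Proposition~\ref{prop1}: $S$ is noetherian, has finite global dimension (being AS regular), and $\uExt^\bullet_S(M,N)$ is bounded whenever $M$ is torsion and finitely generated and $N$ is finitely generated.

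First I would establish (2). Given $M$ torsion and finitely generated and $N$ finitely generated over $T$, regard both as graded $S$-modules along $S \to T$; since $S_+$ surjects onto $T_+$, $M$ remains torsion and finitely generated over $S$. Restriction of scalars along $S \to T$ has right adjoint $\uHom_S(T,-)$, which carries $S$-injectives to $T$-injectives, so for the $T$-module $M$ there is a Grothendieck spectral sequence of graded modules
\[ E_2^{p,q} = \uExt^p_T\bigl(M,\ \uExt^q_S(T,N)\bigr)\ \Longrightarrow\ \uExt^{p+q}_S(M,N). \]
Here $\gldim S < \infty$ bounds the column index $q$; resolving $T$ by finitely generated graded free $S$-modules shows each $\uExt^q_S(T,N)$ is a subquotient of a finite sum of shifts of $N$ and is computed by a complex of $T$-modules, so it is a finitely generated graded $T$-module; and $\uExt^0_S(T,N)=N$ since $N$ is already a $T$-module, whence $E_2^{p,0}=\uExt^p_T(M,N)$. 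One then shows $\uExt^n_T(M,N)$ is bounded by induction on $n$: for $n=0$ it equals $E_2^{0,0}=E_\infty^{0,0}$, a subobject of the bounded module $\uExt^0_S(M,N)$; for the step, $E_2^{n,0}$ has no outgoing differentials, so it surjects onto $E_\infty^{n,0}$ (a subquotient of the bounded $\uExt^n_S(M,N)$) with successive kernels the images of the $d_r$ out of $E_r^{n-r,r-1}$, each a subquotient of $E_2^{n-r,r-1}=\uExt^{n-r}_T(M,\uExt^{r-1}_S(T,N))$ with $n-r<n$, hence bounded by induction since $\uExt^{r-1}_S(T,N)$ is a finitely generated $T$-module. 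An iterated extension of bounded modules is bounded. Throughout, ``bounded'' must be read as ``finitely many nonzero graded pieces'' rather than finite-dimensionality, since $T_0=B$ is infinite-dimensional, so every estimate is made with the grading.

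For (1): since $T_+ = T_{\geq 1}$ and $T/T_+ = B$ is a finitely generated (cyclic) torsion $T$-module, part~(2) applied with $M = T/T_+$ shows $\uExt^j_T(T/T_+, N)$ is bounded for all $j$ and all finitely generated graded $N$, which is exactly $\chi^\circ$. For $\chi$: $T/T_{\geq s}$ is a cyclic $T$-module and $T$ is noetherian, so $\uExt^j_T(T/T_{\geq s},N)$ and its truncations $(\,\cdot\,)_{\geq d}$ are automatically finitely generated $T$-modules, so $\chi$ holds; alternatively one can compare $\uExt^j_T(T/T_{\geq s},N)$ with $\uExt^j_S(S/S_{\geq s},N)$ through the other change-of-rings spectral sequence $\uExt^p_T(\Tor^S_q(S/S_{\geq s},T),N) \Rightarrow \uExt^{p+q}_S(S/S_{\geq s},N)$, using Proposition~\ref{prop:tensor} to see that for $q\ge 1$ the term $\Tor^S_q(S/S_{\geq s},T)$ is bounded with left bound $\ge s$, so only the $q=0$ column contributes in degrees $\ge d$ once $s$ is large; then $\chi$ re-implies $\chi^\circ$ via Proposition~\ref{prop:chiimplieschi}.

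The hard part will be bookkeeping rather than ideas: one must check that every $\uExt$ appearing in the spectral sequences is a finitely generated graded $T$-module (so that subquotients stay finitely generated and ``bounded'' is the right finiteness to propagate), and set up the Grothendieck spectral sequence with the correct sided bimodule structures on $T = S/I$. The only genuinely non-formal ingredient is Proposition~\ref{prop1} for $S$; everything for $T$ is then extracted by change of rings together with $\gldim S < \infty$.
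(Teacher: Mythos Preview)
Your primary route through the coinduction spectral sequence has a real gap. You need each $\uExt^q_S(T,N)$ to be a finitely generated graded right $T$-module in order to feed it back into the induction hypothesis, and your justification (``computed by a complex of $T$-modules'' from a free resolution $P_\bullet \to T$ of right $S$-modules) does not work in the noncommutative setting. The terms $\uHom_S(P_j,N)\cong N^{a_j}$ do carry a right $T$-action coming from $N$, but the differentials, which are given by right multiplication by a matrix with entries in $S$, are \emph{not} right $T$-linear unless those entries become central in $T$. More to the point, that action is not the one required by the spectral sequence: the adjunction $\uHom_S(M,N)\cong\uHom_T(M,\uHom_S(T,N))$ uses the $T$-action induced by the \emph{left} $T$-module structure on the bimodule $T$, and a free resolution of $T_S$ carries no such left $T$-structure. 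So finite generation of $\uExt^q_S(T,N)$ over $T$ is genuinely unclear, and the induction for (2) does not close. Exactly the same issue undermines your ``$\chi$ is automatic from noetherianity'' argument: the $T$-module structure on $\uExt^j_T(T/T_{\geq s},N)$ is the one coming from the bimodule $T/T_{\geq s}$, under which $T_{\geq s}$ acts by zero, so finite generation over $T$ amounts to finite generation over $T/T_{\geq s}$ (equivalently over $B$), and this is precisely the nontrivial content of $\chi$.

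The paper sidesteps all of this by using the \emph{other} change-of-rings spectral sequence, the one you mention as an alternative: $\uExt^p_T(\Tor^S_r(S/S_{\geq s},T),N)\Rightarrow\uExt^{p+r}_S(S/S_{\geq s},N)$. Here the required finite generation is immediate, because $\Tor^S_r(S/S_{\geq s},T)$ is computed by tensoring a finitely generated free resolution of $S/S_{\geq s}$ with the $(S,T)$-bimodule $T$, yielding a complex of finitely generated free right $T$-modules. The paper then runs an induction on $i$ (assuming $\chi_{i-1}^\circ$, which is needed to invoke Lemma~\ref{lem4} and kill the $r\geq 1$ columns in high degree), obtains $\chi_i$, and deduces $\chi_i^\circ$ via Proposition~\ref{prop:chiimplieschi}. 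Part~(2) then follows from $\chi^\circ$ and Lemma~\ref{lem4}. Your alternative sketch is essentially this argument, but note that it too requires the induction on $\chi_{i-1}^\circ$; the off-diagonal vanishing is not free.
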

\begin{proof}
We prove (1) following the proof of \cite[Lemma~8.2]{AZ}.
By induction, assume $T$ satisfies $\chi_{i-1}^\circ$.
Let $N \in \rgr T$ and $s\in \NN$, and consider the spectral sequence
\[ E^{p,r}_2 = \uExt^p_T(\Tor_r^S(S/S_{\geq s}, T), N) \Rightarrow \uExt^{p+r}_S(S/S_{\geq s}, N_S)\]
and the edge map
\beq \label{edge}
E^{i,0}_2 = \uExt^i_T(T/T_{\geq s}, N) \to \uExt^{i}_S(S/S_{\geq s}, N_S).
\eeq
Let $r \geq 1$.
By Proposition~\ref{prop:tensor}, the left bound of $\Tor_r^S(S/S_{\geq s}, T)$ goes to $\infty$ as $s \to \infty$.
By the induction hypothesis, $\uExt^{p-r}_T(T/T_+, N)$ is bounded and so by Lemma~\ref{lem4} the right bound of $E^{p-r,r}_2  = \uExt^{p-r}_T(\Tor_r^S(S/S_{\geq s}, T), N)$ goes to $-\infty$ as $s\to \infty$.
For $d\in \ZZ$, therefore, by taking $s \gg 0$ we may assume that $(E^{i-r,r}_2)_{\geq d} = 0$ and thus that \eqref{edge} is an isomorphism in degrees $\geq d$.

By Proposition~\ref{prop1},
\[ \uExt^i_T(T/T_{\geq s}, N)_{\geq d} \cong  \uExt^{i}_S(S/S_{\geq s}, N_S)_{\geq d}\]
is a finitely generated $S$-module and thus a finitely generated $T$-module.
Thus $T$ satisfies $\chi_i$, and by Proposition~\ref{prop:chiimplieschi} it satisfies $\chi_i^\circ$.

Part (2) is a consequence of the fact that $T$ satisfies $\chi^\circ$ and Lemma~\ref{lem4}.
\end{proof}

We need two subsidiary results before proving Serre finiteness for $T$.

\begin{proposition}
\label{prop5}
{\rm (cf.~\cite[Proposition~3.5]{AZ})}
Let $M, N \in \rgr \T$.
\begin{enumerate}[{\normalfont (1)},topsep=0pt,itemsep=0pt,leftmargin=*]
\item For all  $d\in\ZZ$ and $i \in \NN$ there is $s_0 \in \ZZ$ so that for all $s \geq s_0$ we have
\[ \uExt^i_\T(M/M_{\geq s_0}, N)_{\geq d}  \cong  \uExt^i_\T(M/M_{\geq s}, N)_{\geq d}. \]
\item For all $d\in\ZZ$ and $i \in \NN$ there is $s_0 \in \ZZ$ so that for all $s \geq s_0$ we have
\[ \uExt^i_\T(M_{\geq s_0}, N)_{\geq d} \cong \uExt^i_\T(M_{\geq s}, N)_{\geq d}.\]
\item For all $i \in \NN$, there is $\ell \in \ZZ$ so that for all $s \in \ZZ$ we have
\[ \uExt^i_\T(M, N)_{\geq \ell} \cong \uExt^i_\T(M_{\geq s}, N)_{\geq \ell}.\]
\end{enumerate}
\end{proposition}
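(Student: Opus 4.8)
The plan is to follow the strategy of \cite[Proposition~3.5]{AZ}, with the commutative-base hypotheses there replaced by the $\chi^\circ$ property of $\T$ established in Proposition~\ref{prop2}. The one preliminary fact I would extract is an \emph{explicit} bound on $\uExt$ against the fixed module $N$: since $\T$ satisfies $\chi^\circ$, the graded modules $\uExt^j_\T(\T/\T_+, N)$ with $j \leq i$ are bounded, so they admit a common finite right bound $\rho_i \in \ZZ$. Then Lemma~\ref{lem4} says that for \emph{any} finitely generated bounded graded $\T$-module $W$ whose left bound is at least $a$, the module $\uExt^j_\T(W, N)$ has right bound at most $\rho_i - a$ for all $j \leq i$; in particular $\uExt^j_\T(W, N)_{\geq d} = 0$ for all $j \leq i$ as soon as $a > \rho_i - d$. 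Every truncation-difference module occurring below is bounded and finitely generated, and its left bound is monotone in the truncation parameter, so once such a vanishing statement holds it continues to hold for larger parameters.

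For part (1), I would feed the short exact sequence $0 \to M_{\geq s}/M_{\geq s'} \to M/M_{\geq s'} \to M/M_{\geq s} \to 0$ (for $s' \geq s$) into the long exact $\uExt(-,N)$ sequence; inspecting it shows the map $\uExt^i_\T(M/M_{\geq s}, N) \to \uExt^i_\T(M/M_{\geq s'}, N)$ is an isomorphism once $\uExt^{i-1}_\T(M_{\geq s}/M_{\geq s'}, N)$ and $\uExt^{i}_\T(M_{\geq s}/M_{\geq s'}, N)$ vanish in degrees $\geq d$. Since $M_{\geq s}/M_{\geq s'}$ has left bound at least $s$, taking $s_0 := \rho_i - d + 1$ achieves this for all $s \geq s_0$ and all $s' \geq s$, which is exactly the assertion. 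Part (2) is the same argument applied to $0 \to M_{\geq s'} \to M_{\geq s} \to M_{\geq s}/M_{\geq s'} \to 0$; here the relevant $\uExt$-degrees of the error term are $i$ and $i+1$, so I would take $s_0 := \rho_{i+1} - d + 1$. For part (3) I would use $0 \to M_{\geq s} \to M \to M/M_{\geq s} \to 0$; the key point is that $M/M_{\geq s}$ is bounded with left bound at least the left bound $\ell_M$ of $M$, \emph{independently of $s$}, so $\uExt^j_\T(M/M_{\geq s}, N)$ has right bound at most $\rho_{i+1} - \ell_M$ for all $j \leq i+1$ and all $s$ at once. Hence $\ell := \rho_{i+1} - \ell_M + 1$ kills both the $\uExt^i$ and $\uExt^{i+1}$ terms of $M/M_{\geq s}$ in degrees $\geq \ell$ for every $s$, and the long exact sequence gives $\uExt^i_\T(M, N)_{\geq \ell} \cong \uExt^i_\T(M_{\geq s}, N)_{\geq \ell}$ uniformly in $s$.

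I do not anticipate a serious obstacle: all the substance is in the $\chi^\circ$ property, which is already available, and what remains is bookkeeping — tracking which pair of consecutive Ext-degrees must vanish in each long exact sequence, and verifying the right bounds. The only point needing a little care is the uniformity in part (3), where a single $\ell$ must work for every $s$ simultaneously; this is precisely what the $s$-independence of $\ell_M$ guarantees, and I would also note that when a truncation-difference module happens to be zero the corresponding Ext groups vanish trivially, so the chosen bounds remain valid in those degenerate cases.
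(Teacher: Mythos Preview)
Your proposal is correct and is precisely the argument the paper has in mind: the paper's proof simply notes that $\T$ satisfies $\chi^\circ$ by Proposition~\ref{prop2} and that the proof of \cite[Proposition~3.5]{AZ} then goes through using Lemma~\ref{lem4} in place of \cite[Lemma~3.4]{AZ}. You have spelled out those details explicitly, with the correct short exact sequences and the correct pairs of Ext degrees ($i-1,i$ for (1); $i,i+1$ for (2) and (3)) that must vanish, and the uniformity argument in (3) via the fixed left bound $\ell_M$ is exactly right.
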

\begin{proof}
By Proposition~\ref{prop2} $\T$ satisfies $\chi^\circ$.
Then the proof of \cite[Proposition~3.5]{AZ} goes through in our context, using Lemma~\ref{lem4} instead of \cite[Lemma~3.4]{AZ}.
\end{proof}

\begin{proposition}
\label{prop6}
{\rm (cf.~\cite[Proposition~3.13]{AZ})}
Let $N \in \rgr \T$.
\begin{enumerate}[{\normalfont (1)},topsep=0pt,itemsep=0pt,leftmargin=*]
\item For all $d \in \ZZ$ and for all $s \gg 0$, we have
\[\uHom_\T(\T_{\geq s}, N)_{\geq d} \cong (\omega\pi N)_{\geq d}.\]
\item There is $\ell \in \ZZ$ so that
\[ N_{\geq \ell} \cong (\omega\pi N)_{\geq \ell}.\]
\end{enumerate}
\end{proposition}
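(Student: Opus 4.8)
The plan is to obtain both parts directly from Proposition~\ref{prop5}, applied with $M = \T$ and $i = 0$, after recording one identification. By Proposition~\ref{prop:cohbasics}(1) applied with $j = 0$, together with the defining relation $\underline\H^0_{\rQgr \T}(\pi N) = \omega\pi N$, we have
\[ \omega\pi N \;\cong\; \varinjlim_{s} \uHom_\T(\T_{\geq s}, N), \]
where, for $s \leqslant s'$, the structure map $\uHom_\T(\T_{\geq s}, N) \to \uHom_\T(\T_{\geq s'}, N)$ of this direct system is the restriction map induced by the inclusion $\T_{\geq s'} \hookrightarrow \T_{\geq s}$. Since a filtered colimit of graded modules and the truncation functor $V \mapsto V_{\geq d}$ are both computed degreewise, hence exact and mutually commuting, for every $d \in \ZZ$ we get $(\omega\pi N)_{\geq d} \cong \varinjlim_s \uHom_\T(\T_{\geq s}, N)_{\geq d}$.

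For part (1), I would fix $d \in \ZZ$ and invoke Proposition~\ref{prop5}(2) with $M = \T$, $i = 0$: it supplies an $s_0$ such that for all $s \geqslant s_0$ the restriction map $\uHom_\T(\T_{\geq s_0}, N)_{\geq d} \to \uHom_\T(\T_{\geq s}, N)_{\geq d}$ is an isomorphism. Thus the direct system above has all of its structure maps isomorphisms beyond $s_0$, so its colimit in degrees $\geq d$ is already attained at $s_0$; combined with the identification of the previous paragraph, this yields $\uHom_\T(\T_{\geq s}, N)_{\geq d} \cong (\omega\pi N)_{\geq d}$ for every $s \geqslant s_0$, which is part (1).

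For part (2), I would instead use Proposition~\ref{prop5}(3) with $M = \T$, $i = 0$, which provides a single $\ell \in \ZZ$ --- uniform in $s$ --- such that the restriction map $\uHom_\T(\T, N)_{\geq \ell} \to \uHom_\T(\T_{\geq s}, N)_{\geq \ell}$ is an isomorphism for every $s$. Since $\uHom_\T(\T, N) = N$ via evaluation at $1$, this says $N_{\geq \ell} \to \uHom_\T(\T_{\geq s}, N)_{\geq \ell}$ is an isomorphism for all $s$; these isomorphisms are compatible with the restriction maps of the direct system, so on passing to the colimit over $s$ the unit map $N \to \omega\pi N$ restricts to an isomorphism $N_{\geq \ell} \xrightarrow{\sim} (\omega\pi N)_{\geq \ell}$, which is part (2).

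The only point that needs genuine care is that the comparison isomorphisms produced by Proposition~\ref{prop5} are realised by the restriction maps, rather than being merely abstract isomorphisms, so that one really is computing the colimit defining $\omega\pi N$; but this is already how Proposition~\ref{prop5} is set up --- the discrepancy between $\uHom_\T(\T_{\geq s}, N)$ and $\uHom_\T(\T_{\geq s'}, N)$ in high degrees is governed by $\uExt$-groups of a bounded torsion module, which vanish there by Lemma~\ref{lem4}, using that $\T$ satisfies $\chi^\circ$ (Proposition~\ref{prop2}). So there is no substantial obstacle here: the proposition is, in effect, a repackaging of the $\chi$-condition for $\T$.
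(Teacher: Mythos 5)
Your proof is correct and follows essentially the same route as the paper: both identify $\omega\pi N$ with the colimit $\varinjlim_s \uHom_\T(\T_{\geq s},N)$ and then apply Proposition~\ref{prop5}(2) (with $M=\T$, $i=0$) for part~(1) and Proposition~\ref{prop5}(3) for part~(2). Your extra remarks—making explicit that the colimit identification comes from Proposition~\ref{prop:cohbasics}(1), that $\uHom_\T(\T,N)=N$ via evaluation at $1$, and that the comparison isomorphisms in Proposition~\ref{prop5} are the restriction maps of the defining direct system—are accurate clarifications rather than a departure from the paper's argument.
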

\begin{proof}
We  follow the proof in \cite{AZ}.
By definition
\beq\label{homdef} \omega\pi N = \lim_{s \to \infty}\uHom_\T(\T_{\geq s}, N).\eeq
Then (1) is \eqref{homdef} combined with Proposition~\ref{prop5}(2).
For (2), by Proposition~\ref{prop5}(3) there is $\ell$ so that $N_{\geq \ell} \cong \uHom_\T(\T_{\geq s}, N)_{\geq \ell}$ for all $s$.
But by (1) this is $(\omega\pi N)_{\geq \ell}$.
\end{proof}

We can now prove relative  Serre finiteness for $\rqgr \T$.

\begin{proof}[Proof of Theorem~\ref{thm:SF}]
Let $N \in \rgr \T$ so that $\pi N = \sN$, and let $i \in \NN$.
By Proposition~\ref{prop5}(1) there is $s_0$ so that for all $ j\leq i$ we have
\[ \underline\H^j_{\rQgr \T} (\sN) = \lim_{s\to \infty} \uExt^j_\T(T_{\geq s}, N) = \uExt^j_\T(T_{\geq s_0}, N).\]
Similarly to Proposition~\ref{prop:cohbasics}(2), if $j \geq 1$ then
\[ \underline\H^j_{\rQgr \T} (\sN) \cong \uExt^{j+1}_\T(T/T_{\geq s_0}, N),\]
and there is an exact sequence
\[ 0 \to \uHom_\T(T/T_{\geq s_0}, N) \to N \to \underline\H^0_{\rQgr \T} (\sN)\to \uExt^1_\T(T/T_{\geq s_0}, N) \to 0.\]
Since $T$ satisfies $\chi$, therefore, $\underline \H^i_{\rQgr \T}(\sN)_{\geq 0}$ is a finitely generated $T$-module, and thus $ \H^i_{\rQgr \T}(\sN)$ is  finitely generated  over $T_0 = B$.
Further, if $i \geq 1$ then $\underline \H^i_{\rQgr \T}(\sN)_{\geq 0} \cong H^{i+1}_{\T_+}(N)_{\geq 0}$ is torsion, and as it is finitely generated it must be right bounded.
\end{proof}


\subsection{The local cohomology of \texorpdfstring{$T$}{T}}\label{LOCALCOHCOMP}
It will clearly be useful to be able to compute some (local) cohomology groups.  
The goal of this section is to show that the \v{C}ech complex $\vC(t_0, \dots, t_n; \T)$ of  $\T$ is exact in degree 0. Recall from \eqref{eqn:eta} that we can embed $\T$ into the quantum polynomial ring $\Bbbk_{\qb}[\ba, \bb, \bs] \subseteq D$, where $D$ is defined in \eqref{eqn:Q}, by setting
\begin{align*}
x = \ba, \quad y = \qb \ba^n \bb^{n+1}, \quad z = \qb \ba\bb, \quad t_i = \qb  \ba^{\binom{i}{2}} \bb^{\binom{i+1}{2}} \bs,
\end{align*}
for suitable scalars $\qb$, and that this induces the toric grading on $\T$, where $\deg \ba^i \bb^j \bs^k = (i,j,k)$. In particular, the third component records the projective grading of $\T$.
Thus, any graded localisation of $\T$ can be viewed as living inside the quantum torus $D$, and the degree $0$ part (with respect to the third component of the grading, i.e.~the projective grading) of any such ring can be viewed as living inside the quantum division ring $Q(\Bbbk_{q^{n+1}}[\ba^{\pm 1}, \bb^{\pm 1}])$.
(Here we recall from \eqref{eqn:Q} that $\ba$ and $\bb$ $q^{n+1}$-commute.)
Therefore we have the following:

\begin{lemma}\label{lem:dag}
\begin{enumerate}[{\normalfont (1)},topsep=0pt,itemsep=0pt,wide=0pt]
\item For $0 \leqslant k \leqslant n$, we have
\begin{align}
\T[t_k^{-1}]_0 =
\left\{
\begin{array}{cl}
\Bbbk_{q^{n+1}} [x, t_1 t_0^{-1}] & \text{if } k = 0, \\
\Bbbk_{q^{n+1}}[ t_{k-1} t_k^{-1}, t_{k+1} t_k^{-1}] & \text{if } 1 \leqslant k \leqslant n-1, \\
\Bbbk_{q^{n+1}}[ t_{n-1} t_n^{-1}, y ] & \text{if } k = n.
\end{array}
\right. \label{zero}
\end{align}
If we identify $T$ with $\Ttoric$, then 
\begin{align*}
\T[t_k^{-1}]_0 = \Bbbk_{q^{n+1}}[\ba^{-(k-1)} \bb^{-k}, \ba^k \bb^{k+1}].
\end{align*}
In particular, the given generators $q^{n+1}$-commute.
\item For $0 \leqslant k \leqslant n-1$, we have
\begin{align*}
\T[t_k^{-1}, t_{k+1}^{-1}]_0 = 
\left\{
\begin{array}{cl}
\Bbbk_{q^{n+1}}[x,(t_0 t_1^{-1})^{\pm 1}] & \text{if } k=0, \\
\Bbbk_{q^{n+1}}[t_{k-1} t_k^{-1}, (t_k t_{k+1}^{-1})^{\pm 1}] & \text{if } 1 \leqslant k \leqslant n.
\end{array}
\right.
\end{align*}
If we identify $T$ with $\Ttoric$, then 
\beq\label{one}
\T[t_k^{-1}, t_{k+1}^{-1}]_0 = \Bbbk_{q^{n+1}}\Big[\ba^{-(k-1)} \bb^{-k}, \big(\ba^k \bb^{k+1}\big)^{\pm 1}\Big].
\eeq
\item For $0 \leqslant j < k \leqslant n$ with $k-j \geqslant 2$ we have
\beq\label{two}
\T[t_j^{-1}, t_k^{-1}]_0 = \Bbbk_{q^{n+1}}[\ba^{\pm 1}, \bb^{\pm 1}].
\eeq
\item If $J \subseteq \{t_0, \dots, t_n\}$ with $\#J \geq 3$, then  
\beq \label{three}
\T[t_J^{-1}] = \Bbbk_{q^{n+1}}[\ba^{\pm 1}, \bb^{\pm 1}].
\eeq
\end{enumerate}
\end{lemma}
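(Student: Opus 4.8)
The plan is to realise all four rings as $\ZZ^3$-graded subrings of the quantum $3$-torus $D$ of \eqref{eqn:Q}, via the toric embedding $\eta\colon T\hookrightarrow D$ of \eqref{eqn:eta}. The key structural input is Lemma~\ref{lem:QgrTOneDimensional}: $\Qgr[\ZZ^3](T)\cong D$ is one-dimensional in every $\ZZ^3$-degree. Since each $t_k$ is $\ZZ^3$-homogeneous and maps under $\eta$ to a unit of $D$, each localisation $T[t_J^{-1}]$ embeds as a $\ZZ^3$-graded subring of $D$, namely the span of the monomials in $\ba,\bb,\bs$ whose degrees lie in the sub-semigroup of $\ZZ^3$ supporting it. Passing to the projective-degree-$0$ part (third coordinate zero) therefore places $T[t_J^{-1}]_0$ inside $D_0=\kk_{q^{n+1}}[\ba^{\pm1},\bb^{\pm1}]$, and each asserted equality becomes the statement that a sub-semigroup of $\ZZ^2$, generated by some explicit lattice points, equals the named one.

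The first step is to pin down those generators. Because all generators of $\Tpres$ pairwise $q^\bullet$-commute (Theorem~\ref{thm:TcongTpres}), a projective-degree-$0$ monomial of $T[t_k^{-1}]$ can be put in the normal form $q^\bullet x^a y^b z^c \prod_i (t_it_k^{-1})^{e_i}$; hence $T[t_k^{-1}]_0$ is generated as a $\kk$-algebra by $x$, $y$, $z$ and the ratios $t_it_k^{-1}$ for $0\le i\le n$, and $T[t_k^{-1},t_{k+1}^{-1}]_0$ is generated by these together with $(t_kt_{k+1}^{-1})^{\pm1}$ — here one uses $t_{k+1}^{-1}=q^\bullet t_k^{-1}(t_kt_{k+1}^{-1})$ to move both inverted variables onto $t_k^{-1}$. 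Under $\eta$ the lattice points at play in $D_0$ are $\deg x=(1,0)$, $\deg z=(1,1)$, $\deg y=(n,n+1)$, and $\deg(t_it_k^{-1})=\bigl(\binom{i}{2}-\binom{k}{2},\ \binom{i+1}{2}-\binom{k+1}{2}\bigr)$.

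The computational heart of the argument is the binomial identity
\[
\binom{i}{2}-\binom{k}{2}=-(k-1)\binom{i-k}{2}+k\binom{i-k+1}{2},\qquad \binom{i+1}{2}-\binom{k+1}{2}=-k\binom{i-k}{2}+(k+1)\binom{i-k+1}{2},
\]
whose coefficients $\binom{i-k}{2},\binom{i-k+1}{2}$ are non-negative integers for every integer $i$; equivalently $t_it_k^{-1}=q^\bullet(t_{k-1}t_k^{-1})^{\binom{i-k}{2}}(t_{k+1}t_k^{-1})^{\binom{i-k+1}{2}}$, with the conventions $t_{-1}t_0^{-1}:=x$, $t_{n+1}t_n^{-1}:=y$. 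Similar short monomial computations give $z=q^\bullet(t_{k-1}t_k^{-1})(t_{k+1}t_k^{-1})$, $x=q^\bullet(t_{k-1}t_k^{-1})^{k+1}(t_{k+1}t_k^{-1})^{k}$ and $y=q^\bullet(t_{k-1}t_k^{-1})^{n-k}(t_{k+1}t_k^{-1})^{n-k+1}$. So the sub-semigroup generated by the degrees of $x$, $y$, $z$ and all the $t_it_k^{-1}$ is exactly the one generated by the two points $\deg(t_{k-1}t_k^{-1})=(-(k-1),-k)$ and $\deg(t_{k+1}t_k^{-1})=(k,k+1)$, and a one-line check using $\bb\ba=q^{n+1}\ba\bb$ shows these two monomials $q^{n+1}$-commute in $D_0$. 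This proves part (1) (the boundary values $k=0$ and $k=n$ being precisely the degenerate readings of the conventions above), and part (2) follows at once, the second generator now being inverted.

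For parts (3) and (4) the relevant sub-semigroup turns out to be all of $\ZZ^2$. Given $j<k$ with $m:=k-j\ge2$, the element $(t_{j+1}t_j^{-1})(t_{k-1}t_k^{-1})z^{m-2}$ lies in $T[t_j^{-1},t_k^{-1}]_0$ and has degree $(-1,-1)$; together with $\deg x=(1,0)$, $\deg z=(1,1)$ and $\deg y=(n,n+1)$ this forces the generated semigroup to contain $(1,0)$, $(1,1)$, $(-1,-1)$ and $(n,n+1)$, and one checks directly that any such set generates $\ZZ^2$ (e.g.\ $(-1,0)=(n,n+1)+(n+1)(-1,-1)$, and then $(0,1)$ and $(0,-1)$ follow). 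Hence $T[t_j^{-1},t_k^{-1}]_0=D_0=\kk_{q^{n+1}}[\ba^{\pm1},\bb^{\pm1}]$, and when $\#J\ge3$ the smallest and largest indices of $J$ differ by at least $2$, so part (4) reduces to part (3). I expect the only genuine obstacle to be the bookkeeping underpinning the second and third paragraphs — verifying the normal-form reductions, the two binomial identities, and the handling of the boundary cases $k\in\{0,n\}$ and of the adjacent-versus-non-adjacent split in parts (2)--(4); past that, everything is a finite computation in $\ZZ^2$.
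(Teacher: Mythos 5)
Your argument is correct and operates within the same framework as the paper's proof — both rely on the toric embedding, the one-dimensionality of $\ZZ^3$-graded pieces from Lemma~\ref{lem:QgrTOneDimensional}, and the observation that $T[t_k^{-1}]_0$ is generated as a $\kk$-algebra by $x$, $y$, $z$, and the ratios $t_it_k^{-1}$. The genuine difference lies in how you verify that these generators land in the claimed quantum plane. The paper breaks into the three cases $k=0$, $1\le k\le n-1$, $k=n$ and within each runs a two-sided induction on $|i-k|$ using the defining relations of $T$; you instead extract the closed-form identity
\[
t_it_k^{-1}=q^\bullet\bigl(t_{k-1}t_k^{-1}\bigr)^{\binom{i-k}{2}}\bigl(t_{k+1}t_k^{-1}\bigr)^{\binom{i-k+1}{2}},
\]
whose exponents are non-negative for every integer $i-k$ (since $\binom{m}{2}=m(m-1)/2\ge0$ for all $m\in\ZZ$), together with the boundary conventions $t_{-1}t_0^{-1}=x$, $t_{n+1}t_n^{-1}=y$, which absorbs the paper's three cases into one formula. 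Similarly in parts (3)--(4) the paper exhibits $\ba^{\pm1},\bb^{\pm1}$ directly as products of generators, while you produce a single degree-$(-1,-1)$ element and argue abstractly that the resulting support semigroup is all of $\ZZ^2$. Both routes prove the same lemma; your closed form is exactly what the paper's induction, unwound, produces, and it buys a cleaner, case-free presentation at the cost of verifying two elementary binomial identities. I would characterise it as a tidier packaging of the same mechanism rather than a different route, but the unification is worth recording.
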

\begin{proof}
\begin{enumerate}[{\normalfont (1)},topsep=0pt,itemsep=0pt,wide=0pt]
\item It is clear that any element of $T[t_k^{-1}]_0$ is a linear combination of products of $x,y,z$ and elements of the form $t_j t_k^{-1}$, where $0 \leqslant j \leqslant n$. In each case, the claimed generators of $T[t_k^{-1}]_0$ are monomials in $\ba, \bb,$ and $\bs$, and hence generate a subalgebra of $D$ which is isomorphic to a quantum plane. We write this as $\kk_{q^\bullet}[x, t_1 t_0^{-1}] \subseteq \T[t_0^{-1}]_0$, for example, and have similar containments in the other cases.\\
\mbox{}\hspace{12pt}For the reverse inclusion, we consider the three cases separately, beginning with $k=0$; the case $k=n$ is similar and hence omitted. The relations in $\T$ allow us to write
\begin{align*}
z = q^{-\alpha_1} x t_1 t_0^{-1}, \quad y = q^{\beta_0} z^n t_1 t_0^{-1},
\end{align*}
the first of which clearly lies in $\Bbbk_\qb[x, t_1 t_0^{-1}]$, from which it follows that the same is true of $y$. Moreover, for $2 \leqslant j \leqslant n$, the relation $t_0 t_j = \qb z^{j-1} t_1 t_{j-1}$
implies that
\begin{align*}
t_j t_0^{-1} = q^\bullet z^{j-1} (t_1 t_0^{-1}) (t_{j-1} t_0^{-1}),
\end{align*}
and induction shows that $t_j t_0^{-1} \in \Bbbk_\qb[x, t_1 t_0^{-1}]$.\\
\mbox{}\hspace{12pt}Now suppose that $1 \leqslant k \leqslant n$. The relation $t_{k-1} t_{k+1} = \qb z t_k^2$ in $\T$ gives
\begin{align*}
z = q^\bullet (t_{i-1} t_i^{-1}) (t_{i+1} t_i^{-1})
\end{align*}
in $\T[t_k^{-1}]$, which lies in $\Bbbk_\qb[ t_{k-1} t_k^{-1}, t_{k+1} t_k^{-1}]$. We also have
\begin{align*}
x = \qb z^k t_{k-1} t_k^{-1}, \quad y = \qb z^{n-k} t_{k+1} t_k^{-1}
\end{align*}
from which it follows that $x,y \in \Bbbk_\qb[ t_{k-1} t_k^{-1}, t_{k+1} t_k^{-1}]$. Moreover, if $j \geqslant k+2$, then $t_k t_j = \qb z^{j-k-1} t_{k+1} t_{j-1}$, and hence
\begin{align*}
t_j t_k^{-1} = q^{\bullet} z^{j-k-1} (t_{k+1} t_k^{-1}) (t_{j-1} t_k^{-1}),
\end{align*}
and induction shows that $t_j t_k^{-1} \in \Bbbk_\qb[ t_{k-1} t_k^{-1}, t_{k+1} t_k^{-1}]$. Similarly, one can show that $t_j t_k^{-1} \in\Bbbk_\qb[ t_{k-1} t_k^{-1}, t_{k+1} t_k^{-1}]$ when $j \leqslant k-2$, establishing the claimed equality.\\
\mbox{}\hspace{12pt}Finally, it is straightforward to check that the given generators are identified with $\ba^{-(k-1)} \bb^{-k}$ and $\ba^k \bb^{k+1}$ under the toric embedding, which gives the final equality. Direct calculation shows that these elements $q^{n+1}$-commute:
\begin{align*}
\ba^k \bb^{k+1} \cdot \ba^{-(k-1)} b^{-k} = q^{-(n+1)((k+1)(k-1)-k^2)} \ba^{-(k-1)} b^{-k} \cdot \ba^k \bb^{k+1} = q^{n+1} \ba^{-(k-1)} b^{-k} \cdot \ba^k \bb^{k+1}.
\end{align*}
\item It is clear that, for all values of $k$, the right hand side is contained in the left hand side; we now consider the reverse inclusion. Suppose that $1 \leqslant k \leqslant n-1$. By part (1), we have 
\begin{align*}
T[t_k^{-1}]_0 = \Bbbk_{q^{n+1}}[t_{k-1} t_k^{-1}, t_{k+1} t_k^{-1}] \quad \text{and} \quad T[t_{k+1}^{-1}]_0 = \Bbbk_{q^{n+1}}[t_{k} t_{k+1}^{-1}, t_{k+2} t_{k+1}^{-1}],
\end{align*}
so that $T[t_k^{-1},t_{k+1}^{-1}]_0$ is the subring generated by these elements:
\begin{align*}
T[t_k^{-1},t_{k+1}^{-1}]_0 = \Bbbk \langle t_{k-1} t_k^{-1}, (t_{k} t_{k+1}^{-1})^{\pm 1}, t_{k+2} t_{k+1}^{-1} \rangle,
\end{align*}
It therefore suffices to show that $t_{k+2} t_{k+1}^{-1}$ lies in $\Bbbk_{q^{n+1}} [t_{k-1} t_k^{-1}, (t_{k} t_{k+1}^{-1})^{\pm 1}]$. 
From the relations in $\T$, we have
\begin{align*}
t_{k-1} t_{k+1}^3 = \qb z t_k^2 t_{k+1}^2 = \qb t_k^3 t_{k+2},
\end{align*}
and hence
\begin{align*}
t_{k+2} t_{k+1}^{-1} = \qb (t_{k-1} t_k^{-1}) (t_{k+1} t_k^{-1})^2 \in \Bbbk_{q^{n+1}} [t_{k-1} t_k^{-1}, (t_{k} t_{k+1}^{-1})^{\pm 1}],
\end{align*}
as desired. \\
\mbox{}\hspace{12pt}The cases $k=0$ and $k=n$ are similar; we prove $k=0$.
In this case, it suffices to show that $t_2t_1^{-1}$ lies in $\Bbbk_{q^{n+1}}[x,(t_0 t_1^{-1})^{\pm 1}]$. To see this, note that we have
\begin{align*}
xt_1^3 = \qb z t_0 t_1^2 = \qb t_0^2 t_2,
\end{align*}
and hence 
\begin{align*}
t_2 t_1^{-1} = \qb x (t_1 t_0^{-1})^2 \in \Bbbk_{q^{n+1}}[x,(t_0 t_1^{-1})^{\pm 1}].
\end{align*}
\item It suffices to show that $\ba^{\pm 1}, \bb^{\pm 1} \in \T[t_j^{-1}, t_k^{-1}]_0$ when $0 \leqslant j < k \leqslant n$ with $k-j \geqslant 2$. Since $x \in \T$, we have $\ba \in \T[t_j^{-1}, t_k^{-1}]_0$. We also have
\begin{align*}
\T[t_j^{-1}, t_k^{-1}]_0 \ni z^{k-1} (t_{k-1} t_k^{-1}) = \qb \ba^{k-1} \bb^{k-1} \ba^{-(k-1)} \bb^{-k} = \qb \bb^{-1},
\end{align*}
so that $\bb^{-1} \in \T[t_j^{-1}, t_k^{-1}]_0$. Additionally, we have
\begin{align*}
\T[t_j^{-1}, t_k^{-1}]_0 \ni (t_{j+1} t_j^{-1})^{k-1} (t_{k-1} t_k^{-1})^j = \qb (\ba^j \bb^{j+1})^{k-1} (\ba^{-(k-1)} \bb^{-k})^j = \qb \bb^{k-j-1},
\end{align*}
where $k-j-1 \geqslant 1$. Multiplying by a suitable power of $\bb^{-1}$ shows that $\bb \in \T[t_j^{-1}, t_k^{-1}]_0$. Finally,
\begin{align*}
\T[t_j^{-1}, t_k^{-1}]_0 \ni \ba^{k-2} \bb^{k} (t_{k-1} t_k^{-1}) = \qb \ba^{k-2} \bb^{k} \, \ba^{-(k-1)} \bb^{-k} = \qb \ba^{-1},
\end{align*}
so that $\ba^{-1} \in \T[t_j^{-1}, t_k^{-1}]_0$. Thus $\T[t_j^{-1}, t_{k}^{-1}]_0 = \Bbbk_{q^{n+1}}[\ba^{\pm 1}, \bb^{\pm 1}]$.
\item This follows from (3). \qedhere
\end{enumerate}
\end{proof}

The computations in Lemma~\ref{lem:dag} show that $\rqgr \T$ is ``locally nonsingular'' and so it is natural to expect that it has  finite homological dimension.
We will prove that this is the case in Section~\ref{LOCAL}.

Let $\vC^\bullet = \vC(t_0, \dots, t_n; \T)_0$.
Note that for all $(i,j) \in \ZZ^2$ and $s \in \NN$ we have
\begin{align*}
\dim \vC^s_{(i,j)} \leq \binom{n+1}{s}< \infty
\end{align*} 
from Lemma~\ref{lem:dag}(4).

\begin{lemma}\label{bigtoriclemma}
For all $(i,j) \in \ZZ^2$, the Euler characteristic of $\vC^\bullet_{(i,j)}$ is $0$: that is,
\[ \sum_{k \geq 0} (-1)^k \dim \vC^k_{(i,j)} = 0.\]
 \end{lemma}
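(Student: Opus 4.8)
The plan is to turn the Euler characteristic into an elementary sign count over a simplicial complex built on a path, and then win by the convexity of an arithmetic progression.

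First I would fix $(i,j)\in\ZZ^2$ and observe that, by Lemma~\ref{lem:dag} (for $|J|\geq 1$) together with the toric embedding \eqref{eqn:eta} (for $J=\emptyset$, where $\T[t_\emptyset^{-1}]_0=\T_0=B$), each $\T[t_J^{-1}]_0$ is a $\ZZ^2$-graded subring of $\kk_{q^{n+1}}[\ba^{\pm1},\bb^{\pm1}]$. Hence $\dim\T[t_J^{-1}]_{0,(i,j)}\in\{0,1\}$, equal to $1$ exactly when $\ba^i\bb^j\in\T[t_J^{-1}]_0$, so $\dim\vC^k_{(i,j)}$ is the number of $k$-element subsets $J$ with $\ba^i\bb^j\in\T[t_J^{-1}]_0$, and therefore $\sum_k(-1)^k\dim\vC^k_{(i,j)}=\sum_{J\in\mathcal F}(-1)^{|J|}$, where $\mathcal F=\{J\subseteq\{0,\dots,n\}:\ba^i\bb^j\in\T[t_J^{-1}]_0\}$. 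Since inverting more elements only enlarges the ring, $\mathcal F$ is closed under enlarging $J$, so its complement $\Sigma$ in $2^{\{0,\dots,n\}}$ is a simplicial complex (with the empty face allowed); using $\sum_J(-1)^{|J|}=(1-1)^{n+1}=0$ the sum above equals $-\sum_{J\in\Sigma}(-1)^{|J|}$, which I must show vanishes. By Lemma~\ref{lem:dag}(3)--(4), $\T[t_J^{-1}]_0$ is the whole quantum torus as soon as $J$ contains indices differing by at least $2$, so every face of $\Sigma$ is contained in some $\{k,k+1\}$: thus $\Sigma$ is a subcomplex of the path on $\{0,\dots,n\}$ with edges $\{0,1\},\dots,\{n-1,n\}$, and $\sum_{J\in\Sigma}(-1)^{|J|}=1-\#\{\text{vertices of }\Sigma\}+\#\{\text{edges of }\Sigma\}$.

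The key device is the affine function $p_\ell:=(\ell+1)i-\ell j=i+\ell(i-j)$ for $-1\leq\ell\leq n$, whose values $p_{-1},\dots,p_n$ form an arithmetic progression. Using Lemma~\ref{lem:dag} and the fact that the exponent vectors $(1-k,-k)$ and $(k,k+1)$ span a sublattice of determinant $1$, I would check — end charts $k\in\{0,n\}$ included — that $\ba^i\bb^j\in\T[t_k^{-1}]_0\iff p_{k-1}\geq0$ and $p_k\geq0$; that $\ba^i\bb^j\in\T[t_k^{-1},t_{k+1}^{-1}]_0\iff p_k\geq0$; and that $\ba^i\bb^j\in B\iff p_{-1}\geq0$ and $p_n\geq0$. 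Consequently the vertex set of $\Sigma$ is $\{k:p_{k-1}<0\text{ or }p_k<0\}$ and the edge set is $\{k:p_k<0\}$. If $p_{-1}\geq0$ and $p_n\geq0$, then all $p_\ell\geq0$ (affineness), so $\Sigma$ is void and we are done. Otherwise $p_{-1}<0$ or $p_n<0$; then $\{\ell:p_\ell<0\}$ is a contiguous block of $\{-1,\dots,n\}$ meeting an endpoint, say $\{-1,\dots,c\}$ (the case $\{c,\dots,n\}$ and the case of the whole range being symmetric), and reading off the vertex and edge sets gives $\Sigma=\{\,J:J\subseteq\{0,\dots,c+1\}\text{ and }J\text{ is a face of the path}\,\}$, i.e. the interval $\{0,\dots,c+1\}$ together with all its $c+1$ edges. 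Then $\sum_{J\in\Sigma}(-1)^{|J|}=1-(c+2)+(c+1)=0$. In particular this also rules out the only other bad possibility $\Sigma=\{\emptyset\}$ (which would force all $p_\ell\geq0$, contradicting $p_{-1}<0$ or $p_n<0$).

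I expect the only real work to be the verification in the middle of paragraph three that membership in each chart $\T[t_J^{-1}]_0$, as well as in $B$, is governed by the signs of consecutive $p_\ell$'s. This is a direct but slightly fiddly computation from the explicit generators in Lemma~\ref{lem:dag}, with the boundary charts $k=0$ and $k=n$ (and the half-plane charts $\T[t_k^{-1},t_{k+1}^{-1}]_0$) needing separate, easy attention; everything afterwards is forced by the affineness of $\ell\mapsto p_\ell$ and is purely mechanical.
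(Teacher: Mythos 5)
Your argument is correct and takes a genuinely different route from the paper. The paper partitions $\ZZ^2$ into explicit cones and sectors ($W$, the $U_k$, the $L_k$, $Z$), writes down $\dim\vC^s_{(i,j)}$ as a function of the region containing $(i,j)$, and checks the alternating sum vanishes region by region. You instead recast $\sum_k(-1)^k\dim\vC^k_{(i,j)}$ as $-\sum_{J\in\Sigma}(-1)^{|J|}$ for the downward-closed family $\Sigma$ of non-membership sets, use Lemma~\ref{lem:dag}(3)--(4) to confine $\Sigma$ to the path complex on $\{0,\dots,n\}$, and exploit the affine sequence $p_\ell=(\ell+1)i-\ell j$ (the lattice coordinates read off the determinant-one generator pairs in Lemma~\ref{lem:dag}(1)--(2)) to show $\Sigma$ is either void or a connected subpath, hence has reduced Euler characteristic zero. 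Both proofs rest on the same chart computations, but yours exposes the vanishing as contractibility of a nerve-type complex rather than a coincidence of integers, which is cleaner and would transfer more readily to analogous toric-flavoured rings; the paper's version pays for its directness by producing explicit dimensions $\dim\vC^s_{(i,j)}$ that are not needed afterwards. One step to make explicit in the ``fiddly'' verification you defer: the claim $\ba^i\bb^j\in B\iff p_{-1}\geq 0$ and $p_n\geq 0$ amounts to saturatedness of the semigroup $\supp B$ generated by $(1,0)$, $(n,n+1)$, $(1,1)$ (it must equal all lattice points of the cone on $(1,0)$ and $(n,n+1)$); this is true, but unlike the single-chart and double-chart cases it is not an immediate determinant-one observation, since those two rays span a sublattice of index $n+1$, and it deserves its own short argument (e.g.\ via the spanning monomials $x^iz^k$, $y^jz^k$ identified in the proof of Proposition~\ref{prop:OneDimensionalPieces}).
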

\begin{proof}
 If $X \subseteq \kk_{q^{n+1}}[\ba^\pm, \bb^\pm]$ is bigraded, define $\supp(X) = \{(i,j) \mid \ba^i \bb^j \in X\}.$

We now describe a partition of $\ZZ^2$.
Let $ Q_{\,\RomanOne} = \NN\times \NN \ssm \{(0,0)\}$ be the ``first quadrant'' and let $Q_{\MakeUppercase{\,\RomanThree}} = \{ (i,j) \in \ZZ \times \ZZ \mid i,j < 0\}$  be the ``third quadrant''.
For $1 \leqslant k \leqslant n$ let
\[ U_k = \Big\{ (i,j)\in  Q_{\,\RomanOne} \;\Big|\; \frac{k+1}{k} < \frac{j}{i} \leqslant \frac{k}{k-1} \Big\} \]
and also define
\[ W = \Big\{ (i,j) \in Q_{\,\RomanOne} \;\Big|\; \frac{j}{i} \leqslant \frac{n+1}{n} \Big\} \cup \{0\}.\]
Moreover, let 
\[ L_0 = \{ (i,j) \in \ZZ^2 \mid i \geqslant 0, \; j <0\}\]
and, for $1 \leqslant k \leqslant n-1$, let
\[ L_k = \Big\{ (i,j) \in Q_{\,\RomanThree} \;\Big|\; \frac{k+1}{k} \leqslant \frac{j}{i} < \frac{k}{k-1} \Big\}.\]
Finally, set
\[ Z = \Big\{ (i,j) \in \ZZ^2 \;\Big|\; i < 0, \; \frac{j}{i} < \frac{n}{n-1}\Big\}.\]

We observe that as $\T_0 = \kk \ang{\ba, \ba^n \bb^{n+1}, \ba \bb} \subseteq \kk_{q^{n+1}}[\ba, \bb]$ we have
\[ \supp \T_0 = \NN\cdot (1,0) + \NN \cdot (n, n+1) + \NN \cdot (1,1) = W.\]
Let $0 \leq k \leq n$.
From  \eqref{zero} we calculate:
\[ \supp \T[t_k^{-1}]_0 = \NN \cdot (-(k-1), -k) + \NN \cdot(k, k+1) = 
L_{k-1} \sqcup \dots \sqcup L_0 \sqcup W \sqcup U_n \sqcup \dots \sqcup U_{k+1}.
\]
Likewise, \eqref{one} tells us that, if $0 \leq k \leq n-1$, then 
\[ 
\supp \T[t_k^{-1}, t_{k+1}^{-1}]_0 = \NN \cdot (-(k-1), -k) + \ZZ \cdot (k, k+1) = 
L_{k} \sqcup \dots \sqcup L_0 \sqcup W \sqcup U_n \sqcup \dots \sqcup U_{k+1}.
\]
Combining these computations with \eqref{two} and \eqref{three} we obtain that
\[ \dim \vC^s_{(i,j)} = \left\{
\begin{array}{cl}
\binom{n+1}{s} & \mbox{if $s \geq 3$ or  $(i,j) \in W$,} \\[3pt]
\binom{n}{2} & \mbox{if $s=2$ \text{ and } $(i,j) \in Z$,} \\[3pt]
\binom{n}{2} + k & \mbox{if $s=2$ \text{ and } $(i,j) \in U_k$,} \\[3pt]
\binom{n}{2} + n-k & \mbox{if $s=2$ \text{ and } $(i,j) \in L_k$,} \\[3pt]
k & \mbox{if $s=1$ \text{ and } $(i,j) \in U_k$,} \\[3pt]
n-k & \mbox{if $s=1$ \text{ and } $(i,j) \in L_k$,} \\[3pt]
0 & \mbox{otherwise.}
\end{array} \right.\]
The result follows.
\end{proof}

We can now compute the degree 0 local cohomology of $T$.

\begin{proposition}\label{prop:localcohomology}
The \v{C}ech complex $\vC^\bullet$ is exact, and $H^s_{\T_+}(\T)_0 = 0$ for all $s$.
\end{proposition}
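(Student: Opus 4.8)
The plan is to rephrase the statement via \v{C}ech cohomology and then verify exactness one toric bidegree at a time, where the complex becomes the (relative) simplicial cochain complex of an explicit, contractible simplicial complex. First, $\T_+ = \langle t_0,\dots,t_n\rangle$: by Proposition~\ref{prop:GeneratorsOfT} every monomial of positive projective degree is divisible on the left by some $t_\ell$, so $\T_{\geq 1} = \sum_\ell t_\ell \T$; and each $t_i$ is normal in $\T$, since by Lemma~\ref{lem:RelationsInT} it $\qb$-commutes with $x,y,z$ and with every $t_j$, hence normalises a generating set of $\T$. So Proposition~\ref{prop:LC} gives $H^s_{\T_+}(\T) \cong H^s\big(\vC(t_0,\dots,t_n;\T)\big)$ for all $s$, and taking projective-degree-$0$ parts reduces the claim to exactness of $\vC^\bullet$. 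Since $\vC^\bullet$ splits as a direct sum over $(i,j)\in\ZZ^2$ of its graded pieces $\vC^\bullet_{(i,j)}$ in the first two components of the toric grading (the third being the projective grading, which is $0$ here), it suffices to show each $\vC^\bullet_{(i,j)}$ is exact; by Lemma~\ref{lem:dag}(4) this is a bounded complex of finite-dimensional $\Bbbk$-spaces.

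Fix $(i,j)$. Each summand $\big(\T[t_J^{-1}]_0\big)_{(i,j)}$ of $\vC^{|J|}_{(i,j)}$ is at most one-dimensional, spanned by $\ba^i\bb^j$ when nonzero, and the localisation maps carry $\ba^i\bb^j$ to $\ba^i\bb^j$; consequently the \v{C}ech differential agrees, up to sign, with the simplicial coboundary of
\[ \Sigma_{(i,j)} \coloneqq \big\{\, J\subseteq\{0,\dots,n\} \;\big|\; (i,j)\notin\supp \T[t_J^{-1}]_0 \,\big\}, \]
which is a simplicial complex on the vertex set $\{0,\dots,n\}$ since $J\mapsto \supp\T[t_J^{-1}]_0$ is monotone (immediate from Lemma~\ref{lem:dag}). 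Thus $\vC^\bullet_{(i,j)}$ is the relative cochain complex $C^\bullet(\Delta^n,\Sigma_{(i,j)};\Bbbk)$ of the full simplex $\Delta^n$, and since $\Delta^n$ is contractible, exactness of $\vC^\bullet_{(i,j)}$ is equivalent to $\widetilde{H}^\bullet(\Sigma_{(i,j)};\Bbbk)=0$.

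It remains to identify $\Sigma_{(i,j)}$, using the support formulae \eqref{zero}, \eqref{one}, \eqref{two}, \eqref{three} of Lemma~\ref{lem:dag} together with the partition $\ZZ^2 = W\sqcup Z\sqcup\bigsqcup_k U_k\sqcup\bigsqcup_k L_k$ from the proof of Lemma~\ref{bigtoriclemma}. Reading off supports: if $(i,j)\in W$ then $\emptyset\notin\Sigma_{(i,j)}$, so $\Sigma_{(i,j)}$ is void and $\vC^\bullet_{(i,j)}$ is the augmented cochain complex of $\Delta^n$, which is exact; and if $(i,j)$ lies in $Z$, in some $U_k$, or in some $L_k$, then $\Sigma_{(i,j)}$ is a path graph (on $t_0,\dots,t_n$ for $Z$, on $t_k,\dots,t_n$ for $U_k$, on $t_0,\dots,t_{n-k}$ for $L_k$), hence contractible. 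In every case $\widetilde{H}^\bullet(\Sigma_{(i,j)};\Bbbk)=0$, so $\vC^\bullet_{(i,j)}$ is exact; summing over $(i,j)$ shows $\vC^\bullet$ is exact, i.e.\ $H^s_{\T_+}(\T)_0=0$ for all $s$. (As a check, the Euler characteristic of $\vC^\bullet_{(i,j)}$ is the reduced Euler characteristic of $\Sigma_{(i,j)}$, which vanishes in each case, recovering Lemma~\ref{bigtoriclemma}.)

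The main obstacle is the bookkeeping in the last step: correctly extracting $\Sigma_{(i,j)}$ in each of the $2n+2$ regions of the partition from Lemma~\ref{lem:dag}, and handling the degenerate cases — the void complex when $(i,j)\in W$, the single-vertex and single-point complexes at the ends of the $U_k$ and $L_k$ ranges, and any regions that collapse for small $n$ — with the correct reduced-cohomology conventions. Everything else is formal homological algebra.
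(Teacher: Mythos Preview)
Your argument is correct and takes a genuinely different route from the paper. The paper first observes that the \v{C}ech complex is independent of $q$, specialises to $q=1$, and then invokes Artin's theorem that Kleinian surface singularities are rational to obtain $H^s_{\T_+}(\T)_0=0$ for $s\geq 2$; the case $s=0$ is immediate since $\T$ is a domain, and $s=1$ is deduced from the Euler-characteristic computation of Lemma~\ref{bigtoriclemma}. Your approach is entirely self-contained and avoids both the reduction to commutative geometry and the separate Euler-characteristic argument: recognising each bigraded piece as the relative cochain complex of the pair $(\Delta^n,\Sigma_{(i,j)})$ and checking that $\Sigma_{(i,j)}$ is always void or a path (hence has vanishing reduced cohomology) gives exactness uniformly in $s$ and supplies a transparent combinatorial reason for the vanishing. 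The paper's approach, in turn, makes the connection with the commutative minimal resolution explicit. One small slip: for $(i,j)\in L_k$ the complex $\Sigma_{(i,j)}$ is the path on $t_0,\dots,t_k$ (that is, $k+1$ vertices), not on $t_0,\dots,t_{n-k}$; this is harmless for your conclusion since it is still a path.
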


\begin{proof}
We first observe that $\dim H^s_{\T_+}(\T)_0$ does not depend on $q$.
To see this, note that for any $J \subseteq [n]$ 
and any $(i,j) \in \ZZ^2$, $\T[t_J^{-1}]_{(i,j)}$ is either $0$ or $\kk \ba^i \bb^j$, and which occurs is independent of $q$. Thus if we define 
\[ m^{ij}_J = \left\{\begin{array}{cl} \ba^i \bb^j & 
\mbox{if $\T[x_J^{-1}]_{(i,j)} \neq 0$,}
\\
0 & \mbox{otherwise,} \end{array}\right.\]
then $d_s: \vC^s \to \vC^{s+1}$ is given by
$d_s(m_J^{(i,j)}) = \sum_{k \not\in J} (-1)^{o_J(k)} m^{(i,j)}_{J \cup \{k\}},$
which again does not depend on $q$.
It thus suffices to compute  $H^s_{\T_+}(\T)_0$ if $q=1$.

If $q=1$ then $\widetilde{X}= \Proj(\T)$ is the minimal resolution of $X = \kk^2/C_{n+1}$, constructed via quiver GIT as explained in \cite{NCCR}. 
The singularity of $X$ is rational by \cite{Artin66}; that is, if $\pi: \widetilde{X} \to X$ is the morphism associated to the resolution then 
\[ H^s(\widetilde{X}, \ms O_{\widetilde X}) = \RR^s \pi_* \ms O_{\widetilde X} =\left\{\begin{array}{cl} 0 & \text{if } s \geq 1, \\ \ms O_X & \text{if } s=0.\end{array}\right.\]
But $\coh \widetilde X \simeq \rqgr \T$ by \cite[\href{https://stacks.math.columbia.edu/tag/0BXD}{Tag 0BXD}]{stacks-project} so by Proposition~\ref{prop:cohbasics}(2) for $s \geq 2$ we have 
$H^s_{\T_+}(\T)_0 \cong H^{s-1}(\widetilde X, \ms O_{\widetilde X})  $,
giving the result for $s \geq 2$.

As $\T$ is a domain, $H^0_{\T_+}(\T) = 0$.
The complex $\vC^\bullet$ is thus exact except possibly at $s=1$.
Applying Lemma~\ref{bigtoriclemma} we see that  $\vC^\bullet$ is exact, as claimed. 
\end{proof}

\begin{corollary}\label{cor:local2}
Let   $\sN$ be a finitely generated $\ZZ^3$-graded right 
$\T$-module
that is locally free in the sense that $\sN[t_i^{-1}]_0$ is  a free $\T[t_i^{-1}]_0$-module for all $0 \leq i \leq n$. 
Then $H^s_{\T_+}(\sN)_0 = 0$ for all $s \geqslant 3$.
\end{corollary}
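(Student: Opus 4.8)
The plan is to compute $H^s_{\T_+}(\sN)_0$ through the \v Cech complex and then split off a subcomplex, supported on the ``generic'' charts, which turns out to be acyclic for purely combinatorial reasons.

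First I would note that $t_0,\dots,t_n$ are normal in $\T$ (they $q^\bullet$-commute with all the generators) and generate $\T_+$, so by Proposition~\ref{prop:LC} the local cohomology $H^s_{\T_+}(\sN)$ is the cohomology of $\vC(t_0,\dots,t_n;\sN)$. Since $\sN$ is $\ZZ^3$-graded and the $t_i$ are homogeneous, this is a complex of $\ZZ^3$-graded modules with homogeneous differentials, and passing to the projective-degree-$0$ part is exact, so $H^s_{\T_+}(\sN)_0 = H^s(C^\bullet)$ where $C^\bullet := \vC(t_0,\dots,t_n;\sN)_0$. It remains to prove $H^s(C^\bullet)=0$ for $s\ge 3$.

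Call $J\subseteq\{0,\dots,n\}$ \emph{generic} if $\T[t_J^{-1}]_0 = \Bbbk_{q^{n+1}}[\ba^{\pm 1},\bb^{\pm 1}] =: \mathcal Q$. By Lemma~\ref{lem:dag}(3)--(4) this holds exactly when $J$ contains two non-adjacent indices, so every $J$ with $|J|\ge 3$ is generic, and the non-generic sets are precisely $\emptyset$, the singletons, and the adjacent pairs $\{k,k+1\}$ — that is, the faces of the ``path'' simplicial complex $\Gamma$ on the vertices $0,\dots,n$ with edges $\{k,k+1\}$. A short computation with the toric embedding (as in the proof of Lemma~\ref{lem:dag}) shows moreover that for every generic $J$ one has $\T[t_J^{-1}] = D$; since $\T\to\T[t_i^{-1}]$ is a flat Ore localisation and $\T[t_i^{-1}]$ is strongly $\ZZ^3$-graded over $\T[t_i^{-1}]_0$ (with $t_i$ a unit of projective degree $1$), the local freeness hypothesis descends to show that $\mathcal E := (\sN\otimes_\T D)_0$ is a free $\mathcal Q$-module and that $\sN[t_J^{-1}]_0 = \mathcal E$ for every generic $J$, all the localisation maps between these being the identity.

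Now let $C^\bullet_{\mathrm{gen}}\subseteq C^\bullet$ be the span of the summands $\sN[t_J^{-1}]_0$ with $J$ generic. Since any superset of a generic set is generic, this is a subcomplex, and the quotient $C^\bullet_{\mathrm{rest}} := C^\bullet/C^\bullet_{\mathrm{gen}}$ is assembled from the $\sN[t_J^{-1}]_0$ with $J\in\Gamma$, hence is concentrated in cohomological degrees $0,1,2$ and so has $H^s(C^\bullet_{\mathrm{rest}})=0$ for $s\ge 3$. By the previous paragraph $C^\bullet_{\mathrm{gen}} \cong \mathcal E\otimes_\Bbbk K^\bullet$, where $K^\bullet$ is (up to a degree shift) the relative simplicial cochain complex of the pair $(\Delta^n,\Gamma)$, $\Delta^n$ being the full simplex on $n+1$ vertices. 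Both $\Delta^n$ and $\Gamma$ are contractible, so $H^\ast(\Delta^n,\Gamma;\Bbbk)=0$ and $K^\bullet$ is exact; as $\mathcal E$ is flat over $\Bbbk$, the complex $C^\bullet_{\mathrm{gen}}$ is exact. Feeding this into the long exact sequence of $0\to C^\bullet_{\mathrm{gen}}\to C^\bullet\to C^\bullet_{\mathrm{rest}}\to 0$ yields $H^s(C^\bullet)\cong H^s(C^\bullet_{\mathrm{rest}})$ for all $s$, which vanishes for $s\ge 3$; equivalently $H^s_{\T_+}(\sN)_0=0$ for $s\ge 3$.

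The step I expect to be the main obstacle is the identification $\sN[t_J^{-1}]_0 = \mathcal E$ for all generic $J$: one must verify, via the toric embedding, that inverting any non-adjacent pair of the $t_i$ already inverts every homogeneous element of $\T$ (so that $\T[t_J^{-1}]$ does not depend on the generic $J$), and then descend the freeness of $\sN[t_i^{-1}]$ over $\T[t_i^{-1}]$ to the projective-degree-$0$ part, using that $\T[t_i^{-1}]$ is strongly graded over $\T[t_i^{-1}]_0$. Once this local bookkeeping is in place, the remainder is the long exact sequence together with the contractibility of $\Delta^n$ and of the path $\Gamma$, and does not require Proposition~\ref{prop:localcohomology} itself.
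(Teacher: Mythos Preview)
Your proof is correct and follows a genuinely different route from the paper's. The paper argues in two lines: since $\sN$ is locally free of uniform rank $d$, its \v Cech complex is isomorphic to that of $\T^{\oplus d}$ in cohomological degrees $\geq 2$, and the result then follows from Proposition~\ref{prop:localcohomology}. You instead exploit the combinatorics of the cover directly: the subsets $J$ for which $\T[t_J^{-1}]_0$ is already the full quantum torus $\mathcal Q$ are exactly the non-faces of the path graph $\Gamma$, and on those charts $\sN[t_J^{-1}]_0$ is literally the fixed free $\mathcal Q$-module $\mathcal E$ with identity transition maps; the resulting subcomplex is $\mathcal E \otimes_\kk C^\bullet(\Delta^n,\Gamma)$, acyclic because both $\Delta^n$ and $\Gamma$ are contractible, while the quotient lives in degrees $\leq 2$. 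What your approach buys is independence from Proposition~\ref{prop:localcohomology} --- and hence from its reduction to the commutative case and the rationality of Kleinian singularities --- together with a cleaner handling of transition functions: your identification only concerns the generic charts, where all localisations genuinely coincide, whereas the paper's asserted isomorphism in degrees $\geq 2$ must also match up the adjacent-pair terms $\{k,k+1\}$, on which the local trivialisations of a general locally free $\sN$ differ (for instance, for $\sN=\sM^{(i)}$ with $2\leq i\leq n-1$ no single unit of $\mathcal Q$ trivialises all of them simultaneously). The paper's route, where it applies, is of course shorter and in the case $\sN=\T$ yields vanishing in all degrees rather than only for $s\geq 3$. One small slip in your write-up: $\T[t_i^{-1}]$ is strongly $\ZZ$-graded in the \emph{projective} direction (via the unit $t_i$), not strongly $\ZZ^3$-graded; this is all you use.
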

\begin{proof}
Since $\sN$ is locally free, the \v{C}ech complex for $\sN$ is isomorphic to that of $\T^{\oplus d}$ in cohomological degrees $\geqslant 2$, where $d$ is the uniform rank of $\sN$. 
Thus if $s \geq 3$ then $H^s_{\T_+}(\sN)_0 \cong H^s_{\T_+}(\T)^d_0$.  
The result follows from Proposition~\ref{prop:localcohomology}.
\end{proof}


\section{A system for localising \texorpdfstring{$\T$}{T}-modules}\label{LOCAL}

Let $q \in \Bbbk^\times$ be arbitrary and let $T= T_q$.
In this section we prove two main results: first, that  many computations in $\rqgr \T$ can be done locally, i.e.~on ``charts''; and, second, that $\rqgr \T$ has finite homological dimension.
This is done by establishing an equivalence of categories between $\rqgr \T$ and the category of coherent sheaves over a certain noncommutative  ringed space $\X$.

\subsection{A noncommutative toric variety equivalent to \texorpdfstring{$\rqgr \T$}{qgr-T}} \label{TORICVAR}
By \eqref{eqn:eta}, we can view $\T \subseteq D$ via the toric embedding with, up to scalars,
\[ x= \ba, \quad y = \ba^n\bb^{n+1}, \quad z=\ba\bb,\quad t_i = \ba^{\binom{i}{2}}\bb^{\binom{i+1}{2}} \bs.\]
Any two  of the generators $\{x,y, z, t_0, \dots, t_n\}$ $q^\bullet$-commute, and as a result the generators of $\T$ are all normal.

Define a topological space  $\X$ to be the set of $\ZZ^3$-graded prime ideals of $\T$ which do not contain the irrelevant ideal $\T_+ = (t_0, \dots, t_n)$, in  the Zariski topology.

\begin{lemma}\label{lem:X}
The elements of $\X$ are:
\begin{align*}
0 &\\
\mf{l}_{0}& = \langle y,z, t_1, \dots, t_{n} \rangle\\
\mf{l}_{i} & = \langle x,y,z, t_0, \dots, t_{i-2}, t_{i+1}, \dots t_{n} \rangle \quad \mbox{for $1 \leq i\leq n$}\\
\mf{l}_{{n+1}} & = \langle x,z, t_0, \dots, t_{n-1} \rangle\\
\mf{p}_{i} & = \langle x,y,z, t_0, \dots, t_{i-1}, t_{i+1}, \dots t_{n} \rangle \quad \mbox{for $0 \leq i\leq n$.}
\end{align*}
In particular, $\X$ is a two-dimensional topological space. 
\end{lemma}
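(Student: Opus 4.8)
The plan is to enumerate all $\ZZ^3$-graded prime ideals $\mf p$ of $T$ with $T_+\not\subseteq\mf p$, using the toric embedding $\eta: T\hookrightarrow D=\kk_{\qb}[\ba^{\pm1},\bb^{\pm1},\bs^{\pm1}]$ from \eqref{eqn:eta} together with the generators-and-relations description $T\cong\Tpres$ from Theorem~\ref{thm:TcongTpres}. The starting observation is that every generator of $T$ is normal (any two of $x,y,z,t_0,\dots,t_n$ $\qb$-commute), so for a graded prime $\mf p$, whether a given generator lies in $\mf p$ is governed by the relations: if $fg\in\mf p$ with $f,g$ normal then $f\in\mf p$ or $g\in\mf p$. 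First I would record which generators are forced in or out: since $T_+\not\subseteq\mf p$, at least one $t_i\notin\mf p$; I would then run through the relations $xt_i=q^{\alpha_i}z^it_{i-1}$, $yt_i=q^{\beta_i}z^{n-i}t_{i+1}$, $t_{i-1}t_{j+1}=q^{\gamma_{ij}}z^{j-i+1}t_it_j$ and $xy=q^{-\binom{n+1}{2}}z^{n+1}$ to propagate membership.

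The key combinatorial step is a case analysis on $S_{\mf p}:=\{\,i : t_i\notin\mf p\,\}\subseteq\{0,\dots,n\}$, which is nonempty. The relation $t_{i-1}t_{j+1}=\qb z^{j-i+1}t_it_j$ (for $1\le i\le j\le n-1$) shows: if $i-1,j+1\notin\mf p$ then $z\in\mf p$ or both $t_i,t_j\notin\mf p$; iterating, one deduces that if $z\in\mf p$ then $S_{\mf p}$ consists of at most two \emph{consecutive} indices, whereas if $z\notin\mf p$ then $S_{\mf p}=\{0,\dots,n\}$ (because $t_{i-1}t_{i+1}=\qb z t_i^2\notin\mf p$ forces $t_i\notin\mf p$, and one bootstraps from any single $i\notin\mf p$). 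In the case $z\notin\mf p$: then $x\notin\mf p$ would need checking via $z=q^{-\alpha_1}xt_1t_0^{-1}$-type relations inside $T[t_k^{-1}]_0$ as in Lemma~\ref{lem:dag}, but more directly $\mf p$ localizes to a graded prime of $T[z^{-1}]$; since $T[z^{-1}]_0$ contains (a localization of) a quantum polynomial ring and $\mf p$ meets it in a graded prime, the only $\ZZ^3$-graded primes surviving are $0$ and those cut out by $x$ or $y$ — but $xy=\qb z^{n+1}\notin\mf p$ forces neither $x$ nor $y$ in $\mf p$, giving $\mf p=0$. In the case $z\in\mf p$ with $S_{\mf p}=\{i\}$ a singleton: the relations $x=\qb z^k t_{i-1}t_i^{-1}$ and $y=\qb z^{n-i}t_{i+1}t_i^{-1}$ (rearranging $xt_i=\qb z^it_{i-1}$, $yt_i=\qb z^{n-i}t_{i+1}$) show $x,y$ are determined; for interior $i$ both $x,y\in\mf p$, at $i=0$ one gets $\mf p_0=\mf p$ with $x\notin\mf p$ possibly — I would check each of $i=0,\dots,n$ against all relations and obtain exactly the ideals $\mf l_0,\mf l_1,\dots,\mf l_n,\mf l_{n+1}$ above. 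In the case $z\in\mf p$ with $S_{\mf p}=\{i,i+1\}$ consecutive: this is strictly contained in the singleton cases, and the analysis yields the $\mf p_i$.

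Once the list is complete, I would verify that each displayed ideal is indeed a $\ZZ^3$-graded prime not containing $T_+$: graded-ness is immediate since each is generated by toric (i.e.\ $\ZZ^3$-homogeneous) generators, and primeness follows by exhibiting $T/\mf p$ as (a subring of) an iterated skew Laurent ring — concretely, modding out the listed generators from $\Tpres$ leaves a quantum polynomial algebra or a monomial subalgebra thereof, which is a domain. Finally, the two-dimensionality claim: the chain $0\subsetneq\mf l_i\subsetneq\mf p_j$ (whenever $\mf l_i\subseteq\mf p_j$, e.g.\ $\mf l_i\subset\mf p_{i-1},\mf p_i$) has length $2$ and is maximal among chains of graded primes avoiding $T_+$, so $\dim\X=2$.

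I expect the main obstacle to be the bookkeeping in the case $z\in\mf p$: one must show that once $z$ and all but one or two of the $t_i$ are in $\mf p$, the relations \emph{force} $x$ and/or $y$ into $\mf p$ in precisely the stated pattern (and no further generators), and that no spurious extra graded primes arise from, e.g., a non-principal combination — this is where the explicit relations $xt_i=\qb z^it_{i-1}$ and $yt_i=\qb z^{n-i}t_{i+1}$, read as ``$x$ kills $t_i$ modulo $(z,t_{i-1})$'', do the real work, and care is needed at the boundary indices $i=0$ and $i=n$ where the patterns degenerate (no $t_{-1}$, no $t_{n+1}$), producing the slightly different ideals $\mf l_0$ and $\mf l_{n+1}$ compared to the interior $\mf l_i$.
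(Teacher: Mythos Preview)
Your approach is essentially the paper's: exploit the toric embedding and the normality of $x,y,z,t_0,\dots,t_n$ to reduce to combinatorics on which generators lie in $\mf p$, then run through the relations of $\Tpres$. However, there is one genuine gap and one bookkeeping error.

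\textbf{The gap.} Your case analysis determines, for each possible $S_{\mf p}$, exactly which of the generators $x,y,z,t_0,\dots,t_n$ lie in $\mf p$; but it never explains why $\mf p$ is \emph{equal to} the ideal generated by those. This is the crucial reduction, and you cannot conclude ``$\mf p=0$'' in the case $z\notin\mf p$ (or $\mf p=\mf p_i$, etc., in the other cases) without it. The paper supplies it as follows: since $\dim_\kk T_{\vec n}\le 1$ for all $\vec n\in\NN^3$ (Proposition~\ref{prop:OneDimensionalPieces}), every $\ZZ^3$-homogeneous element of $T$ is a scalar multiple of a monomial in $x,y,z,t_0,\dots,t_n$; hence a $\ZZ^3$-graded ideal is generated by such monomials. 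If moreover the ideal is prime, then each monomial generator (being a product of normal elements) may be replaced by one of its factors, so the ideal is generated by some subset of $\{x,y,z,t_0,\dots,t_n\}$. Once you have this, your combinatorics becomes decisive. Your localisation sketch in the $z\notin\mf p$ case is too vague to substitute for this.

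\textbf{The bookkeeping error.} You have the roles of the $\mf l_i$ and the $\mf p_i$ reversed. Inspecting the definitions, $\mf p_i$ contains every generator except $t_i$, so $S_{\mf p_i}=\{i\}$ is a singleton; whereas for $1\le i\le n$, $\mf l_i$ omits $t_{i-1}$ and $t_i$, so $S_{\mf l_i}=\{i-1,i\}$ is a consecutive pair. Thus the singleton case yields the $\mf p_i$ (plus, at the boundaries $i=0$ and $i=n$, the additional possibilities $\mf l_0$ and $\mf l_{n+1}$ according to whether $x$ or $y$ lies in $\mf p$), and the consecutive-pair case yields the interior $\mf l_i$. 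Correspondingly, the containments go the other way: $\mf l_i\subsetneq\mf p_{i-1}$ and $\mf l_i\subsetneq\mf p_i$, with the $\mf p_i$ being the closed points of $\X$. The paper organises the case analysis slightly differently---by the minimal index $i$ with $t_i\notin\mf p$, then asking whether $t_{i+1}\in\mf p$---which avoids this confusion.
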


\begin{remark}
The reason for the notation here is that we may define cyclic $T$-modules $L_i = T/\mf{l}_{i}$ and $P_i = T/\mf{p}_{i}$.  We will see in Section~\ref{INTTHEORY} that these are, respectively, line and point modules, and that they have the (noncommutative) intersection theory of an $\mathbb{A}_n$ Dynkin diagram.
\end{remark}

\begin{proof}[Proof of Lemma~\ref{lem:X}]
We view $T$ as a subring of $D$ via the toric embedding of Section \ref{sec:ToricEmbedding}. For any $\vec{n} \in \NN^3$ we have $\dim \T_{\vec n} \leq 1$ from the inclusion $\T \subseteq D$.
Thus an ideal  of $\T$ is $\ZZ^3$-graded exactly when it is generated by monomials in $\ba, \bb, \bs$; but an element of $\T$ is a monomial in $\ba,\bb,\bs$ exactly when it is a monomial in $x,y,z, t_0, \dots, t_n$.  
As these elements are all normal, an ideal $\mf n $ of $ \T$ is $\ZZ^3$-graded and prime if and only if $\mf n$ is generated by some subset of $\{x,y,z,t_0, \dots, t_n\}$.

Let $0 \ne \mf p  \in \X$.
If $z \not \in \mf p$ then from the relation $xy = q^\bullet z^{n+1}$ we see that $x,y \not\in \mf p$.
But now some $t_i \in \mf p$ as $\mf p \neq 0$, and then from the relations $t_{i-1}y = q^\bullet t_i z^{n-i+1}$ and $t_{i+1} x = q^\bullet t_i z^{i+1}$ and the assumption that $z\not\in \mf p$ we have $t_0, \dots, t_n \in \mf p$. This contradicts the assumption that $\mf p \not\supseteq \T_+$. It follows that $z \in \mf p$.  Now for all $1 \leq i < j \leq n-1$ we have $t_{i-1} t_{j+1} = q^\bullet z^{j-i+1} t_i t_j \in \mf p$, so if $0 \leq i\leq j \leq n$ with $j \geq i+2$ then $t_i \in \mf p$ or $t_j \in \mf p$.

By hypothesis there is some $t_i \not\in \mf p$; let $i$ be the minimal such index.
We therefore have $\mf p \supseteq \langle  t_0, \dots, t_{i-1}, t_{i+2}, \dots, t_n \rangle$ by the observation in the previous paragraph.
If $i \leq n-1$ then $t_iy = q^\bullet t_{i+1} z^{n-i} \in \mf p$ so $y \in \mf p$ as $t_i \not\in \mf p$.
Likewise if $i \geq 1$ then $x \in \mf p$.
Thus if $1\leq i \leq n-1$ then $\mf p \supseteq \mf l_{i+1}$ and 
\[
\mf p = \left\{\begin{array}{cl} \mf l_{i+1} & \mbox{if $t_{i+1}\not\in \mf p$,}\\
   \mf p_i & \mbox{if $t_{i+1} \in \mf p$.}
\end{array}\right.   
\]
If $i = 0$ then $\mf p \supseteq \langle z,y, t_2, \dots, t_n \rangle$.
Since $x t_1 = q^\bullet zt_0 \in \mf p$, either $x \in \mf p$ or $t_1 \in \mf p$.
If $x \not \in \mf p$ then $\mf p = \mf l_0$.
If $t_1 \not \in \mf p$ then $\mf p = \mf l_1$.
If $x, t_1 \in \mf p$ then $\mf p = \mf p_0$.
Finally, if $i = n$ then $\mf p \supseteq \mf l_{n+1} $ and
\[
\mf p = \left\{\begin{array}{cl} \mf l_{n+1} & \mbox{if $y\not\in \mf p$,}\\
   \mf p_n & \mbox{if $y \in \mf p$.}
\end{array}\right. \qedhere
\]
\end{proof}

We now make $\X$ into a ringed space by defining a sheaf $\ms O_\X$ of noncommutative rings on $\X$.  Our conventions for ringed spaces will in general follow \cite{Ha}, except, of course, that our rings are noncommutative.

For notational purposes let $t_{n+1} = y$ and $t_{-1}=x$.
Let $I \subseteq \{ -1,\dots, n+1\}$, and let
\[ V(I) = V\bigg(\prod_{i\in I} t_i\bigg) = \bigg\{ \mf p \in \mf X \;\;\bigg|\; \prod_{i \in I} t_i \in \mf p \bigg\}.\]
The  subsets $V(I) $ generate a topology on $\X$, where the closed subsets of $\X$ are the (necessarily finite) intersections of the $V(I)$.
If $I \subseteq \{-1, \dots, n+1\}$, define $U_I$ to be $\X \ssm V(I)$.  
We refer to the $U_I$ with $I \neq \varnothing$ as {\em principal open subsets of $\X$} or as {\em principal open sets}.
We will omit curly braces from our notation and write, for example,  $U_i$ for $U_{\{i\}}$ and $U_{ij}$ for $U_{\{i,j\}}$.

Let $U_I$ be a principal open set, and define the {\em coordinate ring of $U_I$} to be
\[ \ms O_\X(U_I) = \T[t_i^{-1} \mid i \in I]_0.\]
Recall that the $\ms O_\X(U_I)$ were computed in Lemma~\ref{lem:dag}.  
In particular, $\ms O_\X(U_i) \cong \kk_{q^{n+1}}[u, v]$ for all $i$.

As an intersection of principal open sets is again principal, any open subset of $\X$ has a cover by principal open sets.
If $V$ is any non-principal open subset of $\X$ write $V = \bigcup U_J$ where the $U_J$ are principal, and put
\[ \ms O_\X(V) = \bigcap_J \ms O_\X(U_J).\]
As all the $\ms O_\X(U_J)$ are subalgebras of $D_0 = \kk_{q^{n+1}}[\ba^{\pm 1}, \bb^{\pm 1}]$, this intersection makes sense.

Now if $V\subseteq U$ are open subsets of $\X$ then there is an inclusion 
$\ms O_\X(U) \subseteq \ms O_\X(V) \subseteq D_0$
so that $\ms O_\X(V)$ is an $\ms O_\X(U)$-module.
These inclusions make $\ms O_\X$ a sheaf of noncommutative algebras on $\X$, and we will refer to $\X$ as a {\em noncommutative ringed space}.
If we need to keep track of $q$, we will refer to $\X$ as $\X_q$.

\begin{remark}\label{rem:Kawamata NC scheme}
In \cite{kawamata2024, kawamata2025}, Kawamata gives a definition which generalises our construction of $\X$: he shows how to construct  a {\em noncommutative scheme} or  {\em NC scheme} from a collection of associative algebras parameterised by a poset.  
The main result of \cite{kawamata2025} is that a titlting bundle $F^\circ$ on a commutative scheme $X$ deforms over an Artin local base to a tilting bundle $F$ on any NC scheme $\X$ that deforms $X$, and that $\End_{\X}(F)$ is derived equivalent to $\X$. 

If we vary $q$, then   $\X_q$ is a NC scheme over $\Spec \kk[q, q^{-1}]$, in Kawamata's sense.  
Further, at $q=1$ we recover the minimal resolution of the Kleinian singularity $X_1$, regarded as a toric variety.
Thus it is not surprising that $\X_q \simeq \rqgr T_q$ has a tilting bundle, although note that \cite{kawamata2025} only considers formal deformations over an Artin local base.  
\end{remark}

We make the expected definition of an $\ms O_\X$-module:  a (left or right) { \em $\ms O_\X$-module} is a sheaf $\ms M$ of abelian groups on $\X$ so that each $\ms M(U)$ is a (left or right) $\ms O_\X(U)$-module and if $V\subseteq U$ are open subsets of $\X$ then the restriction maps $\ms M(U) \to \ms M(V)$ are compatible with the restriction $\ms O_\X(U) \subseteq \ms O_\X(V)$.
Let $\ms M, \ms N$ be $\ms O_\X$-modules.
The notion of a morphism $\ms M \to \ms N$ of $\ms O_\X$-modules and thus of $\Hom_\X(\ms M, \ms N)$ is the obvious generalisation of the standard commutative definition; cf.~\cite[p.~104]{Ha}.
The category of left, respectively right, $\ms O_\X$-modules is called $\ms O_\X \lMOD$, respectively $\rMOD \ms O_\X$.
By restriction, each open subset $U$ of $\X$ is also a noncommutative ringed space and we thus have categories  $\ms O_U \lMOD$ and $\rMOD \ms O_U$ of $\ms O_U$-modules, where $\ms O_U$ is our notation for $\ms O_\X|_U$.

Note that $\X = \bigcup_{i = 0}^n U_i$.
Concretely, we have
\[
U_i = \{ 0, \mf l_i, \mf l_{i+1}, \mf p_i\} \quad \mbox{for } 0\leq i \leq n, \quad U_{-1} = \{0, \mf l_0\}, \quad U_{n+1} = \{0, \mf l_{n+1}\}.
\]

We will see that principal open sets  are affine in the appropriate sense, and use them to define (quasi)coherent left and right $\ms O_\X$-modules.
We will only give definitions on the right as those for the left are the same, mutatis mutandis.

Let us consider $U_i$ for $0 \leq i \leq n$.
Define a functor $\ \widetilde{{}}: \rMod \ms O_X(U_i) \to \rMOD U_i$ as follows:  if $V$ is an open subset of $U_i$ and $M \in \rMod\ms O_X(U_i)$ then set $\widetilde{M}(V) := M \otimes_{\ms O_X(U_i)} \ms O_{U_i}(V)$.
Now, $\widetilde{M}$ is certainly a presheaf, but note that $U_i$ has no proper covers: if $\{ V_j\} $ is a set of open subsets of $U_i$ with $\bigcup_j V_j = U_i$ then some $V_j = U_i$.
This means the glueing and vanishing sheaf conditions are trivial and any presheaf on $U_i$ is automatically a sheaf. 
In particular, $\widetilde M$ is a sheaf.

Fix $I$ and let $U = U_I$ and $C = \ms O_X(U_I)$.
Given $C$-modules $M, N$ and  a $C$-module homomorphism $\rho: M \to N $, we define $\widetilde \rho: \widetilde M \to \widetilde N$ as follows.
Let $V $ be an open subset of $U$.  Then 
$\widetilde \rho(V) : \widetilde M(V) \to \widetilde N(V)$  is given by $1 \otimes\rho$ for left modules and $\rho \otimes 1$ for right modules.
Note that the functor $\ \widetilde {}\ $ is exact and fully faithful, with left inverse given by $\ms M \mapsto \ms M(U)$.

Summarising the construction and discussion of $\ \widetilde{} \ $ we have:
\begin{proposition}\label{newprop:two}
Let $U_I$ be a principal open set.
For  every right $\ms O_\X(U_I)$-module $M$, the assignment $V \mapsto M\otimes_{\ms O_\X(U_I)} \ms O_\X(V)$ defines a sheaf $\widetilde M$ of right $\ms O_{U_I}$-modules.  This assignment induces an equivalence of categories $\rMod \ms O_\X(U_I) \to \rMod \ms O_{U_I}$, with quasi-inverse given by $\ms M \mapsto \ms M(U_I)$.  Analogous results hold on the left. \qed
\end{proposition}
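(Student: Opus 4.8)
The construction of the functor $\widetilde{(-)}$ and its basic properties --- functoriality, exactness, full faithfulness, the fact that each $\widetilde M$ is a sheaf, and the fact that $\ms M\mapsto\ms M(U_I)$ is a left inverse --- were all recorded in the discussion preceding the statement, where moreover $\widetilde M(U_I)=M\otimes_{\ms O_\X(U_I)}\ms O_\X(U_I)=M$ shows this left inverse is in fact a natural isomorphism $\widetilde M(U_I)\cong M$. Thus the only thing left to prove is that $\widetilde{(-)}$ is essentially surjective, i.e.\ that $\ms M\mapsto\ms M(U_I)$ is also a right quasi-inverse. This is the assertion that $U_I$ is ``affine'': that an $\ms O_{U_I}$-module is recovered from its module of global sections by the localisations that build $\ms O_\X$, the noncommutative counterpart of the fact that a quasi-coherent sheaf on an affine scheme is the sheafification of its module of global sections.

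Accordingly, the plan is: given $\ms M\in\rMod\ms O_{U_I}$, write $N=\ms M(U_I)$ and build a natural morphism $\Theta\colon\widetilde N\to\ms M$. On an open $V\subseteq U_I$, the restriction $N=\ms M(U_I)\to\ms M(V)$ is $\ms O_\X(U_I)$-linear, and composing with the $\ms O_\X(V)$-action gives an $\ms O_\X(V)$-linear map $\Theta_V\colon N\otimes_{\ms O_\X(U_I)}\ms O_\X(V)\to\ms M(V)$, $\,n\otimes a\mapsto(n|_V)\,a$; these are compatible with restriction, so they define a morphism of $\ms O_{U_I}$-modules $\Theta$, with $\Theta_{U_I}=\mathrm{id}$. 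It then suffices to show $\Theta_V$ is an isomorphism for every open $V$. Since every open of $U_I$ is a union of principal open sets and both $\widetilde N$ and $\ms M$ are sheaves, a \v{C}ech argument reduces this to the case $V=U_J$ principal.

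The essential point is therefore the principal case. For a principal inclusion $U_J\subseteq U_I$, Lemma~\ref{lem:dag} together with the toric embedding \eqref{eqn:eta} exhibits $\ms O_\X(U_J)=\T[t_j^{-1}\mid j\in J]_0$ as the localisation of $\ms O_\X(U_I)=\T[t_i^{-1}\mid i\in I]_0$ at the multiplicative set generated by the images of $\{t_j:j\in J\}$; these are normal elements (any two of $x,y,z,t_0,\dots,t_n$ $q^\bullet$-commute), so the localisation is flat and Ore. One then checks, using the universal property of this Ore localisation and the sheaf axiom for $\ms M$ on covers of $U_J$ by smaller principal opens, that the restriction $\ms M(U_I)\to\ms M(U_J)$ is the localisation map, whence $\Theta_{U_J}$ is an isomorphism. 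Putting this together with $\widetilde M(U_I)=M$ yields the equivalence; the left-module version is identical with $\rho\otimes1$ replaced by $1\otimes\rho$ throughout.

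The step I expect to be the main obstacle is exactly this identification of $\ms M(U_I)\to\ms M(U_J)$ with the Ore-localisation map --- one must see not only that it is compatible with localisation but that it is initial among such maps --- and the bookkeeping needed to descend from principal opens back to all opens of $U_I$, where the definition of $\ms O_\X$ on non-principal opens as an intersection inside $D_0$ and the \v{C}ech-exactness statements of Lemma~\ref{lem:dag} and Proposition~\ref{prop:localcohomology} enter. The remainder is routine manipulation of adjunctions and flat localisations.
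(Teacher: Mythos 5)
Your high-level reading of the statement is correct: the discussion preceding it sets up the functor $\widetilde{(-)}$, shows it is exact and fully faithful, and notes $\widetilde M(U_I)=M$, so the only potentially substantive issue is essential surjectivity. However, the argument you propose for that step has a genuine gap, and more importantly it misses the single observation on which the whole proposition turns.

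The paper's key remark, stated explicitly in the sentences before the proposition, is that a principal open $U_I$ \emph{has no proper open covers}: every $U_I$ equals $V_{\mf p}$, the minimal open neighbourhood of some $\mf p\in\X$ (e.g.\ $U_k = V_{\mf p_k}$, $U_{\{k,k+1\}}=V_{\mf l_{k+1}}$, $U_{\{j,k\}}=V_0$ for $|j-k|\geq 2$), so any cover $\{V_j\}$ of $U_I$ must contain a member containing $\mf p$ and hence containing $V_{\mf p}=U_I$ itself. This observation does two things simultaneously. First, it makes $\widetilde M$ a sheaf (the gluing axiom is vacuous), which you do note. Second, and this is what you miss, it makes ``quasicoherent on $U_I$'' tautologically equivalent to ``of the form $\widetilde N$'': there is nothing to glue because $U_I$ is itself the unique minimal neighbourhood of its closed point, so a quasicoherent $\ms M$ on $U_I$ is by construction $\widetilde{\ms M(U_I)}$, and essential surjectivity is immediate. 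The $\qed$ in the paper reflects precisely this: once you see that $U_I=V_{\mf p}$, there is no descent problem to solve.

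This is also exactly where your proposed argument breaks. You write that one ``checks, using the universal property of this Ore localisation and the sheaf axiom for $\ms M$ on covers of $U_J$ by smaller principal opens, that the restriction $\ms M(U_I)\to\ms M(U_J)$ is the localisation map.'' But $U_J$ \emph{has no covers by strictly smaller opens} --- the sheaf axiom there carries no information whatsoever. Likewise, the universal property of Ore localisation only gives a map out of $\ms O_\X(U_I)$-modules in which the relevant elements act invertibly; it does not, by itself, recognise $\ms M(U_J)$ as the localisation of $\ms M(U_I)$ --- for that you would need to invoke the quasicoherence hypothesis on $\ms M$, and once you do, the $\vC$ech/Ore machinery is seen to be unnecessary. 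So the step you flag as ``the main obstacle'' is not merely hard to write out carefully; as described it cannot be carried out, and the correct route is the much simpler ``no proper covers'' observation that the paper highlights.
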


We say that a (left or right) $\ms O_\X$-module $\ms M$ is {\em quasicoherent} if for all $i$ there is an $\ms O_X(U_i)$-module $M_{U_i}$ so that $\ms M|_{U_i} \cong \widetilde{M_{U_i}}$, and {\em coherent} if the $M_{U_i}$ are finitely generated. 
The full subset of $\ms O_\X \lMOD$  of quasicoherent (respectively, coherent) sheaves is denoted $\ms O_\X \lMod$ (respectively $\ms O_\X \lmod$).
Likewise on the right we define $\rMod \ms O_\X$ and $\rmod \ms O_\X$.
These categories are abelian given the standard notions of  sheaf kernel (which is the same as presheaf kernel) and sheaf cokernel of a morphism of sheaves on $\X$.  
Likewise, the image of a morphism of (quasi)coherent $\ms O_\X$-modules is (quasi)coherent.

We now define a functor $\bpi:  \rGr \T \to  \rMod \ms O_\X $ as follows.
Given a graded $\T$-module $M$, the sheaf $\bpi M$ is defined by $\bpi M(U_i) = M [t_i^{-1}]_0$ for $0 \leq i \leq n$.
For any open $V\subseteq U_i$, define $\bpi M(V) = \widetilde{M(U_i)}(V)$ as in Proposition~\ref{newprop:two}.
For any other open set $V$, use the glueing axiom of sheaves to define $\bpi M(V)$.
By construction, $\bpi M$ is quasicoherent.

A homomorphism $f: M \to N$ of graded $\T$-modules induces a morphism of $\ms O_\X$-modules $\bpi M \to \bpi N$ in the obvious way.

\begin{lemma}\label{newlem:three}
$\bpi M = 0$ if and only if $M$ is $\T_+$-torsion.
\end{lemma}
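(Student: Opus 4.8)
First I would note that $\bpi M$ is a quasicoherent sheaf and that $\{U_i : 0 \leq i \leq n\}$ is an open cover of $\X$, so by the separation axiom $\bpi M = 0$ if and only if $\bpi M|_{U_i} = 0$ for every such $i$. Since $\bpi M|_{U_i} = \widetilde{M[t_i^{-1}]_0}$ by construction and sections over $U_i$ recover this module (Proposition~\ref{newprop:two}), this is in turn equivalent to $M[t_i^{-1}]_0 = 0$ for all $0 \leq i \leq n$. Finally, $t_i$ is homogeneous of projective degree $1$ and is a unit in $M[t_i^{-1}]$, so right multiplication by $t_i$ gives isomorphisms $M[t_i^{-1}]_k \xrightarrow{\ \sim\ } M[t_i^{-1}]_{k+1}$ for all $k \in \ZZ$; hence $M[t_i^{-1}]_0 = 0$ is equivalent to $M[t_i^{-1}] = 0$. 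Thus I have reduced the lemma to the claim: $M[t_i^{-1}] = 0$ for all $0 \leq i \leq n$ if and only if $M$ is $\T_+$-torsion.

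The implication ``$\T_+$-torsion $\Rightarrow$ all $M[t_i^{-1}] = 0$'' is immediate: for $m \in M$, writing $m$ as a sum of homogeneous components and applying the torsion hypothesis to each gives an $s$ with $m\T_+^{s} = 0$; since $t_i^{s} \in \T_+^{s}$, this yields $mt_i^{s} = 0$, so $m$ maps to $0$ in $M[t_i^{-1}]$, whence $M[t_i^{-1}] = 0$.

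For the converse, suppose $M[t_i^{-1}] = 0$ for all $i$, fix a homogeneous $m \in M$, choose $N_i$ with $mt_i^{N_i} = 0$, and set $N = \max_i N_i$, so $mt_i^{N} = 0$ for all $i$. Here I would use the two facts recorded at the start of Section~\ref{LOCAL}: the $t_i$ are normal, and any two of them $q^\bullet$-commute. Normality gives $\T_+ = \sum_i t_i\T$, and sliding all the $t$'s to the left using the normality automorphisms shows that $m\T_+^{k}$ is spanned by elements $m\,t_{i_1}t_{i_2}\cdots t_{i_k}\,r$ with $r \in \T$. Pairwise $q^\bullet$-commutativity rewrites $t_{i_1}\cdots t_{i_k}$ as a scalar multiple of $t_0^{c_0}\cdots t_n^{c_n}$ with $\sum_i c_i = k$; if $k \geq (n+1)(N-1)+1$ then $c_j \geq N$ for some $j$ by pigeonhole, and reordering to place $t_j^{c_j}$ in front gives $m\,t_{i_1}\cdots t_{i_k} = q^\bullet\,(mt_j^{c_j})\prod_{i\neq j} t_i^{c_i} = 0$. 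Hence $m\T_+^{(n+1)(N-1)+1} = 0$, so every homogeneous element of $M$ lies in the $\T_+$-torsion submodule, and therefore $M$ itself is $\T_+$-torsion.

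The step I expect to require the most care is this converse: the hypothesis only supplies annihilation of $m$ by a power of each \emph{individual} $t_i$, whereas $\T_+$-torsion demands annihilation against \emph{mixed} monomials $t_{i_1}\cdots t_{i_k}$, and it is precisely in bridging this gap that both the normality of the generators and their pairwise $q^\bullet$-commutativity are essential — without the latter one could not collect a single power $t_j^{c_j}$ on which $m$ already vanishes. Everything else is routine bookkeeping with localisations and with sheaves on the finite space $\X$.
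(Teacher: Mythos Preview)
Your proof is correct and follows essentially the same route as the paper's: both reduce to showing that $M[t_i^{-1}]=0$ for all $i$ is equivalent to $M$ being $\T_+$-torsion, and both then use that the ideals $(t_0^r,\dots,t_n^r)$ are cofinal with $\T_{\geq r}$. The paper simply asserts this cofinality, while you spell it out via the pigeonhole argument using normality and pairwise $q^\bullet$-commutativity of the $t_i$; this is exactly the content of the cofinality claim, so there is no substantive difference.
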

\begin{proof}
By Proposition~\ref{newprop:two}, $\bpi M = 0$ if and only if  $\bpi M(U_i) = 0$ for all $0 \leq i \leq n$.
By definition, this is if and only if all $M[t_i^{-1}]_0 = 0$, or if and only if all $M[t_i^{-1}] =0$ as the rings $\T[t_i^{-1}]$ are strongly graded.
This is if and only if for all homogeneous $m \in M$ there is $r \in \NN$ so that  $m t_i^r =0$ for all $0 \leq i \leq n$.
But the chain of ideals $\{ (t_0^r, \dots, t_n^r)\}_{r \geq 0}$ is cofinal with $\{\T_{\geq r}\}$, so this last condition occurs if and only if $M$ is torsion.
\end{proof}

\begin{corollary}\label{cor:five}
There is an induced functor $ \opi:  \rQgr \T  \to \rMod \ms O_\X $.
\qed
\end{corollary}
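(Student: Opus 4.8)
The plan is to obtain $\opi$ from $\bpi$ via the universal property of the Serre quotient $\pi\colon \rGr\T \to \rQgr\T = \rGr\T/\rTors\T$. Concretely, I would invoke the standard fact (Gabriel) that for any abelian category $\sC$ and any exact functor $F\colon \rGr\T\to\sC$ annihilating every object of $\rTors\T$, there is a unique exact functor $\overline F\colon\rQgr\T\to\sC$ with $\overline F\circ\pi = F$; applying this with $\sC = \rMod\ms O_\X$ and $F = \bpi$ produces $\opi := \overline{\bpi}$. Thus the work reduces to verifying the two hypotheses on $\bpi$.

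The first hypothesis, that $\bpi$ kills $\rTors\T$, is exactly Lemma~\ref{newlem:three}. For the second, exactness of $\bpi$, I would argue locally: by the construction of $\bpi$ together with Proposition~\ref{newprop:two}, for $0\le i\le n$ we have $\bpi M|_{U_i}\cong\widetilde{M[t_i^{-1}]_0}$, where $\ \widetilde{{}}\ $ is the exact equivalence of Proposition~\ref{newprop:two}; and $M\mapsto M[t_i^{-1}]_0$ is exact, being the composite of localisation at the Ore set generated by the normal element $t_i$ (exact) with passage to the degree-$0$ part in the projective grading (exact, the grading being a direct sum decomposition). Hence $M\mapsto\bpi M|_{U_i}$ is exact for each $i$, and since $\X=\bigcup_{i=0}^n U_i$ and exactness of a complex of quasicoherent $\ms O_\X$-modules may be tested on this open cover, $\bpi$ is exact. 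The universal property then delivers $\opi\colon\rQgr\T\to\rMod\ms O_\X$ with $\opi\circ\pi=\bpi$, and $\opi$ is automatically exact.

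I do not anticipate a genuine obstacle here: once Lemma~\ref{newlem:three} is in place the statement is formal. The only point requiring a little care is keeping track of which grading ``degree $0$'' refers to on the localisations $\T[t_i^{-1}]$ — it is the projective grading, with the relevant explicit descriptions recorded in Lemma~\ref{lem:dag} — but this is already baked into the definition of $\bpi$. If one prefers to avoid the language of Serre quotients, the same conclusion follows by checking directly that $\bpi$ inverts every morphism $f$ of graded $\T$-modules with torsion kernel and cokernel: by exactness $\ker\bpi f = \bpi(\ker f)$ and $\coker\bpi f = \bpi(\coker f)$, both of which vanish by Lemma~\ref{newlem:three}, so $\bpi f$ is an isomorphism, and $\rQgr\T$ is by definition the localisation of $\rGr\T$ at exactly this class of morphisms.
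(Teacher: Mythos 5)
Your proof is correct and takes essentially the same approach as the paper: the corollary is marked $\qed$ with no argument because the authors regard it as formal from Lemma~\ref{newlem:three} via the universal property of the Serre quotient, which is exactly the route you take. You usefully spell out the one hypothesis the paper leaves implicit, namely exactness of $\bpi$ (checked chartwise using that localisation at the normal element $t_i$ and passage to degree $0$ are exact, and that exactness in $\rMod\ms O_\X$ can be tested on the $U_i$ since every stalk $\ms O_{\X,\mf p}=\ms O_\X(V_{\mf p})$ is a further localisation of some $\ms O_\X(U_i)$).
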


We will show that $\opi$ is an equivalence of categories.  
To construct the inverse functor to $\opi$, we need a bit more notation.  

First, if $\ms M$ is a right $\ms O_\X$-module and $\ms N$ is an $\ms O_\X$-bimodule, we can define the right $\ms O_\X$-module $\ms M \otimes_\X \ms N$ in the usual way, as the sheafification of the presheaf given by $\ms M \otimes_\X \ms N(U_I) = \ms M(U_I)\otimes_{\ms O_\X(U_I)} \ms N(U_I)$.
For any $d\in \ZZ$, let $\ms O(d) := \bpi \T(d)$.  
This is an invertible $\ms O_\X$-bimodule under the obvious convention, as $\T(d)$ is an invertible $\T$-bimodule.
The inverse of $\ms O(d)$ is $\ms O(-d)$.
Note that $\{t_0, \dots, t_n\} \subseteq H^0(\X, \ms O(1))$.
Given a  right $\ms O_\X$-module $\ms M$, let $\ms M(d) := \ms M \otimes_\X \ms O(d)$. 

Now, given $\ms M \in \rMod \ms O_\X$, define
\[ \bGamma_*(\ms M) := \bigoplus_{d\in \ZZ} H^0(\X, \ms M(d)),\]
where $H^0(\X, -) $ is the global section functor.
We claim that $\bGamma_* (\ms M)$ is a (graded) right $\T$-module.
To see this, let $m \in \bGamma_*(\ms M)_d$ and let $t \in \T_e$.
Now, $m$ is determined by a compatible choice of
\[ m_i \in \ms M(U_i) \otimes_{\ms O_\X(U_i)} \ms O(d)(U_i) = \ms M(U_i) \otimes_{\ms O(U_i)} \T[t_i^{-1}]_d\]
for $0 \leq i \leq n$.
Thus we may write $m_i = b_i \otimes r_i$, where $r_i \in \T[t_i^{-1}]_d$.
Then 
$mt = (m_i t)_i$,
where $m_i t = b_i \otimes r_i t \in \ms M(U_i) \otimes_{\ms O(U_i)} \T[t_i^{-1}]_{d+e}$.
The compatibility relations on the $m_i$ ensure that the $m_it$ are also compatible, that is that $m t \in H^0(\X, \ms M(d+e))$, as needed.

Before proving that $\bGamma_*$ induces a quasi-inverse equivalence to $\opi$, we need a lemma.

\begin{lemma}\label{newlem:foo}
{\rm (cf.~\cite[Lemma~II.5.14]{Ha})}
Let $\ms M \in  \rMod \ms O_\X$, and let $m_i \in \ms M(U_i)$ for some $0 \leq i \leq n$.
Then for some $d$, $m_i t_i^d \in \ms M(d)(U_i)$ extends to a global section of $\ms M(d)$.
\end{lemma}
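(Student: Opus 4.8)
This is the noncommutative analogue of the standard fact (\cite[Lemma~II.5.14]{Ha}) that sections over a distinguished affine extend after multiplying by a power of the localising element, and the plan is to follow that proof, using the combinatorics of $\X$ to replace the role of a standard affine cover. The key point is that $\X$ is covered by the finitely many principal open sets $U_0, \dots, U_n$ (together with $U_{-1}, U_{n+1}$, which are contained in $U_0$ and $U_n$ respectively), and on each $U_j$ the sheaf $\ms M$ is of the form $\widetilde{M_{U_j}}$ for a module $M_{U_j}$ over $\ms O_\X(U_j)$, which by Lemma~\ref{lem:dag}(1) is a quantum plane $\kk_{q^{n+1}}[u,v]$; in particular it is noetherian. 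The intersections $U_i \cap U_j = U_{ij}$ are again principal open sets, and $\ms O_\X(U_{ij})$ is the localisation $\ms O_\X(U_i)[t_j^{-1}]_0$ in the appropriate sense (this is spelled out in Lemma~\ref{lem:dag}(2),(3)).

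First I would fix $i$ and $m_i \in \ms M(U_i)$, and consider, for each $j$ with $0 \le j \le n$, the restriction $m_i|_{U_{ij}} \in \ms M(U_{ij})$. Since $\ms M|_{U_j} \cong \widetilde{M_{U_j}}$ and $\ms O_\X(U_{ij}) = \ms O_\X(U_j)[t_i^{-1}]_0$, we have $\ms M(U_{ij}) = M_{U_j} \otimes_{\ms O_\X(U_j)} \ms O_\X(U_{ij})$, so $m_i|_{U_{ij}}$ can be written with a denominator that is a power of the image of $t_i$ acting on $M_{U_j}$; more precisely, as $M_{U_j}$ is finitely generated over the noetherian ring $\ms O_\X(U_j)$, there is a single exponent $d_j$ so that $m_i|_{U_{ij}} \cdot t_i^{d_j}$ comes from (the restriction of) an element $m_j^{(i)} \in \ms M(U_j) = M_{U_j}$. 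Taking $d = \max_j d_j$, I get for each $j$ a section $m_j \in \ms M(U_j)(d) = M_{U_j} \otimes \ms O(d)(U_j)$ — namely $m_j = m_j^{(i)} t_i^{d - d_j}$, interpreting $m_i$ itself as the element over $U_i$ — whose restriction to $U_{ij}$ agrees with $m_i t_i^d$ restricted to $U_{ij}$.

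The remaining step is to check that the $m_j$ glue, i.e.\ that $m_j|_{U_{jk}} = m_k|_{U_{jk}}$ for all $j,k$. Both sides agree with $m_i t_i^d$ after further restriction to $U_{ijk} = U_i \cap U_j \cap U_k$, which is a nonempty principal open set (it contains $0$), and $\ms M(U_{jk}) \to \ms M(U_{ijk})$ is injective because $\ms M(U_{jk}) = M_{U_j} \otimes_{\ms O_\X(U_j)} \ms O_\X(U_{jk})$ is a localisation of a module over a domain at a normal element and $U_{ijk} = U_{jk} \cap U_i$ is a further localisation of the same flavour — concretely, by Lemma~\ref{lem:dag} the rings in play are localisations of the quantum plane or quantum torus $\kk_{q^{n+1}}[\ba^{\pm},\bb^{\pm}]$, which are domains, and $\ms M$ restricted to these is torsion-free over them since $M_{U_j}$ is torsion-free (again using that $\ms O_\X(U_j)$ is a domain — though in fact for the glueing one only needs the relevant localisation map on $\ms M$ to be injective, which follows from normality of $t_i$ and torsion-freeness). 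Hence the $m_j$ patch to a global section of $\ms M(d)$ restricting to $m_i t_i^d$ on $U_i$, as required. The main obstacle — really the only subtlety beyond bookkeeping — is making sure the injectivity used in the glueing step is correctly justified in the noncommutative setting, i.e.\ that the relevant restriction maps $\ms M(U_{jk}) \to \ms M(U_{ijk})$ are injective; this reduces to noting that each $\ms M|_{U_j}$ is $\widetilde{M_{U_j}}$ with $M_{U_j}$ a module over a quantum plane, and that restriction to a smaller principal open corresponds to localising at a normal, regular element, which is injective on any torsion-free (in particular any submodule of a free) $\ms O_\X(U_j)$-module — and if $M_{U_j}$ is not assumed torsion-free one can still argue directly from normality of $t_i$ that the map is injective modulo $t_i$-torsion, which suffices since we are free to enlarge $d$.
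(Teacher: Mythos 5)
Your proposal is correct and follows essentially the same route as the paper's proof: restrict $m_i$ to each $U_{ij}$, clear denominators (a power of $t_i t_j^{-1}$) uniformly, and glue the resulting local sections. You are in fact more careful than the paper about the gluing step, which the paper asserts without comment --- you correctly notice that the restriction maps $\ms M(d)(U_{jk}) \to \ms M(d)(U_{ijk})$ may fail to be injective and that this is fixed by enlarging $d$ further, since the $t_i$-torsion of the finitely generated $\ms O_\X(U_{jk})$-module $\ms M(d)(U_{jk})$ is killed by a fixed power of $t_i$ (by noetherianity, with only finitely many pairs $(j,k)$ to handle); the one infelicity in the write-up is that you first argue under an unstated torsion-freeness hypothesis on $M_{U_j}$ and only append the correct general argument at the end, whereas it would be cleaner to lead with the torsion-killing step.
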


\begin{proof}
We will be careful with the restriction maps:  for $0 \leq i, j \leq n$, let $\varphi_{ij}: \ms M(U_i) \to \ms M(U_{ij})$ be the restriction.
For $j \neq i$, define $m_j  = \varphi_{ij}(m_i)$.
As $\ms M(U_{ij}) = \ms M(U_j)[(t_i t_j^{-1})^{-1}]$, for some $d_j \in \NN$ and $n_j \in \ms M(U_j)$ we have
$m_j = n_j(t_i t_j^{-1})^{-d_j}$, so $m_j t_i^{d_j} = n_j t_j^{d_j} \in \ms M(d_j)(U_j)$.
Taking $d = \max_j d_j$ we see that $m_j t_i^d \in \ms M(d)(U_j)$ for all $j$.
These choices are compatible, so give a global section of $\ms M(d)$ extending $m_i t_i^d$.
\end{proof}

\begin{proposition}\label{newprop:inverse1}
Let $\ms M \in \rMod \ms O_\X$.
Then $\bpi \bGamma_* \ms M \cong \ms M$.
\end{proposition}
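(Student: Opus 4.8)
The plan is to check the isomorphism locally on the principal open sets $U_i$ for $0 \leq i \leq n$, using Proposition~\ref{newprop:two} to reduce to an isomorphism of modules over $\ms O_\X(U_i)$. By construction $\bpi\bGamma_*\ms M$ is quasicoherent, so it suffices to produce a natural isomorphism $(\bpi\bGamma_*\ms M)(U_i) \to \ms M(U_i)$ for each $i$ that is compatible with restriction to the overlaps $U_{ij}$, and then glue. Since $\bpi N(U_i) = N[t_i^{-1}]_0$ for a graded $\T$-module $N$, with $N = \bGamma_*\ms M = \bigoplus_d H^0(\X,\ms M(d))$, the left-hand side is $(\bGamma_*\ms M)[t_i^{-1}]_0$, i.e.\ the degree-$0$ part of the localisation of $\bigoplus_d H^0(\X,\ms M(d))$ at the element $t_i$ of degree $1$.

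First I would define the candidate map. Given a homogeneous element $m \in H^0(\X,\ms M(d))$, its restriction to $U_i$ gives an element $m|_{U_i} \in \ms M(d)(U_i) = \ms M(U_i)\otimes_{\ms O_\X(U_i)}\T[t_i^{-1}]_d$; since $t_i$ is invertible of degree $1$ on $U_i$, multiplying by $t_i^{-d}$ lands in $\ms M(U_i)$. This assignment $m \mapsto (m|_{U_i})t_i^{-d}$ kills the relation $mt_i \leftrightarrow$ shift by $1$, so it descends to a well-defined map $\theta_i : (\bGamma_*\ms M)[t_i^{-1}]_0 \to \ms M(U_i)$, which is visibly an $\ms O_\X(U_i)$-module homomorphism and natural in $\ms M$.

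Next, surjectivity: given any $n_i \in \ms M(U_i)$, Lemma~\ref{newlem:foo} provides $d$ with $n_i t_i^d \in \ms M(d)(U_i)$ extending to a global section $m$ of $\ms M(d)$; then $\theta_i(m) = (m|_{U_i})t_i^{-d} = n_i$, so $\theta_i$ is onto. For injectivity, suppose $m \in H^0(\X,\ms M(d))$ has $(m|_{U_i})t_i^{-d} = 0$ in $\ms M(U_i)$; then $m|_{U_i} = 0$, and I must show $m t_i^N = 0$ in $\bGamma_*\ms M$ for some $N \geq 0$, i.e.\ $m|_{U_j}\, t_i^N = 0$ in $\ms M(d+N)(U_j)$ for every $j$. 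This is the analogue of the standard argument (cf.\ \cite[Lemma~II.5.14]{Ha}): on the overlap $U_{ij}$ we have $m|_{U_{ij}} = 0$ because it is the further restriction of $m|_{U_i}=0$, and since $\ms M(U_{ij}) = \ms M(U_j)[(t_jt_i^{-1})^{-1}]$ is the localisation of $\ms M(U_j)$ at $t_jt_i^{-1}$, a section of $\ms M(U_j)$ vanishing there is annihilated by a power of $t_jt_i^{-1}$, hence $m|_{U_j}$ times a power of $t_i$ vanishes; taking $N$ the max over the finitely many $j$ gives $mt_i^N = 0$. Finally I would check that the $\theta_i$ are compatible with the restrictions to $U_{ij}$ — both sides localise compatibly at $t_jt_i^{-1}$ — so they glue to a global isomorphism $\bpi\bGamma_*\ms M \cong \ms M$, natural in $\ms M$.

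The main obstacle is the injectivity step: one must be careful that ``vanishing after restriction to $U_i$'' genuinely forces a power of $t_i$ to annihilate the section on every other chart, and this uses precisely the identification $\ms M(U_{ij}) = \ms M(U_j)[(t_jt_i^{-1})^{-1}]$ from Proposition~\ref{newprop:two} together with the finiteness of the chart cover $\X = \bigcup_{i=0}^n U_i$. Everything else is bookkeeping with the gradings and the noncommutative localisations, which behave well because all the $t_i$ are normal and the relevant localisations $\T[t_i^{-1}]$ are strongly graded.
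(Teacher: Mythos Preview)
Your proposal is correct and follows essentially the same approach as the paper: define the comparison map chart-by-chart via $[m\,t_i^{-d}] \mapsto (m|_{U_i})\,t_i^{-d}$, invoke Lemma~\ref{newlem:foo} for surjectivity, and glue. Your injectivity argument is in fact more careful than the paper's --- the paper simply asserts ``$\psi_i(m) \neq 0$ if $m \neq 0$'', whereas you correctly observe that one must show a global section $m$ with $m|_{U_i}=0$ is annihilated by some $t_i^N$, and your use of the identification $\ms M(U_{ij}) = \ms M(U_j)[(t_it_j^{-1})^{-1}]$ together with finiteness of the cover is precisely the standard justification (cf.\ \cite[Lemma~II.5.3]{Ha}).
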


\begin{proof}
We first define a map $\psi:  \bpi \bGamma_* (\ms M) \to \ms M$.
By the local nature of a sheaf and by Proposition~\ref{newprop:two}, it is enough to define $\psi$ on the open sets $U_i$ for $0 \leq i \leq n$:  that is, to define
\[\psi_i:  \bpi \bGamma_* (\ms M)(U_i) \to \ms M(U_i)\]
for all $0 \leq i \leq n$.
Now, an element of $ \bpi \bGamma_* (\ms M)(U_i)$  has the form $m t_i^{-d}$ where $d \in \ZZ$ and $m \in \ms M(d)(U_i)$.
But $t_i^{-d} \in \ms O(-d)(U_i) = \T[t_i^{-1}]_{-d}$, so we may regard $m t_i^{-d}$ as an element of $\ms M(d) \otimes_\X \ms O(-d)(U_i)$.
Let $\psi_i(m)$ be the image of $m$ under the natural isomorphism
\[ \ms M(d) \otimes_\X \ms O(-d) \to \ms M.\]
Note that $\psi_i(m) \neq 0$ if $m \neq 0$.
Thus $\psi$ is injective. That $\psi $ is surjective follows from Lemma~\ref{newlem:foo}.
\end{proof}

We now use local cohomology to show that $\opi$ and $\bGamma_*$ are quasi-inverse equivalences.  
Recall that $\pi: \rGr \T \to \rQgr \T$ is the natural (quotient) functor.

\begin{proposition}\label{newprop:inverse2}
Let $M \in \rGr \T$. 
Then in $\rGr \T$ we have $\bGamma_* \bpi M \cong \underline \H^0_{\rQgr \T}(\pi M)$,
which is isomorphic to $M$ in $\rQgr \T$.
\end{proposition}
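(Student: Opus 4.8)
The plan is to prove the isomorphism $\bGamma_*\bpi M \cong \underline\H^0_{\rQgr \T}(\pi M)$ of graded $\T$-modules; since $\underline\H^0_{\rQgr \T}(\pi M) = \omega\pi M$ is isomorphic to $M$ in $\rQgr \T$ by Corollary~\ref{cor:tuesday}, the remaining assertion is then automatic. The first step is to identify $\bGamma_*\bpi M$ with \v{C}ech cycles. Because $\X = \bigcup_{i=0}^n U_i$ and $U_i \cap U_j = U_{ij}$, the sheaf axiom gives for each $d \in \ZZ$ an exact sequence
\[
0 \to H^0\big(\X,(\bpi M)(d)\big) \to \bigoplus_{i=0}^n (\bpi M)(d)(U_i) \to \bigoplus_{0 \leq i < j \leq n} (\bpi M)(d)(U_{ij}),
\]
where $(\bpi M)(d) = \bpi M \otimes_\X \ms O(d)$. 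Using Proposition~\ref{newprop:two} and the fact that $\T[t_i^{-1}]$ is strongly graded (so multiplication by $t_i^d$ gives $M(d)[t_i^{-1}]_0 \cong M[t_i^{-1}]_d$), one computes $(\bpi M)(d)(U_i) = M[t_i^{-1}]_d$ and $(\bpi M)(d)(U_{ij}) = M[t_i^{-1},t_j^{-1}]_d$, with the second map an alternating sum of localisation maps. Summing over $d$ identifies $\bGamma_*\bpi M$, as a graded $\T$-module, with $Z^1 := \ker\big(d_1 \colon \vC^1 \to \vC^2\big)$, where $\vC(t_0,\dots,t_n;M)$ is the \v{C}ech complex of Section~\ref{LOCALCOHDEF}, with $\vC^s = \bigoplus_{|J| = s,\, J \subseteq [n]} M[t_J^{-1}]$ in cohomological degree $s$ and differentials $d_s$. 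Note that $\T_+ = (t_0,\dots,t_n)$ and the $t_i$ are normal, so by Proposition~\ref{prop:LC} the cohomology of this complex is $H^\bullet_{\T_+}(M)$.

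The second step is to show $H^0_{\T_+}(Z^1) = 0$ and $H^1_{\T_+}(Z^1) = 0$. For nonempty $J \subseteq [n]$, exactness of localisation gives $H^j_{\T_+}\!\big(M[t_J^{-1}]\big) \cong H^j_{\T_+}(M)[t_J^{-1}]$, which vanishes for all $j$ since $H^j_{\T_+}(M)$ is $\T_+$-torsion (Lemma~\ref{lem:torsion}) and some $t_i$, $i \in J$, has been inverted. Hence $H^j_{\T_+}(\vC^s) = 0$ for all $j \geq 0$ and all $s \geq 1$. Applying $H^\bullet_{\T_+}(-)$ to the exact sequence $0 \to Z^1 \to \vC^1 \to \im(d_1) \to 0$, and using $\im(d_1) \subseteq \vC^2$ together with left-exactness of $H^0_{\T_+}(-)$, we obtain $H^0_{\T_+}(Z^1) \subseteq H^0_{\T_+}(\vC^1) = 0$ and $H^1_{\T_+}(Z^1) \cong H^0_{\T_+}(\im d_1) \subseteq H^0_{\T_+}(\vC^2) = 0$.

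Finally, by Proposition~\ref{prop:cohbasics}(2) applied to $Z^1$, these vanishings give $Z^1 \xrightarrow{\ \sim\ } \omega\pi Z^1$. On the other hand, the differential $d_0 \colon M \to \vC^1$ factors through $Z^1$ with kernel $H^0_{\T_+}(M) = \tau(M)$ and cokernel $Z^1/\im(d_0) = H^1_{\T_+}(M)$, both of which are $\T_+$-torsion (Lemma~\ref{lem:torsion}); hence $\pi(d_0)$ is an isomorphism $\pi M \cong \pi Z^1$ in $\rQgr \T$, and applying $\omega$ yields $\underline\H^0_{\rQgr \T}(\pi M) = \omega\pi M \cong \omega\pi Z^1 \cong Z^1 = \bGamma_*\bpi M$. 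In particular $\pi(\bGamma_*\bpi M) \cong \pi M$, which gives the last clause (and also follows from Corollary~\ref{cor:tuesday}).

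I expect the only real difficulty to be the bookkeeping in the first step: matching the twist $\ms O(d) = \bpi\T(d)$ against the shift $\T(d)$, evaluating the sheaf sections $(\bpi M)(d)(U_i)$ and $(\bpi M)(d)(U_{ij})$ via Proposition~\ref{newprop:two} and Lemma~\ref{lem:dag}, and checking that the $\T$-module structure that $\bGamma_*$ puts on $\bigoplus_d H^0\big(\X,(\bpi M)(d)\big)$ is exactly the submodule structure on $Z^1 \subseteq \vC^1$. None of these is deep, but they must be done carefully so that the identification $\bGamma_*\bpi M = Z^1$ holds on the nose rather than merely up to an unspecified isomorphism.
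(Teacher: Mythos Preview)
Your proof is correct but takes a genuinely different route from the paper's. Both arguments begin by identifying $\bGamma_*\bpi M$ degreewise with compatible tuples $(m_i)\in\bigoplus_i M[t_i^{-1}]$; in your language this is the \v{C}ech $1$-cocycle module $Z^1$. From there the approaches diverge. The paper constructs the isomorphism $H^0(\X,\ms M)\cong \varinjlim_d \Hom_{\rGr \T}(\T_{\geq d},M)$ \emph{explicitly}: to $f\colon \T_{\geq d}\to M$ it assigns the tuple $(f(t_i^d)t_i^{-d})_i$, and for surjectivity it takes $(m_i)=(n_i t_i^{-d})$ and glues the maps $t_i^d r\mapsto n_i r$ into a single $\lambda\colon \sum_j t_j^d\T\to M$, using $\sum_j t_j^d\T\supseteq \T_{\geq r}$. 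Your argument is instead cohomological: you deduce $H^0_{\T_+}(Z^1)=H^1_{\T_+}(Z^1)=0$ from the vanishing of local cohomology on each $\vC^s$ with $s\geq 1$, apply Proposition~\ref{prop:cohbasics}(2) to get $Z^1\cong \omega\pi Z^1$, and then use that $d_0\colon M\to Z^1$ has torsion kernel and cokernel (namely $H^0_{\T_+}(M)$ and $H^1_{\T_+}(M)$) to transport this to $\omega\pi M$. The paper's approach is more self-contained and yields the isomorphism constructively; yours is cleaner and makes systematic use of the \v{C}ech machinery already in place, at the cost of invoking that local cohomology vanishes after inverting a normal generator of $\T_+$ (which the paper also uses elsewhere, citing \cite{rotman}). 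One small remark: your phrasing ``exactness of localisation gives $H^j_{\T_+}(M[t_J^{-1}])\cong H^j_{\T_+}(M)[t_J^{-1}]$'' deserves a word of justification in the noncommutative setting, though the conclusion (vanishing) is certainly correct and can also be read off directly from the \v{C}ech description in Proposition~\ref{prop:LC}.
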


\begin{proof}
That $M$ and $\underline\H^0_{\rQgr \T}(\pi M)$ are isomorphic in $\rQgr \T$ is Corollary~\ref{cor:tuesday}.

To complete the proof, let $\ms M  =\bpi M$.
It suffices to show that we have an isomoprhism $H^0(\X, \ms M) \cong \varinjlim \Hom_{\rGr \T}(\T_{\geq d},  M)$.
Now $H^0(\X, \ms M)$ may be identified with
\beq\label{newbiff}
\Big\{ m = (m_i) \in \bigoplus_{i=0}^n M[t_i^{-1}]_0 \;\Big|\; \varphi_{ij}(m_i) = \varphi_{ji}(m_j) \mbox{ for all } i,j\Big\},
\eeq
where  $\varphi_{ij}: \ms M(U_i) \to \ms M(U_{ij})$ is  restriction.

Let $f \in \Hom_{\rGr \T} (\T_{\geq d}, M)$.
Then $n_i := f(t_i^d) \in M$ for all $i$, and $n_i t_i^{-d} = n_j t_j^{-d}$
as elements of $M[t_i^{-1}, t_j^{-1}]_0 = \ms M(U_{ij})$.
Thus the restrictions of $n_i t_i^{-d}$ and $n_j t_j^{-d}$ give the same element of $\ms M(U_{ij})$, and so the $n_i t_i^{-d}$ glue to give an element of $H^0(\X, \ms M)$.
This element is clearly nonzero if $f \neq 0$.
In other words, we have constructed an injective homomorphism  $\varinjlim  \Hom_{\rGr \T}(\T_{\geq d}, M) \to H^0(\X, \ms M)$.

To see this map is surjective, let $m = (m_i) \in H^0(\X, \ms M)$ as in \eqref{newbiff}.
There are $d \in \NN$ and $n_i \in M_d$ so that we can write $m_i = n_i t_i^{-d}$ for all $i$.
This induces a degree 0 homomorphism $\lambda_i: t_i^d \T \to M$ defined by $\lambda_i(t_i^d r) = n_i r$.

The compatibility \eqref{newbiff} between the $m_i$ means that $n_i t_i^{-d} = n_j t_j^{-d} \in M[t_i^{-1}, t_j^{-1}]_0$, or that 
$n^i t_j^d  = q^\bullet n_j t_i^d$, where $t_i^d t_j^d = q^\bullet t_j^d t_i^d$.
Thus on $t_i^d \T \cap t_j^d \T = t_i^d t_j^d \T$ we have
\[ \lambda_i(t_i^d t_j^d r) = n_i t_j^d r = q^\bullet n_j t_i^d r = \lambda_j(q^\bullet t_j^d t_i^d r) = \lambda_j (t_i^d t_j^d r),\]
and $\lambda_i, \lambda_j$ agree.
They thus glue to give a well-defined map $\lambda\in \Hom_{\rGr \T}( \sum_j t_j^d \T, M)$ defined by $\lambda(\sum t_j^d r_j) = \sum n_j r_j $.
To finish, we just note that $\sum_j t_j^d T \supseteq T_{\geq r}$ for appropriate $r$.
\end{proof}

Combining Propositions~\ref{newprop:inverse1} and \ref{newprop:inverse2}. we have proved:
\begin{theorem}\label{newthm:catequiv}
The functors  $\bGamma_*$ and $\opi$  induce quasi-inverse equivalences between $\rMod \ms O_\X $ and $\rQgr \T$, which restrict to equivalences between $\rmod \ms O_\X$ and $\rqgr \T$.
\end{theorem}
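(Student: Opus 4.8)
The plan is to deduce the theorem almost directly from Propositions~\ref{newprop:inverse1} and \ref{newprop:inverse2}, and then to check separately that the resulting equivalence restricts to the noetherian subcategories. For the first part, recall that $\bpi: \rGr \T \to \rMod \ms O_\X$ annihilates exactly the $\T_+$-torsion modules (Lemma~\ref{newlem:three}), so it factors through $\opi: \rQgr \T \to \rMod \ms O_\X$; I would regard $\bGamma_*$ as a functor $\rMod \ms O_\X \to \rQgr \T$ by postcomposing the construction $\ms M \mapsto \bigoplus_{d} H^0(\X, \ms M(d))$ with the quotient functor $\pi$. Then Proposition~\ref{newprop:inverse1} supplies a natural isomorphism $\opi \circ (\pi\bGamma_*) \cong \bpi\bGamma_* \cong \id_{\rMod \ms O_\X}$, and Proposition~\ref{newprop:inverse2} together with Corollary~\ref{cor:tuesday} supplies $(\pi\bGamma_*)\circ \opi \cong \pi\bGamma_*\bpi \cong \pi\,\underline\H^0_{\rQgr\T}(-) \cong \id_{\rQgr\T}$. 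One should note that these isomorphisms are natural --- this is visible from the explicit formulas for the comparison map $\psi$ and for the section/homomorphism correspondence used in the proofs of those two propositions --- after which $\opi$ and $\pi\bGamma_*$ are quasi-inverse equivalences of abelian categories.

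It then remains to show that $\opi$ carries $\rqgr\T$ into $\rmod\ms O_\X$ and, conversely, that its quasi-inverse carries $\rmod\ms O_\X$ into $\rqgr\T$. The forward inclusion is a local computation: for a finitely generated graded $\T$-module $M$ and each $0\le i\le n$, the module $M[t_i^{-1}]$ is finitely generated over the noetherian ring $\T[t_i^{-1}]$, which is strongly $\ZZ$-graded, so the degree-zero functor is an equivalence $\rGr \T[t_i^{-1}] \xrightarrow{\sim} \rMod \T[t_i^{-1}]_0$ preserving finite generation; hence $\bpi M(U_i) = M[t_i^{-1}]_0$ is finitely generated over $\ms O_\X(U_i)$, so $\bpi M$ is coherent. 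For the reverse inclusion I would argue that ``coherent'' coincides with ``noetherian object'' in $\rMod\ms O_\X$ and that ``lies in $\rqgr\T$'' coincides with ``noetherian object'' in $\rQgr\T$ (the latter because $\T$ is noetherian, as already noted in the text), so that noetherianity transports across the equivalence $\opi$. The identification ``coherent $=$ noetherian'' in $\rMod \ms O_\X$ uses Proposition~\ref{newprop:two} twice: any subsheaf $\ms K$ of a coherent sheaf restricts on each $U_i$ to $\widetilde{\ms K(U_i)}$ with $\ms K(U_i)$ a submodule of a finitely generated module over the noetherian ring $\ms O_\X(U_i)\cong\kk_{q^{n+1}}[u,v]$, hence itself coherent, giving the ascending chain condition; conversely, writing an arbitrary quasicoherent sheaf as $\opi\pi N$ and $N$ as the directed union of its finitely generated graded submodules exhibits it as a directed union of coherent subsheaves, so a noetherian quasicoherent sheaf is coherent.

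The proof is therefore mostly bookkeeping: the genuine analytic content --- the $\chi$-condition machinery and relative Serre finiteness of Section~\ref{PROPER}, feeding into Propositions~\ref{newprop:inverse1} and \ref{newprop:inverse2} --- is already in place. The only step needing a small independent argument is the reverse inclusion $\opi^{-1}(\rmod\ms O_\X)\subseteq\rqgr\T$, i.e. checking that coherence is detected categorically; I expect this to be the main (and only minor) obstacle, and it is handled by the directed-union argument above together with the standard fact that $\rqgr\T = \rgr\T/\rtors\T$ is precisely the noetherian part of $\rQgr\T$.
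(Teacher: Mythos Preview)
Your proposal is correct. For the equivalence $\rQgr\T \simeq \rMod\ms O_\X$ you proceed exactly as the paper does, combining Propositions~\ref{newprop:inverse1} and \ref{newprop:inverse2} and checking naturality. Where you diverge is in the noetherian restriction. The paper invokes Proposition~\ref{prop6}: for $M\in\rgr\T$ the module $\bGamma_*\bpi M=\omega\pi M$ agrees with $M$ in large degrees, so its image in $\rQgr\T$ lands in $\rqgr\T$. You instead argue categorically that ``coherent'' coincides with ``noetherian object'' in $\rMod\ms O_\X$ (via ACC on the noetherian charts together with a directed-union argument) and that $\rqgr\T$ is the noetherian part of $\rQgr\T$, so the equivalence automatically restricts. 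Your route is valid and in fact more elementary than you realise: contrary to your final paragraph, Propositions~\ref{newprop:inverse1} and \ref{newprop:inverse2} do \emph{not} use the $\chi$-condition machinery of Section~\ref{PROPER} --- their proofs rely only on Lemma~\ref{newlem:foo} and Corollary~\ref{cor:tuesday} --- so your argument for Theorem~\ref{newthm:catequiv} bypasses Proposition~\ref{prop6} and the $\chi$-condition entirely. The paper's route is shorter once Proposition~\ref{prop6} is already available; yours is self-contained.
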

\begin{proof}
All that is left is to prove that if $M \in \rgr \T$, then the image of $\bGamma_* \bpi M$ is in $\rqgr \T$.  
This follows from Proposition~\ref{prop6}, as $M_{\geq \ell}$ is finitely generated for all $\ell$.
\end{proof}

\subsection{Finite homological dimension}

We now turn to proving that $\rqgr \T$ (or equivalently, $\rmod \ms O_\X$) has finite homological dimension.  We will use a version of the standard Grothendieck spectral sequence relating $\Ext$ and $\shExt$;  parts of the commutative proof go through with little change, but there are some important differences. 

As $\rQgr \T \simeq \rMod \ms O_\X$, by Lemma~\ref{lem:piinj} $\rMod \ms O_\X$ has enough injectives.
We wish to understand 
 injective objects in $\rMod \ms O_\X$ in more detail.
 Here we have:
 
\begin{lemma}\label{lem:injstalk}
Let $\ms I$ be a (right or left) $\ms O_\X$-module.  
Then $\ms I$ is injective if and only if the stalk $\ms I_{\mf p}$ is an injective $\ms O_{\X, \mf p}$-module for all $\mf p \in \X$.
\end{lemma}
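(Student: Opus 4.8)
The plan is to use that $\X$ is a finite $T_0$ topological space (Lemma~\ref{lem:X}), so that for each $\mf p \in \X$ the intersection $U_{\mf p}$ of all open sets containing $\mf p$ is itself open, and is a principal open set since an intersection of principal opens is principal ($U_I \cap U_J = U_{I\cup J}$). Consequently $\ms O_{\X,\mf p} = \ms O_\X(U_{\mf p})$, the stalk $\ms I_{\mf p}$ equals $\ms I(U_{\mf p})$, and by Proposition~\ref{newprop:two} the global-section functor gives an equivalence $\rMod \ms O_{U_{\mf p}} \xrightarrow{\sim} \rMod \ms O_{\X,\mf p}$; so taking the stalk at $\mf p$ amounts to restricting to $U_{\mf p}$ and then passing to sections, and $\ms I_{\mf p}$ is an injective $\ms O_{\X,\mf p}$-module if and only if $\ms I|_{U_{\mf p}}$ is an injective $\ms O_{U_{\mf p}}$-module.

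For the implication ``injective $\Rightarrow$ injective stalks'' I would argue that the stalk functor $(-)_{\mf p}\colon \rMod \ms O_\X \to \rMod \ms O_{\X,\mf p}$ has an exact left adjoint, namely $M \mapsto j_{\mf p!}\,\widetilde M$, where $j_{\mf p}\colon U_{\mf p}\hookrightarrow\X$ is the open inclusion, $\widetilde{(-)}$ is the exact functor of Proposition~\ref{newprop:two}, and $j_{\mf p!}$ is extension by zero, which is exact and left adjoint to $j_{\mf p}^*$. A functor with an exact left adjoint preserves injectives, so $\ms I_{\mf p}$ is injective. (Since $j_{\mf p!}\widetilde M$ need not be quasi-coherent, to keep this inside $\rMod \ms O_\X$ one also invokes the standard comparison --- available because $\X$ is finite and Noetherian, cf.\ the structure theory of injectives in \cite[Ch.~II]{Ha} --- that an object injective in $\rMod \ms O_\X$ is injective in $\rMOD \ms O_\X$; alternatively one may prove the whole lemma in $\rMOD \ms O_\X$.)

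For the converse I would use Zorn's lemma. Suppose every stalk of $\ms I$ is injective. Given a monomorphism $\ms F \hookrightarrow \ms G$ in $\rMod \ms O_\X$ and a morphism $f\colon \ms F \to \ms I$, pick a maximal pair $(\ms F_0, f_0)$ with $\ms F \subseteq \ms F_0 \subseteq \ms G$ and $f_0|_{\ms F} = f$. If $\ms F_0 \neq \ms G$, then --- as $\X$ has only finitely many points and subsheaves are detected stalkwise --- there is $\mf p$ with $\ms F_0(U_{\mf p}) \subsetneq \ms G(U_{\mf p})$; write $U = U_{\mf p}$. Since $U$ is affine and $\ms I|_U$ is injective over $\ms O_U$ (its sections being the injective module $\ms I_{\mf p}$), the morphism $f_0|_U$ extends to some $g\colon \ms G|_U \to \ms I|_U$; applying $j_{U!}$ and composing with the counit $j_{U!}(\ms I|_U) \to \ms I$ gives $h\colon j_{U!}(\ms G|_U) \to \ms I$. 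Over $U$ the map $g$ restricts to $f_0|_U$, while $j_{U!}(\ms G|_U)$ has zero stalk outside $U$; hence $f_0$ and $h$ agree on the fibre product $\ms F_0 \times_{\ms G} j_{U!}(\ms G|_U)$ and so glue to a morphism out of the pushout. This pushout maps monomorphically to $\ms G$ (a pushout of a pullback square with one leg mono), with image $\ms F_1 := \ms F_0 + \im\bigl(j_{U!}(\ms G|_U)\to\ms G\bigr)$; being a subsheaf of the quasi-coherent $\ms G$, $\ms F_1$ is quasi-coherent, and $\ms F_1|_U = \ms G|_U \supsetneq \ms F_0|_U$, contradicting maximality. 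Hence $\ms F_0 = \ms G$, so $\ms I$ is injective.

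I expect the converse to be the harder direction: the forward implication is essentially the formal observation that a functor with an exact left adjoint preserves injectives, whereas in the converse one has to build the enlarged extension explicitly and check it is well defined --- the crux being the fibre-product computation over $U_{\mf p}$ showing $f_0$ and $h$ are compatible --- while also being careful that the auxiliary object $j_{U!}(\ms G|_U)$, though not quasi-coherent, has quasi-coherent image in $\ms G$.
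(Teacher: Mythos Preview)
Your forward direction is correct and is the paper's argument in different words: the paper says ``the restriction of an injective sheaf to an open subset is injective'' and then uses Lemma~\ref{lem:local} to identify stalks with restrictions to the minimal opens $V_{\mf p}$. For the converse the paper does something much shorter than your Zorn's-lemma construction: it extends $\iota_{\mf p}$ to $j_{\mf p}\colon \ms N_{\mf p}\to \ms I_{\mf p}$ at each stalk using injectivity of $\ms I_{\mf p}$ and then asserts that the $j_{\mf p}$ glue to a morphism $\ms N\to\ms I$ of $\ms O_\X$-modules. There is no $j_!$ and no maximal-pair machinery; so your intuition that the converse must be the harder, more delicate direction does not match the paper's treatment.

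Your converse has a genuine gap. The claim ``being a subsheaf of the quasi-coherent $\ms G$, $\ms F_1$ is quasi-coherent'' is false in general: a sub-$\ms O_\X$-module of a quasicoherent sheaf need not be quasicoherent, and your $\ms F_1$ (with stalk $\ms G_{\mf q}$ on $U$ and $(\ms F_0)_{\mf q}$ off $U$) typically fails quasicoherence on any chart $U_i$ meeting $U$ properly --- for instance with $U=V_{\mf l_1}$ and $i=0$ one has $\ms F_1(U_0)=(\ms F_0)_{\mf p_0}$ while $(\ms F_1)_{\mf l_1}=\ms G_{\mf l_1}$, and these are not related by $-\otimes_{\ms O_\X(U_0)}\ms O_{\X,\mf l_1}$. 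Your alternative of working in $\rMOD\ms O_\X$ removes this obstacle, but then the step ``$\ms I|_U$ is injective over $\ms O_U$ because its sections $\ms I_{\mf p}$ form an injective module'' no longer follows: that deduction relies on the equivalence of Proposition~\ref{newprop:two}, which is an equivalence with $\rMod\ms O_U$, not with $\rMOD\ms O_U$. In either framework the argument is incomplete as written; to repair the $\rMOD$ version you would need, for example, to set up an induction on the number of points of the open set.
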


Before proving Lemma~\ref{lem:injstalk}, we need an auxiliary lemma.
The stalk $\ms O_{\X, \mf p}$ is not a local ring, except in the weak sense of being $\ZZ^3$-graded local.
Instead we have:

\begin{lemma}\label{lem:local}
Let $\mf p \in\X$. Then
\[ \ms O_{\X, \mf p} \cong \left\{\begin{array}{cl}
\kk_{q^\bullet}[u^\pm, v^\pm] & \text{if } \mf p = 0, \\
\kk_{q^\bullet}[u^\pm, v] & \text{if } \mf p = \mf l_i, \\
\kk_{q^\bullet}[u, v] & \text{if } \mf p = \mf p_i.
\end{array}\right.\]
Further, there is an open subset $V_{\mf p}$ of $\X$ so that $\ms O_{\X, \mf p} = \ms O_\X(V_{\mf p})$.
\end{lemma}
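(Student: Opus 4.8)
The plan is to exploit that $\X$ is a \emph{finite} topological space: by Lemma~\ref{lem:X} it has exactly $2n+3$ points. In a finite space every point $\mf p$ has a smallest open neighbourhood $V_{\mf p}$, namely the (finite, hence open) intersection of all open sets containing $\mf p$, and since $V_{\mf p}$ is initial among the open neighbourhoods of $\mf p$, the restriction maps exhibit $\ms O_\X(V_{\mf p})$ as the stalk $\ms O_{\X,\mf p}$. Thus it remains only to identify $V_{\mf p}$ in each of the three cases and then read off $\ms O_\X(V_{\mf p})$; in each case $V_{\mf p}$ will turn out to be a \emph{principal} open set, so $\ms O_\X(V_{\mf p})$ is one of the rings already described in Lemma~\ref{lem:dag} (or, in the two extreme cases, in Proposition~\ref{prop:june}), and this proves both assertions of the lemma simultaneously.

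For $\mf p = 0$: since $\T$ is a domain no product of generators lies in $0$, so $0 \in U_I$ for every nonempty $I$; taking $I = \{-1,0,1,\dots,n,n+1\}$ (recall $t_{-1}=x$, $t_{n+1}=y$), every point of $\X$ other than $0$ contains $x$ or $y$, so $V(I) = \X\ssm\{0\}$ and $\{0\}=U_I$ is open. Hence $V_0=\{0\}$ and $\ms O_{\X,0}=\T[x^{-1},y^{-1},t_0^{-1},\dots,t_n^{-1}]_0$; this is a subalgebra of $D_0=\kk_{q^{n+1}}[\ba^{\pm 1},\bb^{\pm 1}]$ containing $\T[t_0^{-1},t_2^{-1}]_0 = D_0$ by Lemma~\ref{lem:dag}(3)--(4), so it equals $\kk_{q^{n+1}}[\ba^{\pm 1},\bb^{\pm 1}]\cong\kk_\qb[u^{\pm 1},v^{\pm 1}]$.

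For $\mf p=\mf l_i$ with $1\leq i\leq n$: as $0\subseteq\mf l_i$ every open neighbourhood of $\mf l_i$ contains $0$, while $U_{i-1}\cap U_i = U_{\{i-1,i\}} = \{0,\mf l_i\}$ (using $U_k = \{0,\mf l_k,\mf l_{k+1},\mf p_k\}$ for $0\leq k\leq n$); hence $V_{\mf l_i}=\{0,\mf l_i\}=U_{\{i-1,i\}}$ and $\ms O_{\X,\mf l_i}=\T[t_{i-1}^{-1},t_i^{-1}]_0$, which by Lemma~\ref{lem:dag}(2) is a quantum polynomial ring in one invertible and one non-invertible generator, i.e.~$\cong\kk_\qb[u^{\pm 1},v]$. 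For the extreme cases $\mf p=\mf l_0$ and $\mf p=\mf l_{n+1}$ one has $U_{-1}=\{0,\mf l_0\}$ and $U_{n+1}=\{0,\mf l_{n+1}\}$, so $V_{\mf l_0}=U_{-1}$ and $V_{\mf l_{n+1}}=U_{n+1}$; then $\ms O_\X(U_{-1})=\T[x^{-1}]_0$ and $\ms O_\X(U_{n+1})=\T[y^{-1}]_0$, and Proposition~\ref{prop:june} (which gives $\T[x^{-1}]=B[x^{-1}][t_0;\theta]$, and similarly on inverting $y$) together with the presentation \eqref{eqn:BPresentation} identifies these with $B[x^{-1}]\cong\kk_\qb[x^{\pm 1},z]$ and $B[y^{-1}]\cong\kk_\qb[y^{\pm 1},z]$, again of the form $\kk_\qb[u^{\pm 1},v]$.

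For $\mf p=\mf p_i$: because $\mf p_i$ contains every generator except $t_i$, a principal open $U_I$ contains $\mf p_i$ precisely when $I\subseteq\{i\}$, i.e.~$I=\{i\}$; since the principal opens form a basis of the topology, $V_{\mf p_i}=U_i$, and Lemma~\ref{lem:dag}(1) gives $\ms O_{\X,\mf p_i}=\T[t_i^{-1}]_0\cong\kk_{q^{n+1}}[u,v]$. This exhausts the points of $\X$, so the lemma follows. No serious obstacle arises: the whole argument is elementary once one observes that $\X$ is finite, and the only thing requiring care is the clerical matching of each $V_{\mf p}$ with the correct clause of Lemma~\ref{lem:dag}, together with treating the extreme points $\mf l_0,\mf l_{n+1}$ via Proposition~\ref{prop:june} rather than Lemma~\ref{lem:dag}.
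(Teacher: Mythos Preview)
Your proof is correct and follows essentially the same approach as the paper: exploit finiteness of $\X$ to get a smallest open neighbourhood $V_{\mf p}$, identify it explicitly, and read off $\ms O_\X(V_{\mf p})$ from Lemma~\ref{lem:dag}. Your version is in fact slightly more complete than the paper's, since you explicitly treat the extreme line points $\mf l_0$ and $\mf l_{n+1}$ (whose minimal neighbourhoods $U_{-1}$ and $U_{n+1}$ are not covered by Lemma~\ref{lem:dag}) via Proposition~\ref{prop:june}, whereas the paper's proof simply asserts the general formula $V_{\mf l_i}=\{\mf l_i,0\}$ and cites Lemma~\ref{lem:dag} without singling these cases out.
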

\begin{proof}
By definition, the stalk $\ms O_{\X, \mf p}$ is 
\[ \varinjlim_{\{ V \mid \mf p \in V, V \text{open}\}} \hspace{-15pt} \ms O_\X(V) = \sum_{V}\ms O_\X(V).\]
(Here recall that each $\ms O_\X(V)$ is a subalgebra of $\kk_{q^{n+1}}[\ba^\pm, \bb^\pm]$ with the restriction maps all being inclusions.)
But as $\X$ is a finite space, the intersection
\[ V_{\mf p} := \bigcap \;\{ V \mid \mf p \in V,\; V \mbox{open}\}\]
is itself open, so $\ms O_{\X, \mf p} = \ms O_\X(V_{\mf p})$.
Further, we have
\begin{align*}
V_0 & = \{ 0\},\\
V_{\mf l_i} &= \{ \mf l_i, 0\}, \\
V_{\mf p_i} &= \{ \mf p_i, \mf l_i, \mf l_{i+1}, 0\},
\end{align*}
and the result follows, using the computations in Lemma~\ref{lem:dag}.
\end{proof}

\begin{proof}[Proof of Lemma~\ref{lem:injstalk}]
$(\Leftarrow)$
Let $\ms M, \ms N$ be  $\ms O_\X$-modules and let $\gamma: \ms M \to \ms N$ and 
 $\iota: \ms M \to \ms I$ be  morphisms, where $\gamma$ is monic.  
Now,  $\gamma$ being monic means by definition that its sheaf kernel is trivial; that is, that $\gamma$ is injective on stalks.
So for all $\mf p \in \X$ we have $\gamma_{\mf p}: \ms M_{\mf p} \hra \ms N_{\mf p}$ and $\iota_{\mf p}: \ms M_{\mf p} \to \ms I_{\mf p}$.  
As $\ms I$ has injective stalks, $\iota_{\mf p}$ extends to create a commutative diagram
\begin{equation*}
\begin{tikzcd}[>=stealth,ampersand replacement=\&,row sep=30pt,every label/.append style={font=\normalsize},column sep=35pt] 
\ms M_{\mf p} \arrow[d, "\iota_{\mf p}" {swap}] \arrow[r, "\gamma_{\mf p}"] \& \ms N_{\mf p} \arrow[dl, "j_{\mf p}"] \\
\ms I_{\mf p} \&
\end{tikzcd}
\end{equation*}
The $j_{\mf p}$ glue to give a morphism $j: \ms N \to \ms I$ of $\ms O_\X$-modules so that
\begin{equation*}
\begin{tikzcd}[>=stealth,ampersand replacement=\&,row sep=30pt,every label/.append style={font=\normalsize},column sep=35pt] 
\ms M \arrow[d, "\iota" {swap}] \arrow[r, "\gamma"] \& \ms N \arrow[dl, "j"] \\
\ms I \&
\end{tikzcd}
\end{equation*}
commutes, as needed.

$(\Rightarrow)$
The restriction of an injective sheaf to an open subset is injective, so $\ms I|_{V_{\mf p}}$ is injective for all $\mf p \in \X$.  
The result follows from Lemma~\ref{lem:local}.
\end{proof}

\begin{remark}\label{rem:inj}
Note that by the stalk criterion of Lemma~\ref{lem:injstalk}, if $\ms I \in \rMod \ms O_\X$ is injective, then $\ms I$ is also injective considered as an object of $\rMOD \ms{O}_\X$.
\end{remark}

As in the commutative setting, if $\ms M, \ms N$ are quasicoherent sheaves on $\X$ then the presheaf $U \mapsto \Hom_{\ms O_U}(\ms M|_U, \ms N|_U)$ is a sheaf (of abelian groups), which we call $\shHom_\X(\ms M, \ms N)$.
As usual, we have $\Hom_{\X}(\ms M, \ms N) = H^0(\X, \shHom_\X(\ms M, \ms N))$.

For our Grothendieck spectral sequence, we will need that $\shHom_\X(\ms M, -)$ takes injective objects to $\Gamma$-acyclic objects.
\begin{proposition}\label{prop:flasque}
    Let $I$ be a graded injective $\T$-modiule and let $ \ms M $ be a quasicoherent $\ms O_\X$-module. 
    Then $\shHom_\X(\ms M, \bpi I) $ is a flasque sheaf on $\X$.
\end{proposition}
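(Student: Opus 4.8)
The plan is to prove flasqueness by reducing to a local statement on the finite topological space $\X$ and then using the fact that injective $\T$-modules give rise to sheaves with injective stalks, so that $\shHom$ behaves well. First I would recall that since $\X$ is a finite space, a sheaf $\ms F$ on $\X$ is flasque if and only if for every point $\mf p \in \X$ and every open $V$ with $\mf p \in V$, the restriction $\ms F(V_{\mf p}) \to \ms F(\{pt\})$-type maps are surjective; more precisely, using the minimal open neighbourhoods $V_{\mf p}$ from Lemma~\ref{lem:local}, it suffices to check that for each inclusion of open sets $W \subseteq V$ the restriction map on sections is surjective, and by the finiteness of $\X$ one can reduce to the case where $V \setminus W$ is a single point and $V = V_{\mf p}$ is a minimal neighbourhood. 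So the heart of the matter is: for $\mf p \in \X$, understand $\shHom_\X(\ms M, \bpi I)(V_{\mf p})$ and show the restriction maps out of it are onto.

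The key computation is that for a principal open set $U_I$, by Proposition~\ref{newprop:two} the category of quasicoherent sheaves on $U_I$ is equivalent to $\rMod \ms O_\X(U_I)$, and under this equivalence $\shHom_\X(\ms M, \bpi I)|_{U_I}$ corresponds to $\Hom_{\ms O_\X(U_I)}(\ms M(U_I), (\bpi I)(U_I))$. Now $(\bpi I)(U_i) = I[t_i^{-1}]_0$, and more generally on the minimal neighbourhoods $V_{\mf p}$ one gets a localisation of $I$ in degree $0$; the point is that localising an injective graded $\T$-module at the Ore set of homogeneous elements inverting the $t_i$'s, and then taking degree $0$, yields an injective module over the corresponding localised ring (this is essentially the content behind Lemma~\ref{lem:injstalk}: the stalks $(\bpi I)_{\mf p} = \ms I_{\mf p}$ are injective $\ms O_{\X,\mf p}$-modules). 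Since $\ms O_{\X, \mf p} = \ms O_\X(V_{\mf p})$ by Lemma~\ref{lem:local}, we conclude $(\bpi I)(V_{\mf p})$ is an injective $\ms O_\X(V_{\mf p})$-module. Then for $W \subseteq V_{\mf p}$ the restriction $\shHom_\X(\ms M, \bpi I)(V_{\mf p}) \to \shHom_\X(\ms M, \bpi I)(W)$ is, after identifying sheaves with modules via Proposition~\ref{newprop:two}, the map $\Hom_{\ms O_\X(V_{\mf p})}(\ms M(V_{\mf p}), \ms I(V_{\mf p})) \to \Hom_{\ms O_\X(W)}(\ms M(W), \ms I(W))$, and using that $\ms M(W) = \ms M(V_{\mf p}) \otimes_{\ms O_\X(V_{\mf p})} \ms O_\X(W)$ together with injectivity this map can be shown to be surjective: a homomorphism out of $\ms M(W)$ restricts to one out of $\ms M(V_{\mf p})$ along $\ms M(V_{\mf p}) \to \ms M(W)$, and injectivity of $\ms I(V_{\mf p})$ lets one extend. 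Actually the cleanest route is: flasqueness of a sheaf on the finite space $\X$ follows from the stalks being ``injective enough'' — more concretely, I would show directly that for each $\mf p$, $\shHom_\X(\ms M,\bpi I)_{\mf p} \cong \Hom_{\ms O_{\X,\mf p}}(\ms M_{\mf p}, \ms I_{\mf p})$ and that a sheaf on a finite space all of whose stalks embed compatibly into the sections over minimal neighbourhoods, with those sections surjecting onto nearby ones, is flasque; the surjectivity input is precisely injectivity of $\ms I_{\mf p}$ over $\ms O_{\X,\mf p}$.

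Concretely I would organise it as follows: (1) record that $\X$ is finite and that each point has a minimal open neighbourhood $V_{\mf p}$ (Lemma~\ref{lem:local}), so checking flasqueness reduces to surjectivity of $\ms F(V_{\mf p}) \to \ms F(W)$ for $W \subseteq V_{\mf p}$ open; (2) identify, via Proposition~\ref{newprop:two}, $\ms F(V)= \shHom_\X(\ms M,\bpi I)(V)$ with $\Hom_{\ms O_\X(V)}(\ms M(V), (\bpi I)(V))$ for $V$ a principal open set (and note every open set here contains a principal open dense enough, or handle the general $V$ by the intersection formula); (3) invoke Lemma~\ref{lem:injstalk} and its proof to see $(\bpi I)(V_{\mf p}) = \ms I_{\mf p}$ is an injective $\ms O_\X(V_{\mf p})$-module; (4) for $W \subseteq V_{\mf p}$, write $\ms M(W) = \ms M(V_{\mf p})\otimes_{\ms O_\X(V_{\mf p})}\ms O_\X(W)$, observe the natural map $\ms M(V_{\mf p}) \to \ms M(W)$, and use injectivity to extend any $\phi \in \Hom_{\ms O_\X(W)}(\ms M(W), \ms I(W))$ — precomposing with $\ms M(V_{\mf p}) \to \ms M(W) \xrightarrow{\phi} \ms I(W)$ and then, since $\ms I(V_{\mf p}) \to \ms I(W)$ together with injectivity of $\ms I(V_{\mf p})$, lift to a map $\ms M(V_{\mf p}) \to \ms I(V_{\mf p})$ restricting to $\phi$. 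The main obstacle I anticipate is step (4): one needs to be careful that the restriction map $\ms I(V_{\mf p}) \to \ms I(W)$ is itself surjective (which is again injectivity/flasqueness of $\ms I$ — circular unless handled via stalks) and that the extension of $\phi$ genuinely restricts back to $\phi$ and not just to something agreeing on a dense subset; in the noncommutative setting one must also make sure all the module structures and the quantum-torus localisations from Lemma~\ref{lem:dag} are compatible, but since every localisation map here is an honest inclusion of subalgebras of $\kk_{q^{n+1}}[\ba^{\pm1},\bb^{\pm1}]$ and $\ms M$ is coherent hence finitely presented, the tensor identities behave and there is no real subtlety beyond bookkeeping.
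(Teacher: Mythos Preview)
Your overall strategy---reduce to minimal neighbourhoods $V_{\mf p}$ and use that $(\bpi I)_{\mf p}$ is injective---is reasonable and genuinely different from the paper's, but step~(4) has a real gap that you yourself flag without resolving. You propose to take $\phi\colon \ms M(W) \to \ms I(W)$, precompose with $\ms M(V_{\mf p}) \to \ms M(W)$ to get $\alpha\colon \ms M(V_{\mf p}) \to \ms I(W)$, and then ``lift'' $\alpha$ through $\ms I(V_{\mf p}) \to \ms I(W)$ using injectivity of $\ms I(V_{\mf p})$. But injectivity lets you \emph{extend along monomorphisms into the domain}, not lift along maps out of the target; to make your lift work you would need the localisation map $\ms I(V_{\mf p}) \to \ms I(W)$ to be a split epimorphism of $\ms O_\X(V_{\mf p})$-modules, and establishing that requires precisely the structure theory of indecomposable injectives that you never supply. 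The clean fix is to abandon step~(4) entirely: once you know $(\bpi I)_{\mf p}$ is injective for every $\mf p$ (and this follows directly, since $\T$ is noetherian so localising a graded-injective at $t_i$ and then taking degree~$0$ over the strongly graded ring $\T[t_i^{-1}]$ preserves injectivity), Lemma~\ref{lem:injstalk} gives that $\bpi I$ is injective in $\rMOD\ms O_\X$, and then the standard argument of \cite[Lemma~III.6.1]{Ha} works verbatim on the noncommutative ringed space $\X$: for $j\colon V \hookrightarrow U$, a map $\phi\colon\ms M|_V \to \ms I|_V$ corresponds under $(j_!, j^*)$-adjunction to a map $j_!(\ms M|_V) \to \ms I|_U$, which extends along the monomorphism $j_!(\ms M|_V) \hookrightarrow \ms M|_U$ by injectivity of $\ms I|_U$. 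This bypasses your minimal-neighbourhood reduction altogether and handles non-principal $W$ without extra bookkeeping.

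For comparison, the paper's proof is more explicit and does not pass through injectivity in $\rMOD\ms O_\X$. It reduces to the case where $I$ is graded-uniform and $\T_+$-torsionfree, with assassinator prime $\mf q$, writes $\X \setminus V = V(\mf n)$, and splits into two cases: if $\mf q \supseteq \mf n$ then $\ms I|_V = 0$ and there is nothing to prove; if $\mf q \not\supseteq \mf n$ then any sheaf $\ms F$ supported on $V(\mf n)$ satisfies $\Hom_U(\ms F|_U, \ms I|_U) = 0$, from which one deduces $\ms I|_U = j_*(\ms I|_V)$ and that the restriction map on $\shHom$ is in fact an \emph{isomorphism}, not merely a surjection. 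This is more hands-on but sidesteps the (slightly delicate) question of promoting injectivity from $\rMod\ms O_\X$ to $\rMOD\ms O_\X$.
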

\begin{proof}
Without loss of generality $I$ is graded uniform and $\T_+$-torsionfree.  
There is thus a finitely generated graded uniform submodule $H$ of $I$ and a non-irrelevant graded prime ideal $\mf q$ of $\T$ that is the assassinator of $H$, by \cite[Lemma~5.26]{GW}.
(Note that here we are using the projective $\ZZ$-grading of $\T$ where the $t_i$ have degree 1 and $x,y,z$ have degree 0.)
Let $\ms I= \bpi I$ and $\ms H = \bpi H$. 

Let $V \subseteq U$ be open subsets of $\X$.
We need to show:
\beq\label{NTS}
\Hom_U(\ms M|_U, \ms I|_U) \to \Hom_V(\ms M|_V, \ms I|_V) \text{ is surjective,}
\eeq
where the map is induced by restriction. Writing $Z = \X \ssm V$, we have $Z = V(\mf n)$ where $\mf n$ is a non-irrelevant  $\ZZ^3$-graded ideal of $\T$. We make and prove two claims.

\textbf{Claim 1:} If $\mf q \supseteq \mf n$ then $\ms I|_V = 0$.

To see this, it suffices to show that $\ms I|_{V'} =0$ where $V'$ is an principal open subset of $V$. There is a $\ZZ^3$-homogeneous $w \in \mf n$ so that $w$ is invertible in $\ms O_\X(V')$ and, because $w\in \mf q$ and $H \mf q= 0$, we have $\ms H(V') = 0$ by the equivalence from Proposition~\ref{newprop:two} between $\rMod\ms O_{V'}$ and $\rMod \ms O(V')$. Thus $\ms I(V') = 0$ as $\ms H$ is essential in $\ms I$, establishing the claim.

\textbf{Claim 2:} If $\mf q \not\supseteq \mf n$ and if $\ms F \in \rMod \ms O_\X$ is supported on $Z$ in the sense that
\[ 
Z \supseteq \{ \mf p \in \X \mid \ms F_{\mf p} \neq 0\},
\]
then $\Hom_U(\ms F|_U, \ms I|_U) = 0$. 

To establish this, we may assume without loss of generality that $U$ is principal, by the local nature of a sheaf, so we seek to show that $\Hom_U(\ms F(U), \ms I(U))= 0$, using Proposition~\ref{newprop:two}. Now, $\mf q$ corresponds to a prime ideal $P$ of $C = \ms O_\X(U)$, which is the assassinator of the essential submodule $\ms H(U) $ of $\ms I(U)$, and $\mf n$ corresponds to a $\ZZ^2$-graded ideal $N$ of $C$ with $P \not\supseteq N$. 
Also, $\ms F(U)$ is $w$-torsion for any $\ZZ^2$-homogenous $w \in N$. Choose some such $w \in N \ssm P$ and let $\gamma \in \Hom_C(\ms F(U), \ms I(U)) = \Hom_U(\ms F(U), \ms I(U))$. If  $\gamma \neq 0$ then $P$ is the annihilator of the nonzero module $\im \gamma \cap \ms H(U)$.  
This contradicts the fact that $\im \gamma$ is also $w$-torsion and $w \not\in P$, and thus $\Hom_U(\ms F|_U, \ms I|_U) = 0$, as claimed.

We return to the proof of the proposition.  We must establish \eqref{NTS}.
By Claim 1, we may assume without loss of generality that $\mf n \not\subseteq \mf q$.
We will show that \eqref{NTS} is an equality.

Let $j: V\to U$ be the inclusion and let $\ms K$ be the kernel of the natural map $\ms I|_U \to j_* (\ms I|_V)$.
Now $\ms K$ is supported on $Z$ by definition and by Claim 2 the inclusion $\ms K \subseteq \ms I|_U$ must be the zero map; that is, $\ms K =0$.
So $\ms I|_U \subseteq j_*(\ms I|_V)$ and as $\ms I|_U$ is injective the two must be equal.

Now consider the equivalent restriction map $\varphi: \ms M|_U \to j_*(\ms M|_V)$ for $\ms M$.
Let $\ms A = \ker \varphi$ and $\ms B = \coker \varphi$.
Both are supported on $Z$ and so, by Claim 2, $\Hom_U(\ms A, \ms I|_U) = \Hom_U(\ms B, \ms I|_U) = 0$.
Using injectivity of $\ms I|_U$ we see that
\[ \Hom_U(\ms M|_U, \ms I|_U) = \Hom_U(j_*(\ms M|_V), \ms I|_U) =  \Hom_U(j_*(\ms M|_V), j_*(\ms I|_V)).\]
This last is easily seen to be equal to $\Hom_V(\ms M|_V, \ms I|_V)$.
\end{proof}

As $\rMod \X$ has enough injectives, we may construct the right derived functors of $\Hom_\X(\ms M, -)$ and $\shHom_\X(\ms M, -)$, which we call $\Ext^i_\X(\ms M, -)$ and $\shExt^i_\X(\ms M, -)$ respectively.
We thus define the groups $\Ext^i_\X(\ms M, \ms N)$ and the sheaves $\shExt^i_\X(\ms M, \ms N)$.  Note that we can do this entirely with resolutions using injectives of the form $\bpi I$ for $I$ a graded-injective $\T$-module, by Lemma~\ref{lem:piinj} and the equivalence of categories Theorem~\ref{newthm:catequiv}.

\begin{remark}\label{rem:foo}
Note that $\rMOD \X$ also has enough injectives, using the construction of \cite[Proposition III.2.2]{Ha}.
Thus we can derive $\Hom_\X(\ms M, -) $ either as a functor from $\rMOD \X \to \text{Ab}$ or as a functor from $\rMod \X \to \text{Ab}$.
By Remark~\ref{rem:inj} the two agree on $\rMod \X$, and similarly for $\shHom$.

If we regard $\ms M$ simply as a sheaf of abelian groups on the topological space $\X$ then we have the usual sheaf cohomology $H^i(\X, \ms M)$ by deriving the global section functor $\Gamma = H^0(\X, -)$.
Of course, $\Gamma(\ms M) = H^0(\X, \ms M) = \Hom_{\X}(\ms O_\X, \ms M)$.
As injective objects in $\rMod \X$ are flasque,  by  Proposition~\ref{prop:flasque} with $\ms M = \ms O_\X$, they are $\Gamma$-acyclic by \cite[Proposition~III.2.5]{Ha}.
We can thus compute $H^i(\X, \ms M)$ by taking an injective resolution of $\ms M$ in $\rMod \X$: in other words,
\[ H^i(\X, \ms M) = \Ext^i_\X(\ms O_\X, \ms M).\]
\end{remark}

We can now prove our desired nonsingularity result for $\X$.

\begin{theorem}\label{newthm:smooth}
The categories $\rQgr \T$ and $\rMod \X$ have finite homological dimension.
In particular, if $s>4$ then $\shExt^s_\X(\ms F, \ms G) = 0$ for all quasicoherent sheaves $\ms F, \ms G$ on $\X$.
\end{theorem}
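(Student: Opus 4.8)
The plan is to bound the homological dimension of $\rMod \ms O_\X$ — equivalently, by the equivalence of Theorem~\ref{newthm:catequiv}, of $\rQgr \T$ — by combining a local vanishing statement for $\shExt$ with Grothendieck's vanishing theorem on the finite space $\X$, glued together via the local-to-global spectral sequence for $\Ext$.

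First I would prove the local input: $\shExt^q_\X(\ms F, \ms G) = 0$ for all $q > 2$ and all quasicoherent $\ms F, \ms G$ (this already yields the final clause of the theorem, in the sharper form $s>2$). Since $\X = \bigcup_{i=0}^{n} U_i$ and $\shExt$ commutes with restriction to an open subset — restriction of an injective $\ms O_\X$-module to an open is injective and $\shHom$ localises — it is enough to see that $\shExt^q_{U_i}(\ms F|_{U_i}, \ms G|_{U_i}) = 0$ for $q>2$. By Proposition~\ref{newprop:two} the category $\rMod \ms O_{U_i}$ is exactly equivalent to $\rMod \ms O_\X(U_i)$, and by Lemma~\ref{lem:dag}(1) the ring $\ms O_\X(U_i) \cong \kk_{q^{n+1}}[u,v]$ is a quantum plane, noetherian of global dimension $2$. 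Hence every object of $\rMod \ms O_{U_i}$ has injective dimension at most $2$, so $\ms G|_{U_i}$ admits an injective resolution of length $2$, and computing $\shExt^q_{U_i}(\ms F|_{U_i}, -)$ from it forces vanishing in degrees $>2$.

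Next I would record the global input: $\X$ is finite, hence a noetherian topological space, and has Krull dimension $2$ by Lemma~\ref{lem:X}, so Grothendieck's vanishing theorem \cite[Theorem~III.2.7]{Ha} gives $H^p(\X, -) = 0$ for $p>2$. I would then invoke the local-to-global spectral sequence
\[ E_2^{p,q} = H^p\!\big(\X, \shExt^q_\X(\ms F, \ms G)\big) \;\Longrightarrow\; \Ext^{p+q}_\X(\ms F, \ms G), \]
which is Grothendieck's composite-functor spectral sequence for $\Gamma \circ \shHom_\X(\ms F,-) = \Hom_\X(\ms F,-)$ (cf.~\cite{Weibel}). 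It applies in our setting because, by Proposition~\ref{prop:flasque}, $\shHom_\X(\ms F,-)$ carries the injectives $\bpi I$ to flasque — hence $\Gamma$-acyclic — sheaves, and by Lemma~\ref{lem:piinj} together with Theorem~\ref{newthm:catequiv} all the functors in sight may be derived using resolutions whose terms have this form. Since $p+q > 4$ forces $p>2$ or $q>2$, every such $E_2^{p,q}$ vanishes, whence $\Ext^i_\X(\ms F, \ms G) = 0$ for $i>4$; thus $\rMod \ms O_\X$, and therefore $\rQgr \T$, has homological dimension at most $4$, and the $\shExt$-vanishing of the first step gives the remaining assertion.

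The one genuinely delicate point is the first step: one must make precise, in the noncommutative ringed-space language of Section~\ref{LOCAL}, that $\shExt_\X$ commutes with restriction to the charts $U_i$ and that under Proposition~\ref{newprop:two} it is computed by $\Ext$ over the quantum plane $\ms O_\X(U_i)$ (in particular that the injectives $\bpi I$ restrict to injectives on $U_i$). Once that is settled, the rest is formal: Grothendieck vanishing on a finite poset and the composite-functor spectral sequence are available as soon as Proposition~\ref{prop:flasque} is in hand, and the final bound $4 = \dim \X + \gldim \ms O_\X(U_i)$ is exactly the commutative heuristic that a crepant resolution of a surface singularity is a smooth surface.
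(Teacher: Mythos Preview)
Your proposal is correct and follows essentially the same route as the paper: both arguments derive the local-to-global spectral sequence from Proposition~\ref{prop:flasque}, invoke Grothendieck vanishing on the two-dimensional space $\X$, and use Lemma~\ref{lem:dag} to see that each $\ms O_\X(U_i)$ has global dimension $2$, giving $\shExt^q = 0$ for $q>2$ and hence $\Ext^s = 0$ for $s>4$. You have also correctly noted that the $\shExt$-vanishing actually holds for $s>2$, which is sharper than the theorem's statement and is exactly what the paper's proof establishes.
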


\begin{proof}
    This is a standard Grothendieck spectral sequence.
    We have 
    \[\Hom_\X(\ms F, \ms G) = \Gamma(\shHom_\X(\ms F, \ms G)).\]
    By Proposition~\ref{prop:flasque}, $\shHom_\X(\ms F, -) $ sends injective  objects in $\rMod \X$ to flasque, and thus $\Gamma$-acyclic, sheaves.
    By \cite[Theorem~5.8.3]{Weibel}, there is a convergent spectral sequence
    \beq\label{localglobal} H^p(\X, \shExt^r_\X(\ms F, \ms G)) \Rightarrow \Ext^{p+r}_\X(\ms F, \ms G).\eeq
    Now, by Grothendieck vanishing \cite[Theorem III.2.7]{Ha} and our observation that $\dim \X = 2$, we have $H^p(\X, \ms E) =0$ for any $p \geq 3$ and any sheaf $\ms E$ of abelian groups on $\X$, and so in particular for $\ms E = \shExt^r_\X(\ms F, \ms G)$.
    And since  by Lemma~\ref{lem:dag} $\ms O_\X(U_j)$ has global dimension 2 for all $0 \leq j \leq n$, for $r\geq 3$ we have $\shExt^r_\X(\ms F, \ms G)(U_j) = 0$ for all $0 \leq j \leq n$ and thus $\shExt^r_\X(\ms F, \ms G) =0$.
The result follows.
\end{proof}

Theorem~\ref{newthm:smooth} and Proposition~\ref{prop:june} establish that the morphism $\phi = (\phi^*, \phi_*): \rQgr T \to \rMod B$ can legitimately be considered a resolution of singularities of $B$.
It turns out that $\phi$ is a weakly crepant categorical resolution of $B$ in the sense discussed in Section~\ref{NCCR}.  
This can be proved directly, but it is easier to first prove that $\rqgr T$ and $\rmod \Lambda$ are derived equivalent and then use the corresponding facts about $\psi:  \rMod \Lambda \to \rmod B$, proven in Proposition \ref{prop:PsiCrepantCategoricalRes}.



\section{A derived equivalence of resolutions}\label{DERIVEDEQUIV}
In this section, we give our noncommutative version of the derived geometric McKay correspondence:  that is, we show that  $\rqgr \T$ is derived equivalent to $\rmod \Lambda$.
We will do this by constructing a tilting object in $\rqgr T$.
We then use this derived equivalence to show that $\rqgr T$ is also a weakly crepant categorical resolution of $\rmod B$.

\subsection{The modules of interest and their first properties} \label{sec:OurModules}

We begin by defining some modules that will play an important role for us.

\begin{defn}
For $1 \leqslant i \leqslant n$, define
\begin{gather*}
\sMi \coloneqq x \T + z^i \T, \\
\sNi \coloneqq \sum_{j=1}^n z^{-m_{ij}} t_j \T, \qquad \text{where } m_{ij} \coloneqq \min\{i,j\},
\end{gather*}
viewed as objects in $\rgr \T$ or $\rqgr \T$. Note that, since $\sMi$ and $\sNi$ are generated by normal elements, they are both graded $\T$-bimodules. It will also be convenient to write $\sM^{(0)} = \T = \sN^{(0)}$ and to set
\begin{align*}
\sM \coloneqq \bigoplus_{i=0}^n \sMi.
\end{align*}
\end{defn}

Our first goal is to show that the modules $\sMi$ and $\sNi$ are mutually inverse in $\rqgr T$, in an appropriate sense. We begin by proving some local behaviour of these modules.

\begin{lemma}\label{lem:localiseMi}
We have
\begin{align*}
\sMi[t_k^{-1}]_0 = \left\{
\begin{array}{cl}
x \, \msOk & \text{if } 0 \leqslant k \leqslant i-1, \\
z^i \, \msOk & \text{if } i \leqslant k \leqslant n.
\end{array}
\right.
\end{align*}
\end{lemma}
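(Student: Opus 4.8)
The plan is to compute the localisation $\sMi[t_k^{-1}]_0$ directly using the toric embedding and the description of the rings $\ms O_\X(U_k) = \T[t_k^{-1}]_0$ from Lemma~\ref{lem:dag}(1). Recall $\sMi = x\T + z^i \T$ is generated by the two normal elements $x$ and $z^i$, so $\sMi[t_k^{-1}] = x\,\T[t_k^{-1}] + z^i\,\T[t_k^{-1}]$, and taking degree-$0$ parts gives $\sMi[t_k^{-1}]_0 = x\,\T[t_k^{-1}]_{-\deg x} + z^i\,\T[t_k^{-1}]_{-\deg z^i}$ where degrees refer to the projective grading. Since $x, z$ have projective degree $0$ while $t_k$ has projective degree $1$, we have $x\,\T[t_k^{-1}]_0 = x\,\ms O_\X(U_k)$ and $z^i\,\T[t_k^{-1}]_0 = z^i\,\ms O_\X(U_k)$, so $\sMi[t_k^{-1}]_0 = x\,\msOk + z^i\,\msOk$. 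The content of the lemma is that one of these two summands absorbs the other: $x\,\msOk \supseteq z^i\,\msOk$ when $k \leqslant i-1$, and $z^i\,\msOk \supseteq x\,\msOk$ when $k \geqslant i$.

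First I would treat the case $i \leqslant k \leqslant n$. By Lemma~\ref{lem:dag}(1), for $1 \leqslant k \leqslant n-1$ the ring $\msOk = \kk_{q^{n+1}}[t_{k-1}t_k^{-1}, t_{k+1}t_k^{-1}]$ and the relations in $\T$ give $x = q^\bullet z^k\, t_{k-1}t_k^{-1}$ (cf.\ the proof of Lemma~\ref{lem:dag}(1)). Thus $x \in z^k \msOk \subseteq z^i \msOk$ since $k \geqslant i$ and $z$ is normal with $z^{k-i}$ a unit-multiple of an element of... no: more carefully, $x = q^\bullet z^k t_{k-1}t_k^{-1} = q^\bullet z^i \cdot (z^{k-i} t_{k-1}t_k^{-1})$ and $z^{k-i}t_{k-1}t_k^{-1} \in \msOk$, so $x \in z^i\msOk$, giving $x\msOk \subseteq z^i\msOk$ and hence $\sMi[t_k^{-1}]_0 = z^i\msOk$. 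The boundary cases $k = n$ (where $\msOk = \kk_{q^{n+1}}[t_{n-1}t_n^{-1}, y]$ and still $x = q^\bullet z^n t_{n-1}t_n^{-1}$) and $k=i$ itself are handled the same way. For $0 \leqslant k \leqslant i-1$: by Lemma~\ref{lem:dag}(1), when $k=0$ we have $\ms O_\X(U_0) = \kk_{q^{n+1}}[x, t_1t_0^{-1}]$ and $z = q^{-\alpha_1} x\, t_1 t_0^{-1} \in x\ms O_\X(U_0)$, so $z^i \in x\ms O_\X(U_0)$; for $1 \leqslant k \leqslant i-1 \leqslant n-1$ we have $x = q^\bullet z^k t_{k-1}t_k^{-1}$, equivalently $z^k = q^\bullet x (t_{k-1}t_k^{-1})^{-1}$, but $t_{k-1}t_k^{-1}$ need not be a unit in $\msOk$; instead use that $z = q^\bullet (t_{k-1}t_k^{-1})(t_{k+1}t_k^{-1})$ and the relations $x = q^\bullet z^k t_{k-1}t_k^{-1}$ directly: since $k \leqslant i-1$, write $z^i = z^{i-k} z^k = q^\bullet z^{i-k} \cdot$ (something), and show $z^i \in x\msOk$ by checking in the toric embedding that $z^i x^{-1}$ is a monomial in $\ba,\bb$ lying in $\msOk = \kk_{q^{n+1}}[\ba^{-(k-1)}\bb^{-k}, \ba^k\bb^{k+1}]$. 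Concretely $x \approx \ba$, $z \approx \ba\bb$, so $z^i x^{-1} \approx \ba^{i-1}\bb^i$, and one checks $\ba^{i-1}\bb^i = (\ba^k\bb^{k+1})^a (\ba^{-(k-1)}\bb^{-k})^b$ is solvable in non-negative integers $a,b$ precisely when $i \geqslant k$ — indeed $a = i-k$... let me just say: solve $ak - b(k-1) = i-1$, $a(k+1) - bk = i$; subtracting gives $a - b = 1$, then $a = i-k$, $b = i-k-1 \geqslant 0$ iff $i > k$, i.e.\ $i \geqslant k+1$, which holds.

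The main obstacle, and the place requiring genuine care rather than bookkeeping, is organising the monomial arithmetic in the toric embedding so that the two inclusion directions ($x\msOk \supseteq z^i\msOk$ for $k < i$ versus $z^i\msOk \supseteq x\msOk$ for $k \geqslant i$) come out cleanly and the boundary indices $k \in \{0, i-1, i, n\}$ are covered — these are exactly the cases where Lemma~\ref{lem:dag}(1) has a different-looking presentation of $\msOk$. I would therefore streamline the whole proof by working entirely in the toric embedding $\T \subseteq D$: $x \approx \ba$, $z \approx \ba\bb$, and $\msOk = \kk_{q^{n+1}}[\ba^{-(k-1)}\bb^{-k}, \ba^k\bb^{k+1}]$ (monomial subalgebra of $\kk_{q^{n+1}}[\ba^{\pm},\bb^{\pm}]$, independent of the case distinction once written in $\ba,\bb$). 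Then $x\msOk$ and $z^i\msOk$ are monomial modules, and the inclusion $z^i \in x\msOk$ (for $i > k$) reduces to: $\ba^{i-1}\bb^i = (\ba^k\bb^{k+1})^{i-k}(\ba^{-(k-1)}\bb^{-k})^{i-k-1}$, which is an elementary exponent check; symmetrically $x \in z^i\msOk$ (for $k \geqslant i$) reduces to $\ba = \ba^{1-i}\bb^{-i}\cdot\ba^i\bb^i$, i.e.\ $(\ba^{-(k-1)}\bb^{-k})$ and $(\ba^k\bb^{k+1})$ generate $\ba^{1-i}\bb^{-i}$, again an exponent computation. Once these two monomial identities are verified, the reverse inclusions are immediate ($x \in x\msOk$ always, $z^i \in z^i\msOk$ always), and the lemma follows.
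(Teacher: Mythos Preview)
Your proof is correct and begins exactly as the paper does: $\sMi[t_k^{-1}]_0 = x\,\msOk + z^i\,\msOk$, after which one must show that one summand absorbs the other. For $i \leqslant k \leqslant n$ your argument coincides with the paper's: the relation $x t_k = q^\bullet z^k t_{k-1}$ gives $x = q^\bullet z^i\,(z^{k-i}\,t_{k-1}t_k^{-1}) \in z^i\,\msOk$ directly. For $0 \leqslant k \leqslant i-1$ you detour through the toric embedding and solve a $2\times 2$ linear system in the exponents of $\ba,\bb$, which is correct but heavier than necessary. The paper instead uses the companion relation $x t_{k+1} = q^\bullet z^{k+1} t_k$ in one line: inserting $t_k t_k^{-1}$ gives $z^i = z^{k+1}\,t_k t_k^{-1}\,z^{i-k-1} = q^\bullet x\,t_{k+1}t_k^{-1}\,z^{i-k-1} \in x\,\msOk$, with no separate treatment of the boundary indices $k \in \{0,i-1\}$ required. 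Your toric-coordinate route has the modest advantage of being mechanical once Lemma~\ref{lem:dag}(1) is in hand, but the paper's direct use of the $T$-relations is shorter and avoids the exponent bookkeeping entirely.
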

\begin{proof}
It is clear that $\sMi[t_k^{-1}]_0 = x \msOk + z^i \msOk$. If $0 \leqslant k \leqslant i-1$, then
\begin{align*}
z^i = z^{k+1} t_k t_k^{-1} z^{i-k-1} = q^{\bullet} x t_{k+1} t_k^{-1} z^{i-k-1} \in x \, \msOk.  
\end{align*}
If instead $i \leqslant k \leqslant n$, we have
\begin{align*}
x = x t_k t_k^{-1} = q^\bullet z^k t_{k-1} t_k^{-1} = q^\bullet z^i z^{k-i} t_{k-1} t_k^{-1} \in z^i \, \msOk.
\end{align*}
In either case, the result now follows.
\end{proof}

\begin{lemma}\label{lem:localiseNi}
We have
\begin{align*}
\sNi[t_k^{-1}]_0 = \left\{
\begin{array}{cl}
x^{-1} \, \msOk & \text{if } 0 \leqslant k \leqslant i-1, \\
z^{-i} \, \msOk & \text{if } i \leqslant k \leqslant n.
\end{array}
\right.
\end{align*}
\end{lemma}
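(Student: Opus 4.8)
The plan is to argue exactly as in the proof of Lemma~\ref{lem:localiseMi}, working inside the quantum torus $D$ via the toric embedding of Section~\ref{sec:ToricEmbedding}, so that $x,z,t_0,\dots,t_n$ are all units and the expressions $z^{-m_{ij}}t_j$ make sense. The first step is the elementary reduction
\[
\sNi[t_k^{-1}]_0 \;=\; \sum_{j=1}^n z^{-m_{ij}}\,t_j\, t_k^{-1}\,\msOk ,
\]
which holds because $\T[t_k^{-1}]$ is strongly graded with $t_k$ a homogeneous unit of projective degree $1$, so the degree-$0$ part of $z^{-m_{ij}}t_j\T[t_k^{-1}]$ is $z^{-m_{ij}}t_j t_k^{-1}\msOk$, and the degree-$0$ part of a sum is the sum of the degree-$0$ parts. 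Each summand is then a rank-one free $\msOk$-submodule of $D_0$ on a monomial in $\ba,\bb$, so $\sNi[t_k^{-1}]_0$ is the $\msOk$-span of finitely many such monomials, and it remains to pin down which monomial divides the others in $\msOk$.

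Next I would prove the two inclusions in each range of $k$, using only the relation $x t_j = q^\bullet z^j t_{j-1}$ (valid for $1\le j\le n$, hence for every $j$ occurring) together with the facts $z\in\msOk$ and $t_\ell t_k^{-1}\in\msOk$ from Lemma~\ref{lem:dag}(1). For $0\le k\le i-1$: splitting on whether $j\le i$ (so $m_{ij}=j$) or $j\ge i$ (so $m_{ij}=i$) gives
\[
x\cdot z^{-j}t_j t_k^{-1} = q^\bullet t_{j-1}t_k^{-1}\in\msOk
\qquad\text{and}\qquad
x\cdot z^{-i}t_j t_k^{-1} = q^\bullet z^{j-i}t_{j-1}t_k^{-1}\in\msOk ,
\]
so every generator lies in $x^{-1}\msOk$, giving ``$\subseteq$''. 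For ``$\supseteq$'' I take $j=k+1$ (legitimate since $0\le k\le i-1$ forces $1\le k+1\le\min\{i,n\}$), for which $m_{i,k+1}=k+1$ and the same relation yields $x\cdot z^{-(k+1)}t_{k+1}t_k^{-1}=q^\bullet t_k t_k^{-1}=q^\bullet$, i.e.\ $z^{-(k+1)}t_{k+1}t_k^{-1}=q^\bullet x^{-1}$; as $\sNi[t_k^{-1}]_0$ is an $\msOk$-module this gives $x^{-1}\msOk\subseteq\sNi[t_k^{-1}]_0$. For $i\le k\le n$ the argument is symmetric with $z^i$ replacing $x$: for ``$\subseteq$'' one checks $z^{\,i-m_{ij}}t_j t_k^{-1}\in\msOk$ (it equals $t_j t_k^{-1}$ when $j\ge i$, and $z^{\,i-j}t_j t_k^{-1}$ with $i-j\ge 0$ when $j\le i$), and for ``$\supseteq$'' one takes $j=k$, giving the generator $z^{-i}t_k t_k^{-1}=z^{-i}$.

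The one point I would flag as the genuine subtlety — rather than a routine computation — is that the ``obvious'' bound (fitting every generator into $x^{-1}\msOk$) is \emph{not} tight when $i\le k\le n$: there one has $z^{-i}\msOk\subsetneq x^{-1}\msOk$, the containment coming from $x\in z^i\msOk$, which is precisely the identity $x=q^\bullet z^i z^{k-i}t_{k-1}t_k^{-1}$ established in the proof of Lemma~\ref{lem:localiseMi}. So one must carry out the ``$\subseteq$'' half with the correct target $z^{-i}\msOk$ in that range; otherwise the whole lemma is a direct unwinding of the defining relations of $\T$ (tracking only the unspecified powers $q^\bullet$) combined with the descriptions of $\msOk=\T[t_k^{-1}]_0$ from Lemma~\ref{lem:dag}(1).
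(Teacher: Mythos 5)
Your proof is correct and follows essentially the same route as the paper: both reduce to showing the two inclusions via the relation $x t_j = q^\bullet z^j t_{j-1}$, with the paper handling the ``$\subseteq$'' direction in one uniform line using $j - m_{ij} \geq 0$ (resp.\ $i - m_{ij} \geq 0$) where you split into the two cases $j \leq i$ and $j \geq i$. The closing remark about $z^{-i}\msOk \subsetneq x^{-1}\msOk$ is accurate but not actually needed, since the paper (like your proof) directly establishes the tight bound in the range $i \leq k \leq n$ rather than first proving a weaker containment.
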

\begin{proof}
As in the previous lemma, it is clear that $\sNi[t_k^{-1}]_0 = \sum_{j=1}^n z^{-m_{ij}} t_j t_k^{-1} \msOk$. 
First let $0 \leqslant k \leqslant i-1$, so that $m_{i,k+1} = k+1$. Then
\begin{align*}
x^{-1} = q^{\bullet} z^{-(k+1)} t_{k+1} t_k^{-1} 
= q^{\bullet} z^{-m_{i,k+1}}  t_{k+1} t_k^{-1} \in \sNi[t_k^{-1}]_0.
\end{align*}
Conversely, for $1 \leqslant j \leqslant n$ we have
\begin{align*}
z^{-m_{ij}} t_j t_k^{-1} = q^\bullet x^{-1} z^{-m_{ij}} x t_j t_k^{-1}
= q^\bullet x^{-1} z^{j-m_{ij}} t_{j-1} t_k^{-1} \in x^{-1} \msOk,
\end{align*}
so that $\sNi[t_k^{-1}]_0 = x^{-1} \msOk$. 

If instead $i \leqslant k \leqslant n$, then $m_{ik} = i$, so that
\begin{align*}
z^{-i} = z^{-m_{ik}} t_k t_k^{-1} \in \sNi[t_k^{-1}]_0.
\end{align*}
On the other hand, for $1 \leqslant j \leqslant n$ we have
\begin{align*}
z^{-m_{ij}} t_j t_k^{-1} = q^\bullet z^{-i} z^{i-m_{ij}} t_j t_k^{-1} \in z^{-i} \, \msOk,
\end{align*}
from which it follows that $\sNi[t_k^{-1}]_0 = z^{-i} \, \msOk$.
\end{proof}

\begin{corollary}\label{cor:invertible}
For $1 \leqslant i \leqslant n$, $\sMi$ and $\sNi$ are  are mutually inverse in $\rqgr \T$ in the sense that $\sMi\otimes_\T \sNi \cong \sNi \otimes_\T \sMi \cong \T$ in $\rqgr T$.
As a result, $-\otimes_T \sM^{(i)}$ and $-\otimes_T \sN^{(i)}$ are (quasi-inverse) autoequivalences of $\rqgr \T$, and similarly on the left.
\end{corollary}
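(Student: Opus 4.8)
The plan is to prove Corollary~\ref{cor:invertible} by checking the required isomorphisms locally on the charts $U_k$, $0 \le k \le n$, using the equivalence of categories $\rqgr \T \simeq \rmod \ms O_\X$ of Theorem~\ref{newthm:catequiv} together with the explicit local descriptions of $\sMi$ and $\sNi$ computed in Lemmata~\ref{lem:localiseMi} and \ref{lem:localiseNi}. First I would observe that for any $\vec n \in \ZZ^3$-graded submodules of a quantum torus, tensor products and localisations interact well, so it suffices to show that $\bpi(\sMi \otimes_\T \sNi) \cong \bpi \T$ as $\ms O_\X$-modules (and symmetrically), since an isomorphism in $\rqgr \T$ is detected after applying $\bpi$. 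Concretely, on the chart $U_k$ we must show $(\sMi \otimes_\T \sNi)[t_k^{-1}]_0 \cong \msOk$ as $\msOk$-modules, compatibly with restriction to the overlaps $U_{jk}$.

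The key computation is then the following: by Lemma~\ref{lem:localiseMi}, $\sMi[t_k^{-1}]_0$ is the free rank-one $\msOk$-module generated by $x$ if $k \le i-1$ and by $z^i$ if $k \ge i$; by Lemma~\ref{lem:localiseNi}, $\sNi[t_k^{-1}]_0$ is free of rank one on $x^{-1}$ (resp.\ $z^{-i}$) in the same two cases. Since $x$ and $z$ are normal in $\T$ (indeed all the generators $q^\bullet$-commute, as noted at the start of Section~\ref{TORICVAR}), the products $x \cdot x^{-1} = 1$ and $z^i \cdot z^{-i} = 1$ lie in $\msOk$, and one checks directly that the multiplication map $\sMi[t_k^{-1}]_0 \otimes_{\msOk} \sNi[t_k^{-1}]_0 \to \msOk$ is an isomorphism of $\msOk$-modules in each case. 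The same argument with the roles reversed gives $\sNi[t_k^{-1}]_0 \otimes_{\msOk} \sMi[t_k^{-1}]_0 \cong \msOk$. These local isomorphisms are induced by multiplication inside the ambient quantum torus $D$, hence are automatically compatible with the restriction maps to overlaps, so they glue to the desired isomorphisms $\sMi \otimes_\T \sNi \cong \T \cong \sNi \otimes_\T \sMi$ in $\rqgr \T$ (noting also that the generators of $\sMi$, $\sNi$ have degree $0$ in the projective grading, so no Serre twist is introduced).

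For the final sentence, once we know $\sMi$ and $\sNi$ are mutually inverse in $\rqgr \T$, the functors $-\otimes_\T \sMi$ and $-\otimes_\T \sNi$ on $\rqgr \T$ are quasi-inverse: the associativity of tensor product together with the isomorphisms $\sMi \otimes_\T \sNi \cong \T \cong \sNi \otimes_\T \sMi$ in $\rqgr \T$ gives natural isomorphisms $(-\otimes_\T \sMi) \otimes_\T \sNi \cong -\otimes_\T (\sMi \otimes_\T \sNi) \cong \id_{\rqgr \T}$ and similarly on the other side. Here one should be slightly careful that tensoring with $\sMi$ and $\sNi$ descends to well-defined functors on $\rqgr \T$ rather than merely $\rGr \T$: this holds because $\sMi, \sNi$ are finitely generated projective (indeed locally free of rank one by the lemmata above) on both sides away from the irrelevant ideal, so tensoring preserves torsion modules. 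The symmetric statement on the left is proved identically using the left-module versions of Lemmata~\ref{lem:localiseMi} and \ref{lem:localiseNi}.

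The main obstacle I anticipate is bookkeeping rather than conceptual: one must track the various powers $q^\bullet$ appearing in the relations so that the multiplication maps really land in $\msOk$ and really are $\msOk$-linear (not just $\kk$-linear), and one must verify the gluing/compatibility on the double overlaps $U_{jk}$ carefully — though since everything takes place inside the single ambient ring $D$ with all restriction maps being honest inclusions, this compatibility is essentially automatic. The only genuine subtlety is confirming that $-\otimes_\T \sMi$ is exact on $\rqgr \T$ and preserves noetherian objects, which follows from local freeness of rank one on each chart.
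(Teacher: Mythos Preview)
Your proposal is correct and follows exactly the same approach as the paper: reduce to a local check via the equivalence $\rqgr \T \simeq \rmod \ms O_\X$ of Theorem~\ref{newthm:catequiv}, then invoke Lemmata~\ref{lem:localiseMi} and~\ref{lem:localiseNi} to see that on each chart $U_k$ the modules $\sMi$ and $\sNi$ are free of rank one on mutually inverse generators. The paper's proof is a one-liner to this effect; you have simply spelled out the details (the explicit multiplication map, the gluing compatibility inside $D$, and the descent of the tensor functors to $\rqgr \T$) that the paper leaves implicit.
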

\begin{proof}
Since there is an equivalence $\rqgr \T \simeq \rmod \ms O_\X$, it suffices to prove these isomorphisms locally; this follows quickly from Lemmata \ref{lem:localiseMi} and \ref{lem:localiseNi}.
\end{proof}

We next aim to show that the tensor product 
\[ \sM_1 \otimes_T \cdots \otimes_T \sM_n\]
is isomorphic to $\T(1)$ in $\rqgr \T$; geometrically, this means that this object is ``ample''. We first require an easy lemma:

\begin{lemma}
For $0 \leq k \leq n$, we have 
$\T[t_k^{-1}]_1 = t_k \msOk$.
\end{lemma}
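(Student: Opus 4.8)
The statement to prove is that $\T[t_k^{-1}]_1 = t_k \ms{O}_\X(U_k)$ for $0 \leq k \leq n$. The plan is to use the toric embedding $\eta: \T \hookrightarrow D$ together with the one-dimensionality of the toric-graded pieces of $\T$ (Proposition~\ref{prop:OneDimensionalPieces} and Lemma~\ref{lem:QgrTOneDimensional}) to reduce this to an elementary bookkeeping statement about monomials. Recall from \eqref{eqn:eta} that under the toric embedding the generators of $\T$ are identified, up to scalars, with monomials in $\ba, \bb, \bs$, with the third component of the $\ZZ^3$-grading recording the projective degree. In particular $t_k \mapsto \qb \ba^{\binom{k}{2}} \bb^{\binom{k+1}{2}} \bs$, so $t_k$ has projective degree $1$, and $\ms{O}_\X(U_k) = \T[t_k^{-1}]_0$ was computed in Lemma~\ref{lem:dag}(1).

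First I would establish the inclusion $t_k \ms{O}_\X(U_k) \subseteq \T[t_k^{-1}]_1$: this is immediate since $t_k \in \T_1$, hence $t_k \in \T[t_k^{-1}]_1$, and $\ms{O}_\X(U_k) = \T[t_k^{-1}]_0$ acts on $\T[t_k^{-1}]_1$ preserving the grading. For the reverse inclusion, let $f \in \T[t_k^{-1}]_1$ be nonzero. Multiplying by $t_k^{-1}$ (an invertible element of projective degree $1$ in $\T[t_k^{-1}]$) gives $t_k^{-1} f \in \T[t_k^{-1}]_0 = \ms{O}_\X(U_k)$, and hence $f = t_k (t_k^{-1} f) \in t_k \ms{O}_\X(U_k)$, where I use that $t_k$ is normal so left and right multiplication by $t_k^{-1}$ differ only by a power of $q$. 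This is essentially the same clearing-denominators argument as in the proof of Lemma~\ref{lem:QgrTOneDimensional}, and in fact the lemma follows formally: $\T[t_k^{-1}]$ is a strongly $\ZZ$-graded ring in the projective grading (since $t_k \in \T[t_k^{-1}]_1$ is a unit), so $\T[t_k^{-1}]_1 = \T[t_k^{-1}]_0 \cdot t_k = t_k \cdot \T[t_k^{-1}]_0$.

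The only mild subtlety — and the closest thing to an obstacle, though it is very minor — is checking that $\T[t_k^{-1}]$ really is strongly graded with respect to the projective grading, i.e.\ that $t_k$ is genuinely invertible in degree $1$ and that $\T[t_k^{-1}]_{-1} \cdot \T[t_k^{-1}]_1 = \T[t_k^{-1}]_0$. This is clear because $t_k^{-1} \in \T[t_k^{-1}]_{-1}$ and $t_k t_k^{-1} = 1$. Thus the proof is a one-line consequence of strong gradedness, and I would present it in exactly that compact form, perhaps with a parenthetical remark identifying $\T[t_k^{-1}]_1$ concretely with $\kk_{q^{n+1}}[\ba^{-(k-1)}\bb^{-k}, \ba^k\bb^{k+1}] \cdot \ba^{\binom{k}{2}}\bb^{\binom{k+1}{2}}\bs$ inside $D$ via the toric embedding, for later use.
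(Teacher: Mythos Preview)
Your proof is correct and takes essentially the same approach as the paper: both use that $t_k$ is invertible of projective degree $1$ in $\T[t_k^{-1}]$, so any degree-$1$ element can be written as $t_k$ times a degree-$0$ element. The paper phrases this by checking it on the specific generators $t_i = t_k(t_k^{-1}t_i)$, while you apply the same identity to an arbitrary $f \in \T[t_k^{-1}]_1$; your formulation via strong gradedness is slightly more direct since it avoids having to justify why it suffices to check on the $t_i$.
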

\begin{proof}
It is clear that $t_k \msOk \subseteq \T[t_k^{-1}]_1$. For the reverse inclusion, it suffices to show that $t_0, \dots, t_n \in t_k \msOk$, but this is clear, since $t_i = t_k (t_k^{-1} t_i) \in t_k \msOk$ for $0 \leqslant i \leqslant n$.  
\end{proof}

Recall that the {\em graded quotient ring of $\T$} (in the projective grading) is
\[\Qgr(T) = \Qgr[\ZZ](\T) = \T[h^{-1} \mid 0 \neq h \in \T_d \mbox{ for some $d\in \NN$}] .\]
\begin{lemma} \label{lem:BetaHom}
The graded module homomorphism 
\begin{gather*}
\beta : \sM^{(1)} \otimes_\T \sM^{(2)} \otimes_\T \dots \otimes_\T \sM^{(n)} \to \Qgr(\T)(1), \quad \beta(m_1 \otimes \dots \otimes m_n) = m_1 m_2 \dots m_n t_0 x^{-n}
\end{gather*}
has image in $\T(1)$.
\end{lemma}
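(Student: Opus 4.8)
The natural first step is to reduce the claim to a local statement. Since $\T(1)$ is the intersection over $0 \leq k \leq n$ of the subspaces $\T[t_k^{-1}]$ of $\Qgr(\T)$, to show $\operatorname{im}\beta \subseteq \T(1)$ it suffices to show that for each $k$, the composite map $\sM^{(1)} \otimes_\T \cdots \otimes_\T \sM^{(n)} \to \Qgr(\T)(1) \to \Qgr(\T)[t_k^{-1}](1)$ lands in $\T[t_k^{-1}](1)$. Equivalently, passing to degree-zero parts, I want to show that the localised image lies in $\T[t_k^{-1}]_1 = t_k \,\msOk$ (using the lemma just proved that $\T[t_k^{-1}]_1 = t_k\msOk$).

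The key computational input is the description of $\sM^{(i)}[t_k^{-1}]_0$ from Lemma~\ref{lem:localiseMi}: it equals $x\,\msOk$ when $0 \leq k \leq i-1$ and $z^i\,\msOk$ when $i \leq k \leq n$. Fix $k$. Then for $i \leq k$ we have $\sM^{(i)}[t_k^{-1}]_0 = z^i \msOk$ and for $i > k$ we have $\sM^{(i)}[t_k^{-1}]_0 = x\,\msOk$. So after localising, a simple tensor $m_1 \otimes \cdots \otimes m_n$ maps into
\[
\Big(\prod_{i=1}^{k} z^i\Big)\Big(\prod_{i=k+1}^{n} x\Big)\,\msOk \cdot t_0 x^{-n} \;=\; q^\bullet\, z^{\binom{k+1}{2}} x^{\,n-k}\, t_0\, x^{-n}\,\msOk \;=\; q^\bullet\, z^{\binom{k+1}{2}}\, t_0\, x^{-k}\,\msOk,
\]
where I use that all the generators $q^\bullet$-commute inside $D$ (via the toric embedding) so that factors can be reordered at the cost of a power of $q$, and that $\msOk$ is a subring so it absorbs the scalar. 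It therefore remains to check that $z^{\binom{k+1}{2}} t_0 x^{-k} \in t_k\,\msOk$, i.e.\ (up to a power of $q$) that $t_k^{-1} z^{\binom{k+1}{2}} t_0 x^{-k} \in \msOk$. Using the relation $x t_i = q^{\alpha_i} z^i t_{i-1}$ of $\Tpres$ repeatedly, one gets $x^k t_k = q^\bullet z^{1+2+\cdots+k} t_0 = q^\bullet z^{\binom{k+1}{2}} t_0$, so $z^{\binom{k+1}{2}} t_0 x^{-k} = q^\bullet t_k$, which lies in $t_k \msOk$ trivially. (The edge cases $k=0$, where the product $\prod_{i=1}^0$ is empty and one checks directly that the image is $t_0 x^{-n} \cdot \prod_{i=1}^n x \cdot \msO(U_0) = q^\bullet t_0 \msO(U_0)$, and $k=n$, are handled the same way.)

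I expect no serious obstacle here: the only thing requiring care is bookkeeping the powers of $q$ and making sure the reorderings are justified — which they are, since everything sits inside the quantum torus $D$ where all relevant elements pairwise $q^\bullet$-commute, and the precise exponents are irrelevant because $\msOk$ is a ring containing all scalars. The one genuine point to verify cleanly is the identity $x^k t_k = q^\bullet z^{\binom{k+1}{2}} t_0$ in $\T$, which follows by an immediate induction from the defining relation $x t_i = q^{\alpha_i} z^i t_{i-1}$ of $\Tpres \cong \T$. Since $\beta$ is a map of graded $\T$-bimodules and $\T(1) \subseteq \Qgr(\T)(1)$ is a sub-bimodule, checking the containment on simple tensors suffices, completing the argument.
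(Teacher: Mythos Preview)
Your argument has a genuine gap at the very first step. You assert that $\T(1)$ is the intersection $\bigcap_{k=0}^n \T[t_k^{-1}](1)$ inside $\Qgr(\T)(1)$, and then reduce to a local check. But this intersection property is equivalent to $H^1_{\T_+}(\T)_1 = 0$, and the paper has only established the degree-zero analogue (Proposition~\ref{prop:localcohomology} gives $H^s_{\T_+}(\T)_0 = 0$). Nothing proved so far tells you that a degree-one element lying in every $\T[t_k^{-1}]$ must lie in $\T$; proving this would require extending the \v{C}ech computations of Section~\ref{LOCALCOHCOMP} to positive projective degree, which is additional work not carried out here. Note that localisation \emph{is} legitimate for proving isomorphisms in $\rqgr \T$ (as in Proposition~\ref{prop:ProductOfMi}), but the present lemma is a statement in $\rGr \T$: it asserts that specific elements of $\Qgr(\T)$ actually lie in $\T$, and this cannot be checked chart-by-chart without the intersection property.

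The irony is that your key computation is exactly right and is essentially what the paper does. The identity $x^k t_k = q^\bullet z^{\binom{k+1}{2}} t_0$, equivalently $(z^k x^{-1})(z^{k-1}x^{-1})\cdots(z x^{-1})t_0 = q^\bullet t_k$, obtained by iterating $xt_i = q^{\alpha_i} z^i t_{i-1}$, is the heart of the matter. The paper simply applies this directly: for a simple tensor $m_1 \otimes \cdots \otimes m_n$ with each $m_j \in \{x, z^j\}$, say with exactly $i$ of the $m_j$ equal to $z^j$, the product $m_1 \cdots m_n$ is $q^\bullet z^r x^{n-i}$ with $r \geq \binom{i+1}{2}$, and then
\[
\beta(m_1 \otimes \cdots \otimes m_n) = q^\bullet z^r x^{n-i} t_0 x^{-n} = q^\bullet z^{r-\binom{i+1}{2}}\,(z^i x^{-1})\cdots(z x^{-1}) t_0 = q^\bullet z^{r-\binom{i+1}{2}} t_i \in \T.
\]
This handles all generating tensors at once, with no appeal to localisation. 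Your detour through charts replaces this uniform computation with a separate check for each $k$, at the cost of introducing an unjustified globalisation step. The fix is simply to drop the localisation framing and run the direct computation you already have.
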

\begin{proof}
It suffices to show that the generators of the domain of $\beta$ get sent to an element of $\T$. Suppose that $m_1 \otimes \dots \otimes m_n$ is such that each $m_j$ is either $x$ or $z^j$, and suppose that exactly $n-i$ of $m_1, \dots, m_n$ are equal to $x$. Then the product of the remaining $m_j$ is equal to $z^r$ for some $r$, where necessarily $r \geqslant 1+2+\dots+(i-1)+i = \frac{1}{2}i(i+1)$. Then
\begin{align*}
\beta(m_1 \otimes \dots \otimes m_n) &= q^\bullet z^r x^{n-i}t_0 x^{-n} \\
&= q^\bullet z^{r - \frac{1}{2}i(i+1)} \big(z^i x^{-1}\big) \big(z^{i-1} x^{-1}\big) \dots \big(z^{2} x^{-1}\big) \big(z^1 x^{-1}\big) t_0 \\
&= q^\bullet z^{r - \frac{1}{2}i(i+1)} \big(z^i x^{-1}\big) \big(z^{i-1} x^{-1}\big) \dots \big(z^{2} x^{-1}\big) t_1 \\
&= q^\bullet z^{r - \frac{1}{2}i(i+1)} \big(z^i x^{-1}\big) \big(z^{i-1} x^{-1}\big) \dots \big(z^{3} x^{-1}\big) t_2 \\
& \hspace{25pt} \vdots \\
&= q^\bullet z^{r - \frac{1}{2}i(i+1)} z^{i} x^{-1} t_{i-1} \\
&= q^\bullet z^{r - \frac{1}{2}i(i+1)} t_i,
\end{align*}
which lies in $\T$.
\end{proof}

\begin{proposition}\label{prop:ProductOfMi}
In $\rqgr \T$, we have
\begin{align*}
\sM^{(1)} \otimes_\T \sM^{(2)} \otimes_\T \dots \otimes_\T \sM^{(n)} \cong \T(1).
\end{align*}
\end{proposition}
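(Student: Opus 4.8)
The plan is to show that the homomorphism $\beta$ from Lemma~\ref{lem:BetaHom} becomes an isomorphism after applying $\pi$, by checking this locally on the charts $U_k$ using the equivalence $\rqgr \T \simeq \rmod \ms O_\X$ of Theorem~\ref{newthm:catequiv}. Since $\beta$ is a map of graded $\T$-modules with image in $\T(1)$ (by Lemma~\ref{lem:BetaHom}), it suffices to prove that the induced map $\bpi(\sM^{(1)} \otimes_\T \cdots \otimes_\T \sM^{(n)}) \to \bpi \T(1)$ is an isomorphism of $\ms O_\X$-modules, and for that it is enough to check on each $U_k$ for $0 \leq k \leq n$, i.e.~after localising at $t_k$ and taking degree-$0$ parts.

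First I would compute the left-hand side locally. By Lemma~\ref{lem:localiseMi}, $\sM^{(i)}[t_k^{-1}]_0$ equals $x\,\msOk$ if $k \leq i-1$ and $z^i\,\msOk$ if $k \geq i$; since each of these is a rank-one free $\msOk$-module (generated by a normal element, and $\msOk \cong \kk_{q^{n+1}}[u,v]$ is a domain), the tensor product $\bigotimes_{i=1}^n \sM^{(i)}[t_k^{-1}]_0$ over $\msOk$ is again free of rank one, generated by $x^{n-k} z^{k + (k-1) + \cdots + 1}$ — that is, by $x^{n-k} z^{k(k+1)/2}$ up to a scalar. Meanwhile $\T(1)[t_k^{-1}]_0 = \T[t_k^{-1}]_1 = t_k\,\msOk$ by the lemma just before Lemma~\ref{lem:BetaHom}, which is also free of rank one over $\msOk$. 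So locally $\beta$ is a map between two rank-one free $\msOk$-modules, and I would verify it is an isomorphism by tracking where the generator goes: the computation in the proof of Lemma~\ref{lem:BetaHom}, specialised to the case where exactly $n-k$ of the $m_j$ equal $x$ (so $r = k(k+1)/2$, giving $z^{r - k(k+1)/2} = 1$), shows $\beta(x^{n-k} z^{k(k+1)/2} \otimes \text{-form}) = q^\bullet t_k$, which generates $t_k\,\msOk$. Hence $\beta[t_k^{-1}]_0$ sends a free generator to a free generator, so it is an isomorphism of $\msOk$-modules for every $k$.

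Having checked this on every chart $U_k$, $0 \leq k \leq n$, and recalling $\X = \bigcup_{k=0}^n U_k$, the induced morphism of sheaves $\bpi\beta$ is an isomorphism on a cover of $\X$, hence an isomorphism of $\ms O_\X$-modules. Transporting back along the equivalence of Theorem~\ref{newthm:catequiv}, this yields $\pi(\sM^{(1)} \otimes_\T \cdots \otimes_\T \sM^{(n)}) \cong \pi\T(1)$, i.e.~$\sM^{(1)} \otimes_\T \cdots \otimes_\T \sM^{(n)} \cong \T(1)$ in $\rqgr \T$, as desired.

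The only mild subtlety I anticipate is bookkeeping: one must make sure that when tensoring the local modules $\sM^{(i)}[t_k^{-1}]_0$ over $\msOk$ there is no collapse (this is fine, since $\msOk$ is a domain and each factor is a free rank-one submodule of the division ring $Q(\kk_{q^{n+1}}[\ba^{\pm 1},\bb^{\pm 1}])$, so the product inside that division ring is again free rank one) and that the generator computation is carried out with the correct power of $q$ — but as in Lemma~\ref{lem:BetaHom} the exact scalar $q^\bullet$ is irrelevant, only its nonvanishing matters. There is no serious obstacle; the whole argument is a localisation-plus-freeness check, with Lemmata~\ref{lem:localiseMi} and~\ref{lem:BetaHom} already doing the essential work.
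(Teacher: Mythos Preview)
Your proposal is correct and follows essentially the same argument as the paper's proof: both localise the map $\beta$ of Lemma~\ref{lem:BetaHom} at each $t_k$, use Lemma~\ref{lem:localiseMi} to identify the domain as the free rank-one $\msOk$-module generated by $z^{k(k+1)/2}x^{n-k}$ and the codomain as $t_k\,\msOk$, and then invoke the computation in the proof of Lemma~\ref{lem:BetaHom} to see that the generator maps to $q^\bullet t_k$. The only differences are cosmetic (order of factors in the generator, and your added remark on why the local tensor product has no collapse).
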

\begin{proof}
Let $\beta$ be as in Lemma~\ref{lem:BetaHom} and fix $0 \leqslant k \leqslant n$. 
From the equivalence $\rqgr \T \simeq \rmod \ms O_\X$  it suffices to show that $\beta$ is locally an isomorphism onto $\T(1)$. 
Localising $\beta$ at $t_k$, by Lemma~\ref{lem:localiseMi} in degree $0$ we obtain
\begin{align*}
\widetilde{\beta} : \underbrace{\left( z \, \msOk \otimes z^2 \, \msOk \otimes \dots z^k \, \msOk \right)}_{k \text{ terms}} \otimes \underbrace{\left( x \, \msOk \otimes \dots \otimes x \, \msOk \right)}_{n-k \text{ terms}} \to t_k \, \msOk,
\end{align*}
where the tensor products are over $\msOk$. 
The domain of this map is free of rank $1$, with generator $z^{\tfrac{1}{2}k(k+1)} x^{n-k}$, and we can identify it with the map
\begin{align*}
\widetilde{\beta} : z^{\tfrac{1}{2}k(k+1)} x^{n-k} \, \msOk \to t_k \, \msOk, \quad m \mapsto t_0 x^{-n} m.
\end{align*}
By the proof of Lemma~\ref{lem:BetaHom}, we have
\begin{align*}
\widetilde{\beta}\big(z^{\tfrac{1}{2}k(k+1)} x^{n-k}\big) = t_0 x^{-n} z^{\tfrac{1}{2}k(k+1)} x^{n-k} = q^\bullet \big( z^k x^{-1} \big) \big( z^{k-1} x^{-1} \big) \dots \big( z x^{-1} \big) t_0 = q^\bullet t_k,
\end{align*}
which shows that $\widetilde{\beta}$ is an isomorphism. Thus $\beta$ is locally an isomorphism, and hence an isomorphism in $\rqgr \T$.
\end{proof}

We end this section by identifying some short exact sequences of $\rqgr T$-modules in which the modules $\sMi$ and $\sNi$ sit.

\begin{proposition}
Write
\begin{align*}
\gamma : \T \to \T, \quad \gamma(t) = xtx^{-1},
\end{align*}
which is an automorphism of $\T$, and which restricts to an automorphism of any right (or left) ideal of $\T$. Then, for any $1 \leqslant i, j \leqslant n$, the sequence
\begin{align}
0 \to \T \xrightarrow{t \mapsto \begin{pmatrix} \gamma^{-1}(z^i)t \\ -xt \end{pmatrix}} \sM^{(i)} \oplus \sM^{(j)} \xrightarrow{\begin{pmatrix}m \\ m' \end{pmatrix} \mapsto \gamma(m) \otimes x + z^i \otimes m'} \sMi \otimes_\T \sM^{(j)} \to 0 \label{BijSequence}
\end{align}
is exact in $\rqgr \T$.
\end{proposition}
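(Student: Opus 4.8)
The plan is to verify exactness locally, using the equivalence $\rqgr\T \simeq \rmod \ms O_\X$ from Theorem~\ref{newthm:catequiv}, so that it suffices to check the sequence is exact after applying $\bpi$ and restricting to each principal open set $U_k$, $0\leq k\leq n$. Concretely, I would localise \eqref{BijSequence} at each $t_k$ and take the degree-$0$ part, obtaining a three-term complex of $\msOk$-modules, and show that this complex is exact. The left-hand module $\T$ localises to a free rank-one $\msOk$-module, and by Lemma~\ref{lem:localiseMi} each of $\sMi$, $\sMj$ localises to a free rank-one $\msOk$-module (generated by $x$ or a power of $z$, according to whether $k$ is below or at-or-above the relevant index). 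By Corollary~\ref{cor:invertible} (mutual inverseness of $\sMi$ and $\sNi$), or directly from Lemma~\ref{lem:localiseMi}, the tensor product $\sMi\otimes_\T\sMj$ also localises to a free rank-one $\msOk$-module. So after localisation we are looking at a complex $0\to \msOk \to \msOk \oplus \msOk \to \msOk \to 0$ of free rank-one modules, and exactness will follow once I check the two maps are, respectively, a split injection onto a direct summand and the induced surjection with matching kernel.

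**Key steps, in order.** First I would confirm that the two stated maps are well-defined morphisms in $\rGr\T$ and that the composite is zero: this is the routine check that $\gamma$ is an automorphism restricting to ideals (immediate since $x$ is normal in $\T$), that $\gamma^{-1}(z^i)t \in \sMi$ and $xt\in \sMj$, that the second map is $\T$-balanced and lands in $\sMi\otimes_\T\sMj$, and that $\gamma(\gamma^{-1}(z^i)t)\otimes x + z^i\otimes(-xt) = z^i t\otimes x - z^i\otimes xt = z^i\otimes(tx - xt)\cdot(\dots)$; more carefully, $\gamma(\gamma^{-1}(z^i)t)\otimes x = z^i\gamma(t)\otimes x = z^i\otimes \gamma(t)x = z^i\otimes xt$ (using $\gamma(t)x = xt$ and that the tensor is over $\T$), which cancels with $z^i\otimes xt$, giving composite zero. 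Second, I would fix $k$ and split into the cases $k\leq \min(i,j)-1$, $\min(i,j)\leq k\leq \max(i,j)-1$ (if $i\neq j$), and $k\geq\max(i,j)$, tracking which generator ($x$ versus $z^i$, $z^j$) survives in each localised summand via Lemma~\ref{lem:localiseMi}. Third, in each case I would write the localised first map as a column vector of elements of $\msOk$ (a unit times a monomial in $z x^{-1}$ or its inverse, as in the computations in the proof of Proposition~\ref{prop:ProductOfMi}) and observe one of its two entries is a unit in $\msOk$, so the map is a split mono with locally free cokernel of rank one; then identify that cokernel with the localised $\sMi\otimes_\T\sMj$ under the second map, using the identifications $\sMi[t_k^{-1}]_0\otimes_{\msOk}\sMj[t_k^{-1}]_0 \cong (\sMi\otimes_\T\sMj)[t_k^{-1}]_0$. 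Finally, conclude exactness of the original sequence in $\rqgr\T$ by Theorem~\ref{newthm:catequiv}.

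**Main obstacle.** The genuinely delicate point is the bookkeeping of the $q^\bullet$-scalars and the precise generators in the localised complex: one must be sure that, after identifying each localised term with $\msOk$, the induced first map has a \emph{unit} entry (so that the complex really is the trivial split complex $0\to \msOk\xrightarrow{(u,v)^T}\msOk^2\xrightarrow{(-v',u')}\msOk\to 0$ with $u$ or $v$ a unit and the composite zero forcing exactness), rather than merely a nonzero entry. This requires carefully using the relations $xt_{i} = q^\bullet z^i t_{i-1}$ etc.\ to rewrite $\gamma^{-1}(z^i)$ and $x$ in terms of $t_k^{\pm1}$ and the $\msOk$-generators $t_{k-1}t_k^{-1}$, $t_{k+1}t_k^{-1}$, exactly as in Lemmata~\ref{lem:localiseMi} and~\ref{lem:localiseNi}. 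The argument is otherwise mechanical; the case division by the position of $k$ relative to $i$ and $j$ is the only structural subtlety, and I expect three or four essentially parallel computations, each short.
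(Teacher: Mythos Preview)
Your proposal is correct and follows essentially the same approach as the paper: check that \eqref{BijSequence} is a complex, then invoke the equivalence $\rqgr\T \simeq \rmod\ms O_\X$ from Theorem~\ref{newthm:catequiv} to reduce to a local verification on each $U_k$, splitting into cases according to the position of $k$ relative to $i$ and $j$ via Lemma~\ref{lem:localiseMi}. The paper's proof is terser---it simply writes down the three localised sequences (for $i\leq j$) and asserts their exactness---while you have correctly anticipated that the underlying reason is that one entry of the localised first map is a unit in $\msOk$.
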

\begin{proof}
It is not immediately obvious, but nevertheless straightforward to check, that the second map in \eqref{BijSequence} is $\T$-linear and that this sequence forms a complex. 
It remains to verify exactness, and it suffices to check this locally by the equivalence $\rqgr \T \simeq \rmod \ms O_\X$. 
Assume first that $i \leqslant j$. Localising at $t_k$, we obtain the following sequences:
\begin{align*}
\begin{array}{cll}
0 \to \msOk \to x \msOk \oplus x \msOk \to x^2 \msOk \to 0 & & \text{if } 0 \leqslant k \leqslant i-1  \\[2pt]
0 \to \msOk \to z^i \msOk \oplus  x \msOk \to xz^i \msOk \to 0 & & \text{if } i \leqslant k \leqslant j-1 \\[2pt]
0 \to \msOk \to z^i \msOk \oplus z^j \msOk \to z^{i+j} \msOk \to 0 & & \text{if } j \leqslant k \leqslant n,
\end{array}
\end{align*}
each of which are exact. We obtain similar exact sequences when $i > j$. The result now follows.
\end{proof}

\begin{corollary}
For any $1 \leqslant i , j \leqslant n$, the sequence
\begin{align}
0 \to \sN^{(j)} \xrightarrow{n \mapsto \begin{psmallmatrix} \gamma^{-1}(z^i) \otimes n \\ -xn \end{psmallmatrix}} \big(\sM^{(i)} \otimes_\T \sN^{(j)}\big) \oplus \T \xrightarrow{\begin{psmallmatrix}m \otimes n \\ r \end{psmallmatrix} \mapsto \gamma(m) xn + z^i r} \sMi \to 0 \label{BijSequence2}
\end{align}
is exact in $\rqgr \T$. In particular, for any $1 \leqslant i \leqslant n$, the sequence
\begin{align}
0 \to \sNi \xrightarrow{\begin{pmatrix} \gamma^{-1}(z^i) \\ -x \end{pmatrix}} \T^{\oplus 2} \xrightarrow{\begin{pmatrix} x & z^i \end{pmatrix}} \sMi \to 0 \label{BiiSequence}
\end{align}
is exact in $\rqgr \T$, where the maps are given by multiplication from the left by the indicated matrices.
\end{corollary}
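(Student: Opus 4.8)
The plan is to derive both sequences from the exact sequence~\eqref{BijSequence} of the preceding proposition by applying the exact autoequivalence $-\otimes_\T \sN^{(j)}$ of $\rqgr \T$ furnished by Corollary~\ref{cor:invertible}. Tensoring~\eqref{BijSequence} on the right with $\sN^{(j)}$ and invoking the isomorphisms $\T\otimes_\T\sN^{(j)}\cong \sN^{(j)}$, $\sM^{(j)}\otimes_\T\sN^{(j)}\cong \T$, and $\sM^{(i)}\otimes_\T\sM^{(j)}\otimes_\T\sN^{(j)}\cong \sM^{(i)}\otimes_\T\T\cong \sMi$ (all instances of Corollary~\ref{cor:invertible} together with associativity of $\otimes_\T$) turns the three terms of~\eqref{BijSequence} into the three terms of~\eqref{BijSequence2}. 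Since $-\otimes_\T\sN^{(j)}$ is exact on $\rqgr \T$, the resulting sequence is automatically exact in $\rqgr \T$, so the only thing left is to check that the two maps become exactly the ones displayed in~\eqref{BijSequence2}.

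That verification is a direct, if slightly fiddly, computation with the above identifications, where the relevant realisation of $\sM^{(j)}\otimes_\T\sN^{(j)}\cong\T$ is the multiplication map $m'\otimes n\mapsto m'n$ inside $\Qgr(\T)$. Tracking the first map of~\eqref{BijSequence}, namely $t\mapsto \begin{psmallmatrix}\gamma^{-1}(z^i)t\\ -xt\end{psmallmatrix}$, through the identifications sends $n\in\sN^{(j)}$ to $\begin{psmallmatrix}\gamma^{-1}(z^i)\otimes n\\ -xn\end{psmallmatrix}$; tracking the second map, $\begin{psmallmatrix}m\\ m'\end{psmallmatrix}\mapsto \gamma(m)\otimes x + z^i\otimes m'$, sends $\begin{psmallmatrix}m\otimes n\\ r\end{psmallmatrix}$ (with $r=m'n\in\T$) to $\gamma(m)(xn)+z^i r$. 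These agree with the maps in~\eqref{BijSequence2}, giving exactness of~\eqref{BijSequence2} in $\rqgr \T$ for all $1\leqslant i,j\leqslant n$.

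Finally, \eqref{BiiSequence} is the case $i=j$ of~\eqref{BijSequence2}, simplified via $\sM^{(i)}\otimes_\T\sN^{(i)}\cong\T$ (so that $\bigl(\sM^{(i)}\otimes_\T\sN^{(i)}\bigr)\oplus\T\cong\T^{\oplus 2}$) and the identity $\gamma(m)xn = xmx^{-1}\cdot xn = x(mn)$: writing $t=mn\in\T$ for the image of $m\otimes n$, the second map $\begin{psmallmatrix}m\otimes n\\ r\end{psmallmatrix}\mapsto \gamma(m)xn+z^i r$ becomes $\begin{psmallmatrix}t\\ r\end{psmallmatrix}\mapsto xt+z^i r$, i.e.\ left multiplication by $\begin{psmallmatrix}x & z^i\end{psmallmatrix}$, while the first map is unchanged and is left multiplication by $\begin{psmallmatrix}\gamma^{-1}(z^i)\\ -x\end{psmallmatrix}$. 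I do not anticipate a genuine obstacle here; the only point requiring care is the bookkeeping of the canonical isomorphisms of Corollary~\ref{cor:invertible} (and the scalars $q^\bullet$ hidden inside them), together with the observation $\gamma(m)xn = x(mn)$, which is precisely what makes the $\gamma$-twist collapse in~\eqref{BiiSequence}. As an alternative, both sequences can instead be checked to be complexes by hand and then shown exact locally through the equivalence $\rqgr \T\simeq\rmod \ms O_\X$ of Theorem~\ref{newthm:catequiv}, exactly as in the proof of the preceding proposition, but the tensoring argument is shorter.
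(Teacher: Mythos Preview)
Your proof is correct and follows essentially the same approach as the paper: apply the exact autoequivalence $-\otimes_\T \sN^{(j)}$ to \eqref{BijSequence}, use the multiplication isomorphisms of Corollary~\ref{cor:invertible} to identify the terms, and then specialise to $j=i$. The paper packages the identification step as a commutative diagram with explicit vertical isomorphisms $t\otimes n\mapsto tn$, etc., whereas you track the maps directly, but the content is the same.
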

\begin{proof}
There is a commutative diagram
\begin{equation*}
\begin{tikzcd}[>=stealth,ampersand replacement=\&,column sep=40pt] 
0 \arrow{r} 
\&[-30pt] \T \otimes \sN^{(j)} \arrow{r}[above,inner sep=10pt]{t \otimes n \mapsto \begin{pmatrix} \gamma^{-1}(x^i) t \otimes n \\ -xt \otimes n \end{pmatrix}} \arrow{d}[description]{t \otimes n \mapsto tn} 
\& \big(\sMi \otimes \sN^{(j)}\big) \oplus \big(\sMj \otimes \sN^{(j)}\big) \arrow{r}[above,inner sep=10pt]{\begin{pmatrix} m \otimes n \\ m' \otimes n' \end{pmatrix} \mapsto \gamma(m) \otimes x \otimes n + z^i \otimes m' \otimes n'} \arrow{d}[description]{\begin{pmatrix}m \otimes n \\ m' \otimes n'\end{pmatrix} \mapsto \begin{pmatrix}m \otimes n \\ m'n' \end{pmatrix}} 
\& \sMi \otimes \sMj \otimes \sN^{(j)} \arrow{r} \arrow{d}[description]{m \otimes m' \otimes n \mapsto m (m'n')} 
\&[-30pt] 0 \\[40pt]
0 \arrow{r} 
\& \sNj \arrow{r}[below,inner sep=5pt]{n \mapsto \begin{pmatrix} \gamma^{-1}(z^i) \otimes n \\ -xn \end{pmatrix}} 
\& \big( \sMi \otimes \sNj \big) \oplus \T \arrow{r}[below,inner sep=5pt]{\begin{pmatrix} m \otimes n \\ t \end{pmatrix} \mapsto \gamma(m) xn + z^i r }
\& \sMi \arrow{r} 
\& 0
\end{tikzcd}
\end{equation*}
where the top row is obtained by applying the exact functor $- \otimes_\T \sN^{(j)}$ to \eqref{BijSequence}, and hence is exact, and whose vertical arrows are isomorphisms. This implies exactness of the bottom row, which is precisely \eqref{BijSequence2}. Finally, exactness of \eqref{BiiSequence} follows by specialising to the case $j=i$.
\end{proof}

\subsection{\texorpdfstring{$\sM$}{M} is a tilting object} 
Recall our notation
\begin{align*}
M \coloneqq \bigoplus_{i=0}^n M^{(i)} \qquad \text{and} \qquad \sM \coloneqq \bigoplus_{i=0}^n \sMi
\end{align*}
from Sections \ref{sec:OurSingularity} and \ref{sec:OurModules}. 
The goal of this subsection is to determine $\Ext_{\rQgr \T}^i(\sM,\sM)$ for all $i \geqslant 1$. We first consider the case $i=0$, i.e.~we determine the endomorphism ring of $\sM$. We begin with some preparatory lemmata.

\begin{lemma}\label{lem7.14}
As right $\T$-modules,  $\sMi \cong (M^{(i)} \otimes_B \T)/\operatorname{tors}(M^{(i)} \otimes_B \T)$. In particular, there is a short exact sequence of $T$-modules
\begin{align}
0 \to \operatorname{tors}(M^{(i)} \otimes_B \T) \to M^{(i)} \otimes_B \T \to \sMi \to 0. \label{eqn:TorsMi}
\end{align}
\end{lemma}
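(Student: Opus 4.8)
The plan is to exhibit a surjective graded $\T$-module homomorphism $M^{(i)} \otimes_B \T \to \sM^{(i)}$ and then identify its kernel as the torsion submodule. Recall from Notation~\ref{notation:setup} that $M^{(i)} = x B_q + z^i B_q \subseteq B_q = \T_0$, and from Proposition~\ref{prop:GeneratorsOfT} that $\sM^{(i)} = x\T + z^i\T$ as a graded $\T$-bimodule. First I would define $\mu : M^{(i)} \otimes_B \T \to \sM^{(i)}$ by $\mu(m \otimes t) = mt$, which makes sense because $M^{(i)}$ is a submodule of $B = \T_0$ and multiplication inside $\T$ is $B$-balanced; this map is clearly a graded right $\T$-module homomorphism, and it is surjective since $x, z^i \in M^{(i)}$ already generate $\sM^{(i)}$ as a right $\T$-module. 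Thus we get a short exact sequence
\begin{align*}
0 \to \Ker \mu \to M^{(i)} \otimes_B \T \to \sMi \to 0,
\end{align*}
and it remains to show $\Ker \mu = \operatorname{tors}(M^{(i)} \otimes_B \T)$.

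For the inclusion $\Ker\mu \subseteq \operatorname{tors}(M^{(i)} \otimes_B \T)$: since $\sM^{(i)}$ is a right ideal of the domain $\T$, it is torsionfree (indeed $\T_+$-torsionfree), so it suffices to show that $\operatorname{tors}(M^{(i)} \otimes_B \T)$ maps to $0$ under $\mu$, i.e.\ that $\Ker\mu \supseteq \operatorname{tors}(M^{(i)} \otimes_B \T)$ — but this inclusion is immediate because the target $\sM^{(i)}$ is torsionfree. So the real content is the reverse inclusion $\Ker\mu \subseteq \operatorname{tors}(M^{(i)} \otimes_B \T)$: I must show that every element of $M^{(i)} \otimes_B \T$ killed by $\mu$ is $\T_+$-torsion. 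The cleanest route is to localise and use the equivalence $\rQgr\T \simeq \rmod\ms O_\X$ of Theorem~\ref{newthm:catequiv}, or more directly to check that $\mu$ becomes an isomorphism after inverting any $t_k$. Concretely, localising at $t_k$ and taking degree-zero parts, Lemma~\ref{lem:localiseMi} gives $\sM^{(i)}[t_k^{-1}]_0 = x\,\ms O(U_k)$ (for $0 \le k \le i-1$) or $z^i\,\ms O(U_k)$ (for $i \le k \le n$), each a rank-one free $\ms O(U_k)$-module; on the other side, since $M^{(i)}[x^{-1}] = B[x^{-1}]$ and similarly after inverting $z$ (cf.\ the proof of Lemma~\ref{lem:MoritaEquiv}), and since inverting $t_k$ inverts $x$ or $z$ inside $\T$, one checks $M^{(i)} \otimes_B \T[t_k^{-1}]$ is also free of rank one and that $\mu[t_k^{-1}]$ is an isomorphism. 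Hence $\Ker\mu$ localises to $0$ at every $t_k$, so $(\Ker\mu)[t_k^{-1}] = 0$ for all $k$, which by the argument in Lemma~\ref{newlem:three} forces $\Ker\mu$ to be $\T_+$-torsion, as desired.

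The main obstacle I anticipate is the bookkeeping in the localisation step: one must verify that $\mu[t_k^{-1}]$ is genuinely an isomorphism (not merely surjective) in each of the two cases $0 \le k \le i-1$ and $i \le k \le n$, which requires knowing that $M^{(i)} \otimes_B \T[t_k^{-1}]$ has no extra torsion coming from the tensor product over $B$ — i.e.\ that the relation $xy = q^\bullet z^{n+1}$ and the defining relations among the $t_j$ do not collapse the tensor product further than expected. This is handled by the explicit description of $\T[t_k^{-1}]_0$ in Lemma~\ref{lem:dag} together with Lemma~\ref{lem:localiseMi}, both of which are available; once the localisations match up, the $\T_+$-torsion conclusion is formal. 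A clean alternative, avoiding case analysis, is to observe directly that $\mu$ is an isomorphism in $\rQgr\T$: applying $\phi^* = \pi(-\otimes_B\T)$ to the inclusion $M^{(i)} \hookrightarrow B$ and using that $M^{(i)}[w^{-1}] = B[w^{-1}]$ for each $w \in \{x,y,z\}$ together with Proposition~\ref{prop:june} shows $\pi(M^{(i)}\otimes_B\T) \cong \pi(\sM^{(i)})$, and then the kernel and cokernel of $\mu$ are torsion; since $\sM^{(i)}$ is torsionfree the cokernel is $0$ and the kernel is exactly $\operatorname{tors}(M^{(i)}\otimes_B\T)$.
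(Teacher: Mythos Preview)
Your overall strategy---define the multiplication map $\mu : M^{(i)} \otimes_B \T \to \sM^{(i)}$, observe that it is surjective with torsionfree target, and then verify on each chart $U_k$ that $\mu[t_k^{-1}]_0$ is an isomorphism so that $\Ker\mu$ is $\T_+$-torsion---is exactly the approach the paper takes. The difference lies in how the localisation step is carried out, and here your argument has a genuine gap.

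You claim that ``inverting $t_k$ inverts $x$ or $z$ inside $\T$'', and hence that $M^{(i)} \otimes_B \T[t_k^{-1}]$ is free of rank one. This is false: for instance, by Lemma~\ref{lem:dag}(1) we have $\T[t_0^{-1}]_0 = \kk_{q^{n+1}}[x,\, t_1 t_0^{-1}]$, in which neither $x$ nor $z = q^\bullet x(t_1 t_0^{-1})$ is a unit. So the shortcut via $M^{(i)}[x^{-1}] = B[x^{-1}]$ does not apply, and you have not shown that $M^{(i)} \otimes_B \T[t_k^{-1}]_0$ has no extra torsion. Your ``clean alternative'' via Proposition~\ref{prop:june} has the same problem: that proposition gives an equivalence only after inverting $w \in \{x,y,z\}$, whereas the charts of $\X$ are the $U_k = \{t_k \neq 0\}$, and none of $x,y,z$ becomes invertible on a general $U_k$.

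The paper closes this gap differently. Rather than arguing that some $B$-localisation of $M^{(i)}$ becomes free, it takes the explicit free presentation
\[
B^2 \xrightarrow{\;\begin{psmallmatrix} q^{i(n+1)} z^i & -y \\ -x & q^{-\binom{n+1}{2}} z^{n+1-i}\end{psmallmatrix}\;} B^2 \xrightarrow{\;\begin{psmallmatrix} x & z^i\end{psmallmatrix}\;} M^{(i)} \to 0
\]
from Lemma~\ref{lem:MiSES}, applies the right-exact functor $-\otimes_B \T[t_k^{-1}]_0$, and observes that the resulting sequence is simultaneously a free presentation of $\sM^{(i)}[t_k^{-1}]_0$ (the same generators $x,z^i$ and the same relations persist in $\T$). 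The Five Lemma then forces $M^{(i)} \otimes_B \T[t_k^{-1}]_0 \cong \sM^{(i)}[t_k^{-1}]_0$, completing the argument. To repair your proof, replace the appeal to ``inverting $x$ or $z$'' with this presentation-and-Five-Lemma argument.
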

\begin{proof}
We show that
\beq\label{tensorprod} M^{(i)} \otimes_B \T[t_j^{-1}]_0 \cong \sM^{(i)} [t_j^{-1}]_0,\eeq
and the statement will then follow from the category equivalence of Theorem~\ref{newthm:catequiv}. To see this, first note from Lemma \ref{lem:MiSES} that $M^{(i)}$ is freely presented via the exact sequence
\begin{align*}
B^2 \xrightarrow{\begin{pmatrix}q^{i(n+1)} z^i & -y \\ -x & q^{-\binom{n+1}{2}} z^{n+1-i} \end{pmatrix}} B^2 \xrightarrow{\begin{pmatrix}x & z^i \end{pmatrix}} M^{(i)} \to 0.
\end{align*}
Applying the right exact functor $- \otimes_B \T[t_j^{-1}]_0$ gives a commutative diagram:
\begin{equation*}
\begin{tikzcd}[>=stealth,ampersand replacement=\&,column sep=25pt] 
B^2 \otimes_B \T[t_j^{-1}]_0 \arrow{r} \arrow{d}[left]{\cong} \& B^2 \otimes_B \T[t_j^{-1}]_0 \arrow{r} \arrow{d}[left]{\cong} \& M^{(i)} \otimes \T[t_j^{-1}]_0 \arrow[d,dashed] \arrow{r} \& 0 \\
(\T[t_j^{-1}]_0)^2 \arrow{r} \& (\T[t_j^{-1}]_0)^2 \arrow{r} \& X \arrow{r} \& 0
\end{tikzcd}
\end{equation*}
where the top row is exact and the module $X$ is a suitable cokernel. 
The bottom row is simply a free presentation of $\sM^{(i)}[t_j^{-1}]_0$, and the dashed arrow exists and is an isomorphism by the Five Lemma. Thus $M^{(i)} \otimes \T[t_j^{-1}]_0 \cong \sM^{(i)}[t_j^{-1}]_0$, as desired.
\end{proof}

\begin{lemma}\label{lem:simonloc}
We have $\Qgr[\ZZ](\T)_0 = Q(B).$
\end{lemma}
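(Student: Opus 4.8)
$\Qgr[\ZZ](\T)_0 = Q(B)$.

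First I would note the easy containment. Since $B = \T_0 \subseteq \Qgr[\ZZ](\T)_0$ and the latter is a division ring, every nonzero element of $B$ is invertible in $\Qgr[\ZZ](\T)_0$; hence the universal property of the Ore localisation gives $Q(B) \subseteq \Qgr[\ZZ](\T)_0$. (We know $B$ is a noetherian domain, so $Q(B)$ makes sense.) The work is the reverse containment: every degree-$0$ element of the graded quotient ring of $\T$ already lies in $Q(B)$.

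The plan is to use the toric embedding $\eta: \T \hookrightarrow D$ of Section~\ref{sec:ToricEmbedding}, under which $\Qgr[\ZZ](\T)$ (graded quotient ring in the \emph{projective} grading) sits inside $D$, and its degree-$0$ part sits inside $Q(\Bbbk_{q^{n+1}}[\ba^{\pm1},\bb^{\pm1}]) = D_0$, exactly as observed just before Lemma~\ref{lem:dag}. A general element of $\Qgr[\ZZ](\T)_0$ has the form $f h^{-1}$ where $f \in \T_d$, $0 \neq h \in \T_d$ for some $d \in \NN$. So it suffices to show $\T_d \cdot \T_d^{-1} \subseteq Q(B)$ for every $d$, equivalently that for each $d$ there is a single nonzero $h \in \T_d$ with $\T_d h^{-1} \subseteq Q(B)$ (since $\T_d$ is $1$-dimensional in each toric degree and any two choices of denominator differ by an element of $Q(B)$). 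Take $h = x^d t_0^d$ if one wishes a concrete denominator, or more simply argue degree by degree: I would show that $f h^{-1} \in Q(B)$ for $f,h$ monomials in the generators $x,y,z,t_i$, by using the relations $x t_i = q^\bullet z^i t_{i-1}$, $y t_i = q^\bullet z^{n-i} t_{i+1}$, $t_{i-1}t_{j+1} = q^\bullet z^{j-i+1} t_i t_j$ and $xy = q^\bullet z^{n+1}$ (Lemma~\ref{lem:RelationsInT}/Theorem~\ref{thm:TcongTpres}) to rewrite $f$ and $h$, up to a scalar and up to the $Q(B)$-unit $z$, in the normal form $x^a t_m^{\ell} t_{m+1}^{\ell'}$ of Proposition~\ref{prop:OneDimensionalPieces} with a \emph{common} choice of $m$. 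Since in projective degree $d$ the combined $t$-exponent $\ell + \ell' = d$ agrees for numerator and denominator, the quotient $f h^{-1}$ collapses to an element of $\Bbbk^\times \cdot B[z^{-1}] \subseteq Q(B)$. Concretely: writing $f = \lambda x^{a} t_m^{\ell} t_{m+1}^{\ell'} z^{c}$ and $h = \mu x^{a'} t_m^{\ell}t_{m+1}^{\ell'} z^{c'}$ after clearing through the relations (possible because both have the same toric third coordinate $d$, and one can always push the ``$t$-part'' to the same canonical pair $t_m, t_{m+1}$ using the quadratic relation), we get $f h^{-1} = (\lambda/\mu) x^{a-a'} z^{c-c'}$, which lies in $Q(B)$ as $x,z \in B$.

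Alternatively — and this is probably cleaner to write — I would bypass normal forms via the toric embedding directly: $\Qgr[\ZZ](\T)_0 \subseteq D_0 = Q(\Bbbk_{q^{n+1}}[\ba^{\pm1},\bb^{\pm1}])$, and by Lemma~\ref{lem:QgrTOneDimensional} every $\ZZ^3$-graded component of $\Qgr[\ZZ^3](\T) = D$ is $1$-dimensional; in particular $D_0 = \Qgr[\ZZ^3](\T)_0$ is spanned (as a $\kk$-space, after inverting the nonzero homogeneous elements of $\T_0$) by the monomials $\ba^i\bb^j$. But each $\ba^i\bb^j$ of toric degree $(i,j,0)$ is, up to $q^\bullet$, a product of $x = \ba$, $z = q^\bullet\ba\bb$, $y = q^\bullet\ba^n\bb^{n+1}$ and their inverses (the lattice points $(i,j,0)$ are generated over $\ZZ$ by $(1,0)$, $(1,1)$, $(n,n+1)$), all of which lie in $Q(B)$ since $x,y,z \in B$. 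Hence $D_0 \subseteq Q(B)$, and in particular $\Qgr[\ZZ](\T)_0 \subseteq Q(B)$.

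The main obstacle is purely bookkeeping: making sure that reducing numerator and denominator to the same canonical $t$-monomial is legitimate — i.e.\ that the relations of $\T$ really do let one equate, up to $\Bbbk^\times$ and powers of $z$, any two monomials of the same toric degree. This is exactly the content of Proposition~\ref{prop:OneDimensionalPieces} and Theorem~\ref{thm:TcongTpres} (each toric graded piece of $\T$ is at most $1$-dimensional), so there is no genuine difficulty, only the need to invoke those results cleanly. I expect the toric-embedding argument to make this essentially immediate, so I would present that version.
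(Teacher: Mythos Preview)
Your first approach is essentially the paper's, dressed up more than necessary. The paper simply shows that each $t_i t_j^{-1}$ lies in $Q(B)$ by induction on $|i-j|$, using the relations $xt_i = q^\bullet z^i t_{i-1}$, $yt_i = q^\bullet z^{n-i}t_{i+1}$, and $t_{i-1}t_{j+1} = q^\bullet z^{j-i+1} t_i t_j$. You do not need the normal form of Proposition~\ref{prop:OneDimensionalPieces}: once each $t_i \in Q(B)\cdot t_0$, every monomial in $\T_d$ lies in $Q(B)\cdot t_0^d$, hence so does every element of $\T_d$, and then $f h^{-1}\in Q(B)$ for any $f,h\in \T_d$ with $h\ne 0$ is immediate. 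There is no need to match the ``$t$-part'' of numerator and denominator separately.

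Your second approach contains a genuine gap. You write $\Qgr[\ZZ](\T)_0 \subseteq D_0 = Q(\Bbbk_{q^{n+1}}[\ba^{\pm1},\bb^{\pm1}])$, but $D_0 = \Bbbk_{q^{n+1}}[\ba^{\pm1},\bb^{\pm1}]$ is the quantum $2$-torus, not a division ring, and the containment $\Qgr[\ZZ](\T)_0 \subseteq D_0$ fails: for instance $(x+z)^{-1}\in\Qgr[\ZZ](\T)_0$, yet its image is not a unit in $D_0$. You are also conflating the $\ZZ$-graded quotient $\Qgr[\ZZ](\T)$ with the $\ZZ^3$-graded quotient $\Qgr[\ZZ^3](\T) = D$; the latter inverts only monomials. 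The argument is easily repaired: one has $\Qgr[\ZZ](\T)_0 \hookrightarrow \Qgr[\ZZ](D)_0 = Q(D_0)$, and since $\ba = x$ and $\bb = q^\bullet x^{-1}z$ lie in $Q(B)$ we get $D_0 \subseteq Q(B)$, hence $Q(D_0) = Q(B)$. Once fixed, this is a valid and clean alternative to the paper's induction, but as stated the key containment is wrong.
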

\begin{proof}
Since $B = T_0$, the inclusion $Q(B) \subseteq Q_{\gr}(\T)_0$ is clear, so we consider only the reverse inclusion. 
It suffices to show that any $t_i t_j^{-1}$ lies in $Q(B)$. We prove this by induction on $|j-i|$, with the case $|j-i|=0$ being clear. If $|j-i|=1$, then either $j=i-1$ or $j=i+1$. In the former case, the relation $x t_i = \qb z^i t_{i-1}$ implies $t_i t_{i-1}^{-1} = \qb x^{-1} z^i \in Q(B)$. Similarly, in the latter case we have $t_i t_{i+1}^{-1} = \qb y^{-1} z^{n-i} \in Q(B)$. \\
\indent Now assume that $|j-i| \geqslant 2$; again, we consider two cases. If $j > i$ then
\begin{align*}
t_i t_j^{-1} = t_i t_{j-2} t_{j-2}^{-1} t_j^{-1} = \qb t_i t_{j-2} z^{-1} t_{j-1}^{-2} = \qb z^{-1} \;\, t_i t_{j-1}^{-1} \;\, t_{j-2}t_{j-1}^{-1},
\end{align*}
which lies in $Q(B)$ by the induction hypothesis. Similarly, if $i > j$ then
\begin{align*}
t_i t_j^{-1} = t_i t_{i-2} t_{i-2}^{-1} t_j^{-1} = \qb z t_{i-1}^2 t_{i-2}^{-1} t_j^{-1} = \qb z \;\, t_{i-1} t_{i-2}^{-1} \;\, t_{i-1}t_j^{-1},
\end{align*}
which again lies in $Q(B)$ by the induction hypothesis.
\end{proof}

\begin{lemma} \label{lem:H1MVanish}
For $0 \leq i \leq n$ we have:
\begin{enumerate}[{\normalfont (1)},topsep=0pt,itemsep=0pt,leftmargin=*]
\item $\Hom_{\rQgr \T}(\T, \sMi) = M^{(i)}$; and
\item $H^1_{\T_+}(\sM)_0 = 0$.
\end{enumerate}
\end{lemma}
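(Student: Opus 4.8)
The plan is to deduce both statements at once from the single claim that, for each $0\le i\le n$, the unit of adjunction $\eta=\eta_{M^{(i)}}\colon M^{(i)}\to \phi_*\phi^*M^{(i)}$ is an isomorphism. First I would identify the target: by Lemma~\ref{lem7.14} the surjection $M^{(i)}\otimes_B\T\twoheadrightarrow\sMi$ has $\T_+$-torsion kernel, so $\phi^*M^{(i)}=\pi(M^{(i)}\otimes_B\T)\cong\pi\sMi$ in $\rQgr\T$, and therefore $\phi_*\phi^*M^{(i)}\cong\phi_*(\pi\sMi)=\H^0_{\rQgr\T}(\pi\sMi)=(\omega\pi\sMi)_0=\Hom_{\rQgr\T}(\pi\T,\pi\sMi)$. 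Thus part (1) is precisely the assertion that $\eta$ is an isomorphism. For part (2), Proposition~\ref{prop:cohbasics}(2) applied to $\sMi$ gives $0\to\tau(\sMi)\to\sMi\to\omega\pi\sMi\to H^1_{\T_+}(\sMi)\to0$, and $\tau(\sMi)=0$ since $\sMi=x\T+z^i\T$ is a nonzero submodule of the domain $\T$. Taking degree-$0$ parts and using $(\sMi)_0=xB+z^iB=M^{(i)}$ identifies the map $M^{(i)}\to(\omega\pi\sMi)_0$ with $\eta$ and $H^1_{\T_+}(\sMi)_0$ with $\coker\eta$; since local cohomology commutes with the finite direct sum $\sM=\bigoplus_i\sMi$, part (2) also follows once $\eta$ is known to be an isomorphism.

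To prove $\eta$ is an isomorphism I would show $\ker\eta$ and $\coker\eta$ both vanish, using maximal Cohen--Macaulayness of $M^{(i)}$ and a depth argument. By Proposition~\ref{prop:june}, after inverting any of $x,y,z$ the functors $\phi^*,\phi_*$ become inverse equivalences, so $\eta$ localises to an isomorphism; hence $\ker\eta$ and $\coker\eta$ are $w$-torsion for each $w\in\{x,y,z\}$, and therefore $\mf m$-torsion, where $\mf m=(x,y,z)$ is the augmentation ideal of $B$. Now $M^{(i)}$ is a maximal Cohen--Macaulay module over the two-dimensional Gorenstein ring $B$, so $\depth_B M^{(i)}=2$, i.e.\ $H^0_{\mf m}(M^{(i)})=H^1_{\mf m}(M^{(i)})=0$. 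Since $\ker\eta\subseteq M^{(i)}$ is $\mf m$-torsion while $H^0_{\mf m}(M^{(i)})=0$, we get $\ker\eta=0$. For the cokernel, the point is that $\phi_*\phi^*M^{(i)}=(\omega\pi\sMi)_0$ embeds as a $B$-submodule of $\Qgr[\ZZ](\T)_0=Q(B)$ (Lemma~\ref{lem:simonloc}), hence is torsion-free over $B$, so $H^0_{\mf m}(\phi_*\phi^*M^{(i)})=0$. The local cohomology long exact sequence of $0\to M^{(i)}\xrightarrow{\eta}\phi_*\phi^*M^{(i)}\to\coker\eta\to0$ then gives $0=H^0_{\mf m}(\phi_*\phi^*M^{(i)})\to H^0_{\mf m}(\coker\eta)\to H^1_{\mf m}(M^{(i)})=0$, so $H^0_{\mf m}(\coker\eta)=0$; as $\coker\eta$ is $\mf m$-torsion this forces $\coker\eta=0$.

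The step needing the most care is the cokernel vanishing: concretely, one must be sure that $\phi_*\phi^*M^{(i)}=(\omega\pi\sMi)_0$ really does sit inside $Q(B)$ and hence has positive depth — equivalently, that passing from $\sMi$ to its saturation $\omega\pi\sMi$ does not create a finite-length piece in degree $0$ — and this is exactly where the Cohen--Macaulayness of $M^{(i)}$ enters essentially. (An alternative, more computational route avoiding the depth argument would be to verify directly from Lemma~\ref{lem:localiseMi} that $\bigcap_{k=0}^n\sMi[t_k^{-1}]_0=xB+z^iB$ inside $D_0$, by a support computation in $\ZZ^2$ in the style of the proof of Lemma~\ref{bigtoriclemma}; I would present the depth argument as the main one.) Finally I would note that the case $i=0$ is covered verbatim, with $\sM^{(0)}=\T$, $M^{(0)}=B$, and $B$ itself maximal Cohen--Macaulay; in that case the argument recovers $\Hom_{\rQgr\T}(\pi\T,\pi\T)=B$ and $H^1_{\T_+}(\T)_0=0$, in agreement with Proposition~\ref{prop:localcohomology}.
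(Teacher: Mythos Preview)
Your argument is correct and follows the same strategy as the paper: identify $\Hom_{\rQgr T}(\pi T,\pi\sMi)$ as a $B$-submodule of $Q(B)$ containing $M^{(i)}$ with finite-length cokernel, then use Cohen--Macaulayness of $M^{(i)}$ to force that cokernel to vanish. The packaging differs only in two places: the paper shows the cokernel has GK-dimension $0$ by directly localising $H^1_{T_+}(\sMi)$ at $x$ and $y$ (using $\sMi[x^{-1}]=T[x^{-1}]$ together with Proposition~\ref{prop:localcohomology}) rather than via the unit of adjunction and Proposition~\ref{prop:june}, and for the vanishing step the paper invokes reflexivity of $M^{(i)}$ and \cite[Lemma~4.11(1)]{RSS} in place of your depth assertion $H^1_{\mf m}(M^{(i)})=0$ --- your formulation is correct but implicitly uses local duality over the noncommutative ring $B$, which holds here since $B$ is AS-Gorenstein but is not established within the paper.
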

\begin{proof}
By Proposition~\ref{prop:cohbasics}(2) it suffices to prove that $H^1_{\T_+}(\sMi)_0=0$ for $0 \leqslant i \leqslant n$. We begin by showing that $H^1_{\T_+}(\sMi)_0$ is an $xB + yB$-torsion $B$-module. Since local cohomology commutes with localisation (see, for example, \cite[Theorem 10.67]{rotman}, the proof of which holds in our situation with minor modifications) we obtain
\begin{align*}
H^1_{\T_+}(\sMi)_0[x^{-1}] \cong H^1_{\T_+}(\T)_0[x^{-1}],
\end{align*}
since it is clear from the definition of $\sMi$ that  $\sMi[x^{-1}] = \T[x^{-1}]$.
The right hand side is $0$ by Proposition~\ref{prop:localcohomology}, so 
 $H^1_{\T_+}(\sMi)_0$ has $x$-torsion and, similarly, it has $y$-torsion. In particular, $\GKdim H^1_{\T_+}(\sMi)_0 = 0$.\\
\indent Now, since $M^{(i)}_B$ is maximal Cohen-Macaulay and $B$ is Auslander--Gorenstein of dimension $2$, $M^{(i)}_B$ is reflexive. By \cite[Lemma~4.11(1)]{RSS}, if $M'$ is a $B$-module satisfying $M^{(i)} \subseteq M' \subseteq Q(B)$ with $\GKdim M'/M^{(i)} = 0$, then $M' = M^{(i)}$.
We seek to apply this result when $M' \coloneqq \Hom_{\rQgr \T}(\T, \sMi)$. 
Note that  we have
\[ M' \subseteq \Hom_{\rQgr \T}(\T, \Qgr[\ZZ](\T)) = \End_{\rQgr \T}(\Qgr[\ZZ](\T)) = \Qgr[\ZZ](\T)_0 = Q(B),\]
where the last equality is Lemma~\ref{lem:simonloc}.

\indent We now show that $\GKdim M'/M^{(i)} = 0$. 
Consider the exact sequence in Proposition~\ref{prop:cohbasics}(2) with $N = \sMi$ and take degree 0.
Since $\sMi$ is torsion-free and $\sMi_0 = M^{(i)}$ we obtain an exact sequence 
\begin{align}
0 \to M^{(i)} \to M' \to H^1_{\T_+}(\sM^{(i)})_0 \to 0. \label{eqn:LCSES}
\end{align}
Thus
\begin{align*}
\GKdim M'/M^{(i)} \leqslant 0,
\end{align*}
by the first paragraph of the proof. The second paragraph implies $M'= M^{(i)}$, establishing the result.
\end{proof}

\begin{corollary} \label{cor:HomQgrTM}
$\Hom_{\rQgr \T}(\T,\sM) \cong M$.
\end{corollary}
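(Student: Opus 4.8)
The plan is to deduce Corollary~\ref{cor:HomQgrTM} directly from Lemma~\ref{lem:H1MVanish}(1) by taking the direct sum over $i$. Concretely, since $\sM = \bigoplus_{i=0}^n \sMi$ and $\Hom_{\rQgr \T}(\T, -)$ commutes with finite direct sums (it is an additive functor between abelian categories, and indeed $\Hom_{\rQgr \T}(\T, -) = \phi_*$ preserves limits, in particular finite biproducts), we have
\[
\Hom_{\rQgr \T}(\T, \sM) = \Hom_{\rQgr \T}\Bigl(\T, \bigoplus_{i=0}^n \sMi\Bigr) \cong \bigoplus_{i=0}^n \Hom_{\rQgr \T}(\T, \sMi) \cong \bigoplus_{i=0}^n M^{(i)} = M,
\]
where the middle isomorphism is additivity and the penultimate one is Lemma~\ref{lem:H1MVanish}(1). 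One should remark that these isomorphisms are compatible with the right $B$-module structures, so the displayed isomorphism is one of right $B$-modules; this is automatic since the $B$-action on $\Hom_{\rQgr \T}(\T, \sN)$ comes from the left $B$-action on $\T$, which is the same for each summand.

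Since all the genuine content has already been established in Lemma~\ref{lem:H1MVanish}, there is no real obstacle here — the corollary is a one-line formal consequence. The only thing to be slightly careful about is that the index $i=0$ case of Lemma~\ref{lem:H1MVanish}(1) reads $\Hom_{\rQgr \T}(\T, \sM^{(0)}) = \Hom_{\rQgr \T}(\T, \pi\T) = M^{(0)} = B$, which is consistent with the convention $\sM^{(0)} = \T$ and $M^{(0)} = B_q$ fixed in Sections~\ref{sec:OurSingularity} and~\ref{sec:OurModules}, so the sum genuinely runs over all $0 \leq i \leq n$. Thus the proof is simply: apply additivity of $\Hom_{\rQgr \T}(\T,-)$ and then Lemma~\ref{lem:H1MVanish}(1) termwise.
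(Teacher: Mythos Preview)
Your proof is correct and is exactly the paper's approach: the paper simply says ``This follows from Lemma~\ref{lem:H1MVanish}(1),'' and your argument spells out the one-line additivity step making this explicit.
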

\begin{proof}
This follows from Lemma~\ref{lem:H1MVanish} (1).
\end{proof}

\begin{lemma} \label{lem:LocalCohOfMN}
We have $H^i_{\T_+}(\sMj) = 0 = H^i_{\T_+}(\sNj)$ for all $i \geqslant 3$ and all $0 \leqslant j \leqslant n$.
\end{lemma}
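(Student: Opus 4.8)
The statement to prove is that $H^i_{\T_+}(\sMj) = 0 = H^i_{\T_+}(\sNj)$ for all $i \geqslant 3$ and all $0 \leqslant j \leqslant n$. The natural approach is to reduce to the already-proven vanishing for $\T$ itself (Proposition~\ref{prop:localcohomology}) and its locally free consequence (Corollary~\ref{cor:local2}), using the fact that $\sMj$ and $\sNj$ are generated by normal elements and hence can be probed via the \v{C}ech complex $\vC(t_0, \dots, t_n; -)$.

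\textbf{First step: reduce to the $t_k$-local behaviour.} By Proposition~\ref{prop:LC}, $H^i_{\T_+}(\sMj)$ is the $i$th cohomology of the \v{C}ech complex $\vC(t_0, \dots, t_n; \sMj)$. For $i \geqslant 3$, every term $\sMj[t_J^{-1}]$ appearing in cohomological degree $\geqslant 2$ involves inverting at least two of the $t_k$. Lemmata~\ref{lem:localiseMi} and \ref{lem:localiseNi} show that $\sMj[t_k^{-1}]_0$ is a free rank-one $\T[t_k^{-1}]_0$-module for every $k$ (it is $x\,\msOk$ or $z^j\,\msOk$). Since localising further at another $t_\ell$ preserves this freeness, for $|J| \geqslant 2$ the module $\sMj[t_J^{-1}]$ is free of rank one over $\T[t_J^{-1}]$, and the \v{C}ech differentials in degrees $\geqslant 2$ are (up to the explicit generator changes recorded in Lemmata~\ref{lem:localiseMi}--\ref{lem:localiseNi} and the structure maps of Lemma~\ref{lem:dag}) identified with those of $\vC(t_0, \dots, t_n; \T)$. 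The cleanest way to package this: $\sMj$ and $\sNj$ are locally free of rank one in the sense of Corollary~\ref{cor:local2}, so $H^s_{\T_+}(\sMj)_0 = 0$ and $H^s_{\T_+}(\sNj)_0 = 0$ for all $s \geqslant 3$.

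\textbf{Second step: pass from degree $0$ to all degrees.} The module $\sMj$ carries the full $\ZZ^3$-grading (toric grading), and $H^i_{\T_+}(\sMj)$ is $\ZZ^3$-graded. Corollary~\ref{cor:local2} only gives the vanishing of the degree-zero piece with respect to the projective grading (the third coordinate). But because $\sMj$ is a graded $\T$-bimodule generated by normal elements, twisting by powers of the invertible bimodules considered in Section~\ref{sec:OurModules} (or simply shifting by $z$, which is a normal nonzerodivisor of projective degree $0$) lets us reduce an arbitrary graded component to the degree-zero one; more directly, one can apply Corollary~\ref{cor:local2} after shifting $\sMj$ by an appropriate power of $t_0$ (invertible in each $\T[t_k^{-1}]$) or by noting that $\sMj(d)$ is again locally free of rank one for every $d$, so $H^s_{\T_+}(\sMj(d))_0 = 0$ for $s \geqslant 3$ and all $d$, whence $H^s_{\T_+}(\sMj) = 0$. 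The same argument verbatim handles $\sNj$.

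\textbf{Expected obstacle.} The only subtlety is the bookkeeping in the first step: one must check that, after inverting two or more of the $t_k$, the \v{C}ech complex of $\sMj$ (or $\sNj$) really does become isomorphic to that of a free $\T[t_J^{-1}]$-module of rank one, i.e. that the ``generator swaps'' $x \leftrightarrow z^j$ recorded in Lemmata~\ref{lem:localiseMi}--\ref{lem:localiseNi} are compatible across the restriction maps. This is exactly the hypothesis of Corollary~\ref{cor:local2}, so invoking that corollary is the right move and the argument is short; I do not anticipate any genuine difficulty beyond carefully citing Lemmata~\ref{lem:localiseMi}, \ref{lem:localiseNi}, \ref{lem:dag} and Corollary~\ref{cor:local2}.
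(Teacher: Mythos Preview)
Your approach is essentially the same as the paper's: the paper's one-line proof cites Corollary~\ref{cor:local2} (vanishing for locally free modules) together with Corollary~\ref{cor:invertible} (which packages the local freeness of $\sMj$ and $\sNj$ you extract directly from Lemmata~\ref{lem:localiseMi} and \ref{lem:localiseNi}). Your second step, extending the degree-zero vanishing of Corollary~\ref{cor:local2} to all projective degrees by observing that each shift $\sMj(d)$ is again $\ZZ^3$-graded and locally free of rank one, is a correct elaboration that the paper leaves implicit; note that Corollary~\ref{cor:local2} as stated only yields the degree-zero piece, and only degree zero is ever used downstream (in Proposition~\ref{prop:VanishingExt}), so the paper's brevity is harmless even if the lemma as written asserts more.
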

\begin{proof}
This follows from Corollary~\ref{cor:local2} and Corollary~\ref{cor:invertible}.
\end{proof}

With these in hand, we are able to determine the desired endomorphism ring.

\begin{proposition} \label{prop:EndomorphismRing}
$\End_{\rQgr \T}(\sM) \cong \End_B(M) = \Lambda$.
\end{proposition}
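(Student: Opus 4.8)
The plan is to realise $\sM$ as $\phi^* M$ and then read off $\End_{\rQgr \T}(\sM)$ from the adjunction $(\phi^*,\phi_*)$ of Proposition~\ref{prop:adjoint}.

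First I would observe that $\phi^* M^{(i)} = \pi(M^{(i)} \otimes_B \T) \cong \sMi$ in $\rqgr \T$: by Lemma~\ref{lem7.14} the canonical surjection $M^{(i)} \otimes_B \T \to \sMi$ has torsion kernel, hence becomes an isomorphism on applying $\pi$. Summing over $i$ gives $\phi^* M \cong \sM$ in $\rqgr \T$. Combining this with the adjunction and the identification $\phi_*\sMj = \Hom_{\rQgr \T}(\pi\T,\sMj) = M^{(j)}$ of Lemma~\ref{lem:H1MVanish}(1) (equivalently Corollary~\ref{cor:HomQgrTM} for the full direct sum), one gets for all $i,j$
\[ \Hom_{\rQgr \T}(\sMi,\sMj) \;\cong\; \Hom_{\rQgr \T}(\phi^*M^{(i)},\sMj) \;\cong\; \Hom_B\big(M^{(i)},\phi_*\sMj\big) \;=\; \Hom_B(M^{(i)},M^{(j)}),\]
so that $\End_{\rQgr \T}(\sM) \cong \End_B(M)$ as abelian groups; and $\End_B(M) = \Lambda$ is already known (via $M \cong A_q$ in $\rmod B$, the Auslander map \eqref{eqn:AusMap}, and \eqref{eqn:CKWZiso}).

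The one point needing care — and the step I expect to be the main obstacle — is checking that this isomorphism respects composition. The transported product on $\bigoplus_{i,j}\Hom_B(M^{(i)},\phi_*\sMj)$ coincides with composition in $\End_B(M)$ as soon as the unit $\eta_M\colon M \to \phi_*\phi^*M$ of the adjunction is an isomorphism, for then the comparison map is literally $f \mapsto \eta_M^{-1}\circ\phi_*(f)\circ\eta_M$, which is a ring homomorphism because $\phi_*$ is a functor, and is bijective by the display above. To see $\eta_{M^{(i)}}$ is an isomorphism I would identify it, up to the canonical identifications, with the map $M^{(i)} = \sMi_0 \hookrightarrow (\omega\pi\sMi)_0 = \phi_*\sMi$ coming from Proposition~\ref{prop:cohbasics}(2), which is shown to be an equality in the proof of Lemma~\ref{lem:H1MVanish}(1). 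Alternatively one argues directly: by Proposition~\ref{prop:june} $\eta_{M^{(i)}}$ becomes an isomorphism after inverting any of $x,y,z$, so $\ker\eta_{M^{(i)}}$ and $\coker\eta_{M^{(i)}}$ are finite-dimensional; since $M^{(i)}$ is reflexive (being MCM over the Auslander--Gorenstein ring $B$ of dimension $2$) it has no nonzero finite-dimensional submodule, hence $\eta_{M^{(i)}}$ is injective, and a monomorphism between copies of a reflexive module whose cokernel has finite length is automatically an isomorphism (apply $\Hom_B(-,B)$ and use reflexivity). This gives the ring isomorphism $\End_{\rQgr \T}(\sM) \cong \End_B(M) = \Lambda$.
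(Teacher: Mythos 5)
Your proof is correct and, at bottom, follows the same route as the paper's: Lemma~\ref{lem7.14} to pass between $\sM$ and $M\otimes_B \T$, Corollary~\ref{cor:HomQgrTM} (via Lemma~\ref{lem:H1MVanish}) to identify $\phi_*\sM$ with $M$, and hom--tensor adjointness packaged as the $(\phi^*,\phi_*)$ adjunction of Proposition~\ref{prop:adjoint} rather than unwound by hand. What you add that the paper leaves implicit is the verification that the resulting bijection is multiplicative, by checking that the unit $\eta_M$ is an isomorphism and observing that the comparison map is $f \mapsto \eta_M^{-1}\circ\phi_*(f)\circ\eta_M$; this is a genuine improvement in exposition. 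One caution on your ``alternative'' direct argument for $\eta_{M^{(i)}}$ being an isomorphism: the final step --- that a monomorphism of the reflexive module $M^{(i)}$ with finite-length cokernel into $\phi_*\sMi$ must be an isomorphism --- needs the target to be torsion-free (equivalently, to embed in $Q(B)$), which the paper supplies via Lemma~\ref{lem:simonloc}; without that, applying $\Hom_B(-,B)$ only shows the cokernel's double dual vanishes, not that the cokernel is zero. Your primary route, identifying $\eta_{M^{(i)}}$ with the inclusion $M^{(i)}\hookrightarrow(\omega\pi\sMi)_0$ shown to be an equality in the proof of Lemma~\ref{lem:H1MVanish}(1), is sound.
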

\begin{proof}
Since $\sM$ is torsion-free, by applying $\Hom_{\rGr \T}(-, \sM)$ to the sum of the short exact sequences \eqref{eqn:TorsMi}, we obtain
\begin{align*}
\End_{\rGr \T}(\sM) \cong \Hom_{\rGr \T}(M \otimes_B \T,\sM).
\end{align*}
We then have a chain of isomorphisms
\begin{align*}
\End_{\rQgr \T}(\sM) 
&\cong \lim_{n \to \infty} \Hom_{\rGr \T}(\sM_{\geqslant n},\sM) \\
&\cong \lim_{n \to \infty} \Hom_{\rGr \T}(M \otimes_B \T_{\geqslant n},\sM) \\
&\cong \lim_{n \to \infty} \Hom_B(M, \Hom_{\rGr \T}(\T_{\geqslant n},\sM)) \\
&\cong \Hom_B(M, \lim_{n \to \infty} \Hom_{\rGr \T}(\T_{\geqslant n},\sM)) \\
&\cong \Hom_B(M, \Hom_{\rQgr \T}(\T,\sM)) \\
&\cong \End_B(M),
\end{align*}
where we have used Corollary \ref{cor:HomQgrTM} and the fact that $\Hom_B(M,-)$ commutes with direct limits since $M$ is finitely presented.
\end{proof}

Finally, we are able to show that the higher Ext groups of $\sM$ in $\rQgr T$ vanish.

\begin{proposition} \label{prop:VanishingExt}
For $s \geqslant 1$, we have
\begin{align*}
\Ext_{\rQgr \T}^s(\sM,\sM) = 0.
\end{align*}
\end{proposition}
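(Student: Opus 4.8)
The plan is to reduce the computation of $\Ext^s_{\rQgr\T}(\sM,\sM)$ to the cohomology of invertible objects in $\rqgr\T$, and then to bootstrap the required vanishing from the cases $\T$, $\sM^{(j)}$, $\sN^{(j)}$ using the short exact sequences established in Section~\ref{sec:OurModules}.

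Since $\sM = \bigoplus_{i=0}^n \sM^{(i)}$, it suffices to prove $\Ext^s_{\rQgr\T}(\sM^{(i)},\sM^{(j)}) = 0$ for $s \geq 1$ and all $0 \leq i,j \leq n$. By Corollary~\ref{cor:invertible}, $-\otimes_\T\sN^{(i)}$ is an exact autoequivalence of $\rqgr\T$ and $\sM^{(i)}\otimes_\T\sN^{(i)} \cong \T$ in $\rqgr\T$, so that
\[ \Ext^s_{\rQgr\T}(\sM^{(i)},\sM^{(j)}) \cong \Ext^s_{\rQgr\T}\big(\T,\, \sM^{(j)}\otimes_\T\sN^{(i)}\big) = \H^s_{\rQgr\T}\big(\sM^{(j)}\otimes_\T\sN^{(i)}\big). \]
(Here one should note explicitly that tensoring by $\sN^{(i)}$ commutes with $\Ext$ precisely because it is an autoequivalence of the abelian category $\rqgr\T$.) Thus the statement is equivalent to $\H^s_{\rQgr\T}(\sM^{(j)}\otimes_\T\sN^{(i)}) = 0$ for all $s \geq 1$ and all $i,j$.

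Next I would treat the special cases $\H^s_{\rQgr\T}(\sM^{(j)}) = 0 = \H^s_{\rQgr\T}(\sN^{(j)})$ for $s \geq 1$. For $s \geq 2$ this follows from Proposition~\ref{prop:cohbasics}(2), which identifies $\H^s_{\rQgr\T}(-)$ with $H^{s+1}_{\T_+}(-)_0$ for $s\geq 1$, together with Lemma~\ref{lem:LocalCohOfMN} ($H^{\geq3}_{\T_+}(\sM^{(j)}) = 0 = H^{\geq3}_{\T_+}(\sN^{(j)})$). For $s = 1$ I would apply $\H^\bullet_{\rQgr\T}(-)$ to the short exact sequence \eqref{BiiSequence}, $0 \to \sN^{(j)} \to \T^{\oplus2} \to \sM^{(j)} \to 0$: using $\H^s_{\rQgr\T}(\T) = 0$ for $s\geq1$ (Proposition~\ref{prop:localcohomology}), $\H^0_{\rQgr\T}(\T) = B$, $\H^0_{\rQgr\T}(\sM^{(j)}) = M^{(j)}$ (Lemma~\ref{lem:H1MVanish}(1)), and the $s=2$ vanishing $\H^2_{\rQgr\T}(\sN^{(j)}) = 0$ just obtained, the long exact sequence yields $\H^1_{\rQgr\T}(\sM^{(j)}) \cong \H^2_{\rQgr\T}(\sN^{(j)}) = 0$ and $\H^1_{\rQgr\T}(\sN^{(j)}) \cong \coker\!\big(B^{\oplus2} \to M^{(j)}\big)$; the latter vanishes because the map is $(b,b')\mapsto xb+z^jb'$ and $M^{(j)} = xB+z^jB$ by definition.

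Finally, for general $i,j$ I would feed this into the short exact sequence \eqref{BijSequence2} with the indices interchanged, namely $0 \to \sN^{(i)} \to \big(\sM^{(j)}\otimes_\T\sN^{(i)}\big)\oplus\T \to \sM^{(j)} \to 0$ in $\rqgr\T$. Applying $\H^\bullet_{\rQgr\T}(-)$ and using that $\H^s_{\rQgr\T}$ vanishes in positive degrees on each of $\sN^{(i)}$, $\T$ and $\sM^{(j)}$ forces $\H^s_{\rQgr\T}(\sM^{(j)}\otimes_\T\sN^{(i)}) = 0$ for all $s \geq 1$, which by the first step is the claim; the edge cases $i=0$ or $j=0$ are subsumed, since then $\sM^{(j)}\otimes_\T\sN^{(i)}$ is one of $\T$, $\sM^{(j)}$, $\sN^{(i)}$. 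The main obstacle is the $s=1$ vanishing: unlike the higher-degree statements it is not formal from Lemma~\ref{lem:LocalCohOfMN}, and one genuinely needs the explicit presentations \eqref{BiiSequence}/\eqref{BijSequence2} together with the surjectivity $\phi_*(\T^{\oplus2})\twoheadrightarrow\phi_*\sM^{(j)}$ to kill it.
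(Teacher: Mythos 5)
Your proposal is correct and follows essentially the same route as the paper: reduce via invertibility (Corollary~\ref{cor:invertible}) to computing $\H^s_{\rQgr\T}(\sM^{(j)}\otimes_\T\sN^{(i)})$, kill $s\geq 2$ using Lemma~\ref{lem:LocalCohOfMN}, and then use the sequences \eqref{BiiSequence} and \eqref{BijSequence2} together with Proposition~\ref{prop:localcohomology} to handle $s=1$. The one place you diverge is a small simplification: to get $\H^1_{\rQgr\T}(\sN^{(j)})=0$ you observe directly that the induced map $B^{\oplus 2}\to M^{(j)}$, $(b,b')\mapsto xb+z^jb'$, is surjective because $M^{(j)}=xB+z^jB$ by definition, whereas the paper first identifies $\H^0_{\rQgr\T}(\sN^{(j)})\cong M^{(n+1-j)}$ via Lemma~\ref{lem:HomMi} and then matches the four-term sequence against Lemma~\ref{lem:MiSES}; your shortcut saves those two invocations while arriving at the same conclusion.
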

\begin{proof}
We have 
\begin{align*}
\Ext^s_{\rQgr \T}(\sM,\sM) &= \bigoplus_{j,k=0}^n \Ext^s_{\rQgr \T}(\sMj,\sM^{(k)}) \\
&= \bigoplus_{j,k=0}^n \Ext^s_{\rQgr \T}(\T, \sM^{(k)} \otimes_\T  \sNj) \\
&= \bigoplus_{j,k=0}^n \H^s_{\rQgr \T}(\sM^{(k)} \otimes_\T  \sNj), 
\end{align*}
and from \eqref{BijSequence2} and Proposition~\ref{prop:localcohomology} it suffices to prove that $\H^s_{\rQgr T}(\sMi) = \H^s_{\rQgr T}(\sNi) = 0$ for $i, s \geq 1$.
Taking cohomology of \eqref{BiiSequence} and using Proposition~\ref{prop:localcohomology} we obtain  an exact sequence
\beq \label{eq:june3}
0 \to \H^0_{\rQgr T}(\sNi) \to B^{\oplus 2} \to \H^0_{\rQgr T}(\sMi) \to 
\H^1_{\rQgr T}(\sNi) \to 0
\eeq
and an isomorphism
\[ \H^1_{\rQgr T}(\sMi) \cong \H^2_{\rQgr T}(\sNi). \]
This last is 0 by Corollary~\ref{cor:invertible} and Lemma~\ref{lem:LocalCohOfMN}, so we consider \eqref{eq:june3}.
Now, $\H^0_{\rQgr T}(\sMi) = M^{(i)}$,
and applying the proof of Proposition~\ref{prop:EndomorphismRing} to $\Hom_{\rQgr T}(\sMi, T)$ shows that
\[ \H^0_{\rQgr T}(\sNi) \cong \Hom_{\rQgr T}(\sMi, T) \cong \Hom_B(M^{(i)}, T).\]
By Lemma \ref{lem:HomMi}, this is $M^{(n+1-i)}$ and so \eqref{eq:june3} becomes
\[
0 \to M^{(n+1-i)} \to B^2 \to M^{(i)} \to \H^1_{\rQgr T}(\sNi) \to 0 .
\]
The first three terms of this are from the short exact sequence of Lemma \ref{lem:MiSES} and one can check that the  maps between them are also those of Lemma \ref{lem:MiSES}, so $\H^1_{\rQgr T}(\sNi) = 0$.
\end{proof}


\subsection{The proof of the derived equivalence}\label{TILTING}
In this subsection we prove that the two resolutions $\rqgr \T$ and $\rmod \Lambda$ of the singularity $B$ are derived equivalent.

Let $\ang{\sM}$ denote the subcategory of $\rqgr \T$ given by the closure of $\sM$ under taking finite direct sums, direct summands, and kernels of epimorphisms. (We caution that this is nonstandard notation.)

\begin{proposition} \label{prop:SetOfGenerators}
\mbox{}
\begin{enumerate}[{\normalfont (1)},topsep=0pt,itemsep=0pt,leftmargin=*]
\item Let $k \in \NN$ and let $i_1, \dots, i_k \in \{ 0, \dots, n \}$. Then
\begin{align*}
\sN^{(i_1)} \otimes_\T \dots \otimes_\T \sN^{(i_k)} \in \ang{\sM}.
\end{align*}
\item If $s > 0$ then $\T(-s) \in \ang{\sM}$.
\end{enumerate}
\end{proposition}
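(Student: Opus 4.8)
The statement has two parts, and I would handle them in the order (1) then (2), since (2) will follow from (1) together with the ``ampleness'' computation in Proposition~\ref{prop:ProductOfMi}.

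For part (1), the plan is to induct on $k$. The base case $k=0$ gives $\T \in \ang{\sM}$, which holds since $\T = \sM^{(0)}$ is a direct summand of $\sM$. For the inductive step, I would use the exact sequence \eqref{BijSequence2}, which reads
\begin{align*}
0 \to \sN^{(j)} \to \big(\sM^{(i)} \otimes_\T \sN^{(j)}\big) \oplus \T \to \sM^{(i)} \to 0
\end{align*}
in $\rqgr \T$. Tensoring on the right by the exact autoequivalence $-\otimes_\T \sN^{(i_2)}\otimes_\T\cdots\otimes_\T\sN^{(i_k)}$ (exactness of this functor is Corollary~\ref{cor:invertible}), and relabelling $j = i_1$, $i = $ a new index to be chosen, I get a short exact sequence exhibiting $\sN^{(i_1)}\otimes \cdots \otimes \sN^{(i_k)}$ as the kernel of an epimorphism from $\big(\sM^{(i)}\otimes_\T \sN^{(i_1)}\otimes\cdots\otimes\sN^{(i_k)}\big)\oplus \big(\sN^{(i_2)}\otimes\cdots\otimes\sN^{(i_k)}\big)$ onto $\sM^{(i)}\otimes_\T\sN^{(i_2)}\otimes\cdots\otimes\sN^{(i_k)}$. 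The issue is that the middle and right terms now involve one $\sM^{(i)}$-factor, so they are not obviously in $\ang{\sM}$ yet. The trick is to choose $i = i_1$: then $\sM^{(i_1)}\otimes_\T\sN^{(i_1)} \cong \T$ in $\rqgr\T$ by Corollary~\ref{cor:invertible}, so both the middle term's first summand and the right term collapse to tensor products of $\sN$'s with one fewer factor, to which the inductive hypothesis applies; the middle term's second summand also has $k-1$ factors. Hence the kernel $\sN^{(i_1)}\otimes\cdots\otimes\sN^{(i_k)}$ lies in $\ang{\sM}$, being the kernel of an epimorphism between objects of $\ang{\sM}$. (One should double-check that $\ang{\sM}$, being closed under direct sums, summands, and kernels of epis, does contain such kernels — it does by definition.)

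For part (2), I would first treat $s=1$: by Proposition~\ref{prop:ProductOfMi} we have $\T(1) \cong \sM^{(1)}\otimes_\T\cdots\otimes_\T\sM^{(n)}$ in $\rqgr\T$, hence $\T = \T(1)\otimes_\T \sN^{(n)}\otimes_\T\cdots\otimes_\T\sN^{(1)}$, so $\T(-1) \cong \sN^{(1)}\otimes_\T\cdots\otimes_\T\sN^{(n)}$ in $\rqgr\T$ after twisting by $(-1)$ and using that the autoequivalences $-\otimes_\T\sN^{(i)}$ commute with the shift; this lies in $\ang{\sM}$ by part (1). For general $s>0$, I would iterate: $\T(-s) \cong \big(\sN^{(1)}\otimes\cdots\otimes\sN^{(n)}\big)^{\otimes s}$ in $\rqgr\T$, which is again a tensor product of $\sN^{(i)}$'s (with $sn$ factors), so part (1) applies directly. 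The main obstacle is bookkeeping: making sure the various identifications $\T(-1)\cong\sN^{(1)}\otimes\cdots\otimes\sN^{(n)}$ and the compatibility of the shift functor with the tensor autoequivalences are stated cleanly, and that the choice $i=i_1$ in the induction genuinely reduces the number of tensor factors in every term of the sequence. Once those are pinned down the argument is short.
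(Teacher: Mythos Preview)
Your inductive step for part (1) has a genuine gap. After tensoring \eqref{BijSequence2} on the right by $P = \sN^{(i_2)} \otimes_\T \cdots \otimes_\T \sN^{(i_k)}$ with $j = i = i_1$, the three terms are
\[
0 \to \sN^{(i_1)} \otimes_\T P \to \big(\sM^{(i_1)} \otimes_\T \sN^{(i_1)} \otimes_\T P\big) \oplus P \to \sM^{(i_1)} \otimes_\T P \to 0.
\]
The middle term does simplify as you say, since $\sM^{(i_1)} \otimes_\T \sN^{(i_1)} \cong \T$. But the right-hand term is $\sM^{(i_1)} \otimes_\T \sN^{(i_2)} \otimes_\T \cdots \otimes_\T \sN^{(i_k)}$, and there is no $\sN^{(i_1)}$ factor there to cancel the $\sM^{(i_1)}$. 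This object is not of the form covered by your inductive hypothesis, so you cannot conclude it lies in $\ang{\sM}$, and hence you cannot conclude the kernel does either.

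The paper avoids this by using the sequence \eqref{BijSequence} rather than \eqref{BijSequence2}, and tensoring on \emph{both} sides: applying $\sN^{(i_1)} \otimes_\T \cdots \otimes_\T \sN^{(i_{k-1})} \otimes_\T -$ on the left and $- \otimes_\T \sN^{(i_k)}$ on the right, with the two $\sM$-factors in \eqref{BijSequence} taken to be $\sM^{(i_{k-1})}$ and $\sM^{(i_k)}$. Then both $\sM$-factors cancel (one against $\sN^{(i_{k-1})}$ on the left, one against $\sN^{(i_k)}$ on the right), and the middle and right terms become pure tensor products of $\sN$'s with $k-1$ and $k-2$ factors respectively. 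Your strategy for part (2) is essentially the same as the paper's, modulo the order of the $\sN^{(i)}$ factors: inverting $\sM^{(1)} \otimes_\T \cdots \otimes_\T \sM^{(n)} \cong \T(1)$ gives $\T(-1) \cong \sN^{(n)} \otimes_\T \cdots \otimes_\T \sN^{(1)}$, not the order you wrote, though this does not affect the applicability of (1).
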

\begin{proof}\mbox{}
\begin{enumerate}[{\normalfont (1)},wide=0pt,topsep=0pt,itemsep=0pt]
\item We prove this by induction on $k$. By \eqref{BiiSequence}, $\sNi \in \ang{\sM}$ for $0 \leqslant i \leqslant n$. Now suppose that $k \geqslant 2$, and consider the short exact sequence in $\rqgr \T$
\begin{align*}
0 \to \T \to \sM^{(i_{k-1})} \oplus \sM^{(i_k)} \to \sM^{(i_{k-1})} \otimes_\T \sM^{(i_k)} \to 0
\end{align*}
from \eqref{BijSequence}. Applying the functors $\sN^{(i_1)} \otimes_\T \dots \otimes_\T \sN^{(i_{k-1})} \otimes_\T - $ and $ - \otimes_\T \sN^{(i_k)}$, both of which are exact since the $\sNi$ are invertible objects in $\rqgr \T$, yields an exact sequence
\begin{align*}
0 \to \sN^{(i_1)} \otimes_\T \dots \otimes_\T \sN^{(i_k)}
\to \big( \sN^{(i_1)} \otimes_\T \dots \otimes_\T \sN^{(i_{k-2})} \otimes_\T \sN^{(i_{k})} \big) \oplus \big( \sN^{(i_1)} \otimes_\T \dots \otimes_\T \sN^{(i_{k-1})} \big) \hspace{40pt}\\
\to \sN^{(i_1)} \otimes_\T \dots \otimes_\T \sN^{(i_{k-2})} \to 0. \hspace{33pt}
\end{align*}
The middle and right-hand terms both lie in $\ang{\sM}$ by the induction hypothesis, and so the same is true of $\sN^{(i_1)} \otimes_\T \dots \otimes_\T \sN^{(i_k)}$.
\item By Proposition~\ref{prop:ProductOfMi}, we have $\sM^{(1)} \otimes_\T \dots \otimes_\T \sM^{(n)} \cong \T(1)$ in $\rqgr \T$, and so $\sN^{(n)} \otimes_\T \dots \otimes_\T \sN^{(1)} \cong \T(-1)$. Hence in $\rqgr \T$ we have 
\begin{align*}
\big(\sN^{(n)} \otimes_\T \dots \otimes_\T \sN^{(1)} \big)^{\otimes_\T s} \cong \T(-s),
\end{align*}
and this object lies in $\ang{\sM}$ by part (1). \qedhere
\end{enumerate}
\end{proof}

A set of objects $S$ in an abelian category $\mathcal{A}$ is said to \emph{generate} $\mathcal{A}$ if, for every object $A \in \mathcal{A}$, there exist objects $B_1, \dots, B_k \in \mathcal{A}$ and an epimorphism
\begin{align*}
B_1 \oplus \dots \oplus B_k \twoheadrightarrow A.
\end{align*}

\begin{lemma}\label{lem:Generators}
{\rm (cf.~\cite[Proposition 4.4]{AZ})}
$\rqgr T$ is generated by $\{ \T(-s) \mid s > 0 \}$.
\end{lemma}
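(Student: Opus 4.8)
The plan is to mimic the proof of \cite[Proposition~4.4]{AZ}, adapting it to our non-commutative setting. Let $\sF \in \rqgr \T$ be arbitrary. The goal is to produce a surjection onto $\sF$ from a finite direct sum of shifts $\T(-s)$ with $s > 0$.

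First I would choose a finitely generated graded $\T$-module $N$ with $\pi N = \sF$. Since $N$ is finitely generated, its generators sit in degrees $d_1, \dots, d_k$, so there is a graded surjection $\bigoplus_{j=1}^k \T(-d_j) \twoheadrightarrow N$ of $\T$-modules. Applying the exact quotient functor $\pi: \rGr \T \to \rQgr \T$ yields an epimorphism $\bigoplus_{j=1}^k \pi\T(-d_j) \twoheadrightarrow \sF$ in $\rqgr \T$. The only thing that could go wrong is that some $d_j \leqslant 0$. To fix this, I would note that for any fixed $d$ there is a surjection $\T(-d-m)^{\oplus r} \twoheadrightarrow \T(-d)$ in $\rqgr \T$ for some $m \gg 0$ and some $r$: indeed, since $\T$ is generated in degrees $0$ and $1$ (the generators $x,y,z$ have projective degree $0$ and the $t_i$ have projective degree $1$), the module $\T_{\geqslant m}$ is generated in degree $m$ for every $m \geqslant 0$, so there is a graded surjection $\T(-m)^{\oplus r} \twoheadrightarrow \T_{\geqslant m}$; and $\pi(\T_{\geqslant m}) = \pi\T$ since $\T/\T_{\geqslant m}$ is torsion. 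Twisting by $(-d)$ and precomposing gives a surjection $\pi\T(-d-m)^{\oplus r} \twoheadrightarrow \pi\T(-d)$ in $\rqgr \T$, and $d + m > 0$ once $m$ is large enough. Composing this with the epimorphism above produces the desired surjection onto $\sF$ from a finite direct sum of $\pi\T(-s)$ with $s > 0$.

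The main obstacle — and it is a mild one — is confirming that $\T$ really is generated in degrees $0$ and $1$ under the projective grading, so that $\T_{\geqslant m}$ is generated in degree $m$. This is immediate from Proposition~\ref{prop:GeneratorsOfT}, since the listed generators $x, y, z$ lie in $T_0$ and $t_0, \dots, t_n$ lie in $T_1$; alternatively, one can read it off from the presentation $\Tpres$ of Theorem~\ref{thm:TcongTpres}. Given this, the argument that $\T_{\geqslant m}$ is generated in degree $m$ is the standard one: $\T_{\geqslant m} = \T_m \cdot \T$ because $\T_{m+1} = \T_m \cdot \T_0 + (\text{products involving the }t_i)$, and an easy induction on degree — using that any monomial of projective degree $d \geqslant m+1$ factors as a degree-$m$ monomial times a degree-$(d-m)$ monomial, which is already visible in the proof of Proposition~\ref{prop:GeneratorsOfT} — closes the loop. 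Everything else is formal manipulation with the exact functor $\pi$ and the fact that torsion modules become zero in $\rqgr \T$. I expect this proof to occupy only a few lines once written out in full.
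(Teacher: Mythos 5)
Your argument is correct, but it takes a more roundabout route than the paper's.  The paper's proof is a one-liner: writing $\sF = \pi F$, it observes that $\pi F = \pi(F_{\geq 1})$ since $F/F_{\geq 1}$ is torsion, and then a finite set of homogeneous generators of $F_{\geq 1}$ automatically lies in \emph{positive} degrees, giving the desired surjection $\bigoplus \T(-s_j) \twoheadrightarrow F_{\geq 1}$ directly.  This works for any noetherian $\NN$-graded ring and uses nothing about the generating degrees of $T$.  You instead surject onto $F$ from shifts $\T(-d_j)$ with possibly $d_j \leq 0$ and then repair the bad shifts, which requires the extra ingredient that $\T_+$ is generated in degree $1$ (equivalently, that $\T_{\geq m}$ is generated in degree $m$).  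That ingredient is true and easy here — $x,y,z$ have projective degree $0$ and the $t_i$ degree $1$, so every monomial of degree $d \geq m$ factors as a degree-$m$ piece times a degree-$(d-m)$ piece, and then noetherianity lets you pass from ``generated by $T_m$'' to ``generated by a finite subset of $T_m$'' — but it is a detour compared to truncating $F$ once at the start.  (One small slip: you wrote ``$\T_{m+1} = \T_m \cdot \T_0 + (\text{products involving the } t_i)$'', which should be $\T_{m+1} = \T_m \cdot \T_1$; the clarification you give right after, factoring off a degree-$m$ submonomial, is the correct statement.)  Had you applied your own observation that $\pi(\T_{\geq m}) = \pi \T$ to $F$ instead of to $T$, you would have recovered the paper's shorter argument.
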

\begin{proof}
The proof of \cite[Proposition 4.4(1)]{AZ} works in our situation but, as it is short, we give it here. Let $\sF = \pi(F)$ be an object of $\rqgr \T$, where $F \in \rgr T$.
Then $\sF = \pi(F_{\geqslant 1})$ as $F/F_{\geqslant 1}$ is torsion. Taking a set of homogeneous generators for $F_{\geqslant 1}$, we obtain a surjection 
\begin{align*}
\T(-s_1) \oplus \dots \oplus \T(-s_k) \to F_{\geqslant 1} 
\end{align*}
of graded right $\T$-modules for some positive integers $s_1, \dots, s_k$. Since the quotient functor $\pi : \rgr \T \to \rqgr \T$ is exact this gives a surjection
\begin{align*}
\T(-s_1) \oplus \dots \oplus \T(-s_k) \to \mathcal{F}
\end{align*}
in $\rqgr \T$, as desired.
\end{proof}

We are now in a position to prove our noncommutative McKay correspondence.

\begin{theorem}\label{thm:tilting}
$\sM$ is a tilting object for $D^b(\rqgr T)$ and the functor $\RHom_{\rQgr \T}(\sM, -)$ induces a triangle equivalence
\begin{align*}
D^b(\rqgr \T) \to D^b(\rmod \Lambda) = \mathscr{D}(\Lambda).
\end{align*}
\end{theorem}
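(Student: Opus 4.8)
The plan is to verify the three standard conditions that make $\sM$ a tilting object for $D^b(\rqgr \T)$ and then invoke the general tilting theorem. Recall that an object $\sM$ in an abelian category $\mathcal{A}$ (with $D^b(\mathcal{A})$ having finite-dimensional Hom-spaces and finite homological dimension) is tilting if: (i) $\sM$ is a \emph{compact generator}, i.e.\ the smallest triangulated subcategory of $D^b(\mathcal{A})$ containing $\sM$ and closed under summands is all of $D^b(\mathcal{A})$; (ii) $\Ext^s_{\mathcal A}(\sM, \sM) = 0$ for all $s \geq 1$; and (iii) $\End_{\mathcal A}(\sM)$ has finite global dimension. Once these hold, $\RHom_{\rQgr \T}(\sM, -): D^b(\rqgr \T) \to D^b(\rmod \End(\sM))$ is a triangle equivalence by the standard argument (see e.g.\ \cite{NCCR} or the references therein, applicable since $\rqgr \T$ has finite homological dimension by Theorem~\ref{newthm:smooth} and finite-dimensional Hom-spaces over $B$, hence over $\kk$, by Theorem~\ref{thm:SF}(1)).

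First I would assemble the pieces already proved. Condition (ii) is exactly Proposition~\ref{prop:VanishingExt}. Condition (iii): by Proposition~\ref{prop:EndomorphismRing} we have $\End_{\rQgr \T}(\sM) \cong \End_B(M) = \Lambda$, and $\Lambda = \Lambda_q$ has finite global dimension by Remark~\ref{rem:CY2} (it is twisted $2$-Calabi--Yau, hence homologically smooth, hence of finite global dimension by \cite{yekutielizhang}). So the only real work is condition (i), the generation statement. Here is where Proposition~\ref{prop:SetOfGenerators} and Lemma~\ref{lem:Generators} come in. By Lemma~\ref{lem:Generators}, the set $\{\T(-s) \mid s > 0\}$ generates $\rqgr \T$ in the sense that every object is a quotient of a finite sum of these. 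By Proposition~\ref{prop:SetOfGenerators}(2), each $\T(-s)$ for $s>0$ lies in $\ang{\sM}$, the closure of $\sM$ under finite sums, summands, and kernels of epimorphisms. Therefore every object of $\rqgr \T$ is a quotient of an object in $\ang{\sM}$; iterating (taking the kernel, which is again a quotient of something in $\ang{\sM}$, etc.) and using that $\rqgr \T$ has finite homological dimension, one obtains for each $\sF \in \rqgr \T$ a finite resolution $0 \to \sM_d \to \cdots \to \sM_0 \to \sF \to 0$ with each $\sM_j \in \add \sM$ (the resolution terminates because $\Ext^{>N}$ vanishes for $N$ the homological dimension, so the $d$-th syzygy is a summand of $\sM_{d-1}$). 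Splicing these into distinguished triangles shows $\sF$ lies in the triangulated subcategory generated by $\sM$, giving condition (i) on the level of $D^b(\rqgr \T)$.

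I expect the main obstacle to be the care needed in passing from the ``quotient/generation'' statement to a genuine \emph{finite-length} resolution by objects in $\add \sM$, i.e.\ making precise that $\ang{\sM}$ together with finite homological dimension forces bounded resolutions — this is the content of the standard lemma that a generating object with $\Ext^{>0}$-vanishing in a category of finite homological dimension is tilting, and one must check the hypotheses (finite global dimension of $\rqgr \T$, which is Theorem~\ref{newthm:smooth}; the $\Ext$-vanishing, Proposition~\ref{prop:VanishingExt}; and that syzygies stay in $\ang{\sM}$, which needs $\ang{\sM}$ to be closed under the relevant kernels, which it is by construction). The rest is bookkeeping: identify the target algebra as $\Lambda$ via Proposition~\ref{prop:EndomorphismRing}, note $D^b(\rmod \Lambda) = \mathscr{D}(\Lambda)$ by the convention fixed in Section~\ref{CONVENTIONS}, and cite the tilting equivalence theorem. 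Finally I would remark, as the excerpt promises, that specialising $q = 1$ recovers the argument of \cite{NCCR} for the classical derived McKay correspondence, since all the modules $\sM^{(i)}, \sN^{(i)}$ and the ring $\T$ specialise to their commutative counterparts.
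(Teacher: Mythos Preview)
Your proposal is correct and follows essentially the same route as the paper: both verify Ext-vanishing (Proposition~\ref{prop:VanishingExt}), the endomorphism identification $\End_{\rQgr \T}(\sM)\cong\Lambda$ (Proposition~\ref{prop:EndomorphismRing}), finite homological dimension (Theorem~\ref{newthm:smooth}), and generation via Proposition~\ref{prop:SetOfGenerators} plus Lemma~\ref{lem:Generators}, then invoke a general tilting theorem. The paper packages the last step by citing \cite[Proposition~4.6]{keller} directly (also recording that $\rQgr\T$ is locally noetherian and $\Lambda$ noetherian), whereas you sketch the resolution argument by hand; one small slip is that your resolving objects $\sM_j$ lie in $\ang{\sM}$ rather than $\add\sM$, but since $\ang{\sM}$ sits inside the triangulated closure of $\sM$ this does not affect the conclusion.
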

\begin{proof}
This is a direct consequence of \cite[Proposition 4.6]{keller}, once we verify the hypotheses of that result. We need to show the following:
\begin{enumerate}[{\normalfont (1)},topsep=0pt,itemsep=0pt,leftmargin=30pt]
\item $\rQgr \T$ is locally noetherian and $\sM$ is a noetherian object in $\rQgr \T$;
\item $\rQgr \T$ has finite homological dimension;
\item $\Ext^i_{\rQgr \T}(\sM,\sM) = 0$ for all $i > 0$;
\item $\ang{\sM}$ contains a set of generators of $\rQgr \T$;  
\item $\End_{\rQgr \T}(\sM) \cong \Lambda$; and
\item $\Lambda$ is noetherian.
\end{enumerate}
As $\T$ is noetherian, $\rQgr \T$ is a locally noetherian category, and $\mc M$ is noetherian since it is a finitely generated $\T$-module, establishing (1). Properties (2) and (3) are proved in Theorem~\ref{newthm:smooth} and Proposition~\ref{prop:VanishingExt}, respectively. Combining Proposition~\ref{prop:SetOfGenerators} and Lemma~\ref{lem:Generators} shows that (4) is satisfied. Finally, (5) is established in Proposition~\ref{prop:EndomorphismRing}. The result now follows since $\Lambda \cong A \hash  G$ is noetherian. 
\end{proof}

\begin{remark}\label{rem:epsilon}
It is possible to define a triangulated functor
\[ - \otimes^{\LL}_\Lambda \sM: \mc D^b(\Lambda) \to D^b(\rqgr T)\]
using $\otimes_\Lambda$ on charts of the noncommutative toric variety $\X$ 
and the equivalence of Theorem~\ref{newthm:catequiv} between $\rmod \ms O_\X$ and $\rqgr T$.
One can then show that $- \otimes^{\LL}_\Lambda \sM $ is  quasi-inverse to the functor $\RHom_{\rQgr T}(\sM, -)$ from Theorem~\ref{thm:tilting}.
Since we will not need the quasi-inverse explicitly, we omit the details.
\end{remark}

Let us now give the promised proof that the map $\phi: \rqgr T \to \rmod B$ is a weakly crepant categorical resolution.

Recall that $\phi_* = \H^0_{\rQgr \T}(-): \rqgr T \to \rmod B$ and $\phi^* = \pi(-\otimes_B T): \rmod B \to \rqgr T$, and that $(\phi^*, \phi_*)$ are an adjoint pair.
We will denote the associated right derived functor $D^b(\rqgr T) \to D(B)$ also by $\phi_*$.
Likewise, we will use  $\phi^*:  D(B) \to D(\rqgr T)$ to denote the triangulated functor associated to the abelian functor $\phi^*$, which we note is right exact as a consequence of Proposition~\ref{prop:adjoint}. It will be helpful to have more compact notation for the triangle equivalence between $D^b(\rqgr T)$ and $\ms D(\Lambda)$, so temporarily let $\Theta$ be the functor $\RHom_{\rQgr T}(\sM, -)$ from Theorem~\ref{thm:tilting}.

The following diagram records the action of these functors:

\begin{equation}\label{diagram}
\begin{tikzcd}[>=stealth,ampersand replacement=\&,row sep=55pt,every label/.append style={font=\normalsize},column sep=-5pt] 
D(\rMod \Lambda) \arrow[dr, "\psi_*" {right,xshift=0pt,yshift=4pt},shift left=2]  \& \& \arrow[ll, "\Theta" {above,yshift=1pt}, "\cong" {below,yshift=-1pt}] D(\rQgr T)
\arrow[dl, "\phi_*" {right,xshift=0pt,yshift=-3pt},shift left=3,yshift=7pt] \\
\& \arrow[ul, "\psi^*" {left,xshift=-1pt,yshift=-3pt},shift left=3,yshift=7pt] D(\rMod B) \arrow[ur, "\phi^*" {left,xshift=2pt,yshift=4pt},shift left=2] \&
\end{tikzcd}
\end{equation}
Note that \eqref{diagram} is not commutative: for example, $\Theta(\sM) = \Lambda \neq \psi^*\phi_* (\sM) = \Lambda e_0 \Lambda$.
The quotient $\Lambda/\Lambda e_0 \Lambda $ is the type $\AA_{n}$ preprojective algebra, by \cite[Theorem~8.2]{crawfordthesis}, which has dimension $\binom{n+2}{3}$ and in particular is nonzero.

However, there are some nice relations between these functors, which we give in the next two results.

\begin{lemma}\label{lem:ii}
The functors $\Theta \phi^*$ and $\psi^*$ from $ D(B) \to \ms D(\Lambda)$ agree on $\Perf(B)$.
\end{lemma}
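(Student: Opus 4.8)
The statement to prove is that $\Theta\phi^* \cong \psi^*$ as functors $D(B) \to \mathscr D(\Lambda)$ upon restriction to $\Perf(B)$. Since $\Perf(B)$ is the triangulated envelope of the single object $B$ (viewed as a complex in degree $0$), the natural strategy is to apply Lemma~\ref{lem:star} with $R = B$, $\alpha = \Theta\phi^*$, and $\beta = \psi^*$. So the plan is to verify the two hypotheses of that lemma: (a) there is an isomorphism $\theta\colon \Theta\phi^*(B) \xrightarrow{\sim} \psi^*(B)$, and (b) for every $b \in B$ the square relating $\theta$ to multiplication by $b$ commutes. Both functors are triangulated, so Lemma~\ref{lem:star} then gives the desired natural isomorphism on $\Perf(B)$.

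\textbf{Step 1: compute $\Theta\phi^*(B)$.} First, $\phi^*(B) = \pi(B \otimes_B T) = \pi(T) = \mathcal O_{\mathcal X}$ in $\rqgr T$; since $T$ is $\phi_*$-acyclic in the relevant sense (this is essentially $H^1_{T_+}(T)_0 = 0$ together with $\phi_* \pi T = B$, cf.~Proposition~\ref{prop:cohbasics} and Proposition~\ref{prop:localcohomology}), the derived functor $\phi^*$ applied to $B$ is just $\pi T$ concentrated in degree $0$. Next, $\Theta(\pi T) = \RHom_{\rQgr T}(\mathcal M, \pi T)$. By Proposition~\ref{prop:VanishingExt}-style computations — more precisely, one needs $\Ext^s_{\rQgr T}(\mathcal M, T) = 0$ for $s \geq 1$, which follows from the vanishing $\mathrm H^s_{\rQgr T}(\mathcal N^{(i)}) = 0$ established inside the proof of Proposition~\ref{prop:VanishingExt} (since $\Ext^s_{\rQgr T}(\mathcal M^{(i)}, T) = \mathrm H^s_{\rQgr T}(\mathcal N^{(i)})$) — this complex has cohomology only in degree $0$, where it equals $\Hom_{\rQgr T}(\mathcal M, \pi T) = \Hom_B(M, T_0) = \Hom_B(M, B) = e_0\Lambda$ as a left $\Lambda$-module (using $\Lambda = \End_B(M)$ and the identification $M = \Lambda e_0$, so $\Hom_B(\Lambda e_0, B) \cong e_0\Lambda$ via \eqref{eqn:CKWZiso2}). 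Thus $\Theta\phi^*(B) \cong e_0\Lambda$ in $\mathscr D(\Lambda)$.

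\textbf{Step 2: compute $\psi^*(B)$ and build $\theta$.} By definition $\psi^* = - \otimes_B e_0\Lambda$, so $\psi^*(B) = B \otimes_B e_0\Lambda = e_0\Lambda$ (again $B$ is flat, indeed free-ish enough, over itself so no derived correction is needed). Hence both sides are literally $e_0\Lambda$ as left $\Lambda$-modules, and we take $\theta$ to be the identity — or, being careful, the composite of the canonical isomorphisms identified in Step 1. The content of hypothesis (b) of Lemma~\ref{lem:star} is then that the right $B = e_0\Lambda e_0$-action by $b$ on $e_0\Lambda$ coming through $\Theta\phi^*$ matches the one coming through $\psi^*$; tracing the definitions, the $\Theta\phi^*$-action is precomposition/postcomposition induced by the $B$-module structure on $\phi^*(B) = \pi(B\otimes_B T)$ via the left $B$-factor, while the $\psi^*$-action is the evident left-factor action on $B \otimes_B e_0\Lambda$. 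Both unwind to multiplication by $e_0 b e_0$ inside $e_0\Lambda$, so the square commutes.

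\textbf{Main obstacle.} The genuinely delicate part is not any single calculation but keeping the functoriality bookkeeping honest: one must check that the isomorphism $\Theta\phi^*(B) \cong e_0\Lambda \cong \psi^*(B)$ is compatible with the $B$-actions in the precise sense demanded by Lemma~\ref{lem:star}(b), i.e.~that the identifications in Step 1 (the adjunction $\phi^*$, the tilting equivalence $\Theta$, the vanishing of higher $\Ext$, and the isomorphism $\Hom_B(\Lambda e_0, B) \cong e_0\Lambda$) are all natural in the module being acted on. Each of these naturality statements is available from the earlier sections (the adjunction is Proposition~\ref{prop:adjoint}, the $\Ext$-vanishing is in Proposition~\ref{prop:VanishingExt}, the $\Hom$-identification is \eqref{eqn:CKWZiso2} and Lemma~\ref{lem:HomMi}), but assembling them into a single commuting square requires care. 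I expect the cleanest write-up to factor through the observation that on $B$ itself the derived functors $\phi^*$ and $\psi^*$ agree with their underived counterparts, reducing everything to an abelian-category computation where the $B$-bimodule structures are transparent.
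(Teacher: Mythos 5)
Your proposal follows essentially the same route as the paper: invoke Lemma~\ref{lem:star} with $\alpha=\Theta\phi^*$, $\beta=\psi^*$, compute both sides on $B$ to get $e_0\Lambda \cong M^*$, and then check compatibility with the $B$-action. Two remarks. First, a small justification gaffe: the reason $\mathbb L\phi^*(B)$ agrees with the underived $\phi^*(B)=\pi T$ is simply that $B$ is projective as a right $B$-module, so no left-derived correction occurs; the $\phi_*$-acyclicity of $\pi T$ (the $H^1_{T_+}(T)_0=0$ vanishing you cite) is irrelevant at this step — that is data about $\mathbb R\phi_*$, not $\mathbb L\phi^*$. Second, you are actually somewhat more explicit than the paper about why $\Theta(\pi T)=\RHom_{\rQgr T}(\sM,\pi T)$ collapses to degree $0$ (spelling out that $\Ext^s_{\rQgr T}(\sM,\pi T)=\H^s_{\rQgr T}(\sN^{(i)})=0$ for $s\geq 1$, which is indeed inside Proposition~\ref{prop:VanishingExt}), whereas the paper cites only Proposition~\ref{prop:EndomorphismRing} for the degree-$0$ identification and leaves the higher vanishing implicit. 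The final verification of condition (b) of Lemma~\ref{lem:star} is written a bit loosely in your proposal; the paper makes it crisp by writing out that $\Theta\phi^*(\lambda_b)$ is $f\mapsto\lambda_b\circ f$ on $\Hom_{\rQgr T}(\sM,\pi T)$ while $\psi^*(\lambda_b)=\lambda_b\otimes 1$, both of which become left multiplication by $b\in e_0\Lambda e_0$ on $e_0\Lambda$ — which is what you sketch, so this is a presentational difference rather than a gap.
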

\begin{proof}
We use the criteria of Lemma~\ref{lem:star}.
First,
\[ \Theta \phi^* B = \Theta(\pi T) = \Hom_{\rqgr T}(\sM, \pi T) = M^* = \psi^* B, \]
where we have used Proposition~\ref{prop:EndomorphismRing} for the isomorphism between $M^*$ and $\Hom_{\rQgr T}(\sM, \pi T)$.
If $b \in B$, let $\lambda_b = (b \cdot -)$ denote the corresponding endomorphism of $B_B$.
Note that $\lambda_b$ also gives an element of $\Lambda = \End_\Lambda(\Lambda)$, from the inclusion  $B = e_0 \Lambda e_0 \subseteq \Lambda$.
Then 
\[ \Theta \phi^*(\lambda_b) : \Hom_{\rQgr T}(\sM, \pi T) \to \Hom_{\rQgr T}(\sM, \pi T) \]
is given by $f \mapsto \lambda_b \circ f$.
On the other hand, $\psi^*(\lambda_b) = \lambda_b \otimes 1: B \otimes_B M^* \to B \otimes_B M^*$ is also given by left multiplication by $b$.
The result follows by Lemma~\ref{lem:star}.
\end{proof}

\begin{lemma}\label{lem:iii}
The functors $\phi_* $ and $\psi_* \Theta$ from $D(\rQgr T) \to  D(R)$
agree on $D^b(\rqgr T) $.
\end{lemma}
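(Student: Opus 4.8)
The plan is to prove $\phi_* \cong \psi_* \Theta$ on $D^b(\rqgr T)$ by the same strategy used in Lemma~\ref{lem:ii}, namely by verifying the hypotheses of Lemma~\ref{lem:star}, but now applied to the "output" side. Since both $\phi_*$ and $\psi_* \Theta$ are triangulated functors from $D^b(\rqgr T)$ to $D(B)$ (note $D(R) = D(B)$ here, as $R$ is being used loosely for the base; more precisely the target is $D(\rMod B)$), and since $D^b(\rqgr T)$ is generated as a triangulated category by $\sM$ (indeed $\Theta$ is a derived equivalence with $\ms D(\Lambda)$ sending $\sM$ to the progenerator $\Lambda$, and $\Lambda$ generates $\ms D(\Lambda)$), it suffices by the dual of Lemma~\ref{lem:star} to check: (a) there is an isomorphism $\phi_*(\sM) \cong \psi_* \Theta(\sM)$ in $D(B)$; and (b) this isomorphism is compatible with the action of $\End_{\rqgr T}(\sM) \cong \Lambda$ on both sides.

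First I would compute both sides on $\sM$. For the left side, $\phi_*(\sM) = \RR\H^0_{\rQgr T}(\sM)$; by Lemma~\ref{lem:H1MVanish} and Lemma~\ref{lem:LocalCohOfMN} (together with Corollary~\ref{cor:invertible} to handle the $\sNi$ appearing after tensoring, exactly as in the proof of Proposition~\ref{prop:VanishingExt}), the higher cohomology $\H^s_{\rQgr T}(\sMi)$ vanishes for $s \geq 1$ and $\H^0_{\rQgr T}(\sMi) = M^{(i)}$, so $\phi_*(\sM) \cong M$ concentrated in degree $0$. For the right side, $\Theta(\sM) = \RHom_{\rQgr T}(\sM, \sM) = \End_{\rqgr T}(\sM) = \Lambda$ (using Proposition~\ref{prop:VanishingExt} for the vanishing of higher Ext and Proposition~\ref{prop:EndomorphismRing} for the endomorphism ring), again concentrated in degree $0$; then $\psi_*(\Lambda) = \Hom_\Lambda(e_0 \Lambda, \Lambda) = e_0 \Lambda$. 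Finally $e_0 \Lambda \cong M$ as a right $B = e_0 \Lambda e_0$-module, under the identification recalled in Section~\ref{OURSINGULARITY} (the module $e_i \Lambda e_0$ corresponds to $M^{i,G}$, so $e_0 \Lambda = \bigoplus_i e_i \Lambda e_0 \cong \bigoplus_i M^{(i)} = M$ in $\rmod B$). This establishes (a).

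For (b), both sides carry a left action of $\Lambda \cong \End_{\rqgr T}(\sM)$: on the left, $\lambda \in \End_{\rqgr T}(\sM)$ acts on $\phi_*(\sM) = \H^0_{\rQgr T}(\sM)$ by postcomposition; on the right, $\lambda$ acts on $\Theta(\sM) = \Lambda$ by left multiplication (this is the standard identification $\End_{\rqgr T}(\sM) \cong \Lambda^{\mathrm{op}}$-or-$\Lambda$ depending on conventions, and then $\psi_*$ is $\Lambda$-linear). Tracing through the chain of isomorphisms in the proof of Proposition~\ref{prop:EndomorphismRing} — which identifies $\End_{\rqgr T}(\sM)$ with $\End_B(M)$ via $\Hom_B(M, -)$ applied to $\H^0_{\rQgr T}(\sM) = M$ — one sees that the $\End_{\rqgr T}(\sM)$-action on $\H^0_{\rQgr T}(\sM) = M$ is exactly the $\End_B(M)$-action, and under $\End_B(M) = \Lambda$ acting on $e_0 \Lambda$ by left multiplication this matches the right side. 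So the isomorphism of (a) is $\Lambda$-equivariant.

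The main obstacle, I expect, is the bookkeeping in step (b): one must be careful about left-versus-right module conventions (the paper composes arrows in quivers left-to-right, and $\Theta = \RHom_{\rQgr T}(\sM,-)$ naturally produces left $\End(\sM)$-modules while $\rmod \Lambda$ is the right module category), and about checking that the concrete isomorphism $e_0 \Lambda \cong M$ used here is the same one implicit in $\phi_*$, so that the $\Lambda$-actions genuinely agree rather than agreeing only up to an automorphism of $\Lambda$. Once the dictionary between $\H^0_{\rQgr T}(\sM)$, $M$, and $e_0\Lambda$ is pinned down compatibly — which is essentially forced by the computations already in Proposition~\ref{prop:EndomorphismRing} and Lemma~\ref{lem:H1MVanish} — the rest is the formal application of Lemma~\ref{lem:star} (dualized), together with the observation that $\Theta$ being an equivalence reduces "generated by $\sM$" to "$\ms D(\Lambda)$ generated by $\Lambda$", which is immediate.
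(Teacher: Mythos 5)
Your proposal is correct and follows essentially the same route as the paper: compute $\phi_*\sM \cong M \cong \psi_*\Theta\sM$ (using the cohomology vanishing established in the proof of Proposition~\ref{prop:VanishingExt}), check compatibility with the $\End_{\rqgr T}(\sM)\cong\Lambda$ action via Proposition~\ref{prop:EndomorphismRing}, and then invoke Lemma~\ref{lem:star} through the equivalence $\Theta$, using that $\ms D(\Lambda) = \Perf(\Lambda)$ is the image of $D^b(\rqgr T)$. One small slip worth fixing: $\psi_*(\Lambda) = \Hom_\Lambda(e_0\Lambda,\Lambda) \cong \Lambda e_0$, not $e_0\Lambda$ --- which is indeed $\bigoplus_i e_i\Lambda e_0 \cong M$ as you correctly describe, so the content of the computation is right.
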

\begin{proof}
Let us first check that these functors agree on $\sM$ and on endomorphisms of $\sM$.
We have $\phi_* \sM = H^0_{\rQgr T}(\sM) = M$, and $\psi_* \Theta \sM = \psi_* \Lambda = \Lambda\otimes_\Lambda M = M$.
Since $\psi_* \Lambda = M = \phi_* \sM$ and $\Lambda = \Theta(\sM)$, there are homomorphisms 
\begin{equation*}
\begin{tikzcd}[>=stealth,ampersand replacement=\&,row sep=40pt,every label/.append style={font=\normalsize},column sep=-5pt] 
\Lambda = \End_\Lambda(\Lambda) \arrow[dr, "\psi_*" {swap,xshift=-3pt,yshift=4pt}]  \& \& \arrow[ll, "\Theta" {above,yshift=1pt}] \End_{\rQgr T}(\sM)
\arrow[dl, "\phi_*" {xshift=2pt,yshift=4pt}] \\
\& \End_B(M) \&
\end{tikzcd}
\end{equation*}
These maps induce the canonical identifications, where we have used Proposition~\ref{prop:EndomorphismRing} to identify $\End_B(M)$ with $\Lambda$.
It follows that the functors agree on endomorphisms of $\sM$.

Since $\Lambda$ has finite global dimension, we obtain
$\ms D(\Lambda) = \Perf( \Lambda)$,
which is the image under $\Theta$ of $\Perf(D(\rQgr T))=D^b(\rqgr T)$.
Further, $\Lambda = \Theta(\sM)$.
The result follows from applying Lemma~\ref{lem:star} to $\Perf(D(\rQgr T))$, using that $\Theta$ is a triangle equivalence.
\end{proof}

We can now prove that $\phi$ is a weakly crepant categorical resolution.

\begin{theorem}\label{thm:iv}
The map $\phi = (\phi^*, \phi_*)$ of noncommutative spaces is a weakly crepant categorical resolution of $ D(B)$.
\end{theorem}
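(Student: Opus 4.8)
The plan is to deduce everything from the already-established weak crepancy of $\psi\colon \rMod\Lambda \to \rmod B$ (Proposition~\ref{prop:PsiCrepantCategoricalRes}) by transporting it across the triangle equivalence $\Theta = \RHom_{\rQgr \T}(\sM,-)\colon D^b(\rqgr \T) \xrightarrow{\sim} \ms D(\Lambda)$ of Theorem~\ref{thm:tilting}, using the two comparison lemmata $\Theta\phi^* \cong \psi^*$ on $\Perf(B)$ (Lemma~\ref{lem:ii}) and $\phi_* \cong \psi_*\Theta$ on $D^b(\rqgr \T)$ (Lemma~\ref{lem:iii}). Concretely there are three points to verify: that $\phi^*$ carries $\Perf(B)$ into $D^b(\rqgr \T)$ and $\phi_*$ carries $D^b(\rqgr \T)$ into $\ms D(B)$, so that $\phi$ is a genuine map of the relevant categories; that $D^b(\rqgr \T)$ is smooth and the adjunction unit $\id_{\Perf(B)} \to \phi_*\phi^*$ is an isomorphism (a categorical resolution); and that $(\phi_*,\phi^*)$ is again an adjoint pair (weak crepancy).

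For the first point, $\phi^*(B) = \pi(B\otimes_B \T) = \pi \T \in \rqgr \T$, and since $\Perf(B)$ is generated by $B$ under shifts, cones, and summands, the triangulated functor $\phi^*$ sends $\Perf(B)$ into $D^b(\rqgr \T)$ (alternatively this follows from $\Theta\phi^* \cong \psi^*$ together with $\psi^*(\Perf(B)) \subseteq \ms D(\Lambda)$ and the fact that $\Theta$ restricts to an equivalence onto $\ms D(\Lambda)$). That $\phi_*$ lands in $\ms D(B)$ follows from Theorem~\ref{thm:SF}(1) (the cohomologies $\H^i_{\rQgr \T}(\sN)$ are finitely generated over $B$) and Theorem~\ref{newthm:smooth} (finite homological dimension bounds the amplitude), or again from $\phi_* \cong \psi_*\Theta$ since $e_0\Lambda$ is a projective $\Lambda$-module. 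Smoothness of $D^b(\rqgr \T)$ is immediate from Theorem~\ref{thm:tilting}, which identifies it with $\ms D(\Lambda)$, and the homological smoothness of $\Lambda$ recorded in Remark~\ref{rem:CY2}.

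For the unit I would argue as in Proposition~\ref{prop:PsiCrepantCategoricalRes} and invoke Lemma~\ref{lem:star}: it suffices to check that $B \to \phi_*\phi^* B$ is an isomorphism compatible with left multiplication by each $b \in B$. Here $\phi^* B = \pi \T$, and by Proposition~\ref{prop:cohbasics}(2), the fact that $\T$ is a domain, and $H^s_{\T_+}(\T)_0 = 0$ for all $s$ (Proposition~\ref{prop:localcohomology}), one gets $\H^0_{\rQgr \T}(\pi \T) = \T_0 = B$ and $\H^i_{\rQgr \T}(\pi \T) = H^{i+1}_{\T_+}(\T)_0 = 0$ for $i \geq 1$, so that the unit $B \to \phi_*\phi^* B$ is precisely this identification and is evidently $B$-linear. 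Hence $\id_{\Perf(B)} \to \phi_*\phi^*$ is an isomorphism and $\phi$ is a categorical resolution of $D(B)$. (One could instead try to transport the unit formally along $\phi_*\phi^* \cong \psi_*\Theta\phi^* \cong \psi_*\psi^*$ and the known isomorphism $\id_{\Perf(B)} \to \psi_*\psi^*$ from Proposition~\ref{prop:PsiCrepantCategoricalRes}, but this requires knowing that these natural isomorphisms match units with units, which is exactly what the direct Lemma~\ref{lem:star} computation confirms.)

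Finally, for weak crepancy, given $\sN \in D^b(\rqgr \T)$ and $P \in \Perf(B)$ I would chain the natural isomorphisms
\[
\Hom_{D(B)}(\phi_*\sN,P) \cong \Hom_{D(B)}(\psi_*\Theta\sN,P) \cong \Hom_{\ms D(\Lambda)}(\Theta\sN,\psi^*P) \cong \Hom_{\ms D(\Lambda)}(\Theta\sN,\Theta\phi^*P) \cong \Hom_{D(\rqgr \T)}(\sN,\phi^*P),
\]
using Lemma~\ref{lem:iii}, the adjunction $(\psi_*,\psi^*)$ of Proposition~\ref{prop:PsiCrepantCategoricalRes}, Lemma~\ref{lem:ii}, and full faithfulness of $\Theta$; naturality in both variables then gives the adjunction $(\phi_*,\phi^*)$, which passes to the ambient derived categories. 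The only real difficulty here is the bookkeeping flagged above, namely matching the canonical adjunction unit with the explicit local-cohomology computation; once that is pinned down, everything else is a formal transport of Proposition~\ref{prop:PsiCrepantCategoricalRes} through the equivalence $\Theta$.
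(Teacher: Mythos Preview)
Your proof is correct and follows essentially the same approach as the paper: transport the weak crepancy of $\psi$ across the equivalence $\Theta$ via Lemmata~\ref{lem:ii} and~\ref{lem:iii}. The only minor difference is that for the unit $\id_{\Perf(B)} \to \phi_*\phi^*$ you carry out the direct local-cohomology verification (which the paper mentions as an option) whereas the paper writes down the transport $\phi_*\phi^* \cong \psi_*\Theta\phi^* \cong \psi_*\psi^* \cong \id_{\Perf(B)}$; your additional checks that $\phi^*$ and $\phi_*$ land in the correct bounded categories are useful detail the paper leaves implicit.
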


\begin{proof}
We have seen in Proposition~\ref{prop:adjoint} that $(\phi^*, \phi_*)$ are an adjoint pair and so do give a map of noncommutative spaces.  
It follows from Theorem~\ref{thm:tilting} and Remark~\ref{rem:CY2} that $D^b(\rqgr T)$ is smooth.
To show that $\phi$ is a categorical resolution, it remains to verify that $\phi_*\phi^* \cong \id_{\Perf(B)}$.
This can be checked directly, or using Lemmata~\ref{lem:ii} and \ref{lem:iii}:
we have
\[ \phi_* \phi^* \cong \psi_* \Theta \phi^* \cong \psi_* \psi^* \] 
as functors from $\Perf(B)$ to $ D(B)$.  
This last is naturally isomorphic to $\id_{\Perf(B)}$ by Proposition~\ref{prop:PsiCrepantCategoricalRes}.

To show that $\phi$ is weakly crepant, we need that $\phi^* : \Perf(B) \to D(\rQgr T)$ is also a right adjoint to $\phi_*$, that is, that there are natural isomorphisms 
\beq\label{sunny} \RHom_B(\phi_* \sF, N) \cong \RHom_{\rQgr T}(\sF, \phi^*N)\eeq
for all $N \in \Perf( B)  $ and $\sF \in D(\rQgr T)$.
But by Proposition~\ref{prop:PsiCrepantCategoricalRes} $\psi$ is weakly crepant, and Lemmata~\ref{lem:ii} and \ref{lem:iii} show that $\Theta$ and quasi-inverse transport the corresponding adjunction to \eqref{sunny}.
\end{proof}

We believe that $D(\rQgr T)$ is also a strongly crepant categorical resolution of $\ms D(B)$, as with $ D(\Lambda)$, but again we have not been able to prove this. 


\section{Intersection theory}\label{INTTHEORY}

\numberwithin{equation}{section}

Recall that, given suitable finiteness on $\Hom$ and $\Ext$ groups, one may define an ``intersection product'' on an abelian category $\mathbf{C}$ with enough injectives by
\[ \sA \cdot \sB = \sum_{i \geq 0} (-1)^{i+1} \dim_\kk \Ext^i_{\mathbf{C}}(\sA, \sB).\]
(This is, for example, the intersection product defined in \cite{MS}.)
We say that $\sA \cdot \sB$ is {\em well-defined} if  $\dim_\kk \Ext^i_{\mathbf{C}}(\sA, \sB) < \infty$ for all $i$ and all but finitely many $\Ext^i_{\mathbf{C}}(\sA, \sB)$ are zero.
In this section, we consider the intersection theory of the ``exceptional locus'' of the map $\phi: \rqgr T \to \rmod B$ defined in Section~\ref{DEFCAT} and show that it consists of a chain of $n$ lines, each with self-intersection $-2$: in other words, the classic resolution graph of an $\AA_n$ singularity.  

In this section, for $1 \leq i \leq n$ let $L_i = T/\mf l_i$, where $\mf l_i$ is defined in Lemma~\ref{lem:X}.
We comment briefly on the term ``exceptional locus of $\phi$''.
Each of the $L_i$ is annihilated by the ideal $\langle x,y,z \rangle$ of $B$ and so is supported on the singularity of $B$.
By Proposition~\ref{prop:june} this is, morally,  the locus where $\phi:  \rqgr T \to B$ is not an isomorphism. 

Our analysis of the $L_i$ begins with the observation that the $L_i$ are line modules.
\begin{lemma}\label{lem:linehilb}
The composition $\kk_{\qb}[t_{i-1}, t_i] \to T \to L_i$ is an isomorphism of graded rings, and thus $L_i$ has Hilbert series 
\begin{align*}
h_{L_i}(t) = (1-t)^{-2}. \tag*{\qed} 
\end{align*}
\end{lemma}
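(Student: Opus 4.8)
The plan is to show directly that the obvious surjection $\kk_{\qb}[t_{i-1},t_i] \to T/\mf l_i = L_i$ is an isomorphism, after which the Hilbert series computation is immediate since a quantum polynomial ring in two variables has Hilbert series $(1-t)^{-2}$.

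First I would observe that $\mf l_i = \langle x,y,z, t_0,\dots,t_{i-2},t_{i+1},\dots,t_n\rangle$ is a $\ZZ^3$-graded (toric-graded) ideal of $T$, so $L_i$ inherits the toric grading, and by Proposition~\ref{prop:OneDimensionalPieces} (or the identification $T \cong \Tpres$ of Theorem~\ref{thm:TcongTpres}) every toric-graded piece of $T$, hence of $L_i$, is at most one-dimensional. Since the composite $\nu\colon \kk_{\qb}[t_{i-1},t_i] \to T \to L_i$ is a graded ring homomorphism — it respects the $\qb$-commutation relation between $t_{i-1}$ and $t_i$ by Lemma~\ref{lem:RelationsInT}, and there are no other relations in the source — it suffices to prove $\nu$ is both surjective and injective. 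For surjectivity, I would use the spanning set $\mc S$ of $\Tpres$ from Proposition~\ref{prop:OneDimensionalPieces}: modulo $\mf l_i$ every element of $\mc S$ involving $x$, $y$, $z$, or any $t_m$ with $m\notin\{i-1,i\}$ dies, leaving only the monomials $t_{i-1}^{\ell_{i-1}} t_i^{\ell_i}$ (from the family $z^k t_m^{\ell_m} t_{m+1}^{\ell_{m+1}}$ with $m=i-1$, and the degenerate cases $z^k t_m^{\ell_m}$), all of which are clearly in the image of $\nu$; hence $\nu$ is onto.

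For injectivity, the key point is that distinct monomials $t_{i-1}^{a} t_i^{b}$ in the source have distinct toric degrees: by Proposition~\ref{prop:green}, $\deg t_{i-1} = \big(\binom{i-1}{2},\binom{i}{2},1\big)$ and $\deg t_i = \big(\binom{i}{2},\binom{i+1}{2},1\big)$, and these two vectors are linearly independent in $\ZZ^3$ (compare, e.g., the first two coordinates), so $(a,b)\mapsto a\deg t_{i-1} + b\deg t_i$ is injective. Thus the source is at most one-dimensional in each toric degree, matching $L_i$, and $\nu$ restricted to each graded piece is a surjection between spaces of dimension $\leq 1$; it remains only to check it is nonzero wherever the source is nonzero, i.e.\ that $t_{i-1}^{a}t_i^{b} \notin \mf l_i$ for all $a,b\geq 0$. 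This I would verify via the toric embedding $\eta\colon T \hookrightarrow D$ of \eqref{eqn:eta}: $\mf l_i$ is generated by monomials in $\ba,\bb,\bs$, and one checks that the monomial $\eta(t_{i-1}^{a}t_i^{b})$ — a scalar times $\ba^{\bullet}\bb^{\bullet}\bs^{a+b}$ — is not divisible by any of the generating monomials $\eta(x),\eta(y),\eta(z),\eta(t_m)$ for $m\notin\{i-1,i\}$ inside the quantum torus $D$ (the $\bs$-exponent distinguishes it from $x,y,z$, and a short direct check with the $\ba,\bb$-exponents rules out the remaining $t_m$). Hence $\nu$ is injective, so an isomorphism.

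The main obstacle is the last step: confirming that the relevant monomials avoid $\mf l_i$. I expect this to be a genuine but routine computation — the alternative, cleaner route is to invoke the equivalence $\rqgr T \simeq \rmod \ms O_\X$ and Lemma~\ref{lem:X}, which already exhibits $\mf l_i$ as a prime ideal whose "line" structure is understood, so that $L_i = T/\mf l_i$ is by construction a domain with the expected graded structure; in that case injectivity of $\nu$ follows because a nonzero graded map into a graded domain cannot kill a monomial in nonzero elements. Either way, once $\nu$ is an isomorphism of graded rings onto $\kk_{\qb}[t_{i-1},t_i]$, the Hilbert series statement $h_{L_i}(t) = (1-t)^{-2}$ follows immediately, since $t_{i-1}$ and $t_i$ each have projective degree $1$ and the two-variable quantum polynomial ring has the same Hilbert series as the commutative polynomial ring in two degree-one variables.
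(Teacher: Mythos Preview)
Your approach is correct and fills in precisely the details the authors suppress with their \qed. Surjectivity via the spanning set $\mc S$ is fine, and the toric-grading argument reducing injectivity to the claim $t_{i-1}^a t_i^b \notin \mf l_i$ is exactly right.

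One genuine gap: your first proposed verification of $t_{i-1}^a t_i^b \notin \mf l_i$ via ``divisibility in the quantum torus $D$'' does not work as stated, since every nonzero element of $D$ is a unit and hence divisibility there is no constraint. What one would actually need to check is a semigroup condition on toric degrees inside $T$, namely that $\deg(t_{i-1}^a t_i^b) - \deg(g)$ does not lie in $\supp(T)$ for any generator $g$ of $\mf l_i$; this is doable but tedious. Your second route is the right one and is almost certainly what the authors have in mind: by Lemma~\ref{lem:X} the ideal $\mf l_i$ is prime, so $L_i$ is a domain; the images of $t_{i-1}$ and $t_i$ in $L_i$ are nonzero (else $\mf l_i \supseteq T_+$, contradicting $\mf l_i \in \X$), and in a domain a product of nonzero elements is nonzero. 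That settles injectivity immediately, and the Hilbert series follows.
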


\begin{remark}\label{rem:P1}
The homomorphisms $T \to L_i$ give an inclusion of $\rqgr L_i $ in $\rqgr T$.
Since, as is well-known, $\rqgr \kk_{\qb}[x,y] \simeq \coh \PP^1$ \cite{AV}, it is reasonable to think of the $L_i$ as being projective lines inside $\rqgr T$.
\end{remark}

The main goal of this section is to prove that the line modules $L_i$ have the intersection theory of an $\mathbb{A}_n$ singularity. More specifically:

\begin{theorem}\label{thm:inttheory}
Let  $1 \leq i , j \leq n$.  
\begin{enumerate}[{\normalfont (1)},topsep=0pt,itemsep=0pt,leftmargin=*]
\item The intersection product $L_i \cdot L_j$ is well-defined.
\item We have
\[ L_i \cdot L_j = \left\{\begin{array}{cl} -2 & \text{if } i=j, \\
    1 & \text{if } |i-j| = 1, \\
    0 & \mbox{otherwise}. \end{array}\right.\]
   \end{enumerate} 
\end{theorem}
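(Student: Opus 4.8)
The plan is to compute $\Ext^i_{\rQgr T}(L_i, L_j)$ by working locally on the noncommutative toric variety $\X$, using the equivalence $\rqgr T \simeq \rmod \ms O_\X$ of Theorem~\ref{newthm:catequiv}. By Theorem~\ref{newthm:smooth} all higher $\shExt$ sheaves vanish for $s > 4$, and by Theorem~\ref{thm:SF}(1) the $\H^i_{\rQgr T}(-)$ of a noetherian object are finite-dimensional over $B$; since $L_i$ is supported at the singular point (it is killed by $\langle x,y,z\rangle$), these $\Ext$ groups will in fact be finite-dimensional $\kk$-vector spaces, giving part (1). For part (2), I would first locate where $\bpi L_i$ is supported: from Lemma~\ref{lem:X} and the description $U_k = \{0,\mf l_k,\mf l_{k+1},\mf p_k\}$, the sheaf $\bpi L_i = \bpi(T/\mf l_i)$ is supported on the closure of $\{\mf l_i\}$, which meets only the charts $U_{i-1}$ and $U_i$ (recalling $t_{-1}=x$, $t_{n+1}=y$), and on each of these two charts Lemma~\ref{lem:linehilb} identifies $L_i$ with $\kk_{\qb}[t_{i-1},t_i]$, a (twisted) homogeneous coordinate ring of $\PP^1$ (Remark~\ref{rem:P1}). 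So $L_i$ genuinely behaves like a $-2$-curve $\PP^1 \subseteq \X$.

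\textbf{Key steps.} (i) Use the local-to-global spectral sequence \eqref{localglobal}, $H^p(\X, \shExt^r_\X(\bpi L_i, \bpi L_j)) \Rightarrow \Ext^{p+r}_\X(\bpi L_i, \bpi L_j)$, so it suffices to compute the sheaves $\shExt^r_\X(\bpi L_i,\bpi L_j)$ on each chart $U_k$ and then take cohomology over the finite space $\X$. (ii) On a chart $U_k \cong \Spec \kk_{q^{n+1}}[u,v]$ (a quantum plane, global dimension $2$), compute $\Ext^r_{\ms O(U_k)}(L_i(U_k), L_j(U_k))$. When $|i-j| \geq 2$ the supports of $L_i$ and $L_j$ are disjoint (no common chart contains both $\mf l_i$ and $\mf l_j$), forcing all $\Ext$ groups to vanish and $L_i\cdot L_j = 0$. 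When $i = j$: on each of $U_{i-1}, U_i$ the module $L_i$ looks like $\kk_{\qb}[u]$ inside $\kk_{\qb}[u,v]$, i.e.\ the structure sheaf of a coordinate line in $\AA^2_{\qb}$; a Koszul/free resolution gives $\Ext^0 = \Ext^1$ one-dimensional pieces and $\Ext^2$ vanishing locally, and assembling over $\X$ (the two charts overlap in $U_{i-1,i}$, which contains only $0$, so $L_i$ restricts to zero there) yields $\End = \kk$, $\Ext^1 = \kk^2$, $\Ext^2 = \kk$, hence $L_i\cdot L_i = -(1) + (-2)\cdot(-1)\cdots$; more carefully $L_i\cdot L_i = \sum_{r}(-1)^{r+1}\dim\Ext^r = -\dim\Hom + \dim\Ext^1 - \dim\Ext^2 = -1+2-1 = -2$. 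When $|i-j| = 1$, say $j = i+1$: the supports meet exactly in the single chart $U_i$, where $L_i$ and $L_{i+1}$ are the two coordinate lines $\{v=0\}$ and $\{u=0\}$ of $\AA^2_{\qb}$ meeting transversally at the origin; then $\Hom = 0$, $\Ext^1 = \kk$ (the transverse intersection), higher $\Ext$ vanish, giving $L_i\cdot L_{i+1} = -0 + 1 = 1$. (iii) Finally, check the $\chi$-type finiteness hypotheses needed to define the intersection product are met — this follows from Theorem~\ref{newthm:smooth} (finite homological dimension) together with the properness of $\phi$ and the concentrated support of $L_i$.

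\textbf{Main obstacle.} The delicate point is the quantum bookkeeping in the local $\Ext$ computations: one must track the grading shifts and the powers of $q$ appearing in the relations $xt_i = q^\bullet z^i t_{i-1}$, etc., when writing down the local free resolutions of $L_i(U_k)$ as modules over the quantum plane $\ms O(U_k) = \kk_{q^{n+1}}[u,v]$, and verify that the restriction maps between the two relevant charts $U_{i-1}$ and $U_i$ glue the local $\Ext$ computations correctly (so that, e.g., $\Ext^1(L_i,L_i)$ is genuinely $2$-dimensional rather than $1$- or $3$-dimensional). A second, more conceptual subtlety is confirming that $\shExt^r_\X(\bpi L_i,\bpi L_j) = 0$ for $r\geq 3$ and that $H^p(\X,-)$ vanishes for $p\geq 3$ (Grothendieck vanishing, $\dim\X = 2$), so that the spectral sequence \eqref{localglobal} only involves finitely many nonzero terms; this is already in hand from Theorem~\ref{newthm:smooth} but needs to be invoked carefully to guarantee the alternating sum defining $L_i\cdot L_j$ terminates. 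I expect the transversality computation for $|i-j|=1$ and the self-intersection computation to be the crux, and the disjoint-support cases to be immediate.
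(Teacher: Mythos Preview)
Your approach for $|i-j|\geq 2$ (disjoint support) and $|i-j|=1$ (transverse coordinate lines in a quantum $\AA^2$) is essentially the paper's approach and is fine.

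The self-intersection case $i=j$ has a genuine gap. Two concrete errors:

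\textbf{(a) The overlap is wrong.} You write that $U_{i-1,i}$ ``contains only $0$, so $L_i$ restricts to zero there''. In fact $U_{i-1}\cap U_i = \{0,\mf l_i\}$ (both charts contain $\mf l_i$), and $L_i[t_{i-1}^{-1},t_i^{-1}]_0 \cong \kk[(t_{i-1}t_i^{-1})^{\pm 1}]$, which is nonzero. So the gluing is nontrivial.

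\textbf{(b) The local $\Ext$ groups are not finite-dimensional.} On each chart $U_k$ (for $k\in\{i-1,i\}$), $L_i(U_k)\cong C/(v)$ with $C=\kk_{q^{n+1}}[u,v]$, and $\Ext^0_C(C/(v),C/(v))\cong \Ext^1_C(C/(v),C/(v))\cong C/(v)$, which is infinite-dimensional over $\kk$. So ``$\Ext^0=\Ext^1$ one-dimensional'' is false, and you cannot simply add up dimensions over charts. (Also note $-1+2-1=0$, not $-2$.)

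What is actually needed is to identify the \emph{sheaves} $\shExt^s_\X(\bpi L_i,\bpi L_i)$ globally and then take their cohomology. The paper does this by showing that the line ideal $\mf l_i$ is invertible in $\rqgr T$ (with inverse $\sJ_i = T + z^{-1}t_{i-1}t_i T$), and then using the exact sequence $0\to\mf l_i\to T\to L_i\to 0$ to compute
\[
\shExt^0_\X(\bpi L_i,\bpi L_i)\cong \bpi L_i, \qquad
\shExt^1_\X(\bpi L_i,\bpi L_i)\cong \shHom_\X(\bpi\mf l_i,\bpi L_i)\cong \bpi(\sJ_i\otimes_T L_i)\cong \bpi L_i(-2).
\]
The appearance of the twist $(-2)$ is the entire point: it is what produces the $-2$ self-intersection. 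The spectral sequence then degenerates and gives $\Hom=\kk$, $\Ext^1=0$, $\Ext^2=\kk$ (from $\H^0(L_i)=\kk$, $\H^1(L_i)=0$, $\H^0(L_i(-2))=0$, $\H^1(L_i(-2))=\kk$, computed via the \v{C}ech complex for $L_i\cong\kk_{\qb}[t_{i-1},t_i]$), so $L_i\cdot L_i = -1+0-1=-2$. Your outline has no mechanism for producing this twist.
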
    

The proof of this result will take up most of the section. Our first step will be to prove that the line ideals $\mf l_j$ are invertible in $\rqgr T$, in the sense of Corollary~\ref{cor:invertible}. Our candidate for the inverse of $\mf l_j$ will be 
\begin{align*}
\sJ_j = \T + z^{-1} t_{j-1} t_j \T, \qquad 1 \leqslant j \leqslant n.
\end{align*}
(Note that $\sJ_j \subseteq Q_{\gr}(T)$.) We first require some preparatory lemmata.

\begin{lemma}\label{sublem:one}
The multiplication map $\sJ_j \otimes_\T \mf l_j \to Q_\text{\normalfont{gr}}(\T)$ has image in $\T$.
\end{lemma}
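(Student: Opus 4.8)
The statement asserts that the multiplication map $\sJ_j \otimes_\T \mf l_j \to Q_{\gr}(\T)$, which a priori lands in the graded quotient ring, actually has image inside $\T$. Since $\sJ_j = \T + z^{-1} t_{j-1} t_j \T$ is generated as a right $\T$-module by $1$ and $z^{-1} t_{j-1} t_j$, and $\mf l_j$ is generated by the normal elements $x, y, z, t_0, \dots, t_{j-2}, t_{j+1}, \dots, t_n$ (reading off the list in Lemma~\ref{lem:X}; recall $t_{-1} = x$, $t_{n+1} = y$ in the boundary cases), the plan is simply to check that the product of each generator of $\sJ_j$ with each generator of $\mf l_j$ lies in $\T$. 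The only nontrivial products are those involving the generator $z^{-1}t_{j-1}t_j$ of $\sJ_j$; the products involving $1 \in \sJ_j$ obviously land in $\mf l_j \subseteq \T$.

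So the core computation is to show, for each generator $g$ of $\mf l_j$, that $z^{-1} t_{j-1} t_j \cdot g \in \T$. The key input is the relation $t_{j-1}t_{j+1} = q^\bullet z\, t_j^2$ from the presentation $\Tpres$ (Theorem~\ref{thm:TcongTpres}), together with the relations $x t_i = q^\bullet z^i t_{i-1}$ and $y t_i = q^\bullet z^{n-i} t_{i+1}$. First I would handle $g = z$: then $z^{-1} t_{j-1}t_j z = q^\bullet z^{-1} z t_{j-1} t_j = q^\bullet t_{j-1}t_j \in \T$ (using that $z$ is normal and $q^\bullet$-commutes with everything). Next $g = x$: we have $z^{-1}t_{j-1}t_j x = q^\bullet z^{-1} t_{j-1}(x t_j) = q^\bullet z^{-1} t_{j-1} z^j t_{j-1} = q^\bullet z^{j-1} t_{j-1}^2 \in \T$ since $j \geq 1$. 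Symmetrically, $g = y$: $z^{-1}t_{j-1}t_j y = q^\bullet z^{-1} t_{j-1} z^{n-j} t_{j+1} = q^\bullet z^{n-j-1} t_{j-1}t_{j+1} = q^\bullet z^{n-j} t_j^2 \in \T$, using the key relation $t_{j-1}t_{j+1} = q^\bullet z t_j^2$. For $g = t_k$ with $k \leq j-2$: repeatedly applying the relation $t_{k}t_{j} = q^\bullet z^{j-k-1} t_{k+1}t_{j-1}$ (a rearrangement of the flanking-$t$ relation), one pushes $t_k$ rightward past $t_j$ at the cost of powers of $z$; more directly, $z^{-1}t_{j-1}t_j t_k = q^\bullet z^{-1} t_{j-1}\big(q^\bullet z^{j-k-1}t_{j-1}t_{k+1}\big)$ — wait, one must be careful about which index is flanking. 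I would instead use $t_{k}t_{j} = q^\bullet z^{j-k-1}t_{k+1}t_{j-1}$ to write $t_j t_k = q^\bullet t_k t_j$ up to the commutation scalar and then apply $t_{j-1}t_{j+1}$-type relations; cleanest is: since $k \leq j-2 < j-1$, the pair $(t_k, t_{j-1})$ has gap $\geq 1$, so $t_{j-1}t_j t_k$ can be reduced by the identity $t_{i-1}t_{j'+1} = q^\bullet z^{j'-i+1}t_i t_{j'}$ applied with appropriate indices, yielding a monomial in $\T$ after cancelling the $z^{-1}$. The symmetric case $g = t_k$ with $k \geq j+1$ is handled the same way using $(t_j, t_k)$ having gap $\geq 1$.

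The main obstacle, such as it is, is purely bookkeeping: tracking the correct indices in the flanking relation $t_{i-1}t_{j'+1} = q^\bullet z^{j'-i+1} t_i t_{j'}$ so that after multiplying $z^{-1}t_{j-1}t_j$ by a generator $t_k$ one genuinely produces at least one factor of $z$ (to absorb the $z^{-1}$) and the result is a monomial in the $t_i, x, y, z$ with non-negative exponents. Because all the generators involved are normal, the $q^\bullet$ scalars never obstruct anything and can be suppressed throughout. Once all generators of $\mf l_j$ are checked, the right $\T$-module generated by these products is contained in $\T$, and since $\sJ_j \otimes_\T \mf l_j$ is generated by exactly these products, the multiplication map factors through $\T$. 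I would also note the edge cases: when $j = n$, $\mf l_n = \langle x, z, t_0, \dots, t_{n-1}\rangle$ has $y$ replaced by $t_{n+1} = y$ absent as a separate generator but this is consistent with $\mf l_{n+1}$ notation — actually $\mf l_j$ for $1 \leq j \leq n$ always has the stated form, so no genuine edge case arises in the range of interest, though the computation for $g = y$ when $j = n$ gives $z^0 t_n^2 = t_n^2 \in \T$, still fine.
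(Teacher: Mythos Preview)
Your approach is the same as the paper's: check that $z^{-1}t_{j-1}t_j\cdot g \in \T$ for each generator $g$ of $\mf l_j$, using the relations of $\Tpres$. Your computations for $g=z$ and $g=x$ match the paper exactly.

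Two places are slightly rougher than the paper's version. For $g=y$ you route through $yt_j = q^\bullet z^{n-j}t_{j+1}$, which is only a relation for $j\leq n-1$; at $j=n$ your intermediate expression $z^{-1}t_{n-1}t_{n+1}$ is not defined, even though your final answer $t_n^2$ is correct. The paper avoids this by instead using $yt_{j-1}=q^\bullet z^{n-j+1}t_j$, which is valid for all $1\leq j\leq n$ and gives $z^{-1}t_{j-1}t_jy=q^\bullet z^{n-j}t_j^2$ directly. For $g=t_k$ with $k\notin\{j-1,j\}$ you correctly identify the flanking relation $t_{i-1}t_{j'+1}=q^\bullet z^{j'-i+1}t_it_{j'}$ as the tool, but you do not carry out the computation. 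The paper does it in one line: for $k\geq j+1$ use $t_{j-1}t_k=q^\bullet z^{k-j}t_jt_{k-1}$ to get $q^\bullet z^{k-j-1}t_j^2t_{k-1}$, and for $k\leq j-2$ use $t_kt_j=q^\bullet z^{j-k-1}t_{k+1}t_{j-1}$ to get $q^\bullet z^{j-k-2}t_{j-1}^2t_{k+1}$; both exponents of $z$ are non-negative. Tightening these two points would make your argument complete.
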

\begin{proof}
It suffices to show that multiplication by $z^{-1} t_{j-1} t_j$ maps the generators of $\mf l_j$ into $\T$. This is clear for $z$. Using the relations in $\T$, we have
\begin{gather*}
z^{-1} t_{j-1} t_j \cdot x = \qb z^{-1} t_{j-1} z^j t_{j-1} = \qb z^{j-1} t_{j-1}^2, \\
z^{-1} t_{j-1} t_j \cdot y = \qb z^{-1} t_j z^{n-j+1} t_j = \qb z^{n-j} t_j^2,
\end{gather*}
both of which lie in $\T$ since $1 \leqslant j \leqslant n$. Moreover, for $k \neq j,j-1$, we have
\begin{align*}
z^{-1} t_{j-1} t_j \cdot t_k = \left\{
\begin{array}{ll}
\qb z^{-1} t_j z^{k-j} t_j t_{k-1} = \qb z^{k-j-1} t_j^2 t_{k-1} & \text{if } k \geqslant j+1, \\
\qb z^{-1} t_{j-1} z^{j-k-1} t_{k+1} t_{j-1} = \qb z^{j-k-2} t_{j-1}^2 t_{k+1} & \text{if } k \leqslant j-2,
\end{array}
\right.
\end{align*}
and this lies in $\T$ in both cases.
\end{proof}

Recall our notation $\msOk = \T[t_k^{-1}]_0$.

\begin{lemma}\label{sublem:two}
We have
\begin{align*}
\mf l_j[t_k^{-1}]_0 =
\left\{
\begin{array}{cl}
\msOk & \text{if } k \neq j,j-1, \\
\msOk t_{k+1} t_k^{-1} & \text{if } k = j,\\
\msOk t_{k-1} t_k^{-1} &\text{if } k = j-1,
\end{array}
\right.
\end{align*}
with the understanding that $t_{-1}t_0^{-1} \coloneqq x$ and $t_n t_{n+1}^{-1} \coloneqq y$.
\end{lemma}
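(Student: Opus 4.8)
\textbf{Proof proposal for Lemma~\ref{sublem:two}.} The plan is to compute $\mf l_j[t_k^{-1}]_0$ chart by chart, using the explicit descriptions of $\ms O_\X(U_k) = \T[t_k^{-1}]_0$ from Lemma~\ref{lem:dag}(1) together with the toric embedding $\eta: \T \hra D$ of Section~\ref{sec:ToricEmbedding}. Since $\mf l_j$ is generated by the normal elements $\{t_m : m \neq j-1, j\}$ together with $x,y,z$ (with the conventions $t_{-1} = x$, $t_{n+1} = y$), each localisation $\mf l_j[t_k^{-1}]$ is the right ideal of $\T[t_k^{-1}]$ generated by the images of these generators, and I want its degree-zero part. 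Working inside $D$, where $\T[t_k^{-1}]_0 = \kk_{q^{n+1}}[\ba^{-(k-1)}\bb^{-k}, \ba^k\bb^{k+1}]$ and every graded piece is at most one-dimensional, this becomes a bookkeeping problem about which monomials in $\ba,\bb$ lie in the ideal.

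First I would handle the case $k \neq j, j-1$: here $t_k$ itself is one of the generators of $\mf l_j$, so after inverting $t_k$ the ideal $\mf l_j[t_k^{-1}]$ contains a unit and hence equals all of $\T[t_k^{-1}]$; taking degree-zero parts gives $\mf l_j[t_k^{-1}]_0 = \ms O_\X(U_k) = \msOk$, as claimed. Next, for $k = j$ (so that $t_j$ is \emph{not} a generator of $\mf l_j$ but $t_{j+1}$ is, using the convention $t_{n+1}=y$ when $j = n$): I would show that modulo inverting $t_j$, the generators $x, y, z, t_{j+1}, \dots$ all become $\msOk$-multiples of $t_{j+1}t_j^{-1}$, while no generator produces a unit. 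The containment $\mf l_j[t_j^{-1}]_0 \subseteq \msOj\, t_{j+1}t_j^{-1}$ follows from the relations in $\T$: for instance $y = \qb\, z^{n-j}\, t_{j+1} t_j^{-1}$ (a relation of $\Tpres$), $x \cdot (t_{j+1}t_j^{-1}) = \qb z^{j+1} (t_j t_{j+1}) t_j^{-2}\cdot$(unit)\,$\in \msOj t_{j+1}t_j^{-1}$ after using $xt_{j+1} = q^\bullet z^{j+1} t_j$, and similarly $z = \qb x^{-1}z^{j+1}t_j \cdot$ — more cleanly, I would just check directly that $x, z$ and each $t_m$ ($m \neq j-1,j$) times a suitable monomial lands in the stated module, exactly as in the proof of Lemma~\ref{sublem:one}. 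For the reverse containment, $t_{j+1}t_j^{-1} \in \mf l_j[t_j^{-1}]_0$ is immediate since $t_{j+1}$ is a generator of $\mf l_j$ and $t_j^{-1} \in \T[t_j^{-1}]_0 \cdot$(unit); then multiplying by $\msOj = \kk_{q^{n+1}}[\cdots]$ stays inside. The case $k = j-1$ is symmetric: $t_{j-1}$ is not a generator but $t_{j-2}$ is (or $x$, if $j=1$), and the analogous computation with the relation $x t_j = q^\bullet z^j t_{j-1}$ (giving $x \in \msO_{j-1} t_{j-2}t_{j-1}^{-1}$ via... ) shows $\mf l_j[t_{j-1}^{-1}]_0 = \msO_{j-1}\, t_{j-2}t_{j-1}^{-1}$, which under the $t_{-1} = x$ convention covers $j = 1$ as well.

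The main obstacle I anticipate is purely organisational rather than conceptual: one has to keep careful track of the boundary conventions $t_{-1} = x$, $t_{n+1} = y$ and verify that the formula specialises correctly at the ends $j = 1$ and $j = n$ (and at the ends $k = 0$, $k = n$ where $\msOk$ has a slightly different generating set per Lemma~\ref{lem:dag}(1)), and to confirm in each chart that \emph{no} generator of $\mf l_j$ becomes invertible (which would collapse the answer to $\msOk$) — this is where one uses that after inverting $t_k$ with $k \in \{j-1,j\}$, the remaining generators are genuinely non-units, i.e. correspond to nontrivial monomials in $\ba,\bb$. Since $\dim \T[t_J^{-1}]_{\vec n} \leq 1$ for all $\vec n$, each such check reduces to comparing weights, so there is no real difficulty, just case analysis. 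I would present the $k=j$ case in full and remark that $k = j-1$ and the boundary cases are analogous.
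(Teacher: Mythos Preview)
Your proposal is correct and follows essentially the same approach as the paper: the easy case $k \neq j, j-1$ via $t_k \in \mf l_j$ becoming a unit, and for $k = j$ (resp.\ $k = j-1$) verifying that each generator $x, y, z, t_m t_k^{-1}$ of $\mf l_j[t_k^{-1}]_0$ lies in $\msOk\, t_{k+1}t_k^{-1}$ (resp.\ $\msOk\, t_{k-1}t_k^{-1}$) using the $\Tpres$ relations, with separate treatment of the boundary cases $k = n$ and $k = 0$. Your intermediate line for $x$ is garbled --- in practice the paper first shows $z = q^\bullet (t_{k-1}t_k^{-1})(t_{k+1}t_k^{-1})$ and then deduces $x, y$ from $x = q^\bullet z^k t_{k-1}t_k^{-1}$ and $y = q^\bullet z^{n-k} t_{k+1}t_k^{-1}$ --- but this is exactly the direct check you describe.
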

We warn the reader that the convention on $t_{-1}$ and $t_n$ in the lemma is slightly different from the convention in Section~\ref{TORICVAR}.
\begin{proof}
We necessarily have $\mf l_j[t_k^{-1}]_0 \subseteq \msOk$. If $k \neq j,j-1$, then $1 = t_k t_k^{-1} \in \mf l_j[t_k^{-1}]_0$, so we have equality. \\
\indent Now suppose that $k = j$; in particular, we have $k \geqslant 1$. It suffices to show that 
\begin{align*}
x,y,z, t_\ell t_k^{-1} \in \msOk t_{k+1} t_k^{-1} \quad \text{for } \ell \in \{0, 1, \dots, k-2,k+1, \dots n\}.
\end{align*}
We need to consider the cases $k \neq n$ and $k = n$ separately. \\
\indent First suppose that $k \neq n$. Then, using the relations in $\T$ and the fact that $1 \leqslant k \leqslant n-1$, we have
\begin{align*}
z &= \qb t_{k-1} t_k^{-1} t_{k+1} t_k^{-1} \in \msOk t_{k+1} t_k^{-1}.
\end{align*}
This also implies that $x,y \in \msOk t_{k+1} t_k^{-1}$, since
\begin{align*}
x = \qb z^k t_{k-1} t_k^{-1} \quad \text{and} \quad y = \qb z^{n-k} t_{k+1} t_k^{-1}
\end{align*}
again using that $1 \leqslant k \leqslant n-1$. To show that $t_\ell t_k^{-1} \in \msOk t_{k+1} t_k^{-1}$, we consider two cases:
\begin{align*}
\ell \geqslant k+1:& \quad t_k t_\ell = \qb z^{\ell-k-1} t_{k+1} t_{\ell-1} \quad \Rightarrow \quad t_{\ell} t_k^{-1} = \qb z^{\ell-k-1} t_{k+1} t_k^{-1} t_{\ell-1} t_k^{-1}, \\
\ell \leqslant k-2:& \quad t_\ell t_k = \qb z^{k-\ell-1} t_{\ell+1} t_{k-1} \quad \Rightarrow \quad t_{\ell} t_k^{-1} = \qb z^{k-\ell-1} t_{\ell+1} t_k^{-1} t_{k-1} t_k^{-1};
\end{align*}
in the first case, this element plainly lies in $\msOk t_{k+1} t_k^{-1}$, while the same is true in the latter case since $z \in \msOk t_{k+1} t_k^{-1}$ and $k-\ell-1 \geqslant 1$. \\ 
\indent Now suppose that $k=n$. We need to show that 
$\mf l_n[t_n^{-1}]_0 = \msOn y$. We have $z = \qb y t_{n-1}t_n^{-1} \in \msOn y$ and
\begin{align*}
x = \qb y^{-1} z^{n+1} = \qb y^n \big(t_{n-1}t_n^{-1}\big)^{n+1}\in \msOn y. 
\end{align*}
Finally, for $\ell \leqslant n-2$:
\begin{align*}
t_\ell t_n = \qb z^{n-\ell-1} t_{\ell+1} t_{n-1} \quad \Rightarrow \quad t_\ell t_n^{-1} = \qb z^{n-\ell-1} t_{\ell+1}t_n^{-1} t_{n-1}t_n^{-1} \in \msOn,
\end{align*}
since $z^{n-\ell-1} \in \msOn$. \\
\indent The case $k = j-1$ is similar, except one must consider the subcases $k=0$ and $k \neq 0$ separately; we omit the proof.
\end{proof}

\begin{lemma}\label{sublem:three}
We have
\begin{align*}
\sJ_j[t_i^{-1}]_0 =
\left\{
\begin{array}{cl}
\msOi & \text{if } i \neq j,j-1, \\
\msOi {t_i}t_{i+1}^{-1} & \text{if } i = j,\\
\msOi {t_i}t_{i-1}^{-1} &\text{if } i = j-1,
\end{array}
\right.
\end{align*}
again with the understanding that $t_{-1}t_0^{-1} \coloneqq x$ and ${t_n}{t_{n+1}^{-1}} \coloneqq y$.
\end{lemma}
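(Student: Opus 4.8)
\textbf{Proof plan for Lemma~\ref{sublem:three}.}

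The plan is to compute the localisations $\sJ_j[t_i^{-1}]_0$ directly, following exactly the template of the proof of Lemma~\ref{sublem:two}, since $\sJ_j = \T + z^{-1}t_{j-1}t_j\T$ has essentially the same structure as $\mf l_j$ but with the roles of the flanking $t$'s ``inverted''. First I would note that, since $\T \subseteq \sJ_j$, we always have $\msOi \subseteq \sJ_j[t_i^{-1}]_0 \subseteq \T[t_i^{-1}]$, and $\sJ_j[t_i^{-1}]_0$ is generated over $\msOi$ by $\msOi$ together with the single extra element $z^{-1}t_{j-1}t_j t_i^{-1}$ (and its $\msOi$-multiples that land in degree $0$), i.e.\ by $z^{-1}t_{j-1}t_j \cdot (\text{degree }0\text{ pieces})$. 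So the whole computation reduces to identifying the $\msOi$-submodule of $D_0$ generated by $z^{-1}t_{j-1}t_j$ after inverting $t_i$.

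Then I would split into the three cases. If $i \neq j, j-1$: using the relations $t_j t_i = \qb z^{\bullet} t_{i+1}t_{j-1}$ (for $i \geq j+1$) or $t_i t_{j-1} = \qb z^\bullet t_{i+1} t_{j-2}$-type relations, one rewrites $z^{-1}t_{j-1}t_j t_i^{-1}$ as a product of elements of $\msOi$; concretely the relation $t_{j-1}t_j = \qb z\, t_i^{?}\cdots$ is not quite right, so instead one uses that $z^{-1}t_{j-1}t_j = \qb (z^{-1}t_{j-1}t_i)(t_i^{-1}t_j)$ and that each factor, suitably adjusted by powers of $z$, already lies in $\msOi$ by Lemma~\ref{lem:dag}(1) — this shows $\sJ_j[t_i^{-1}]_0 = \msOi$. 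For $i = j$: here $t_i^{-1} = t_j^{-1}$ is available, so $z^{-1}t_{j-1}t_j t_j^{-1} \cdot (\text{stuff})$; using $z = \qb t_{j-1}t_j^{-1} t_{j+1}t_j^{-1}$ (valid since $1\le j \le n-1$; the case $j = n$ handled via $z = \qb y\, t_{n-1}t_n^{-1}$ as in Lemma~\ref{sublem:two}) one finds $z^{-1}t_{j-1}t_j = \qb t_j t_{j+1}^{-1}$ up to a power of $z \in \msOi$, giving $\sJ_j[t_j^{-1}]_0 = \msOj t_j t_{j+1}^{-1}$ (with the convention $t_n t_{n+1}^{-1} := y$). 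The case $i = j-1$ is symmetric, again splitting off the subcase $j - 1 = 0$ where $t_{-1}t_0^{-1} := x$ enters; one uses the analogous relation expressing $z$ in terms of $t_{j-2}t_{j-1}^{-1}$ and $t_j t_{j-1}^{-1}$ (or $x$ when $j=1$).

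The main obstacle — though it is more bookkeeping than genuine difficulty — is verifying the reverse inclusions in the cases $i = j$ and $i = j-1$: one must check that every element $t_\ell t_i^{-1}$ and $x, y, z$ lies in $\msOi \cdot t_i t_{i+1}^{-1}$ (resp.\ $\msOi \cdot t_i t_{i-1}^{-1}$), which requires the same case analysis on $\ell \geq i+1$ versus $\ell \leq i-2$ as in Lemma~\ref{sublem:two}, and then inverting: since $t_i t_{i+1}^{-1}$ is a unit in $D_0$, one shows $t_{i+1}t_i^{-1} \in \msOi$ already, whence $\msOi t_i t_{i+1}^{-1} = \msOi$ would be false — so care is needed to track that $t_i t_{i+1}^{-1}$ is \emph{not} in $\msOi$ (only its square times $x$ or $y$ is, cf.\ Lemma~\ref{lem:dag}(2)), which is precisely why the answer is a rank-one $\msOi$-module generated by $t_i t_{i+1}^{-1}$ rather than all of $\msOi$. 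Once this is pinned down the boundary conventions $t_{-1}t_0^{-1} = x$, $t_n t_{n+1}^{-1} = y$ fall out of the corresponding degenerate relations, and the proof is complete; I would state the $j-1=0$ and $j=n$ subcases explicitly and leave the symmetric $i=j-1$ general case to the reader, as the excerpt does for Lemma~\ref{sublem:two}.
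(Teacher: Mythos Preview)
Your overall structure matches the paper's: reduce $\sJ_j[t_i^{-1}]_0$ to $\msOi$ plus the single extra generator $z^{-1}t_{j-1}t_j t_i^{-2}$ (note: degree considerations force $t_i^{-2}$, not $t_i^{-1}$), then identify that element case by case using the $t$-relations, and treat the boundary cases $j=n$, $j=1$ separately. The case $i\neq j,j-1$ in the paper is exactly what you describe, splitting into $j-i\geq 2$ and $i-j\geq 1$.

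Where you diverge is in your ``main obstacle'' paragraph. For $i=j$ (and symmetrically $i=j-1$) you anticipate having to verify that every generator $x,y,z,t_\ell t_i^{-1}$ of $\msOi$ lies in $\msOi\, t_i t_{i+1}^{-1}$, mirroring the case analysis in Lemma~\ref{sublem:two}. The paper avoids this entirely: once you know (from Lemma~\ref{lem:dag}(1)) that $t_{i+1}t_i^{-1}$ is one of the two generators of $\msOi$, the single identity $1 = (t_{i+1}t_i^{-1})\cdot(t_i t_{i+1}^{-1})$ immediately gives $\msOi \subseteq \msOi\, t_i t_{i+1}^{-1}$. Combined with the observation that the extra generator satisfies $z^{-1}t_{i-1}t_i\, t_i^{-2} = q^\bullet\, t_i t_{i+1}^{-1}$ (via $t_{i-1}t_{i+1} = q^\bullet z\, t_i^2$), the whole case $i=j$, $1\le i\le n-1$, is two lines. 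Your worry that one must ``track that $t_i t_{i+1}^{-1}$ is not in $\msOi$'' is a red herring: the containment $\msOi \subsetneq \msOi\, t_i t_{i+1}^{-1}$ is all that is asserted, and nothing in the argument requires checking it is strict.
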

\begin{proof}
First suppose that $i \neq j,j-1$; we need to show that $\sJ_j[t_i^{-1}]_0 = \msOi$. Since $\T \subseteq \sJ$, the inclusion $ \msOi \subseteq \sJ_j[t_i^{-1}]_0$ is clear. For the reverse inclusion, it suffices to show that $z^{-1} t_{j-1}t_j t_i^{-2} \in \msOi$. Since $i \neq j,j-1$, we have $j-i \geqslant 2$ or $i-j \geqslant 1$. If $j-i \geqslant 2$, then the relations in $\T$ give
\begin{align*}
t_i t_j = \qb z^{j-i-1} t_{i+1} t_{j-1} \quad &\Rightarrow \quad t_j t_i^{-1} = \qb z^{j-i-1} t_{i+1} t_{j-1}t_i^{-2} \\
&\Rightarrow \quad z^{-1} t_{j-1}t_j t_i^{-2} = \qb z^{j-i-2} t_{j-1}^2 t_{i+1}t_i^{-3} \in \msOi.
\end{align*}
If instead $i-j \geqslant 1$, then 
\begin{align*}
t_{j-1} t_i = \qb z^{i-j} t_j t_{i-1} \quad &\Rightarrow \quad t_{j-1} t_i^{-1} = \qb z^{i-j} t_j t_{i-1}t_i^{-2} \\
&\Rightarrow \quad z^{-1} t_{j-1}t_j t_i^{-2} = \qb z^{i-j-1} t_j^2 t_{i-1}t_i^{-3} \in \msOi.
\end{align*}
\indent Now suppose $i=j$, so that we need to show $\sJ_i[t_i^{-1}]_0 = \msOi {t_i}t_{i+1}^{-1}$. We consider the cases $1 \leqslant i \leqslant n-1$ and $i=n$ separately. \\
\indent First suppose that $1 \leqslant i \leqslant n-1$. In this case, the claim essentially follows after observing that
\begin{align*}
1 = t_{i+1} t_i^{-1} \cdot {t_i}t_{i+1}^{-1} \quad \text{and} \quad \qb z^{-1} t_{i-1} t_i t_i^{-2} = {t_i}t_{i+1}^{-1}.
\end{align*}
If instead $i=n$, then we need to show that $\sJ_i[t_i^{-1}]_0 = \msOi y^{-1}$. This follows since
\begin{align*}
1 = y \cdot y^{-1} \quad \text{and} \quad z^{-1} t_{n-1} t_n t_n^{-2} = z^{-1} t_{n-1}t_n^{-1} = \qb y^{-1}.
\end{align*}
\indent The case $i=j-1$ is similar and therefore omitted; we remark that the proof in this case requires analysing the subcases $j=1$ and $2 \leqslant j \leqslant n$ separately.
\end{proof}

We can now establish invertibility of the line ideals $\mf l_j$:

\begin{proposition}\label{prop:InvertibleLineIdeal}
The $\mf l_j$ are invertible $T$-bimodules in $\rqgr T$, in the sense of Corollary~\ref{cor:invertible}.
\end{proposition}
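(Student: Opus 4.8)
The plan is to prove invertibility by working locally on the charts $U_k$ (equivalently, using the category equivalence $\rqgr T \simeq \rmod \ms O_\X$ of Theorem~\ref{newthm:catequiv}), exactly as in the proof of Corollary~\ref{cor:invertible}. Concretely, I must show that the two multiplication maps
\[ \mu: \sJ_j \otimes_T \mf l_j \to T \quad\text{and}\quad \mu': \mf l_j \otimes_T \sJ_j \to T \]
are isomorphisms in $\rqgr T$; by symmetry (and the fact that all the modules involved are generated by normal elements, hence are bimodules), it suffices to treat one of them, and it suffices to check it after localising at each $t_k$ and taking the degree-$0$ part, for $0 \leqslant k \leqslant n$.

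First I would invoke Lemma~\ref{sublem:one} to see that $\mu$ is well-defined, i.e.\ lands in $T$ (not merely in $Q_{\gr}(T)$). Next, the key computational input is already assembled: Lemma~\ref{sublem:two} computes $\mf l_j[t_k^{-1}]_0$ and Lemma~\ref{sublem:three} computes $\sJ_j[t_k^{-1}]_0$, in each case as a rank-one free $\msOk$-module with an explicit generator. Since localisation is exact and commutes with $\otimes_T$, one has $(\sJ_j \otimes_T \mf l_j)[t_k^{-1}]_0 \cong \sJ_j[t_k^{-1}]_0 \otimes_{\msOk} \mf l_j[t_k^{-1}]_0$, which is free of rank one over $\msOk$. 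I would then simply multiply the explicit generators together in each of the three cases ($k \neq j, j-1$; $k = j$; $k = j-1$): for instance when $k = j$ the generators are $t_j t_{j+1}^{-1}$ (from $\sJ_j$) and $t_{j+1} t_j^{-1}$ (from $\mf l_j$), whose product is $1$, the generator of $\msOk = T[t_k^{-1}]_0$; when $k = j-1$ the generators are $t_{j-1} t_{j-2}^{-1}$ and $t_{j-2} t_{j-1}^{-1}$ with product $1$ again (using the boundary conventions $t_{-1}t_0^{-1} = x$, $t_n t_{n+1}^{-1} = y$ and the relations $xy = \qb z^{n+1}$ etc.\ in the edge cases $j=1$ or $j = n$); and when $k \neq j, j-1$ both modules localise to $\msOk$ itself and $\mu$ localises to the identity. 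Hence $\mu$ is locally an isomorphism, so an isomorphism in $\rqgr T$; the same computation handles $\mu'$, giving $\sJ_j \otimes_T \mf l_j \cong \mf l_j \otimes_T \sJ_j \cong T$ in $\rqgr T$.

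There is no serious obstacle here — the content has been front-loaded into Lemmata~\ref{sublem:one}--\ref{sublem:three}, and the proof is a bookkeeping exercise matching up generators. The only point requiring a little care is the edge cases $j = 1$ and $j = n$, where one of the ``$t$-ratio'' generators degenerates to $x$ or $y$ via the stated conventions; there one must verify the product identities using the relations of $T$ rather than the formal cancellation $t_j t_{j+1}^{-1} \cdot t_{j+1} t_j^{-1} = 1$, but Lemmata~\ref{sublem:two} and \ref{sublem:three} were already phrased to accommodate exactly this. I would state the proof tersely, cite the three preparatory lemmata, and note that the autoequivalence consequence (that $-\otimes_T \mf l_j$ and $-\otimes_T \sJ_j$ are quasi-inverse exact autoequivalences of $\rqgr T$, on both sides) follows formally as in Corollary~\ref{cor:invertible}.
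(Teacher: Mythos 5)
Your proof is correct and takes the same local approach as the paper: reduce to the charts $U_k$ via Theorem~\ref{newthm:catequiv}, then use Lemmata~\ref{sublem:one}--\ref{sublem:three} to see that both $\mf l_j$ and $\sJ_j$ localise to free rank-one $\msOk$-modules whose generators multiply to a unit on each chart. The paper's own proof is just a single sentence citing Lemma~\ref{sublem:three}; your write-up spells out the bookkeeping (including the boundary cases $j=1,n$ and the well-definedness of the multiplication map via Lemma~\ref{sublem:one}) that the paper leaves implicit.
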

\begin{proof}
As before, Theorem~\ref{newthm:catequiv} allows us to prove this locally, and this is precisely the content of Lemma~\ref{sublem:three}.
\end{proof}

Our basic strategy to prove Theorem~\ref{thm:inttheory} is to use the local-to-global spectral sequence \eqref{localglobal}.
We thus need to compute $\shExt^s_{\X}(\bpi L_i, \bpi L_j)$.  
Since $L_i, L_j$ are $T$-bimodules, this will be an $\ms{O}_\X$-bimodule.
We first compute:
\begin{lemma}\label{intfour}
Identify $\msOk $ with $\kk_{\qb}[t_{k-1}t_k^{-1}, t_{k+1} t_k^{-1}]$, using Lemma~\ref{lem:dag}\emph{(1)}.
Then 
\[ (\bpi L_i)(U_k)  = \left\{ \begin{array}{cl}
\kk_{\qb}[t_{i-2}t_{i-1}^{-1}, t_{i} t_{i-1}^{-1}]/(t_{i-2}t_{i-1}^{-1}) & \text{if } k = i-1, \\
\kk_{\qb}[t_{i-1}t_i^{-1}, t_{i+1} t_i^{-1}]/(t_{i+1} t_i^{-1}) & \text{if } k =i, \\
0 & \mbox{otherwise.}
\end{array} \right.
\]
Further, $(\bpi L_i)(U_k)$ is a GK 1-critical $\msOk$-module provided that it is nonzero, i.e.~it has GK-dimension 1 and any proper quotient is finite-dimensional.
\end{lemma}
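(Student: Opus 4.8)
The plan is to compute $\bpi L_i$ on each chart $U_k$ directly from the defining ideal $\mf l_i$ and the localisation formulae already available. Recall from Lemma~\ref{lem:X} that $\mf l_i = \langle x, y, z, t_0, \dots, t_{i-2}, t_{i+1}, \dots, t_n\rangle$ for $1 \leqslant i \leqslant n$ (with the obvious modifications when $i$ is $1$ or $n$, where $x$ or $y$ respectively is one of the $t_{\pm 1}$ in the convention $t_{-1} = x$, $t_{n+1} = y$). By construction $\bpi L_i(U_k) = (T/\mf l_i)[t_k^{-1}]_0 = T[t_k^{-1}]_0 \big/ \big(\mf l_i[t_k^{-1}]_0\big)$, so the first step is to identify the ideal $\mf l_i[t_k^{-1}]_0$ inside $\msOk$. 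When $k \notin \{i-1, i\}$ the generator $t_k$ of $\mf l_i$ (note $t_k \in \mf l_i$ precisely when $k \neq i-1$ and $k \neq i$) becomes a unit, so $\mf l_i[t_k^{-1}]_0 = \msOk$ and the quotient is $0$. When $k = i$ we have $t_{i+1} \in \mf l_i$, and Lemma~\ref{sublem:two} (with $j = i$, $k = i$) tells us that $\mf l_i[t_i^{-1}]_0 = \msOi \, t_{i+1}t_i^{-1}$; together with $x, y, z \in \mf l_i$ — which Lemma~\ref{lem:dag}(1) expresses as elements of $\msOi = \kk_{\qb}[t_{i-1}t_i^{-1}, t_{i+1}t_i^{-1}]$ lying in the ideal generated by $t_{i+1}t_i^{-1}$ — this gives $\bpi L_i(U_i) = \kk_{\qb}[t_{i-1}t_i^{-1}, t_{i+1}t_i^{-1}]/(t_{i+1}t_i^{-1})$. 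The case $k = i-1$ is symmetric, using the $k = j-1$ clause of Lemma~\ref{sublem:two}: there $t_{i-2} \in \mf l_i$, so $\mf l_i[t_{i-1}^{-1}]_0 = \msO_{U_{i-1}} \, t_{i-2}t_{i-1}^{-1}$, giving $\bpi L_i(U_{i-1}) = \kk_{\qb}[t_{i-2}t_{i-1}^{-1}, t_i t_{i-1}^{-1}]/(t_{i-2}t_{i-1}^{-1})$.

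Once the three cases are settled, the only remaining point is the criticality claim. When $\bpi L_i(U_k)$ is nonzero it is, by the above, a ring of the form $\kk_{\qb}[u,v]/(u) \cong \kk_{\qb}[v]$ — a (skew, but one-variable so genuinely commutative) polynomial ring in one variable. I would observe that this is a noetherian domain of GK-dimension $1$, hence that every nonzero submodule has GK-dimension $1$ and every proper quotient is finite-dimensional over $\kk$: indeed a proper quotient of $\kk_{\qb}[v]$ is $\kk_{\qb}[v]/(f)$ for some nonzero $f$, which is finite-dimensional. Equivalently, $\GKdim$ is exact on short exact sequences and additive, so a GK-$1$ module with a proper quotient of GK-dimension $1$ would force a GK-$0$ submodule of $\kk_{\qb}[v]$, impossible in a domain. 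This establishes GK $1$-criticality.

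There is no real obstacle here; the statement is essentially bookkeeping, and the one thing to be careful about is the index conventions at the boundary — when $i = 1$ the chart $U_0$ plays the role of $U_{i-1}$ and the "generator" $t_{i-2} = t_{-1}$ is $x$, while when $i = n$ the chart $U_n = U_{i+1 - 1}$ uses $t_{n+1} = y$ — so I would state the result with the convention $t_{-1} = x$, $t_{n+1} = y$ already in force, matching the warning preceding Lemma~\ref{sublem:two}, and note that Lemmata~\ref{lem:dag} and \ref{sublem:two} were proved with exactly these substitutions. The mildly delicate check is that the ring $\msOk$ is only a \emph{skew} polynomial ring $\kk_{q^{n+1}}[\cdot, \cdot]$, so I should confirm that quotienting by a normal generator (one of the two variables, which is indeed normal) yields an honest one-variable ring where the criticality argument applies verbatim; this follows because $\kk_{q^{n+1}}[u,v]/(u) \cong \kk[v]$ regardless of $q$.
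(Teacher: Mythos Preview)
Your proposal is correct and follows essentially the same approach as the paper: both deduce the result directly from Lemma~\ref{sublem:two}, which already computes $\mf l_i[t_k^{-1}]_0$ in all three cases. Your treatment is more explicit than the paper's one-line proof, and you add the easy but worthwhile verification of GK $1$-criticality; the only redundancy is that once you invoke Lemma~\ref{sublem:two} for $k=i$ or $k=i-1$, the images of $x,y,z$ are already accounted for, so the separate appeal to Lemma~\ref{lem:dag}(1) is unnecessary.
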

\begin{proof} This follows directly from Lemma~\ref{sublem:two}.
\end{proof}

\begin{corollary}\label{cor:intsix}
If $|j-i| \geq 2$ then $\Ext^s_\X(\bpi L_i, \bpi L_j) = 0$ for all $s$ and so $L_i \cdot L_j = 0$ is well-defined.
\end{corollary}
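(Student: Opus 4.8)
The plan is to run the local-to-global spectral sequence \eqref{localglobal} after first showing that every sheaf $\shExt^r_\X(\bpi L_i, \bpi L_j)$ vanishes. Since $\X = \bigcup_{k=0}^{n} U_k$ and each $U_k$ is affine in the sense of Proposition~\ref{newprop:two}, the sheaves $\shExt^r_\X(\bpi L_i,\bpi L_j)$ are determined by their sections over the $U_k$, with
\[
\shExt^r_\X(\bpi L_i, \bpi L_j)(U_k) \;\cong\; \Ext^r_{\msOk}\!\bigl((\bpi L_i)(U_k),\, (\bpi L_j)(U_k)\bigr),
\]
exactly the local-on-charts mechanism already used (and justified) in the proof of Theorem~\ref{newthm:smooth}; I would cite that argument rather than reprove it.

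First I would extract from Lemma~\ref{intfour} that $(\bpi L_i)(U_k)\neq 0$ only for $k\in\{i-1,i\}$ (both of which lie in $\{0,\dots,n\}$, as $1\le i\le n$), and likewise $(\bpi L_j)(U_k)\neq 0$ only for $k\in\{j-1,j\}$. When $|i-j|\ge 2$ the two-element sets $\{i-1,i\}$ and $\{j-1,j\}$ are disjoint, so for every $k$ at least one of $(\bpi L_i)(U_k)$, $(\bpi L_j)(U_k)$ is the zero module; hence $\Ext^r_{\msOk}\!\bigl((\bpi L_i)(U_k),(\bpi L_j)(U_k)\bigr)=0$ for all $r$ and all $k$. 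Therefore $\shExt^r_\X(\bpi L_i,\bpi L_j)|_{U_k}=0$ for every $k$, and since the $U_k$ cover $\X$ we conclude $\shExt^r_\X(\bpi L_i,\bpi L_j)=0$ for all $r$.

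Feeding this into \eqref{localglobal}, the entire $E_2$-page $E_2^{p,r}=H^p\!\bigl(\X,\shExt^r_\X(\bpi L_i,\bpi L_j)\bigr)$ vanishes, so the abutment gives $\Ext^s_\X(\bpi L_i,\bpi L_j)=0$ for all $s$. In particular every such group is finite-dimensional and all but finitely many (indeed all) vanish, so the intersection product $L_i\cdot L_j$ is well-defined and
\[
L_i\cdot L_j=\sum_{s\ge 0}(-1)^{s+1}\dim_\kk\Ext^s_\X(\bpi L_i,\bpi L_j)=0 .
\]

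I do not expect a genuine obstacle: the only slightly delicate point is the identification of $\shExt^r_\X(\bpi L_i,\bpi L_j)$ with an $\msOk$-module on each chart, and this is precisely the device already deployed in Theorem~\ref{newthm:smooth}. As an aside, one can also see the result through supports: unwinding the minimal open neighbourhoods $V_{\mf p}$ of Lemma~\ref{lem:local} together with Lemma~\ref{intfour} shows that $\bpi L_i$ is supported on $\{\mf p_{i-1},\mf l_i,\mf p_i\}$, so $\bpi L_i$ and $\bpi L_j$ have disjoint supports when $|i-j|\ge 2$, and $\shHom_\X$ — hence all $\shExt^r_\X$ — between sheaves of disjoint support vanishes; but this route requires a little more bookkeeping with the topology of $\X$, so the chart argument above is the cleaner one to write down.
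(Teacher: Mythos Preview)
Your proof is correct and follows exactly the paper's approach: use Lemma~\ref{intfour} to see that on every chart $U_k$ one of $(\bpi L_i)(U_k)$ or $(\bpi L_j)(U_k)$ vanishes, so all $\shExt^r_\X(\bpi L_i,\bpi L_j)$ vanish, and then the local-to-global spectral sequence \eqref{localglobal} gives the result. Your write-up is more detailed than the paper's one-line proof, but the content is identical.
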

\begin{proof} This follows from \eqref{localglobal} and the observation that on all charts $U_k$ either $\bpi L_i(U_k) = 0$ or $\bpi L_j(U_k) = 0$.
\end{proof}

We further have the following computation, which we give without proof.

\begin{lemma}\label{lem:intfive}
Let $C = \kk_q[u,v]$ be a quantum polynomial ring, let  $L, L'$ be $C/(u)$ and $C/(v)$, respectively, and let $P = \kk_q[u,v]/\langle u,v \rangle$.
    Then 
    \[ \Hom_C(L,L) \cong L \cong \Ext^1_C(L,L), \quad \Ext^1_C(L,L')\cong P,\]
    and all other $\Ext^s_C(L,L)$ and $\Ext^s_C(L, L')$ vanish. 
    Thus \[ \shExt^s_\X(\bpi L_i, \bpi L_{i+1}) = \left\{ \begin{array}{cl} \bpi P_i & \text{if } s =1 \\
 0 & \mbox{otherwise,} 
 \end{array} \right.
\] 
where $P_i   = T/(\mf l_i + \mf l_{i+1})$.\qed
\end{lemma}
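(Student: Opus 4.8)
The plan is to establish the module-level $\Ext$ computation over $C = \kk_q[u,v]$ first, and then globalise it over the noncommutative ringed space $\X$ one chart at a time. For the first part, I would use that $u$ is a normal nonzerodivisor in the domain $C$, so the right module $L = C/uC$ has the length-one free resolution $0 \to C \xrightarrow{u\,\cdot} C \to L \to 0$, and symmetrically $0 \to C \xrightarrow{v\,\cdot} C \to L' \to 0$ resolves $L' = C/vC$. Applying $\Hom_C(-,N)$ and identifying $\Hom_C(C,N)$ with $N$ turns the transpose of ``left multiplication by $u$'' into ``right multiplication by $u$'' on $N$, so $\Hom_C(L,N) = \operatorname{ann}_r(u;N)$, $\Ext^1_C(L,N) \cong N/Nu$, and $\Ext^s_C(L,N)=0$ for $s\geq 2$ (likewise for $L'$ with $v$). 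Evaluating at $N=L$, where $Lu = \overline{Cu} = \overline{uC} = 0$ so multiplication by $u$ is the zero map, gives $\Hom_C(L,L)\cong L\cong \Ext^1_C(L,L)$; evaluating at $N=L'$, where on the $\kk$-basis $\{\overline{u^i}\}$ of $L'$ multiplication by $u$ is the injective shift $\overline{u^i}\mapsto\overline{u^{i+1}}$, gives $\Hom_C(L,L')=0$ and $\Ext^1_C(L,L') \cong L'/L'u = C/(uC+vC) = P$. The symmetric statement $\Ext^1_C(L',L)\cong P$ comes out the same way (or from the evident $u\leftrightarrow v$ symmetry of the quantum plane), and this is the version I will actually use below.

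For the sheaf statement I would argue chart by chart on the affine opens $U_k$ ($0\leq k\leq n$), using that $\rMod\ms O_{U_k}\simeq\rMod\ms O_\X(U_k)$ by Proposition~\ref{newprop:two}, that $U_k$ has no proper open covers, and that $\shExt$ may be computed from the $\bpi I$-resolutions employed in Theorem~\ref{newthm:smooth}; together these give $\shExt^s_\X(\ms F,\ms G)(U_k) \cong \Ext^s_{\ms O_\X(U_k)}(\ms F(U_k),\ms G(U_k))$. By Lemma~\ref{intfour}, $(\bpi L_i)(U_k)$ is nonzero only for $k\in\{i-1,i\}$ and $(\bpi L_{i+1})(U_k)$ only for $k\in\{i,i+1\}$, so $\shExt^s_\X(\bpi L_i,\bpi L_{i+1})$ vanishes off $U_i$. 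On $U_i$, identifying $\ms O_\X(U_i) = \kk_{\qb}[t_{i-1}t_i^{-1},t_{i+1}t_i^{-1}]$ with $C$ via $u = t_{i-1}t_i^{-1}$, $v = t_{i+1}t_i^{-1}$ (Lemma~\ref{lem:dag}(1)), Lemma~\ref{intfour} identifies $(\bpi L_i)(U_i) = C/(v) = L'$ and $(\bpi L_{i+1})(U_i) = C/(u) = L$, so the first part yields $\shExt^1_\X(\bpi L_i,\bpi L_{i+1})(U_i) \cong \Ext^1_C(L',L) \cong P$ and all other $\shExt$ vanish there. Finally, using Lemma~\ref{sublem:two}, I would check that $\mf l_i$ and $\mf l_{i+1}$ each localise to the whole of $\msOk$ for $k\neq i$, whereas $\mf l_i[t_i^{-1}]_0 = \msOi\, t_{i+1}t_i^{-1}$ and $\mf l_{i+1}[t_i^{-1}]_0 = \msOi\, t_{i-1}t_i^{-1}$, so $\bpi P_i$ (for $P_i = T/(\mf l_i+\mf l_{i+1})$) is the sheaf supported on $U_i$ with value $\msOi/(\msOi u+\msOi v) = P$. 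Comparing the two sheaves chart by chart, and noting the identifications are natural, gives $\shExt^1_\X(\bpi L_i,\bpi L_{i+1})\cong\bpi P_i$ and the vanishing in the other degrees.

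I do not expect a genuine obstacle: the homological content is a single length-one free resolution, and the rest is chart bookkeeping carried out with Lemmata~\ref{lem:dag}, \ref{intfour}, and \ref{sublem:two}, which are already in place. The points demanding care are keeping the left/right module conventions straight when dualising ``left multiplication by $u$'', and correctly matching the two ratios $t_{i\pm1}t_i^{-1}$ with the coordinates $u,v$ of the chart $U_i$ — it is exactly this matching that makes $\Ext^1_C(\text{first argument},\text{second argument})$ on $U_i$ equal to $\Ext^1_C(L',L)\cong P$ rather than a $\Hom$ or a zero group.
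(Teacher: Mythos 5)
Your proposal is correct and is exactly the argument the paper suppresses: the length-one free resolution $0 \to C \xrightarrow{u\cdot} C \to L \to 0$ dualises to right multiplication by $u$, giving the module-level $\Ext$ groups, and the sheaf statement then reduces via Proposition~\ref{newprop:two} and Lemma~\ref{intfour} to the single chart $U_i$ where both line modules are supported. You were right to flag the one genuinely delicate point: on $U_i$ the identification from Lemma~\ref{intfour} puts $(\bpi L_i)(U_i)=C/(v)$ and $(\bpi L_{i+1})(U_i)=C/(u)$, so the group actually needed is $\Ext^1_C(L',L)$ rather than the $\Ext^1_C(L,L')$ literally displayed in the lemma; your observation that the two agree (by the $u\leftrightarrow v$ symmetry, or by rerunning the resolution argument with $v$) closes that gap, and the comparison with $\bpi P_i$ via Lemma~\ref{sublem:two} is carried out correctly.
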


\begin{lemma}\label{lem:inteight}
We have
\[ \Ext^s_\X(\bpi L_i, \bpi L_{i+1}) = \left\{\begin{array}{cl}
\kk & \text{if } s=1, \\
0 & \mbox{otherwise,} \end{array} \right.\]
so $L_i \cdot L_{i+1} = 1$ is well-defined.
\end{lemma}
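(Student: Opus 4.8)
The plan is to deduce Lemma~\ref{lem:inteight} from Lemma~\ref{lem:intfive} by feeding the local $\shExt$ computation into the local-to-global spectral sequence \eqref{localglobal}. By Lemma~\ref{lem:intf005}... more precisely by Lemma~\ref{lem:intfive}, we know that $\shExt^0_\X(\bpi L_i,\bpi L_{i+1}) = 0$, that $\shExt^1_\X(\bpi L_i,\bpi L_{i+1}) \cong \bpi P_i$, and that $\shExt^s_\X(\bpi L_i,\bpi L_{i+1}) = 0$ for $s\geq 2$. So the spectral sequence
\[ E_2^{p,r} = H^p(\X,\shExt^r_\X(\bpi L_i,\bpi L_{i+1})) \Rightarrow \Ext^{p+r}_\X(\bpi L_i,\bpi L_{i+1})\]
has only the row $r=1$ nonzero, hence degenerates at $E_2$, giving $\Ext^s_\X(\bpi L_i,\bpi L_{i+1}) \cong H^{s-1}(\X,\bpi P_i)$. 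Thus everything reduces to computing the cohomology of the point module $\bpi P_i$ on $\X$.

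The key step is therefore to show $H^0(\X,\bpi P_i) = \kk$ and $H^p(\X,\bpi P_i)=0$ for $p\geq 1$. Here $P_i = T/(\mf l_i+\mf l_{i+1})$, and from Lemma~\ref{lem:intfive} (or directly from Lemmata~\ref{lem:dag} and \ref{sublem:two}) the sheaf $\bpi P_i$ is supported only on the chart $U_i$, where $\bpi P_i(U_i) \cong \msOi/\langle t_{i-1}t_i^{-1}, t_{i+1}t_i^{-1}\rangle \cong \kk$; on every other chart $U_k$ with $0\leq k\leq n$ the stalk vanishes. Since $\bpi P_i$ is thus (the extension by zero of) a skyscraper-type sheaf concentrated at the closed point $\mf p_i\in\X$, its higher cohomology vanishes — concretely, one can use the \v{C}ech complex of $\bpi P_i$ with respect to the cover $\{U_0,\dots,U_n\}$: only the terms $U_J$ with $i\in J$ contribute, and one checks the complex $0\to \bpi P_i(U_i)\to \bigoplus_{j\neq i}\bpi P_i(U_{ij})\to\cdots$ is exact except in degree $0$ where the cohomology is $\kk$. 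Alternatively, invoke Grothendieck vanishing (as in the proof of Theorem~\ref{newthm:smooth}) together with the observation that $\bpi P_i$ has $0$-dimensional support, so $H^p(\X,\bpi P_i)=0$ for $p\geq 1$ automatically, and $H^0(\X,\bpi P_i)=\bpi P_i(U_i)=\kk$ since the sheaf is determined by its single nonzero chart. Combining, $\Ext^1_\X(\bpi L_i,\bpi L_{i+1}) \cong H^0(\X,\bpi P_i)=\kk$ and all other $\Ext$ groups vanish.

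Finally, since $\bpi L_i\cong L_i$ and $\bpi L_{i+1}\cong L_{i+1}$ under the equivalence $\rqgr T\simeq\rmod\ms O_\X$ of Theorem~\ref{newthm:catequiv}, and this equivalence identifies $\Ext$ groups in $\rqgr T$ with those in $\rmod\ms O_\X$ (equivalently $\Ext_\X$), we conclude $\dim_\kk\Ext^1_{\rqgr T}(L_i,L_{i+1})=1$ with all other $\Ext^s$ vanishing. Hence the intersection product
\[ L_i\cdot L_{i+1} = \sum_{s\geq 0}(-1)^{s+1}\dim_\kk\Ext^s_\X(\bpi L_i,\bpi L_{i+1}) = -(-1)^{1+1}\cdot 0 + (-1)^{1+1}\cdot 1 = 1\]
is well-defined and equals $1$. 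I expect the main obstacle to be purely bookkeeping: correctly tracking which charts $\bpi P_i$ is supported on and verifying the \v{C}ech complex computation of $H^\bullet(\X,\bpi P_i)$; once the local picture of Lemma~\ref{lem:intfive} is in place, the spectral sequence argument is formal, exactly parallel to the argument already used for $L_i\cdot L_j$ with $|i-j|\geq 2$ in Corollary~\ref{cor:intsix}.
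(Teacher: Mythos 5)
Your proof is correct and follows essentially the same route as the paper: apply the local-to-global spectral sequence \eqref{localglobal} together with Lemma~\ref{lem:intfive} to reduce to computing $\H^{s-1}_{\rQgr T}(P_i)$, which you then carry out via the \v{C}ech complex supported on $U_i$. The paper's proof is terser — it simply says ``a local cohomology computation gives the result'' — so your write-up fills in exactly that step, with the same outcome.
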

\begin{proof}
By Lemma~\ref{lem:intfive}, the local-to-global spectral sequence \eqref{localglobal}
collapses to give 
\[ \Ext^s_\X(\bpi L_i, \bpi L_{i+1}) = H^{s-1} (\X,\shExt^1_\X(\bpi L_i, \bpi L_{i+1})) \cong \H^{s-1}_{\rQgr T}(P_i).
\]
A local cohomology computation gives the result.
\end{proof}

We now complete the proof of Theorem~\ref{thm:inttheory}.

\begin{proposition}\label{prop:intnine}
\[\shExt^s_\X(\bpi L_i, \bpi L_i) \cong \left\{
\begin{array}{cl}
\bpi L_i & \text{if } s=0, \\
\bpi L_i (-2) & \text{if } s=1, \\
0 & \mbox{otherwise,} \end{array} \right.
\]
and $L_i \cdot L_i = -2$ is well-defined.
\end{proposition}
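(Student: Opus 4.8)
The plan is to mimic the structure of Lemma~\ref{lem:intfive} and Lemma~\ref{lem:inteight}: first compute the sheaf-Ext groups $\shExt^s_\X(\bpi L_i, \bpi L_i)$ locally on the affine charts $U_k$, then feed the answer into the local-to-global spectral sequence \eqref{localglobal}. By Lemma~\ref{intfour}, on the two relevant charts $U_{i-1}$ and $U_i$ the restriction $\bpi L_i$ is, after identifying the coordinate ring with a quantum polynomial ring $\kk_{\qb}[u,v]$, isomorphic to $\kk_{\qb}[u,v]/(u)$ (and vanishes on all other charts $U_k$). So the local computation is exactly the $\Ext^s_C(L,L)$ part of Lemma~\ref{lem:intfive}, giving $\shExt^0_\X(\bpi L_i, \bpi L_i) \cong \bpi L_i$, $\shExt^1_\X(\bpi L_i, \bpi L_i) \cong \bpi L_i$ as sheaves, with all higher ones vanishing. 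The only subtlety is the internal grading shift: one must track the degree in the projective grading of $T$ to see that the $\Ext^1$-sheaf is $\bpi L_i(-2)$ rather than $\bpi L_i$. I would pin this down by writing out the length-two free resolution $0 \to C(-?) \xrightarrow{\,\cdot u\,} C \to C/(u) \to 0$ on each chart with the correct projective-degree bookkeeping for $t_{i-1}, t_i$, or alternatively by invoking the invertibility of $\mf l_i$ (Proposition~\ref{prop:InvertibleLineIdeal}) together with the known self-duality of $\mf l_i$ to fix the twist; the shift $(-2)$ should match the fact that the line $L_i$ has self-intersection $-2$, which is precisely what we are about to verify, so a direct resolution argument is cleaner.

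With the sheaf-Ext computation in hand, I would plug into \eqref{localglobal}, which is the spectral sequence $H^p(\X, \shExt^r_\X(\bpi L_i, \bpi L_i)) \Rightarrow \Ext^{p+r}_\X(\bpi L_i, \bpi L_i)$. Since $\shExt^r$ vanishes for $r \geq 2$, the spectral sequence has only two nonzero rows ($r=0$ and $r=1$), and the $E_2$-page entries are $H^p(\X, \bpi L_i)$ and $H^p(\X, \bpi L_i(-2))$. Under the equivalence $\rqgr T \simeq \rmod \ms O_\X$ of Theorem~\ref{newthm:catequiv} and the cohomology comparison of Proposition~\ref{prop:cohbasics}(2), these are computed by the local cohomology $H^{p+1}_{T_+}(L_i)$ and $H^{p+1}_{T_+}(L_i(-2))$ — equivalently by the cohomology of $\PP^1$ since $L_i \cong \kk_{\qb}[t_{i-1},t_i]$ is a polynomial ring in two variables (Lemma~\ref{lem:linehilb}, Remark~\ref{rem:P1}). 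Concretely, $\H^0_{\rQgr T}(\bpi L_i) = L_i$ has Hilbert series $(1-t)^{-2}$ so its degree-$0$ part is one-dimensional, $\H^1_{\rQgr T}(\bpi L_i) = 0$, while $\H^0_{\rQgr T}(\bpi L_i(-2))$ and $\H^1_{\rQgr T}(\bpi L_i(-2))$ are computed from the $\PP^1$-cohomology of $\sO(-2)$: $H^0(\PP^1, \sO(-2)) = 0$ and $H^1(\PP^1, \sO(-2))$ is one-dimensional. I would make this precise by the same kind of explicit \v{C}ech / local cohomology computation used implicitly in Lemma~\ref{lem:inteight}.

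Assembling: the $E_2$-page has $E_2^{0,0} = \kk$ (from $H^0(\X,\bpi L_i)$, degree $0$), $E_2^{1,0} = 0$, $E_2^{0,1} = 0$ (from $H^0(\X, \bpi L_i(-2))$), and $E_2^{1,1} = \kk$ (from $H^1(\X, \bpi L_i(-2))$), with all other entries zero. There are no possible nonzero differentials between these (the $d_2$ differential would go $E_2^{0,1} \to E_2^{2,0} = 0$ and there is nothing mapping into the surviving classes), so the spectral sequence degenerates and we read off $\Ext^0_\X(\bpi L_i, \bpi L_i) = \kk$, $\Ext^1_\X(\bpi L_i, \bpi L_i) = 0$, $\Ext^2_\X(\bpi L_i, \bpi L_i) = \kk$, and $\Ext^s = 0$ for $s \geq 3$. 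Hence all $\Ext$ groups are finite-dimensional and almost all vanish, so $L_i \cdot L_i$ is well-defined, and $L_i \cdot L_i = \sum_{s\geq 0}(-1)^{s+1}\dim_\kk \Ext^s_\X(\bpi L_i, \bpi L_i) = -1 + 0 - 1 = -2$. Combined with Corollary~\ref{cor:intsix} and Lemma~\ref{lem:inteight}, this finishes the proof of Theorem~\ref{thm:inttheory}. The main obstacle I anticipate is getting the internal twist in $\shExt^1_\X(\bpi L_i, \bpi L_i) = \bpi L_i(-2)$ exactly right — the local computation is routine but the degree bookkeeping for $t_{i-1}$ and $t_i$ (both in projective degree $1$, so the Koszul-type relation contributes a shift by $2$) is where a sign or off-by-one error would creep in; everything downstream is then forced.
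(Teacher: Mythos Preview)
Your overall strategy matches the paper's exactly: compute $\shExt^s_\X(\bpi L_i,\bpi L_i)$ locally via Lemma~\ref{lem:intfive}, feed into the local-to-global spectral sequence \eqref{localglobal}, and finish with the cohomology of $L_i$ and $L_i(-2)$ computed as for $\PP^1$. Your $E_2$-page and the final Ext dimensions are correct.

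The gap is precisely where you anticipated it: pinning down the twist $(-2)$. Your approach (a) does not work as stated. On the chart $U_k$ you are in degree $0$ of $T[t_k^{-1}]$, and the generator of $\mf l_i[t_k^{-1}]_0$ is $t_{i+1}t_i^{-1}$ (for $k=i$) or $t_{i-2}t_{i-1}^{-1}$ (for $k=i-1$), both of projective degree $0$; so the local resolution $0\to C\xrightarrow{\cdot u}C\to C/(u)\to 0$ has no shift, and no ``Koszul-type'' argument from the degrees of $t_{i-1},t_i$ applies directly. The twist is a global phenomenon encoded in how the two trivialisations of the invertible sheaf $\bpi\mf l_i$ glue on $U_{i-1}\cap U_i$. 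You could recover it by computing that transition function, but this is not what you wrote.

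The paper takes your approach (b), made concrete. Applying $\shHom_\X(-,\bpi L_i)$ to $0\to\mf l_i\to T\to L_i\to 0$ and using the local computation shows $\shExt^1_\X(\bpi L_i,\bpi L_i)\cong\shHom_\X(\bpi\mf l_i,\bpi L_i)$. By Proposition~\ref{prop:InvertibleLineIdeal} this is $\bpi(\sJ_i\otimes_T L_i)\cong\bpi(\sJ_i/T)$. Since $\sJ_i=T+z^{-1}t_{i-1}t_i T$ and $z^{-1}t_{i-1}t_i$ has projective degree $2$, left multiplication by this element gives a surjection $L_i(-2)\cong z^{-1}t_{i-1}t_iT/z^{-1}t_{i-1}t_i\mf l_i\twoheadrightarrow\sJ_i/T$ which is an isomorphism on each chart by GK-criticality (Lemma~\ref{intfour}), hence an isomorphism in $\rqgr T$. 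That is where the $(-2)$ actually comes from; ``self-duality of $\mf l_i$'' is not used.
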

\begin{proof}
Consider the exact sequence $0 \to \mf l_i \to T \to L_i \to 0$
and the associated long exact sequence
\beq\label{LES}
0 \to \shHom_\X(\bpi L_i, \bpi L_i) 
\stackrel{f}{\to} \shHom_\X(\bpi T, \bpi L_i) 
\stackrel{g}{\to} \shHom_\X(\bpi \mf l_i, \bpi L_i) 
\stackrel{h}{\to} \shExt^1_\X(\bpi L_i, \bpi L_i) \to 0,
\eeq 
where all other terms in the long exact sequence vanish.
On each open set $U_k$, by Lemma~\ref{lem:intfive} we see, first that $f,h$ are isomorphisms and $g=0$, and second that
\beq\label{extstar}
\shExt^1_\X(\bpi L_i, \bpi L_i)(U_k) \cong (\bpi L_i)(U_k) \quad \mbox{for all $k$.}
\eeq
Thus $f$ is a global isomorphism and 
\[\shHom_\X(\bpi L_i, \bpi L_i) \cong \shHom_\X(\bpi T, \bpi L_i) \cong \bpi L_i.
\]

We can therefore compute $\shExt^1_\X(\bpi L_i, \bpi L_i)$ as $\shHom_\X(\bpi \mf l_i, \bpi L_i)$.
This object is isomorphic to $\bpi(\sJ_i \otimes_T L_i)$ by 
Proposition~\ref{prop:InvertibleLineIdeal} and the equivalence $\rqgr \sT \simeq \rmod \ms O_\X$ in Theorem~\ref{newthm:catequiv}.
Again using Proposition~\ref{prop:InvertibleLineIdeal}
$\sJ_i \otimes_T L_i \cong \sJ_i/\sJ_i \mf l_i$ is isomorphic in $\rqgr T$ to
\[ \sJ_i/T \cong \frac{z^{-1}t_{i-1} t_i T}{z^{-1}t_{i-1} t_i T\cap T}.\]
Since $z^{-1}t_{i-1} t_i \mf l_i \subseteq T$, there is a surjection
\beq\label{tues1}
\frac{z^{-1}t_{i-1} t_i T}{z^{-1}t_{i-1} t_i \mf l_i} \twoheadrightarrow \frac{z^{-1}t_{i-1} t_i T}{z^{-1}t_{i-1} t_i T\cap T}.
\eeq
Applying $\bpi(-)(U_k)$, both terms in \eqref{tues1} become isomorphic to $(\bpi L_i)(U_k)$.
By the GK-criticality in Lemma~\ref{intfour},  the surjection \eqref{tues1} must be an isomorphism on all charts and thus an isomorphism in $\rqgr \T$.
Thus
\[ \shExt^1_\X(\bpi L_i, \bpi L_i) \cong \bpi\Big(\frac{z^{-1}t_{i-1} t_i T}{z^{-1}t_{i-1} t_i \mf l_i}\Big)\cong \bpi(T(-2)/\mf l_i(-2)) \cong \bpi(L_i(-2)).\]

By the above computations, the $E_2$-page of \eqref{localglobal} for $\Ext^s_\X(\bpi L_i, \bpi L_i)$ is
\[ \begin{array}{cc} 
H^0(\X, \bpi L_i(-2)) \hspace{10pt}&\hspace{10pt} H^1(\X, \bpi L_i(-2)) \\[20pt]
H^0(\X, \bpi L_i) & H^1(\X, \bpi L_i) \end{array}\]
where all the maps are zero as we are on the $E_2$-page.
Thus
\begin{align*}
\Hom_\X(\bpi L_i, \bpi L_i) & \cong H^0(\X, \bpi L_i), \\
\Ext^1_\X(\bpi L_i, \bpi L_i) & \cong H^0(\X, \bpi L_i(-2)) \oplus H^1(\X, \bpi L_i), \\
\Ext^2_\X(\bpi L_i, \bpi L_i ) & \cong H^1(\X, \bpi L_i(-2)).
\end{align*}
We see that 
\[ L_i \cdot L_i = -\chi(L_i) + \chi(L_i(-2)), \]
where for a graded $T$-module $M$ we have 
\[
\chi(M) = \sum_{i \geq 0} (-1)^i \dim_\kk H^i(\X, \bpi M) =  \sum_{i \geq 0} (-1)^i \dim_\kk \H^i_{\rQgr T}( \pi M),
\]
where the second equality follows from the  equivalence between $\rmod \ms{O}_\X$ and $\rqgr T$.

We will now show that $\chi(L_i(n)) = n+1$ for all $n\in \ZZ$, from which the fact that $L_i \cdot L_i = -2$ will follow. We calculate this via a local cohomology computation.
If $k \not\in\{i-1, i\}$ then $L_i[t_k^{-1}] = 0$. 
The \v{C}ech complex for $L_i$ is thus
\[ 0 \to L_i \to L_i[t^{-1}_{i-1}] \oplus L_i[t_i^{-1}] \to L_i[t_{i-1}^{-1}, t_i^{-1}] \to 0.\]
From Lemma~\ref{lem:linehilb}, this is the complex
\[ 0 \to \kk[t_{i-1}, t_i] \to \kk[t_{i-1}^{\pm 1}, t_i] \oplus \kk[t_{i-1}, t_i^{\pm 1}] \to \kk[t_{i-1}^{\pm 1}, t_i^{\pm 1}]\to 0\]
of $\kk$-vector spaces.
Thus $H^s_{T_+}(L_i) = 0$ for $ s \neq 2$, and 
\beq\label{tues2}
\dim_\kk H^2_{T_+}(L_i)_n = \left\{\begin{array}{cl} 0 & \text{if } n \geq 0, \\  -(n+1) & \text{if } n \leq -1. \end{array}\right.
\eeq
That $\chi(L_i(n)) = n+1$ follows from  \eqref{tues2} and Lemma~\ref{lem:linehilb}.
\end{proof}

\begin{remark}\label{rem:kernel}
The computations in Proposition~\ref{prop:intnine} show that 
\[\H^\bullet_{\rQgr \T}(L_i(-1)) = \RR \phi_* (L_i(-1))=0\] for $1 \leq i \leq n$.
Thus the $L_i(-1)$ are contained in the kernel of $\phi_*$.  
We conjecture that the modules $L_1(-1), \dots, L_n(-1)$ generate the kernel of $\phi_*$.  
This is the subject of ongoing research.  
\end{remark}

\bibliographystyle{amsalpha}
\bibliography{bibliography}
\end{document}